\newcommand{\NN}{{\mathbb{N}}}
\newcommand{\QQ}{{\mathbb{Q}}}
\newcommand{\RR}{{\mathbb{R}}}
\newcommand{\EE}{{\mathbb{E}}}
\newcommand{\PP}{{\mathbb{P}}}
\newcommand{\clb}{{\mathcal{B}}}
\newcommand{\clc}{{\mathcal{C}}}
\newcommand{\cld}{{\mathcal{D}}}
\newcommand{\cle}{{\mathcal{E}}}
\newcommand{\clf}{{\mathcal{F}}}
\newcommand{\cls}{{\mathcal{S}}}
\newcommand{\Om}{\Omega}
\newcommand{\taubar}{{\bar{\tau}}}
\newcommand{\bfZ}{\mathbf{Z}}
\newcommand{\bfB}{\mathbf{B}}
\newcommand{\bfe}{\mathbf{e}}
\newcommand{\bfz}{\mathbf{z}}
\newcommand{\bfL}{\mathbf{L}}
\newcommand{\sn}{\varsigma}
\newcommand{\lala}{\gamma}
\newcommand{\nunu}{\nu}
\newcommand{\en}{\varsigma^*}
\newcommand{\gn}{g}
\newcommand{\vsig}{\vartheta}
\newcommand{\vrr}{\varrho}
\newcommand{\BM}{\mbox{{\small BM}}}
\definecolor{expcol}{rgb}{1.0,0.5,0.5}
\definecolor{ecol}{rgb}{0.0, 0.5, 1.0}
\newcommand{\editc}[1]{{\color{black} #1}}
\numberwithin{equation}{section}
\numberwithin{figure}{section}
\numberwithin{table}{section}
\newtheorem{lemma}{Lemma}[section]
\newtheorem{proposition}[lemma]{Proposition}
\newtheorem{theorem}[lemma]{Theorem}
\newtheorem{remark}[lemma]{Remark}
\newtheorem{corollary}[lemma]{Corollary}
\numberwithin{equation}{section}
\newcommand{\cg}{\color{black}}
\begin{document}

\title{The Inert Drift Atlas Model}
\author{Sayan Banerjee, Amarjit Budhiraja, Benjamin Estevez}
\maketitle

\begin{abstract}
Consider a massive (inert) particle impinged from above by N Brownian particles that are instantaneously reflected upon collision with the inert particle.  
 The velocity of the inert particle increases due to the influence of an external Newtonian potential (e.g. gravitation) and decreases in proportion to the total local time of collisions with the Brownian particles. 
 This system models a semi-permeable membrane in a fluid having microscopic impurities (Knight (2001)).
 We study the long-time behavior of the process $(V,\mathbf{Z})$, where $V$ is the velocity of the inert particle and $\mathbf{Z}$ is the vector of gaps between successive particles ordered by their relative positions. The system is not hypoelliptic, not reversible, and has singular form interactions. Thus the study of stability behavior of the system requires new ideas.  We show that this process has a unique stationary distribution that takes an explicit product form which is Gaussian in the velocity component and Exponential in the other components. We also show that convergence in total variation distance to the stationary distribution happens at an exponential rate. We further obtain certain law of large numbers results for the particle locations and intersection local times.

\noindent\newline

\noindent \textbf{AMS 2010 subject classifications:} 60J60, 60K35, 60J25, 60H10. \newline

\noindent \textbf{Keywords:} Reflecting Brownian motions, degenerate dynamics, Atlas model, singular interaction, exponential ergodicity, product-form stationary distributions.
\end{abstract}

\section{Introduction}

\label{sec:intro}  

\editc{\subsection{Motivation and Model Description}}
In this work we study the long-time behavior of an interacting particle system comprising a massive (inert) particle that moves under the combined influence of an external Newtonian potential (eg. gravitation) and a non-Newtonian `inert drift' resulting from collisions with many microscopic (Brownian) particles. This serves as a simplified model for the motion of a semi-permeable membrane in a fluid having microscopic impurities (see \cite{KnightInert}). The membrane, which allows fluid molecules to pass but is impermeable to the impurities, plays the role of the inert particle.

Mathematically, this model consists of $N$-Brownian particles in $\RR$, with state processes denoted as $\{X_i(t), t \ge 0\}_{1\le i \le N}$, interacting with the inert particle, with state process $X_0(t)$, according to the following system of equations: For $t \ge 0$,
\begin{equation}\label{eq:unrankloc}
	\begin{aligned}
		X_0(t) &= x_0 + \int_0^t V(s) ds, \;\; V(t) = v_0+ gt - \sum_{i=1}^N \ell_i(t) ,\\
		X_i(t) &= x_i + W_i(t) + \ell_i(t), \; 1 \le i \le N.
	\end{aligned}
\end{equation}
Here \editc{$x_0 \le x_1 \le \cdots \le x_N$} denote the initial positions of the $N+1$ particles, $v_0$ the initial velocity of the inert particle, $\{W_i, 1 \le i \le N\}$ are mutually independent standard real Brownian motions, $g \in (0,\infty)$ denotes the gravitation constant and $\ell_i$ is  the collision local time between the $i$-th particle and the inert particle which, in particular, satisfies $\ell_i(t) = \int_0^t 1_{\{X_i(s) = X_0(s)\}} d\ell_i(s)$ for $1\le i \le N$ and $t\ge 0$. 
The local time interactions model the cumulative transfer of momentum when a Brownian particle collides with the inert particle `infinitely often' on finite time intervals, with each collision resulting in an infinitesimal momentum transfer. Such interactions lie at the heart of this model and interesting long time behavior results from the combined effect of the `soft' gravitational potential and `hard' collisions.

\editc{It follows from \cite{barnes2018strong} (see Theorem 2.5 and Proposition 2.10 therein) that there is a strong solution to the system of equations in 
\eqref{eq:unrankloc} and the solution satisfies $X_0(t) \le X_i(t)$ for all $t\ge 0$ and $1\le i \le N$ a.s. Using Gronwall's lemma and the Lipschitz property of the Skorohod map it is easy to verify that in fact the system of equations in 
\eqref{eq:unrankloc} has a unique strong solution. Given this unique  solution process $\{X_i(t),\; t \ge 0\}_{0\le i \le N}$ of \eqref{eq:unrankloc} it will be
 convenient} to consider the ordered particle system: 
$$X_{(0)}(t) \le X_{(1)}(t) \le \cdots \le X_{(N)}(t), \; t \ge 0,$$
where $\{X_{(i)}(t) : t \ge 0\}$ denotes the state process of the $i$-th particle from the bottom (\editc{note that the lowest particle, which we call the $0$-th particle from the bottom, is the inert particle,  in particular, $X_{(0)}(\cdot) = X_0(\cdot)$}). By an application of Tanaka's formula it is easy to verify that this ranked particle system satisfies the following system of equations: For $t \ge 0$,
\begin{equation}\label{eq:rankloc}
	\begin{aligned}
		X_{(0)}(t) &= x_0 + \int_0^t V(s) ds, \;\; V(t) = v_0+ gt -  L_1(t),\\
		X_{(1)}(t) &= x_1 + B_1(t)  -\frac{1}{2} L_2(t) + L_1(t), \\
		X_{(i)}(t) &= x_i + B_i(t)  -\frac{1}{2} L_{i+1}(t) + \frac{1}{2} L_{i}(t), \; 2 \le i \le N.
	\end{aligned}
\end{equation}
where \editc{$x_0 \leq x_1 \leq ... \leq x_N$,} $\{B_i, 1 \le i \le N\}$ are standard independent Brownian motions and  for $1\le i \le N$, $L_{i}$  denotes the collision  local time between the $i$-th and the $(i-1)$-th ranked particle which satisfies $L_i(t) = \int_0^t 1_{\{X_{(i)}(s) = X_{(i-1)}(s)\}} dL_i(s)$  and $L_{N+1}(t)=0$  for all $t\ge 0$ .

We are interested in the time asymptotic behavior of the {\em velocity and gap processes} associated with this system. Namely, denoting $Z_i(t)  \doteq X_{(i)}(t) - X_{(i-1)}(t)$, the object of interest is the stochastic process
$$(V(t), Z_1(t), \cdots, Z_N(t)).$$
This process is given by the system of equations
\begin{equation}\label{eq:gapproc}
	\begin{aligned}
		V(t) &= v_0+ gt -  L_1(t),\\
		Z_1(t) &= z_1 + B_1(t)  - \int_0^t V(s) ds - \frac{1}{2} L_2(t) + L_1(t), \\
		Z_2(t) &= z_2 + B_2(t)- B_1(t)  - \frac{1}{2} L_3(t) + L_2(t) - L_1(t),\\
		Z_i(t) &= z_i + B_i(t)  - B_{i-1}(t) -\frac{1}{2} L_{i+1}(t) +  L_{i}(t) - \frac{1}{2} L_{i-1}(t), \; 3 \le i \le N.
	\end{aligned}
\end{equation}

The model described by equations \eqref{eq:rankloc} (with gaps evolving as in \eqref{eq:gapproc}), which we call the \emph{inert drift Atlas model}, lies at the interface of two well-studied classes of interacting particle systems: \emph{inert drift models} and \emph{rank-based diffusions}, which we summarize below.

\editc{\subsection{Previous Work}}

The case where $N=1$ (namely the two particle system) with $g=0$ was analyzed in \cite{KnightInert}, which initiated the study of inert drift models. It was shown there that the inert particle progressively gains momentum from the local time interactions and eventually escapes the Brownian particle (no further collisions). When $g>0$, \cite{banerjee2019gravitation} showed that the two particles never escape each other. Among other results, the paper showed that the velocity of the inert particle and the gap between the two particles jointly converge in total variation distance to an explicit stationary distribution having a product form density (no rates of convergence were obtained). The two particle model with gravitation and fluid viscosity was investigated in \cite{BanBro}. In \cite{BBCH}, an inert drift model was considered where a particle moves as a diffusion process inside a bounded smooth domain and acquires inert drift when it hits the boundary of the domain. It was shown that the position of the particle and the cumulative inert drift have a product form stationary measure, which is unique under suitable conditions. A variety of related inert drift models have been studied in \cite{BurEtAl,WhiteInert,BurWhi}. When the term $\sum_{i=1}^N \ell_i(t)$ in \eqref{eq:unrankloc} is replaced by $N^{-1}\sum_{i=1}^N \ell_i(t)$ (mean field type interaction), the asymptotic behavior as $N\to \infty$
has been analyzed in \cite{BarHyd,barnes2018strong} where results on hydrodynamic limits and propagation of chaos have been obtained. Recently, unexpected connections have appeared between inert drift models and diffusion limits of load balancing systems like the Join-the-shortest-queue policy in heavy traffic \cite{eschenfeldt2018join},\cite{BanMuk},\cite{banerjee2020join}. More precisely, the joint evolution of the diffusion-scaled number of idle servers and busy servers converges in distribution to a diffusion that  resembles the two particle inert drift system with linear drift. Consequently, there are several common themes at the technical level between \cite{banerjee2019gravitation,BanBro} and \cite{BanMuk,banerjee2020join}.
Brownian particle systems of the form studied in the current work also arise as diffusion approximations of certain types of queuing systems in which each queue has the same finite capacity which is dynamically controlled in a manner that the increase in capacity is proportional to net job loss due to capacity constraints. In this model, currently under investigation, the individual queues play the role of Brownian particles whereas the dynamically changing queue capacity threshold represents the massive inert particle.

In a somewhat different vein, inspired by problems in mathematical finance, the study of rank-based diffusions
\cite{Atlas2},\cite{Atlas1},\cite{Atlas3},\cite{DJO},\cite{sarantsev2017stationary},\cite{AS},\cite{banerjee2021domains} have gained a lot of attention in recent years. These models consist of a collection of particles on the real line which evolve as diffusion processes where the drift and diffusivity of each particle is a function of its relative rank in the system. Closest in spirit to our model is the \emph{Atlas model} where the lowest ranked particle at any time moves as a Brownian motion with constant upward drift while the remaining particles evolve as standard Brownian motions (with zero drift).

\editc{\subsection{Analytical Challenges}}

The Atlas model and the model considered here are examples of particle systems with topological interactions in the terminology of \cite{CDGP}. In such particle systems, interactions between particles are determined by their relative positions. In particular, in both the Atlas model and in the particle system considered here, the lowest particle has different dynamical properties. Specifically, in the Atlas model the lowest particle gets a constant upward drift whereas in the model considered here the lowest particle experiences an {\em inert drift}. However there are some important differences between the two models. Unlike the Atlas model, where the collision local time of the lowest two particles enters directly in the position evolution of the lowest particle, here this local time impacts the velocity of the lowest particle. Indeed, this collision local time is the source of the inert drift of the lowest particle. Furthermore,
  %
  there is no Brownian noise in the equation for $X_{(0)}$ in \eqref{eq:rankloc}, unlike in the Atlas model. This results in the deterministic evolution of the velocity process in time periods with no collisions, making the full system, whose long-time behavior is of interest, non-elliptic (in fact, the driving diffusion process in the interior of the domain is not even hypoelliptic). More precisely, the law of $(V(t), Z_1(t), \cdots, Z_N(t))$ for any $t>0$ does not have a density with respect to Lebesgue measure, for general initial conditions. Also, we find that, unlike the Atlas model, the system considered here is not reversible.
  Hence,  standard techniques for studying ergodicity behavior of elliptic diffusion processes cannot be applied, and one needs new methods. 
  As noted above, inert two-particle systems have been studied in several previous works, however the current work is the first to study the ergodicity properties of a general $N$-particle system. There are fundamental differences in system behavior 
  as one goes from $N=1$ to $N>1$ which make the study of ergodicity behavior significantly more demanding.  \editc{In particular, as is crucially exploited in  \cite{banerjee2019gravitation, BanBro}, in the $N=1$ case, there is a basic \emph{regenerative structure}  arising from the fact that at points of decrease of the velocity process, the remaining state  coordinate, namely the one corresponding to $Z_1$, is fully determined (in fact equal to $0$). In the general $N$-particle system there is no such simple regenerative structure since, although the first gap coordinate $Z_1$ is once again $0$ at points of decrease of $V$, the remaining coordinates, namely $Z_2, \ldots , Z_N$ can be arbitrary.}

\editc{\subsection{Main Contributions}}

We now briefly describe the main contributions of this work.  Since the system is not hypoelliptic, one cannot apply standard existing theory to argue uniqueness of invariant measures. Our first main result says that the Markov proces $(V, \bfZ)= (V, Z_1, \ldots, Z_N)$ admits at most one stationary distribution. We then produce an explicit stationary distribution  for the system and together the two results (see Theorems \ref{thm:exisuniq} and \ref{thm:prodform}) prove existence and uniqueness of
stationary distributions of $(V, \bfZ)$. We in fact show that the unique stationary distribution takes a product form whose first component (corresponding to the velocity coordinate) is Gaussian and remaining are Exponential (see Theorem \ref{thm:prodform} for the precise form). \editc{In the case $N=1$, a Gaussian-Exponential product form stationary distribution has appeared in previous works \cite{WhiteInert},\cite{BBCH},\cite{banerjee2019gravitation}; however, this is the first work that finds such a product form structure for a general \editc{$N$-particle} system.} This stationary distribution also has striking similarities with the Atlas model where the stationary distribution is a product of exponentials with rates decreasing with the ranks of the particles (see, for example, \cite[Theorem 8]{Atlas1}). 

We next study the rate of convergence to stationarity. 
In Theorem \ref{thm:geomerg}, we show that the distribution of $(V(t), Z_1(t), \cdots, Z_N(t))$ converges to equilibrium exponentially fast (exponential ergodicity) as $t\to \infty$. \editc{To the best of our knowledge, this is the first result on  exponential ergodicity for any type of non-hypoelliptic reflected diffusion in dimensions higher than $2$.} 

 Finally in Theorem \ref{lln} we establish some law of large numbers type results. In particular, it is shown that the whole system `drifts' to infinity at speed $g/N$. Although this is an intuitive result to expect, our proof crucially hinges on the 
 rather technical result on exponential  moments of return times to certain compact sets that form the basis of the exponential ergodicity proof.
 We also find, somewhat surprisingly, that the intensity of collisions when $N \ge 3$ is maximum, in a certain sense, between the first two Brownian particles (rather than between the inert and the first Brownian particle); see Remark \ref{int12}.
 
 \editc{\subsection{Approach}}

A common approach to proving ergodicity or exponential rates of convergence to stationarity for diffusions in domains is by constructing a suitable Lyapunov function by analyzing the interplay between the ``interior drift vector field'' and the reflection vector field (cf. \cite{dupuis1994lyapunov}, \cite{atar2001positive}, \cite{BudLee}). For example, in polyhedral domains with constant (oblique) reflection on each face of the boundary, the key insight in the construction of a Lyapunov function is that the drift vector field for stable systems must lie in the interior of the cone generated by the negatives of the reflection directions. 
Note that  $\bfZ$ is a  
reflected diffusion in the positive orthant $\mathbb{R}^N_+$ with constant oblique reflection  at each face. The interior drift of this process is $V(t) \mathbf{e}_1$, where $\mathbf{e}_1$ is the unit vector with $1$ in the first coordinate. Due to the complicated dynamics of $V$, that includes in particular the local time for the first gap process $Z_1$, its behavior in relation to the reflection field seems hard to analyze which makes a direct construction of a explicit form Lyapunov function (as in the above cited works) hard. 

In this work we instead
take a pathwise approach. The stability in the particle system studied here arises as a result of interplay between the intersection local times for the various particles in the system. This interplay is distilled in Lemma \ref{zless} which identifies a stabilizing `singular' drift that prevents the gaps between the particles from being too large. This key lemma allows us to prove the
finiteness of exponential moments of hitting times to certain compact sets by analyzing excursions of the process between suitably chosen stopping times (see Sections \ref{highlev}-\ref{comphit}). 
In conjunction with results of \cite{DowMeyTwe} (see Proposition \ref{driftcondn} (a)), this analysis furnishes a general abstract form Lyapunov function, given in terms of exponential moments of these hitting times, which is key in the proof of exponential ergodicity. 
%
%
%
Another important ingredient in our proofs
is 
establishing a certain minorization estimate (see Proposition \ref{driftcondn} (b)).
For hypoelliptic diffusions such an estimate follows readily from the existence of a density for the process at each time $t>0$. However, in our case, establishing a suitable minorization bound involves substantial work and a careful exploitation of the properties of the collision local times of the particles in the system.
The proof of this estimate, which uses an intricate and novel 
 %
 pathwise analysis,  is the topic of  Section \ref{minosec}.


\editc{\subsection{Future Directions}}
 
The current work is the first step in our program of analyzing high-dimensional reflected diffusions with inert drift type interactions. The natural next step will be to investigate ergodicity properties of the infinite-dimensional analogue of our model. The corresponding vector of velocity and gap processes is expected to have at least one stationary distribution, given by the $N \rightarrow \infty$ limit of \eqref{eq:statdistn} below. It is unclear if this is the unique stationary distribution. Analogy with the Atlas model suggests infinitely many stationary distributions, each with a non-trivial domain of attraction \cite{sarantsev2017stationary},\cite{DJO},\cite{banerjee2021domains}. Another interesting question concerns the study of hydrodynamic limits of empirical occupation measures of the system and relate them to the path asymptotics of the bottom $k$ particles for $k \in \mathbb{N}$ (see \cite{dembo2017equilibrium} for related results on the Atlas model). Both these directions are currently under investigation.

\subsection{Notation and Preliminaries}

The following notation will be used.  For $d \in \NN$ and $T>0$, we denote by $\clc([0,T]: \RR^d)$ (resp. $\clc([0, \infty): \RR^d)$) the space of continuous functions on $[0,T]$ (resp. $[0,\infty)$) with values in $\RR^d$, equipped with the topology of uniform convergence (resp. local uniform convergence).
The spaces $\clc([0,T]: \RR_+^d)$ (resp. $\clc([0, \infty): \RR^d_+)$) of continuous functions with values in the nonnegative orthant $\RR_+^d$ are defined similarly. For $t \in [0,\infty)$ and $f \in \clc([0, \infty): \mathbb{R}^d)$, we define
$\|f\|_t \doteq \sup_{0 \leq s \leq t}|f(s)|$, where $|\cdot|$ is the Euclidean norm on $\mathbb{R}^{d}$.  
Borel $\sigma$-fields on a metric space $S$ will be denoted as $\clb(S)$. Inequalities for vectors and vector-valued random variables  are understood to be coordinatewise. An open set $G \subset \RR^d$ is said to have a $\clc^2$ boundary if 
each point in $\partial G$ has a neighborhood in which $\partial G$ is the graph of a  ${\mathcal{C}}^{2 }$ function of $d-1$ of the coordinates (cf. \cite[Section 6.2]{giltru}).
Throughout  $\lambda$ will denote the Lebesgue measure on a subset of a Euclidean space whose dimension will be clear from the context.

The following elementary estimate will be used several times. Suppose for $m\in \NN$, $\tilde B_1, \ldots\editc{,} \tilde B_m$ are mutually independent Brownian motions and $\alpha_1, \ldots\editc{,} \alpha_m \in \RR_+$.
Let $\tilde B_i^* (t)\doteq \sup_{0\le s \le t} |\tilde B_i(s)|$. Then there are $\varrho_1, \varrho_2 \in (0,\infty)$, such that
\begin{equation} \label{eq:elemconc}
	\mathbb{E}\left( e^{u\sum_{i=1}^m \alpha_i\tilde B_i^* (t)}\right) \le \varrho_1 e^{\varrho_2 u^2 t} \mbox{ for all } t \ge 0 \mbox{ and } u \ge 0.\end{equation}
The dependence of the constants $\varrho_1, \varrho_2$ on $m$ and $\alpha_i$ will usually be suppressed from the notation.

\editc{In the next section, we outline our main results. The organization of the paper is summarized at the end of the section.}

\section{Main Results}
\label{sec:mainres}
Define the $N \times N$ matrix
\begin{equation}\label{Rmat}
R \doteq
\begin{pmatrix}
 1 & -\frac{1}{2} & 0 & 0 & \cdots & 0 \\
 -1 & 1 & -\frac{1}{2} & 0 & \cdots & 0 \\
 0 & -\frac{1}{2} & 1 & -\frac{1}{2} & \cdots & 0 \\
 \vdots & \vdots & \vdots & \vdots & \vdots & \vdots \\
 0 & \cdots & \cdots & \cdots & -\frac{1}{2} & 1
\end{pmatrix}.
\end{equation}
It is easily checked that the matrix $U=I-R$ has the property that $U^T$ is a transient, substochastic matrix and thus has spectral radius strictly less than $1$. Consequently, $R$ is invertible and  $W= R^{-1}$ can be written as an infinite sum of matrices with nonnegative entries. 
The Skorohod problem associated with such matrices $R$ has been well studied and the following result is well known cf. \cite{harrei},\cite{dupish},\cite{dupram}.

We denote by  $\clc_0([0, \infty): \mathbb{R}^N)$ the space of continuous functions $f: [0,\infty) \rightarrow \mathbb{R}^N$ such that $f(0) \geq 0$.

\begin{proposition}\label{skorokhod}
 To each $x \in \clc_0([0, \infty):\mathbb{R}^N)$ there is a unique pair $(\eta,y) \in \clc([0, \infty):\mathbb{R}_+^N) \times \clc([0, \infty):\mathbb{R}_+^N)$ such that, 

\begin{enumerate}[(i)]
    \item for all $t\ge 0$, $y(t) = x(t) + R\eta(t)$,
    \item For each $i \in \{1,...,N\}$,
    \begin{inparaenum}
        \item  $\eta_i(0) = 0$,
        \item $\eta_i(t)$ is non-decreasing in $t$,
        \item $\int_0^{\infty}y_i(t)d\eta_i(t) = 0$.
    \end{inparaenum}
\end{enumerate}
The pair $(\eta,y)$ is called the solution to the Skorokhod problem for $x$ with respect to $R$. The map $\Gamma: \clc_0([0, \infty):\mathbb{R}^N) \rightarrow \clc([0, \infty):\mathbb{R}_+^N) \times \clc([0, \infty):\mathbb{R}_+^N)$ given by
\begin{align*}
    \Gamma(x) = (\eta,y) = (\Gamma_1(x),\Gamma_2(x))
\end{align*}
is Lipschitz in the sense that there is a $c_{\Gamma} \in (0,\infty)$ such that for $x,x' \in \clc_0([0, \infty):\mathbb{R}^N)$
and $t <\infty$,
$$\|\Gamma_1(x) - \Gamma_1(x')\|_t + \|\Gamma_2(x) - \Gamma_2(x')\|_t \le c_{\Gamma} \|x-x'\|_t.$$
\end{proposition}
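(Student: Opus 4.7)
The plan is to reduce the multidimensional Skorokhod problem to a fixed-point problem involving the one-dimensional Skorokhod map applied coordinatewise, and to exploit the hypothesis that $U = I-R$ has $U^T$ transient and substochastic (so $U^n \to 0$ and $W = \sum_{k\ge 0} U^k$ has nonnegative entries).

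First, I would rewrite the candidate equation $y = x + R\eta$ as $y_i = (x_i - (U\eta)_i) + \eta_i$ for each $i$. If $(\eta,y)$ solves the Skorokhod problem with respect to $R$, then for each $i$ the pair $(\eta_i, y_i)$ solves the one-dimensional (i.e.\ $R=1$) Skorokhod problem for the driving path $s \mapsto x_i(s) - (U\eta)_i(s)$ (note $x_i(0) - (U\eta)_i(0) = x_i(0)\ge 0$ since $\eta(0)=0$ and $x\in \clc_0$). By the classical one-dimensional Skorokhod formula this forces
\begin{equation}
\eta_i(t) \;=\; \sup_{0\le s \le t} \bigl((U\eta)_i(s) - x_i(s)\bigr)^+ , \qquad y_i(t) \;=\; x_i(t) - (U\eta)_i(t) + \eta_i(t).
\end{equation}
This observation drives both existence and uniqueness.

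For \emph{existence}, I would define a Picard iteration $\eta^{(0)} \equiv 0$ and
\begin{equation}
\eta^{(n+1)}_i(t) \;\doteq\; \sup_{0\le s \le t}\bigl((U\eta^{(n)})_i(s) - x_i(s)\bigr)^+.
\end{equation}
Each $\eta^{(n)}_i$ is continuous, nondecreasing, with $\eta^{(n)}_i(0)=0$. Using the elementary bound $|\sup_s a(s)^+ - \sup_s b(s)^+| \le \|a-b\|_t$ and the nonnegativity of the entries of $U$, one obtains componentwise
\begin{equation}
\bigl\|\eta^{(n+1)}_i - \eta^{(n)}_i\bigr\|_t \;\le\; \sum_{j} U_{ij}\,\bigl\|\eta^{(n)}_j - \eta^{(n-1)}_j\bigr\|_t,
\end{equation}
and hence iterating, in vector notation,
\begin{equation}
\bigl(\|\eta^{(n+1)}_i - \eta^{(n)}_i\|_t\bigr)_i \;\le\; U^n\,\bigl(\|\eta^{(1)}_i - \eta^{(0)}_i\|_t\bigr)_i.
\end{equation}
Since $U^n \to 0$, the series $\sum_n \|\eta^{(n+1)}-\eta^{(n)}\|_t$ is summable for every finite $t$, so $\eta^{(n)}$ converges locally uniformly to some limit $\eta \in \clc([0,\infty):\RR^N_+)$. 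Setting $y \doteq x + R\eta$ and passing to the limit in the defining equation yields continuity of $y$ and the identity $\eta_i(t) = \sup_{s\le t}((U\eta)_i(s)-x_i(s))^+$, from which $y_i \ge 0$, $\eta_i$ nondecreasing, and the complementarity $\int_0^\infty y_i\, d\eta_i = 0$ follow from the corresponding one-dimensional properties (using uniform convergence to pass $\int y_i^{(n)}\, d\eta_i^{(n)} = 0$ to the limit).

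For \emph{uniqueness and the Lipschitz property} — which I would prove simultaneously — let $(\eta,y) = \Gamma(x)$ and $(\eta',y') = \Gamma(x')$. From the representation formula and the same one-dimensional estimate,
\begin{equation}
\|\eta_i - \eta'_i\|_t \;\le\; \|x_i - x'_i\|_t + \sum_j U_{ij}\,\|\eta_j - \eta'_j\|_t,
\end{equation}
which, written vectorially as $\Delta(t) \le \|x-x'\|_t \mathbf{1} + U\Delta(t)$ and iterated, gives $\Delta(t) \le W\|x-x'\|_t \mathbf{1}$ since $W = \sum_{k\ge 0} U^k$ has nonnegative entries. Combined with $\|y - y'\|_t \le \|x-x'\|_t + \|R\|\,\|\eta-\eta'\|_t$, this yields the Lipschitz bound with $c_\Gamma$ depending only on $R$. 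Uniqueness is the special case $x=x'$.

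The only real delicacy is verifying that the Picard limit genuinely satisfies the pointwise complementarity $\int_0^\infty y_i\, d\eta_i = 0$; this is handled by noting that the iterates satisfy $y_i^{(n+1)} d\eta_i^{(n+1)} = 0$ by the one-dimensional Skorokhod property, and that uniform convergence of the nondecreasing functions $\eta_i^{(n)}$ together with uniform convergence of $y_i^{(n)}$ to a nonnegative limit allows passage to the limit in the Stieltjes integral. The rest is bookkeeping that leverages exactly the two structural facts given in the statement: $U_{ij}\ge 0$ and $U^n \to 0$.
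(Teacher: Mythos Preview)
The paper does not give its own proof of this proposition; it states the result as well known and cites \cite{harrei}, \cite{dupish}, \cite{dupram}. Your argument is correct and is essentially the classical Harrison--Reiman proof: rewrite $y = x + R\eta$ as $y_i = (x_i - (U\eta)_i) + \eta_i$, recognize each coordinate as a one-dimensional Skorokhod problem, and iterate using the nonnegativity of $U$ and the convergence of $\sum_k U^k = W$. One small point of phrasing: in the existence step you write ``since $U^n \to 0$, the series $\sum_n \|\eta^{(n+1)}-\eta^{(n)}\|_t$ is summable,'' but $U^n \to 0$ alone is not enough --- what you actually use (and already stated in your opening paragraph) is that the spectral radius of $U$ is strictly less than $1$, so $\sum_k U^k$ converges. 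With that adjustment the argument is clean.
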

For $x \in \clc_0([0, \infty):\mathbb{R}^N)$, we occasionally write $\Gamma_1(x) = (\Gamma_{11}(x), \ldots , \Gamma_{N1}(x))$.

The following result gives strong existence and uniqueness for the system of equations in \eqref{eq:gapproc}. Proof is given in Section \ref{sec:exisuniq}.
Let 
\begin{equation}\label{Amat}
A  \doteq
\begin{pmatrix}
1 & 0 & 0 & \cdots & 0 & 0 \\
-1 & 1 & 0 & \cdots & 0 & 0 \\
0 & -1 & 1 & \cdots & \vdots & \vdots \\
\vdots & \vdots & \vdots & \ddots & 1 & 0 \\
0 & 0 & 0 & \cdots & -1 & 1 \\
\end{pmatrix}.
\end{equation}
\begin{theorem}\label{thm:wellposed}
Let $(\bar \Om, \bar\clf, \{\bar \clf_t\}_{t\ge 0}, \bar \PP)$ be a filtered probability space on which are given $N$  mutually independent standard real $\bar \clf_t$-Brownian motions $B_1, \ldots, B_N$ and, $\bar\clf_0$-measurable random variables $V^0$ and
$\bfZ^0 = (Z_1^0, \ldots, Z_N^0)$ with values in $\RR$ and $\RR_+^N$ respectively. 
Then there is a continuous, $\bar\clf_t$-adapted, stochastic process
$(V(t), Z_1(t), \ldots, Z_N(t))_{0\le t < \infty}$ with values in $\RR \times \RR_+^N$ such that, for all $t\ge 0$,
\begin{equation}\label{eq:solskor}
\begin{aligned}
	V(t) &= V^0 + gt - L_1(t), \\
	\bfZ(t) &=\Gamma_2\left(\bfZ^0 - \bfe_1 \int_0^{\cdot} V(s) ds  + A\bfB(\cdot)\right)(t),\\
	L_1(t) &= \Gamma_{11}\left(\bfZ^0 - \bfe_1 \int_0^{\cdot} V(s) ds  + A\bfB(\cdot)\right)(t),\\
\end{aligned}
\end{equation}
where $\bfB = (B_1, \ldots, B_N)'$ and $\bfZ = (Z_1, \ldots , Z_N)'$.
Furthermore, if $(\tilde V(t), \tilde Z_1(t), \ldots\editc{,} \tilde Z_N(t))$ is another such process then
$$(\tilde V(t), \tilde Z_1(t), \ldots\editc{,} \tilde Z_N(t)) = ( V(t), Z_1(t), \ldots , Z_N(t)) \mbox{ for all } t \ge 0,  \mbox{ a.s. } $$
\end{theorem}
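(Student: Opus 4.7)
The plan is to recast the system \eqref{eq:solskor} as a one-dimensional fixed-point equation for the velocity process $V$ on any finite interval $[0,T]$, and then solve it by Picard iteration, exploiting the Lipschitz continuity of the Skorokhod map in Proposition \ref{skorokhod}. The key observation is that once a continuous, $\bar\clf_t$-adapted path $v$ is postulated in place of $V$, every other unknown in \eqref{eq:solskor} is fully determined by $\Gamma$. Namely, for such a $v$, set
\begin{equation*}
X(v)(t) \doteq \bfZ^0 - \bfe_1 \int_0^t v(s)\, ds + A\bfB(t), \qquad t \ge 0,
\end{equation*}
which lies in $\clc_0([0,\infty):\RR^N)$ since $\bfZ^0 \ge 0$ and $A\bfB(0) = 0$, and define
\begin{equation*}
\Psi(v)(t) \doteq V^0 + gt - \Gamma_{11}(X(v))(t).
\end{equation*}
Observe that $V$ satisfies \eqref{eq:solskor} if and only if $V = \Psi(V)$, in which case $\bfZ \doteq \Gamma_2(X(V))$ and $L_1 \doteq \Gamma_{11}(X(V))$ complete the solution and automatically take values in $\RR_+^N$ and $\RR_+$, respectively.

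For existence, I would iterate $v^{(0)} \equiv V^0$ and $v^{(n+1)} \doteq \Psi(v^{(n)})$. The trivial bound $\|X(v) - X(v')\|_t \le \int_0^t \|v - v'\|_s\, ds$, combined with the Lipschitz estimate from Proposition \ref{skorokhod}, gives
\begin{equation*}
\|\Psi(v) - \Psi(v')\|_t \le c_\Gamma \int_0^t \|v - v'\|_s\, ds.
\end{equation*}
Iterating this bound on any fixed $[0,T]$ yields
\begin{equation*}
\|v^{(n+1)} - v^{(n)}\|_T \le \|v^{(1)} - v^{(0)}\|_T \cdot \frac{(c_\Gamma T)^n}{n!},
\end{equation*}
which is summable in $n$. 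Hence $\{v^{(n)}\}$ is pathwise Cauchy in uniform norm on $[0,T]$ and converges to a continuous limit $V$, which by continuity of $\Psi$ under uniform convergence is a fixed point. Each $v^{(n)}$ is $\bar\clf_t$-adapted since the Skorokhod map is causal (the restriction of $\Gamma(x)$ to $[0,t]$ depends only on $x|_{[0,t]}$) and $\bfB$, $\bfZ^0$, $V^0$ are $\bar\clf_t$-adapted; the pathwise uniform limit $V$ inherits adaptedness. Since $T$ is arbitrary and solutions on different intervals agree by the uniqueness argument below, this yields $V$ on $[0,\infty)$.

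For uniqueness, if $V$ and $\tilde V$ both solve \eqref{eq:solskor}, the same Lipschitz estimate applied to the two fixed points of $\Psi$ gives $\|V - \tilde V\|_t \le c_\Gamma \int_0^t \|V - \tilde V\|_s\, ds$ for every $t \ge 0$, and Gronwall's lemma forces $V \equiv \tilde V$ almost surely; uniqueness of $\bfZ$ and $L_1$ then follows from the uniqueness part of the Skorokhod problem applied to $X(V) = X(\tilde V)$. No substantive obstacle arises: the full weight of the argument is absorbed into the Lipschitz property of $\Gamma$ supplied by Proposition \ref{skorokhod}. The only mildly technical point is the preservation of $\bar\clf_t$-adaptedness along the Picard iteration and under uniform limits, which is immediate from causality of $\Gamma$ together with standard measurability facts.
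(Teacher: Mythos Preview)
Your proposal is correct and follows essentially the same approach as the paper: Picard iteration based on the Lipschitz property of the Skorokhod map, followed by Gr\"onwall for uniqueness. The only cosmetic difference is that you phrase the iteration as a one-dimensional fixed point for $V$ (noting that $\bfZ$ and $L_1$ are then determined), whereas the paper iterates the full triple $(V^{(n)},\bfZ^{(n)},\bfL^{(n)})$; the estimates and conclusions are identical.
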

We remark that, with $\bfZ$, and $V$ as in the theorem, letting 
$$\bfL(t) = (L_1(t) , \ldots , L_N(t))= \Gamma_1\left(\bfZ^0 - \bfe_1 \int_0^{\cdot} V(s) ds  + A\bfB(\cdot)\right)(t) \mbox{ and } L_{N+1}(t)=0,\; $$
we have that the following system of equations holds:
\begin{equation}\label{eq:gapprocb}
	\begin{aligned}
		V(t) &= V^0+ gt -  L_1(t),\\
		Z_1(t) &= Z_1^0 + B_1(t)  - \int_0^t V(s) ds - \frac{1}{2} L_2(t) + L_1(t), \\
		Z_2(t) &= Z_2^0 + B_2(t)- B_1(t)  - \frac{1}{2} L_3(t) + L_2(t) - L_1(t),\\
		Z_i(t) &= Z_i^0 + B_i(t)  - B_{i-1}(t) -\frac{1}{2} L_{i+1}(t) +  L_{i}(t) - \frac{1}{2} L_{i-1}(t), \; 3 \le i \le N.
	\end{aligned}
\end{equation}
Consider the path space $\Om^* = \clc([0,\infty): \RR^N \times \RR \times \RR_+^N)$, $\clf^*$ the corresponding Borel $\sigma$-field on $\Om^*$. We also consider the space
$(\Om, \clf) \doteq (\clc([0,\infty):  \RR \times \RR_+^N), \clb(\clc([0,\infty):  \RR \times \RR_+^N))$.
On these two measurable spaces we
denote, for $(v, \bfz) \in \RR \times \RR_+^N$, by $\PP^*_{(v,\bfz)}$ [resp. $\PP_{(v,\bfz)}$],  the probability measures  induced by  $(\bfB, V, \bfZ)$ [resp. $(V, \bfZ)$] where $(V, \bfZ)$ is the solution
of \eqref{eq:solskor} when $(V^0, \bfZ^0) = (v, \bfz)$ a.s. 
Then from the unique solvability in the above theorem it follows that $\{\PP_{(v,\bfz)}\}_{(v,\bfz) \in \RR \times \RR_+^N}$ defines a strong Markov family. The next result concerns the stationary distribution of this Markov family.
\begin{theorem}\label{thm:exisuniq}
	There is a unique stationary distribution for the Markov family $\{\PP_{(v,\bfz)}\}_{(v,\bfz) \in \RR \times \RR_+^N}$.
\end{theorem}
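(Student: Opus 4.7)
The plan is to prove the uniqueness part of the statement; existence is supplied by the explicit product-form stationary distribution exhibited in Theorem \ref{thm:prodform}. Because $(V, \bfZ)$ lives in $\RR \times \RR_+^N$ but is driven by only $N$ independent Brownian motions, with $V$ evolving deterministically between collisions of the two lowest particles, the process is not hypoelliptic, and the law $\PP_{(v, \bfz)}\bigl((V(t), \bfZ(t)) \in \cdot\bigr)$ is singular with respect to Lebesgue measure in general. Standard density-based techniques for uniqueness of invariant measures are therefore unavailable, and I would instead verify the hypotheses of the Harris-type criterion from \cite{DowMeyTwe} (Proposition \ref{driftcondn}) on a suitable compact set $C \subset \RR \times \RR_+^N$: a Lyapunov-type recurrence condition and a minorization estimate.

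For the recurrence condition, I would show that the expected first hitting time of $C$ is finite from every initial condition. The key input is Lemma \ref{zless}, which isolates a stabilizing singular drift coming from the interplay of the collision local times $L_1, \ldots , L_N$ and prevents the gap vector from becoming too large. Following the excursion analysis developed in Sections \ref{highlev}-\ref{comphit}, one tracks how upward excursions of $V$ driven by gravity are systematically dissipated through $L_1$, and how this dissipation, via the reflection matrix $R$, in turn pulls $\bfZ$ back toward the origin; this yields the required control on expected return times to $C$.

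The second, and harder, ingredient is the minorization: there exist $t_0 > 0$, $\delta > 0$, and a nontrivial measure $\nu$ on $\RR \times \RR_+^N$ such that
$$\PP_{(v, \bfz)}\bigl((V(t_0), \bfZ(t_0)) \in \cdot\bigr) \ge \delta\, \nu(\cdot) \quad \text{for every } (v, \bfz) \in C.$$
This is the main obstacle. Lacking a density at fixed times, I would construct, by careful pathwise analysis, an event of positive probability on which the process enters, within time $t_0$, a collision regime where $Z_1$ equals zero on an interval of positive length and $L_1$ accumulates continuously. During this regime, randomness from $B_1, \ldots , B_N$ is transferred---through the Lipschitz Skorokhod map $\Gamma$ of Proposition \ref{skorokhod}---into $V$ and the upper gaps $Z_2, \ldots , Z_N$. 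A combination of conditioning and Girsanov-type arguments restricted to this favorable event would then yield a non-degenerate absolutely continuous component of the law at $t_0$ in a neighborhood of a reference configuration. This is the technical heart of Section \ref{minosec}.

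With both conditions in hand, the compact set $C$ is petite in the Meyn-Tweedie sense, and uniqueness of the stationary distribution follows from the standard Harris-recurrence theory invoked via Proposition \ref{driftcondn}(a)-(b).
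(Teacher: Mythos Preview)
Your proposal is correct but takes a heavier route to uniqueness than the paper does. The paper handles existence exactly as you say, via the explicit product form in Theorem \ref{thm:prodform}. For uniqueness, however, it does \emph{not} invoke the full Harris/Meyn--Tweedie machinery. Theorem \ref{atmostonesd} gives a short direct argument: if uniqueness failed, Birkhoff's ergodic theorem would supply two mutually singular stationary distributions $\pi,\pi'$, separated by some Borel set $A$; the minorization estimate (Theorem \ref{minorization}) together with the elementary accessibility Lemma \ref{uniqlem} then forces $\pi(A)>0$, a contradiction. No Lyapunov function, no hitting-time estimates, no excursion analysis are used at this stage---only the minorization and a one-step reachability statement. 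The entire apparatus you invoke (Lemma \ref{zless}, the excursion bounds of Sections \ref{highlev}--\ref{comphit}, Proposition \ref{driftcondn}) is reserved in the paper for the stronger exponential-ergodicity conclusion, Theorem \ref{thm:geomerg}. Your route certainly works---a petite set plus a drift condition gives positive Harris recurrence and hence uniqueness---but it proves strictly more than needed, whereas the paper's argument isolates uniqueness as a cheap consequence of the minorization alone.

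One small inaccuracy in your minorization sketch: $Z_1$ does not stay at zero on an interval of positive length; as a reflected diffusion it visits zero on a Lebesgue-null set. The actual mechanism in Section \ref{minosec} is that after $Z_1$ first hits zero, $L_1$ at a later fixed time inherits a sub-density from the running supremum of $-B_1$ (Lemma \ref{subdens}), while the remaining gaps acquire a density via a killed-Brownian-motion estimate (Lemma \ref{kild}); a Girsanov change of measure then transports this back to the original dynamics.
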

In fact this unique stationary distribution takes an explicit product form as given by the theorem below.
Consider the probability measure $\pi$ on $\RR \times \RR_+^N$ given by the formula:
\begin{equation}\label{eq:statdistn}
\pi(dv,\, dz_1, \, \ldots , dz_N) \doteq  c_{\pi}e^{-(v-\frac{g}{N})^2}\prod_{i=1}^Ne^{-2g\editc{\left(\frac{N-i+1}{N}\right)}z_i}\,	dv\, dz_1\, \ldots , dz_N,
\end{equation}
where $c_{\pi}$ is the normalization constant. 
\begin{theorem}\label{thm:prodform}
The probability measure $\pi$ defined in \eqref{eq:statdistn} is the unique stationary distribution of 
$\{\PP_{(v,\bfz)}\}_{(v,\bfz) \in \RR \times \RR_+^N}$.
\end{theorem}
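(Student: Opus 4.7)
By Theorem \ref{thm:exisuniq} it suffices to verify that the explicit product measure $\pi$ of \eqref{eq:statdistn} is invariant for the Markov family $\{\PP_{(v,\bfz)}\}$. My plan is to verify the basic adjoint relationship (BAR) for reflected diffusions. Writing $\rho(v,\bfz) = c_\pi e^{-(v-g/N)^2}\prod_{i=1}^N e^{-\alpha_i z_i}$ with $\alpha_i := 2g(N-i+1)/N$, the interior generator read off from \eqref{eq:gapprocb} is
\[
\Lmc f = g\,\partial_v f - v\,\partial_{z_1}f + \tfrac12\sum_{i,j=1}^N D_{ij}\,\partial_{z_i}\partial_{z_j}f,
\]
where $D := AA^T$ is symmetric tridiagonal with $D_{11}=1$, $D_{ii}=2$ for $i\ge 2$, and $D_{i,i+1}=D_{i+1,i}=-1$; the oblique reflection directions on $\{z_1=0\}$ and on $\{z_k=0\}$ for $2\le k\le N$ are, in the $(v, z_1, \ldots, z_N)$ coordinates, $\gamma_1=(-1, 1,-1,0,\ldots,0)$ and $\gamma_k = (0, \ldots, -\tfrac12, 1, -\tfrac12, 0, \ldots)$ (with $-\tfrac12$'s in the $z_{k-1}$ and $z_{k+1}$ slots; the $z_{N+1}$-slot is dropped when $k=N$), read off directly from the local-time coefficients in \eqref{eq:gapprocb}.

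The first step is to check the interior Fokker--Planck equation $\Lmc^*\rho\equiv 0$. The key algebraic identity is $D\alpha = (2g/N)\bfe_1$, verified by direct computation using the specific rates $\alpha_i$, which yields $\alpha^T D\alpha = 4g^2/N$. With $\partial_v\rho = -2(v-g/N)\rho$, $\alpha_1 = 2g$, and $\partial_{z_i}\partial_{z_j}\rho = \alpha_i\alpha_j\rho$, the first-order part of $\Lmc^*\rho/\rho$ is $2g(v-g/N) - v\alpha_1 = -2g^2/N$ while the second-order part is $\tfrac12\alpha^T D\alpha = 2g^2/N$, so the two cancel. The main step is the BAR itself: for a measure-determining class of test functions $f$,
\[
\int \Lmc f\, d\pi + \sum_{k=1}^N \int_{\{z_k=0\}} (\gamma_k\cdot\nabla f)\,d\nu_k = 0,
\]
where $\nu_k$ is the Revuz measure of $L_k$ under $\PP_\pi$. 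Taking expectations in \eqref{eq:gapprocb} and using $\EE_\pi[V]=g/N$ gives, recursively, the stationary local-time rates $\EE_\pi[L_1(t)]/t = g$ and $\EE_\pi[L_k(t)]/t = \alpha_k$ for $k\ge 2$, leading to the ansatz $d\nu_1 = \tfrac12\,\rho|_{z_1=0}\, dv\, d\bfz_{-1}$ and $d\nu_k = \rho|_{z_k=0}\, dv\, d\bfz_{-k}$ for $k\ge 2$ (with $\bfz_{-k}$ denoting $\bfz$ with the $k$-th coordinate dropped). The BAR would be verified by integrating $\int \Lmc f\cdot\rho$ by parts in each coordinate: the interior term vanishes by $\Lmc^*\rho=0$, and the boundary integrals on each face $\{z_k=0\}$ reassemble, thanks again to $D\alpha = (2g/N)\bfe_1$ together with the structure of the $\gamma_k$, into exactly $-\sum_k\int(\gamma_k\cdot\nabla f)\,d\nu_k$.

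I expect the main obstacle to lie in the face $\{z_1=0\}$, which is the only face coupling $V$ to the gap coordinates. The IBP of $-v\,\partial_{z_1}f\cdot\rho$ leaves $\int_{\{z_1=0\}} v f\rho\, dv\, d\bfz_{-1}$; this must be matched with the $-\partial_v f$ component of $\gamma_1\cdot\nabla f$ via a further IBP in $v$ against the Gaussian factor $e^{-(v-g/N)^2}$, which brings down $2(v-g/N)$. The Gaussian mean $g/N$ and the local-time rate $r_1 = g$ are precisely tuned so that, combined with the face contributions from $\tfrac12 D_{11}\partial_{z_1}^2 f$ and $D_{12}\partial_{z_1}\partial_{z_2}f$, the resulting terms reassemble into $(\gamma_1\cdot\nabla f)$ integrated against $\tfrac12\rho|_{z_1=0}$. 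On the interior faces $k\ge 2$, the matching reduces to a purely $\bfz$-dimensional identity without the $V$-coupling complication. Once the BAR has been verified on a core of test functions, invariance of $\pi$ follows by a standard argument through the martingale problem (e.g., via Echeverr\'ia's criterion).
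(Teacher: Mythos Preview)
Your approach is essentially the same as the paper's: both verify the interior adjoint equation $\Lmc^*\rho=0$ together with the appropriate boundary conditions on each face $\{z_k=0\}$, and both rely on Theorem~\ref{thm:exisuniq} for uniqueness. The algebraic core you identify---$D\alpha=(2g/N)\bfe_1$, hence $\alpha^T D\alpha=4g^2/N$, giving exact cancellation between the first- and second-order pieces of $\Lmc^*\rho$---is precisely the computation the paper carries out (their equations \eqref{s1}--\eqref{s4} and \eqref{fi1}).

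The one place where the paper is tighter is that instead of carrying out the integration-by-parts and invoking Echeverr\'ia, it appeals directly to \cite[Theorem 3]{KangRam}, which packages the BAR for reflected diffusions into the explicit PDE conditions \eqref{s1}--\eqref{s4}; your boundary matching on each face would reproduce exactly those conditions. Note that Echeverr\'ia's criterion in its classical form is stated for unconstrained processes, so for a reflected diffusion with oblique boundary pushes (especially the unusual face $\{z_1=0\}$ where the push has a component in the unconstrained $v$-direction) you would in any case want a constrained version---and Kang--Ramanan is exactly that. Citing it replaces your IBP sketch with a clean sufficient condition and dispenses with the need to separately justify the ansatz for the $\nu_k$.
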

\editc{Note that while Theorem \ref{thm:prodform} implies Theorem \ref{thm:exisuniq}, we proceed by first showing that there exists at most one stationary distribution (Theorem \ref{atmostonesd}). The existence and explicit form of the stationary distribution is subsequently exhibited (in Section \ref{sec:prodform}) by proving that the density of $\pi$ solves the partial differential equation (with boundary conditions) arising from the basic adjoint relationship (see \eqref{s1}-\eqref{s4}). We have therefore separated out these results for clarity of exposition.}

Our third result gives exponential ergodicity of the Markov process. 
Write an element $\omega \in \Omega^*$ [resp. $\omega \in \Omega$] as $\omega = (\beta,\upsilon,\zeta)$ [resp. $\omega = (\upsilon,\zeta)$], where $\beta \in \clc([0,\infty): \RR^N)$, $\,\upsilon \in \clc([0,\infty): \RR)$ and $\,\zeta \in \clc([0,\infty): \RR_+^N)$. For $t \in [0,\infty)$, 
abusing notation, denote the coordinate processes  $\bfB(t), V(t)$ and $\bfZ(t)$ on $(\Om^*, \clf^*)$ [resp. $V(t)$ and $\bfZ(t)$ on $(\Om, \clf)$]  by the formulae
\begin{align*}
    \bfB(t)(\omega)   = \beta(t), \;\; 
    V(t)(\omega) = \upsilon(t), \;\;
   \bfZ(t)(\omega) = \zeta(t), \; \; t\ge 0.
\end{align*}
Also, we will write $B_i(t)$ and $Z_i(t)$ respectively for the projections of $\bfB(t)$ and $\bfZ(t)$ onto their $i^{\textnormal{th}}$ coordinates. 
Consider the transition probability kernel of the Markov family $\{\PP_{(v,\bfz)}\}_{(v,\bfz) \in \RR \times \RR_+^N}$ defined as
$$\PP^t((v, \bfz), A) \doteq \PP_{(v,\bfz)}((V(t), \bfZ(t)) \in A), \; t\ge 0, (v,\bfz) \in \RR \times \RR_+^N,\; A \in \clb(\RR \times \RR_+^N).$$
Also, for a bounded and measurable $\phi: \RR \times \RR_+^N \to \RR$ we write
$$\PP^t((v, \bfz),\, \phi) \doteq \int_{\RR \times \RR_+^N} \phi(\tilde v, \tilde \bfz)\, \PP^t((v, \bfz),\, d\tilde v\times d\tilde\bfz).$$
Similarly, for $\phi$ as above, $\pi(\phi) \doteq \int \phi(\tilde v, \tilde \bfz) \pi(d\tilde v\times d\tilde\bfz)$.
The following theorem shows the convergence of the transition probability kernel to the stationary distribution in the total variation distance at an exponential rate.
Denote by $\BM_1$ the class of all  measurable
	$\phi: \RR \times \RR_+^N \to \RR$ such that $\sup_{(v,\bfz) \in \RR \times \RR_+^N} |\phi(v,\bfz)|\le 1$.
\begin{theorem}\label{thm:geomerg}
	There is a $\gamma \in (0,1)$ and, for every  $(v,\bfz) \in \RR \times \RR_+^N$, a $\kappa(v,\bfz) \in (0,\infty)$, such that for all $t\ge 0$, 
$$ \sup_{\phi \in \BM_1} |\PP^t((v, \bfz),\, \phi) - \pi(\phi)| \le \kappa(v, \bfz) \gamma^t .$$
\end{theorem}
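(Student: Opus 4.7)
\medskip
\noindent\textbf{Proof proposal.} The plan is to invoke the Down-Meyn-Tweedie geometric ergodicity theorem (mentioned in the authors' outline as Proposition \ref{driftcondn}), for which two ingredients are required: a \emph{geometric drift condition} toward a compact set $C \subset \RR \times \RR_+^N$ and a \emph{minorization estimate} on $C$, i.e.\ the existence of $t_0>0$, $\delta \in (0,1)$ and a probability measure $\nu$ on $\RR \times \RR_+^N$ such that $\PP^{t_0}((v,\bfz),\cdot) \ge \delta \nu(\cdot)$ for every $(v,\bfz) \in C$. Together with the uniqueness of the stationary distribution supplied by Theorem \ref{thm:exisuniq}, these two conditions imply the advertised exponential decay in total variation with some rate $\gamma \in (0,1)$ and a pointwise prefactor $\kappa(v,\bfz)$ which will be essentially the Lyapunov function evaluated at $(v,\bfz)$.

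For the drift condition, I would take $C$ to be a box of the form $[-M,M]\times [0,M]^N$ with $M$ large, and set $W(v,\bfz) \doteq \EE_{(v,\bfz)}[e^{\theta \tau_C}]$ for the hitting time $\tau_C \doteq \inf\{t\ge 0 : (V(t),\bfZ(t)) \in C\}$ and some small $\theta>0$. The Down-Meyn-Tweedie criterion then reduces to proving that $W$ is finite on all of $\RR \times \RR_+^N$ and bounded on compact sets. The central tool here is Lemma \ref{zless}, which identifies a stabilizing singular drift guaranteeing that the gaps $\bfZ$ cannot stay large for long; concretely, I would split the complement of $C$ into three regimes and treat each by excursion analysis between suitable stopping times. (i) If $V$ is large positive, the inert particle races upward, but Lemma \ref{zless} ensures that the first Brownian particle catches up within a time of controlled length, producing enough local time $L_1$ to eventually bring $V$ back. (ii) If $V$ is large negative, the gravitational term $gt$ ensures a deterministic upward drift of $V$ on intervals with no collisions, returning $V$ to a bounded range in time of order $|V|/g$. (iii) If $|\bfZ|$ is large but $V \in [-M,M]$, Lemma \ref{zless} once again controls the excursion length. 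Combining these estimates with the Gaussian tail bound \eqref{eq:elemconc} yields finite exponential moments of $\tau_C$, hence the required geometric drift inequality.

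The second and much harder ingredient is the minorization on $C$. Since the interior dynamics of $(V,\bfZ)$ is not hypoelliptic — there is no Brownian noise in the $V$-equation and the law of $(V(t),\bfZ(t))$ is in general singular with respect to Lebesgue measure — the usual route of bounding transition densities from below is unavailable. My plan is a pathwise construction. Fix $t_0>0$ and exhibit, for each $(v,\bfz) \in C$, a Brownian event $\mathcal E_{(v,\bfz)}$ of probability bounded below uniformly on $C$ on which the trajectory of $(V,\bfZ)$ is forced through a common ``funnel'' near time $t_0/2$: specifically, an event on which each Brownian motion $B_i$ drives $X_{(i)}$ downward enough to produce a collapse of all gaps simultaneously at some random time $\tau \le t_0/2$, and on which the cumulative local time $L_1$ at $\tau$ lies in a fixed target interval, pinning $V(\tau)$. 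At the collapse time the system effectively restarts from a canonical configuration $(V(\tau),\zero)$ with $V(\tau)$ in a compact set, and standard short-time Girsanov / support arguments applied to the Skorohod-mapped dynamics over $[\tau,t_0]$ produce an absolutely continuous component with density bounded below on a fixed open set $G$. This gives a minorization of the form $\PP^{t_0}((v,\bfz),\cdot) \ge \delta\, \nu(\cdot)$ with $\nu$ supported on $G$ and absolutely continuous.

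The main obstacle will be the minorization, and within it the uniform-in-$C$ construction of the collapse-and-pin event. The difficulty is that forcing all gaps to zero simultaneously requires a coordinated behavior of the Brownian drivers that is propagated through the non-diagonal reflection matrix $R$, and the value of $V(\tau)$ depends on the full history of $L_1$. A careful case split on the starting gaps $\bfz$, exploiting the Lipschitz continuity of the Skorohod map in Proposition \ref{skorokhod} together with explicit lower bounds on collision local times on favorable Brownian events, should make it possible to exhibit one common open set $G$ and one common lower bound $\delta>0$ valid across all of $C$, which is exactly the content the authors allude to in Section \ref{minosec}. With both ingredients in hand, Proposition \ref{driftcondn} and the uniqueness of $\pi$ yield the claim with $\kappa(v,\bfz)$ proportional to $W(v,\bfz)+1$.
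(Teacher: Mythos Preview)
Your overall architecture (drift condition toward a set $C$, minorization on $C$, then Down--Meyn--Tweedie) is correct and matches the paper. The drift-side sketch is in the right spirit, though the paper organizes it rather differently: instead of a box, it takes $C^* = \{v=2g,\, z_1=0,\, \bar z_2 \le \Delta\}$, a codimension-two slice that the process visits naturally at each instant the velocity crosses the level $2g$ downward. Exponential moments of the return time to $C^*$ are obtained by analyzing excursions of $V$ between levels $4g$ and $2g$ (Propositions \ref{sigma1} and \ref{sigma2}) and then using Lemma \ref{zless} and the stopping-time sequence \eqref{eq:sigmatimes} to show that $\bar Z_2$ has a strictly negative conditional drift across such excursions (Proposition \ref{neginexp}), feeding a sub-exponential tail bound for the hitting time (Proposition \ref{gammabound}). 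Your regime split is plausible but would need comparable machinery; in particular regime (i) is not handled by Lemma \ref{zless} alone --- that lemma controls $\bar Z_2$, not the time for $Z_1$ to hit zero, which the paper treats separately in Lemma \ref{sec2prop2}.

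The real problem is your minorization. Forcing \emph{all} gaps to zero simultaneously means an $(N{+}1)$-fold collision $X_{(0)}=\cdots=X_{(N)}$; for the rank-ordered Brownian system such multi-collisions are a.s.\ avoided (for $N\ge 2$), and even approximate total collapse gives you no leverage because the reflection matrix $R$ couples all the local times in a way that makes the post-collapse law intractable. The paper's route is genuinely different and avoids this. It first applies Girsanov (Corollary \ref{girsanov}) to remove the $V$-dependent drift, so that under the new measure $\bfZ$ is simply the gap process of $N$ ordered Brownian motions with the bottom one reflecting at $0$. Then only $Z_1$ is forced to hit zero: at that instant $L_1$ starts to grow, and a one-dimensional sub-density argument for the running supremum of $B_1$ (Lemma \ref{subdens}) pins $L_1(\cdot)$, hence $V(\cdot)$, into a target Borel set. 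Meanwhile the remaining gaps are steered into a fixed compact box $F$ away from the boundary (Lemmas \ref{H2sub1}--\ref{H2sub2}), after which the killed process $\bfZ^*$ behaves like linear Brownian motion in a $\clc^2$ domain and a two-sided heat-kernel bound (Lemma \ref{kild}) supplies the Lebesgue-minorization. The point is that the absolutely continuous component of the law comes from the pair $(L_1,\bfZ)$ via this decoupling, not from any full collapse.

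Two smaller omissions: you still need $\psi$-irreducibility and aperiodicity to invoke the DMT theorem; the paper proves these in Propositions \ref{irred} and \ref{aper} using the minorization on the specific set $D$ of Theorem \ref{minorization}. And the minorization of Theorem \ref{minorization} does not hold on a full box --- it requires $v\in[0,g/128]$ and $z_1>0$ --- so the paper inserts an extra step (Lemma \ref{lem:lem5}) showing that from $C^*$ the process enters such a region with uniformly positive probability in one time unit.
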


\editc{We note here that the proof of exponential ergodicity proceeds through establishing finiteness of exponential moments of certain hitting times. This, in turn, provides the tightness required to furnish an independent proof of existence of a stationary distribution.}

Finally, we prove a strong law of large numbers type result for the system. Recall the ranked particle system $\{X_{(i)}(\cdot)\}_{0 \le i \le N}$ from \eqref{eq:rankloc}. This process can be constructed on $(\Om^*, \clf^*, \PP^*_{(v,\bfz)})$ for any
$(v, \bfz) \in \RR \times \RR_+^N$ by solving the system of equations \editc{in \eqref{eq:solskor} (or equivalently\eqref{eq:gapprocb}),} whose unique pathwise solutions are guaranteed by Theorem \ref{thm:wellposed}, and then defining $X_{(i)}$ by the right side of \eqref{eq:rankloc}.

\begin{theorem}\label{lln}
For any $(v,\bfz) \in \RR \times \RR_+^N$, the following limits hold $\PP^*_{(v,\bfz)}$-almost surely:
\begin{align}
\lim_{t \rightarrow \infty} \frac{X_{(i)}(t)}{t} &= \frac{g}{N}, \ \ 0 \le i \le N,\label{le1}\\
\lim_{t \rightarrow \infty} \frac{L_1(t)}{t} &= g,\label{le2}\\
\lim_{t \rightarrow \infty} \frac{L_i(t)}{t} &= \frac{2(N-i+1)g}{N}, \ \ 2 \le i \le N.\label{le3}
\end{align}
\end{theorem}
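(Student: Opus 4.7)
The strategy is to divide each equation in \eqref{eq:gapprocb} by $t$, send $t\to\infty$, and read off the limits of the $L_i(t)/t$ algebraically; \eqref{le1} will then follow from $X_{(i)}(t)=X_{(0)}(t)+\sum_{j=1}^i Z_j(t)$. Two ingredients need to be supplied: (I) the Ces\`aro convergence $\frac{1}{t}\int_0^t V(s)\, ds \to g/N$, and (II) the sample-path decay $V(t)/t\to 0$ and $Z_i(t)/t\to 0$ for each $1\le i\le N$, both holding $\PP^*_{(v,\bfz)}$-a.s.

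For (I), the stationary mean $\EE_\pi V = g/N$ is read off directly from \eqref{eq:statdistn} (the $v$-marginal of $\pi$ is Gaussian with mean $g/N$), and the convergence of the time average follows from the strong law for additive functionals of a geometrically ergodic Markov process (e.g.\ Meyn--Tweedie, Theorem~17.0.1) applied to $f(v,\bfz)=v$; the required domination of $|f|$ by the Lyapunov function is furnished by the exponential hitting-time estimates underlying Theorem~\ref{thm:geomerg}. For (II), the same hitting-time estimates from Sections~\ref{highlev}--\ref{comphit} give uniform-in-$t$ exponential moment bounds of the form $\sup_{t\ge 0}\EE^*_{(v,\bfz)}\bigl[e^{\theta(|V(t)|+|\bfZ(t)|)}\bigr]\le C$ for suitable $\theta, C\in(0,\infty)$ depending on the initial state. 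Chebyshev combined with Borel--Cantelli along integer times gives $V(n)/n\to 0$ and $\bfZ(n)/n\to 0$ a.s., and the interpolation to continuous $t$ uses the pathwise bound $|V(t)-V(n)|\le g+(L_1(n+1)-L_1(n))$ on $[n,n+1]$, with exponential moments of the local-time increments again supplied by the hitting-time estimates and with \eqref{eq:elemconc} controlling the Brownian contributions to $\bfZ$.

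Granting (I) and (II), the local-time limits are deduced sequentially. Dividing the velocity equation in \eqref{eq:gapprocb} by $t$ gives $L_1(t)/t\to g$, proving \eqref{le2}. Dividing the $Z_1$-equation by $t$ and passing to the limit yields $a_2 := \lim L_2(t)/t = 2(g-g/N) = 2(N-1)g/N$. The $Z_i$-equations for $i\ge 2$ then produce, in the limit, $a_3 = 2(a_2-a_1)$ (the coefficient on $a_1$ being $2$ rather than $1$ because of the ``$-L_1$'' term in the $Z_2$-equation), the three-term recursion $a_{i+1} = 2a_i - a_{i-1}$ for $3\le i\le N-1$, and the boundary identity $a_N = \tfrac12 a_{N-1}$ coming from $L_{N+1}\equiv 0$ in the $Z_N$-equation. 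An easy induction verifies $a_i = 2(N-i+1)g/N$ for $2\le i\le N$, proving \eqref{le3}. Finally, \eqref{le1} is immediate: $X_{(0)}(t)/t = x_0/t + \frac{1}{t}\int_0^t V(s)\, ds\to g/N$ by (I), and $X_{(i)}(t)/t - X_{(0)}(t)/t = \frac{1}{t}\sum_{j=1}^i Z_j(t)\to 0$ by (II), so $X_{(i)}(t)/t\to g/N$ for every $0\le i\le N$.

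The main obstacle is (II). Because $X_{(0)}$ has no Brownian noise and $V$ is deterministic between collisions, the system is far from elliptic or hypoelliptic and standard diffusion tools for controlling $|V(t)|$ and $|\bfZ(t)|$ uniformly in $t$ are unavailable; the required sample-path bounds have to be extracted from the delicate exponential hitting-time estimates that also drive the ergodicity proof. Once that step is in hand, the rest is bookkeeping around the linear system produced by the gap equations.
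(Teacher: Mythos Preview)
Your proposal is correct, and the overall architecture---establish the sample-path decay $V(t)/t\to 0$ and $Z_i(t)/t\to 0$, then read off the local-time limits sequentially from the gap equations---matches the paper exactly. The paper packages step (II) as Lemma~\ref{fl}, proving the sharper bound $|V(t)|+\sum_i Z_i(t)\le m^*(\omega)(\log t)^2$ by slicing time into excursions between returns to $C^*$ and using the exponential hitting-time bound \eqref{lln1} together with Borel--Cantelli; your sketch of (II) is in the same spirit, though the paper's excursion argument is cleaner than your proposed discretize-then-interpolate scheme.

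The one genuine difference is your step (I). You invoke a strong law for additive functionals of a geometrically ergodic Markov process to get $\tfrac{1}{t}\int_0^t V(s)\,ds\to g/N$ directly. The paper instead observes the elementary algebraic identity
\[
V(t)+\sum_{j=1}^N X_{(j)}(t)=v+\sum_{j=1}^N x_j+gt+\sum_{j=1}^N B_j(t),
\]
obtained by summing the equations in \eqref{eq:rankloc}; dividing by $Nt$ and using only $B_j(t)/t\to 0$ and $V(t)/t\to 0$ (the latter from (II)) gives $\tfrac{1}{Nt}\sum_j X_{(j)}(t)\to g/N$, and then the gap bounds force each $X_{(i)}(t)/t\to g/N$. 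The Ces\`aro limit for $V$ then falls out \emph{a posteriori} from $X_{(0)}(t)=x_0+\int_0^t V$. So the paper's route is more self-contained---it needs no general Markov-chain SLLN, only the Brownian LLN---while your route is equally valid given that Theorem~\ref{thm:geomerg} has already been proved and $v\mapsto v$ is dominated by the Lyapunov function.
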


\begin{remark}\label{int12}
It is natural to expect that the gaps become larger in some sense as one moves away from the inert particle. This heuristic is quantified in the stochastic monotonicity of the stationary gaps displayed in \eqref{eq:statdistn}. From this, it might appear that the growth rate of the local time $L_i(t)$ (which quantifies the intensity of collisions between the $(i-1)$th and $i$th particle) with $t$ should decrease as $i$ increases from $1$ to $N$. However, Theorem \ref{lln} shows that for $N \ge 3$, $L_2$ grows at a faster rate than $L_1$ and the expected decrease in rates holds from $i=2$ onwards. Hence, perhaps surprisingly, particles indexed $1$ and $2$ collide `more often' than particles $0$ and $1$ as time progresses.
\vspace{-0.5cm}
\end{remark}
\editc{\subsection{Organization}}
\editc{The rest of the paper is organized as follows. In Section \ref{sec:wellposed}, we provide the proof of Theorem \ref{thm:wellposed}. In Section \ref{minosec}, we show a technical estimate which will be integral to the proofs of our main results. In Section \ref{sec:exisuniq}, we show that there is at most one stationary distribution (Theorem \ref{atmostonesd}). In Section \ref{sec:prodform}, we prove Theorem \ref{thm:prodform}. Together, these two results also establish Theorem \ref{thm:exisuniq}. In Section \ref{sec:geomerg}, we give the proof of Theorem \ref{thm:geomerg}. Proofs of several technical results stated in 
Section \ref{sec:geomerg} (without proof) are provided in Section \ref{sec:techlem}.  In Section \ref{sec:lln}, we establish Theorem \ref{lln}.} 

\section{Existence and uniqueness of the process}\label{sec:wellposed}
In this section, we prove Theorem \ref{thm:wellposed}. The proof uses the Lipschitz property in Proposition \ref{skorokhod},   and a standard Picard iteration scheme. We provide a sketch. Fix $T<\infty$.
Let $(V^0, \bfZ^0)$ be as in the statement of the theorem. Define, for $n \in \NN_0$, continuous $\bar \clf_t$-adapted $\RR \times \RR_+^N \times \RR_+^N$ valued processes $\{(V^{(n)}(t), \bfZ^{(n)}(t)), \bfL^{(n)}(t))\}_{0\le t \le T}$, recursively, as follows.
Let
$$V^{(0)}(t) \doteq V^0, \;\; \bfZ^{(n)}(t)) \doteq \bfZ^0,\;\;  \bfL^{(n)}(t) = 0, \;\; 0 \le t \le T.$$
Having defined $\{(V^{(k)}(t), \bfZ^{(k)}(t)), \bfL^{(k)}(t))\}_{0\le t \le T}$ for $k = 0, \ldots \editc{,} n-1$, define
\begin{equation}\label{eq:picit}
\begin{aligned}
\bfZ^{(n)}(t) &=\Gamma_2\left(\bfZ^0 - \bfe_1 \int_0^{\cdot} V^{(n-1)}(s) ds  + A\bfB(\cdot)\right)(t),\\
	\bfL^{(n)}(t) &= \Gamma_{1}\left(\bfZ^0 - \bfe_1 \int_0^{\cdot} V^{(n-1)}(s) ds  + A\bfB(\cdot)\right)(t),\\
		V^{(n)}(t) &= V^0 + gt - L_1^{(n)}(t), 
\end{aligned}
\end{equation}
where $L_1^{(n)}(t)$ is the first coordinate of $\bfL^{(n)}(t)$.

From the Lipschitz property in Proposition \ref{skorokhod} it follows that, for any $n \ge 2$, and $t \in [0,T]$,
\begin{align*}
	\|\bfZ^{(n)} - \bfZ^{(n-1)}\|_t + \|\bfL^{(n)} - \bfL^{(n-1)}\|_t  \le c_{\Gamma}\int_0^t \|V^{(n-1)} - V^{(n-2)}\|_s ds
\end{align*}
and
\begin{align*}
\|V^{(n)} - V^{(n-1)}\|_t	=  \|L_1^{(n)} - L_1^{(n-1)}\|_t \le \|\bfL^{(n)} - \bfL^{(n-1)}\|_t \le c_{\Gamma}\int_0^t \|V^{(n-1)} - V^{(n-2)}\|_s ds.
\end{align*}
Letting 
$$\Delta_n(t) \doteq \|\bfZ^{(n)} - \bfZ^{(n-1)}\|_t + \|\bfL^{(n)} - \bfL^{(n-1)}\|_t  + \|V^{(n)} - V^{(n-1)}\|_t,$$
we have for $n \ge 2$ and $t \in [0,T]$, $\Delta_n(t) \le c_{\Gamma}\int_0^t \Delta_{n-1} (s) ds$.
Now  a standard argument shows that, a.s.,
$(V^{(n)}, \bfZ^{(n)}, \bfL^{(n)})$ is a Cauchy sequence in $\clc([0,T]: \RR \times \RR_+^N \times \RR_+^N)$. Let $(V,\bfZ, \bfL)$ denote the limit. It is easy to verify that this is a $\bar \clf_t$-adapted process.
Furthermore, sending 
$n\to \infty$ in \eqref{eq:picit} we see that $(V, \bfZ, \bfL)$ solve, for $0 \le t \le T$,
\begin{equation}\label{eq:picitfin}
\begin{aligned}
\bfZ(t) &=\Gamma_2\left(\bfZ^0 - \bfe_1 \int_0^{\cdot} V(s) ds  + A\bfB(\cdot)\right)(t),\\
	\bfL(t) &= \Gamma_{1}\left(\bfZ^0 - \bfe_1 \int_0^{\cdot} V(s) ds  + A\bfB(\cdot)\right)(t),\\
		V(t) &= V^0 + gt - L_1(t),
\end{aligned}
\end{equation}
where $L_1(t)$ is the first coordinate of $\bfL(t)$.  In particular, $(V, \bfZ)$ is a solution of \eqref{eq:solskor}. Since $T>0$ is arbitrary this proves the first part of the theorem.

Now suppose that $(V, \bfZ, \bfL)$ and $(\tilde V, \tilde \bfZ, \tilde \bfL)$ are two continuous $\RR \times \RR_+^N \times \RR_+^N$ valued
$\bar \clf_t$-adapted processes that solve \eqref{eq:picitfin}. Then, for $t \in [0, T]$,
\begin{align*}
\|\bfZ- \tilde \bfZ\|_t	+ \|\bfL- \tilde \bfL\|_t &\le c_{\Gamma} \int_0^t \|V-\tilde V\|_s ds =
 c_{\Gamma} \int_0^t \|L_1-\tilde L_1\|_s ds\\
 & \le  c_{\Gamma} \int_0^t (\|\bfZ- \tilde \bfZ\|_s	+ \|\bfL- \tilde \bfL\|_s) ds.
\end{align*}
Using Gr\"onwall's lemma, it then follows that $\bfZ(t)= \tilde \bfZ(t)$ and $\bfL(t) = \tilde \bfL(t)$ for all $t \in [0,T]$ a.s. which also says that $V(t) = \tilde V(t)$ for all $t \in [0,T]$ a.s.  The result follows.
\hfill \qed

\section{A Minorization Estimate}\label{minosec}
In this section we will establish a minorization estimate  for the  transition probability kernel $\PP^t((v, \bfz), A)$ introduced in the last section. This estimate will be a key ingredient in the proofs of Theorems \ref{thm:exisuniq} and \ref{thm:geomerg}. The deterministic motion of the bottom (inert) particle when $Z_1>0$ results in very singular behavior of our diffusion process manifested, in particular, by the lack of a density of $(V(t),\mathbf{Z}(t))$ with respect to Lebesgue measure for any $t >0$ when the initial condition satisfies $Z_1(0)>0$. Hence, one cannot use standard techniques for establishing a minorization condition for elliptic (or hypoelliptic) diffusions. We take a pathwise approach here by analyzing a suitable collection of driving Brownian paths to obtain a sub-density of the form described in Theorem \ref{minorization}. This is done by first `removing the drift' by applying Girsanov's Theorem and analyzing the simpler system given by gaps between $N$ ordered Brownian motions and the local time at zero of the bottom particle. This, along with an appropriate control of the Radon-Nikodym derivative, yields the desired result.

 Let
 \begin{equation}
 	\sn \doteq \frac{1}{128}, \;\;\; \en \doteq \sn + \left(\frac{1}{63}-\frac{1}{64}\right).
 \end{equation} 
\begin{theorem}\label{minorization} 
	Let $C = [0, \frac{g}{128}] \times [\frac{g}{2},g]^N$.
	There exists $D \in \clb(\mathbb{R} \times \mathbb{R}_+^N)$ such that
    $\lambda(D \cap C) > 0$,
and such that for each $(v, \bfz) \in [0, \frac{g}{128}]\times (0, \infty) \times \mathbb{R}_+^{N-1}$, there is a $K_{(v,\bfz)} \in (0,\infty)$ so that
\begin{equation}\label{MLminor}
    \inf_{t \in [\sn, \en]}\mathbb{P}^t((v, \bfz), S) \geq K_{(v,\bfz)}\lambda(S \cap D) \mbox{ for every } S \in \clb(\mathbb{R} \times \mathbb{R}_+^N).
\end{equation}
 Moreover, the map $(v,\bfz) \mapsto K_{(v,\bfz)}$ is measurable and for any  $0\le a_i<b_i<\infty$, 
 $1\le i \le N$, $a_1>0$, with $\bar A = [0, \frac{g}{128}] \times [a_1, b_1]\times \cdots \times [a_N, b_N]$,  
\begin{equation}\label{MLuniform}
    \bar{K}_{\bar A}  \doteq \inf_{(v, \bfz) \in \bar A}K_{(v,\bfz)} > 0.
\end{equation}

\end{theorem}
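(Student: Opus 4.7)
The plan is to follow the roadmap sketched in the paragraph preceding the theorem: use Girsanov to simplify the driving system, establish a density lower bound for the transformed process, and pull back via controlled Radon--Nikodym derivative estimates. Concretely, fix $t \in [\sn, \en]$ and set $\tilde B_1(s) \doteq B_1(s) - \int_0^s V(r)\, dr$ for $0 \le s \le t$. Under a new measure $\QQ$ with density $d\QQ/d\bar\PP = \exp\bigl(\int_0^t V\, dB_1 - \tfrac{1}{2}\int_0^t V^2\, ds\bigr)$, Girsanov's theorem shows $\tilde B_1$ is a Brownian motion, independent of $B_2, \ldots, B_N$, and substituting into \eqref{eq:gapprocb} the drift $-\int_0^\cdot V(s)\, ds$ in the $Z_1$ equation disappears. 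Thus, under $\QQ$, the gap vector $\bfZ$ becomes the Skorohod problem for $R$ driven by $(\tilde B_1, B_2, \ldots, B_N)$, i.e.\ the gap process of $N$ ordered Brownian motions reflected off each other and off the bottom, while $V(s) = v + gs - L_1(s)$ still reads off the bottom reflection local time of the transformed system. Because $v \le g/128$ and $t \le \en$, Novikov's condition is easy to verify on the event where $V$ stays bounded.

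The next step is to obtain a quantitative lower bound for the joint law of $(V(t), \bfZ(t)) = (v + gt - L_1(t), \bfZ(t))$ under $\QQ$ on a Borel set $D$ with $\lambda(D \cap C) > 0$, uniformly in $t \in [\sn, \en]$. Under $\QQ$, $\bfZ$ is an elliptic reflected diffusion in $\RR_+^N$, and $L_1$ is the reflection local time on the face $\{z_1 = 0\}$. I would use a support-theorem style argument: for each target point in a carefully chosen $D \subset C$, design a family of driving Brownian paths that steers the system from $(v, \bfz)$ into a small neighborhood of the target at time $t$, while simultaneously forcing $L_1(t)$ into a prescribed range, and then use Brownian small-ball probabilities to lower bound the measure of such path families uniformly. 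The hypothesis $z_1 > 0$ is used critically: it furnishes a macroscopic initial time window during which $L_1 \equiv 0$, allowing one to disentangle the space coordinate from the local-time coordinate before the bottom particle starts interacting with the inert particle.

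The pieces are then combined as follows. On the event $\{\sup_{s \le t} |V(s)| \le M\}$ the Radon--Nikodym derivative $d\bar\PP/d\QQ$ is bounded below by a positive constant $c_{v,\bfz}$ depending only on the initial data (since $V(s) = v + gs - L_1(s)$, controlling $V$ reduces to an upper bound on $L_1$ that can be built into the path construction of the previous step). For $S \in \clb(\RR \times \RR_+^N)$, one writes
\begin{align*}
\PP^t((v,\bfz), S) = \EE^{\QQ}\!\left[\frac{d\bar\PP}{d\QQ}\, \mathbf{1}_S(V(t), \bfZ(t))\right] \ge c_{v,\bfz}\, \QQ\bigl((V(t), \bfZ(t)) \in S \cap D,\ \text{good path event}\bigr),
\end{align*}
and the density lower bound from the previous step yields $\PP^t((v,\bfz), S) \ge K_{(v,\bfz)}\, \lambda(S \cap D)$, uniformly in $t \in [\sn, \en]$. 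Measurability of $(v, \bfz) \mapsto K_{(v,\bfz)}$ and the uniform positivity \eqref{MLuniform} on compact subsets of the initial domain follow from the explicit dependence of $c_{v,\bfz}$ and of the small-ball bounds on $(v, \bfz)$, together with standard continuity arguments for the Skorohod map.

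The main obstacle is the pathwise density lower bound for $(L_1(t), \bfZ(t))$ under $\QQ$. Because $L_1$ is a singular additive functional of the path, the smoothness of the transition density of the elliptic reflected diffusion $\bfZ$ does not by itself yield a density for the pair. One must craft the driving Brownian paths so that both $\bfZ(t)$ and $L_1(t)$ land in prescribed ranges simultaneously, which requires a delicate multistep construction: first separate the particles from the bottom while keeping $L_1$ small, then force a prescribed amount of local time accumulation in a controlled subinterval. The peculiar numerical constants $\sn = 1/128$ and $\en - \sn = 1/63 - 1/64$ almost certainly reflect the specific time allocations in this construction, chosen so that each phase of the path design succeeds with a positive probability that can be bounded uniformly in the initial data.
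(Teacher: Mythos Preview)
Your high-level strategy---Girsanov to remove the $V$-drift, then a density lower bound for the transformed system, then pull back through a controlled Radon--Nikodym derivative---is exactly the paper's route. However, two of your steps are genuinely incomplete as stated.

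First, the Radon--Nikodym control. You claim that on $\{\sup_{s\le t}|V(s)|\le M\}$ the derivative $d\bar\PP/d\QQ$ is bounded below by a positive constant. But the derivative contains the stochastic integral $\int_0^t V\,d\tilde B$, which is \emph{not} pathwise bounded just because $V$ is bounded. The paper handles this by It\^o integration by parts: $-\int_0^r V\,d\tilde B = -\int_0^r \tilde B\,dL_1 + g\int_0^r \tilde B\,ds - V(r)\tilde B(r)$, and then each piece is bounded pathwise on events where both $V$ and the Brownian path are controlled. Without this identity your RN bound does not go through.

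Second, and more seriously, support-theorem/small-ball arguments give you positivity on open sets, not the Lebesgue density lower bound $\PP^t((v,\bfz),S)\ge K\lambda(S\cap D)$. To get a genuine density the paper \emph{decouples} the $V$-coordinate from the $\bfZ$-coordinate across successive time phases, and your proposed phase ordering (separate first, then accumulate local time) is the reverse of what makes this work. The paper's construction is: (i) let $Z_1$ hit $0$ at a stopping time $\tau_1\le\sn/4$; (ii) on $[\tau_1,\tau_1+\sn/4]$, starting from $z_1=0$ and keeping the other local times zero, one has exactly $L_1(\tau_1+s)=\sup_{u\le s}(-B_1(\tau_1+u))$, so the density of $L_1(\tau_1+\sn/4)$ in a prescribed interval comes from the \emph{explicit} law of the Brownian running maximum (Lemma~\ref{subdens}); (iii) on $[\tau_1+\sn/4,r]$ the gaps are steered through the interior of a bounded $\clc^2$ domain $G\subset(\RR_+^N)^o$, so no further local time accrues and the density of $\bfZ(r)$ comes from a killed Brownian-motion heat kernel lower bound (Lemma~\ref{kild}, via Zhang's two-sided estimates). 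Because phases (ii) and (iii) use disjoint Brownian increments, the two densities multiply. The set $D$ is the product $D_1\times G_1$ arising from these two constructions, and the peculiar constants $\sn,\en$ are calibrated so that $v+gr-I\subset[0,g/63]$ lands in the range where the running-maximum density is uniformly bounded below. Your sketch does not supply any mechanism producing a Lebesgue density for the pair $(L_1(t),\bfZ(t))$; the explicit density of the Brownian maximum and the heat kernel bound are the missing ingredients.
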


In proving the above it will be convenient to introduce a probability measure $\tilde \PP^*_{(v, \bfz)}$ that is mutually absolutely continuous to $\PP^*_{(v, \bfz)}$ and which is somewhat simpler to analyze. This measure corresponds to the law of the processes $(\bfB, V,\bfZ)$ given as in \eqref{eq:gapprocb} but with $V$ on the right side of equation for $Z_1$ replaced by the $0$ process.
Recall the path space $(\Om^*, \clf^*)$ and the coordinate processes $(\bfB, V, \bfZ)$ given on this space.  Let $\{\clf^*_t\}_{t \geq 0}$ be the filtration generated by these coordinate processes. For $(v,\bfz) \in \mathbb{R} \times \mathbb{R}_+^N$
 let $\tilde{\mathbb{P}}^*_{(v, \bfz)}$ be the probability measure on $(\Omega^*, \clf^*)$  such that  under 
 $\tilde{\mathbb{P}}^*_{(v, \bfz)}$ the following hold:
\begin{enumerate}[(i)]
    \item  $\bfB$ is the standard $N$-dimensional $\clf^*_t$-Brownian motion.
    \item For each $t \in [0,\infty)$, with  $L(t) =  \Gamma_1(\bfz + A\bfB(\cdot))(t)$,
    \begin{align}
    \begin{split}\label{drifteqn}
        \bfZ(t)  = \bfz + A\bfB(t)  + RL(t), \;\;
        V(t)  = v+ gt - L_1(t).
    \end{split}
    \end{align}
   \end{enumerate}
   
\editc{\subsection{Outline of Proof}

The proof of Theorem \ref{minorization} is organized as follows. In Lemma \ref{Novikov}, we establish a version of Novikov's criterion which allows us to relate $\PP^*_{(v, \bfz)}$ to $\tilde{\PP}^*_{(v, \bfz)}$ via Girsanov's Theorem. In Corollary \ref{girsanov}, we use the preceding lemma to invoke Girsanov's Theorem and make explicit the relation between the two measures.

We next prove a number of technical results in support of Theorem \ref{minorization}. In Lemma \ref{kild}, we establish a minorization condition for a `killed' version of $\bfZ$ under law $\tilde{\PP}^*_{(v, \bfz)}$, when $\bfZ(0)$ lies in a certain compact set $F$. In Lemma \ref{subdens}, we prove the existence of a subdensity for the supremum of Brownian motion over a compact time interval under certain constraints on its infimum and final location. This supremum, in turn, is connected to the local time $L_1$ via the Skorohod map. As under law $\tilde{\PP}^*_{(v, \bfz)}$, existence of a subdensity at a fixed time for $(\bfZ, V)$ is implied by that for $(\bfZ, L_1)$ (see \eqref{drifteqn}), the above two lemmas are crucial in proving Theorem \ref{minorization}. Lemmas \ref{H2sub1} and \ref{H2sub2} provide a version of the `support theorem' where a tractable event in terms of the driving Brownian motions is constructed under which the gap process $\bfZ$ at a prescribed time $\sn/4$ lies in $F$ almost surely under $\tilde{\PP}^*_{(v, \bfz)}$.

Lastly, we prove Theorem \ref{minorization}. Using the strong Markov property, we analyze the process pathwise between appropriately chosen stopping times. We first let $Z_1$ hit zero at time $\tau_1$ after which, under the event on the driving Brownian motions described in Lemma \ref{H2sub2}, the local time $L_1$ lies in a given Borel set and the gaps $\bfZ$ lie in the set $F$ introduced in Lemma \ref{kild} at time $\tau_1 + \sn/4$. Theorem \ref{minorization} now follows upon combining this and the minorization condition on the killed gap process obtained in Lemma \ref{kild}, which is used in analyzing the subsequent process path.


\subsection{Proof of Theorem \ref{minorization}}}
   
In order to relate $\PP^*_{(v, \bfz)}$ with $\tilde{\PP}^*_{(v, \bfz)}$ we establish the following integrability property which will be used to verify a variation of Novikov's criterion. {\cg In the following, $\tilde{\mathbb{E}}^*_{(v,\bfz)}$ denotes the expectation under the probability measure $\tilde\PP^*_{(v, \bfz)}$. Under  $\tilde{\PP}^*_{(v, \bfz)}$, the local times $L_i$, $1\le i \le N$ (and with $L_{N+1}=0$) have the following pathwise representation:
\begin{equation}\label{locrep}
\begin{aligned}
 L_1(t) &= \sup_{s \leq t}(-z_1 + \frac{1}{2}L_2(s) - B_1(s))^+,\\
 L_2(t) &= \sup_{s \leq t}(-z_2 + \frac{1}{2}L_3(s) + L_1(s) + B_1(s) - B_2(s))^+,\\
  L_i(t) &= \sup_{s \leq t}(-z_i + \frac{1}{2}(L_{i+1}(s) + L_{i-1}(s)) + B_{i-1}(s) - B_i(s))^+, \  \  i=3, \ldots, N.
\end{aligned}
\end{equation}

}

\begin{lemma}\label{Novikov}  For every $c\in (0, \infty)$ and $r\in \NN$, there is a $m \in \NN$  such that with $t_k = k/m$, $k=0, 1, \ldots \editc{,} rm-1$,
	for each $(v,\bfz) \in \mathbb{R} \times \mathbb{R}_+^N$,
$${\cg \tilde{\mathbb{E}}^*_{(v,\bfz)}\,e^{\frac{c}{2}\int_{t_{k}}^{t_{k+1}} V(s)^2 ds} < \infty.}$$
\end{lemma}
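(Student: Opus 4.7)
The idea is to bound $V$ pathwise in terms of the driving Brownian motions via the Lipschitz property of the Skorokhod map, and then to choose $m$ large enough that the resulting exponential of a squared sub-Gaussian quantity is integrable.

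Under $\tilde{\mathbb{P}}^*_{(v,\bfz)}$ the velocity satisfies $V(s) = v + gs - L_1(s)$, and the local time process satisfies $L(\cdot) = \Gamma_1(\bfz + A\bfB(\cdot))$. Since $L_1$ is nondecreasing and nonnegative, $|V(s)| \le |v| + gr + L_1(r)$ for every $s \le r$. Applying Proposition~\ref{skorokhod} to the path $x(\cdot) = \bfz + A\bfB(\cdot) \in \clc_0([0,\infty):\RR^N)$ compared with $x' \equiv 0$ (for which $\Gamma_1(0) = 0$), one obtains
\begin{equation*}
L_1(r) \le \|L\|_r \le c_\Gamma\,\|\bfz + A\bfB\|_r \le c_\Gamma|\bfz| + C_A \sum_{i=1}^N B_i^*(r),
\end{equation*}
where $B_i^*(r) = \sup_{s \le r} |B_i(s)|$ and $C_A$ depends only on $c_\Gamma$ and the matrix $A$.

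Squaring this estimate, applying the crude bounds $(x_1 + \cdots + x_k)^2 \le k\sum_{i=1}^k x_i^2$, and using $t_{k+1} - t_k = 1/m$, I can produce deterministic constants $K_1 = K_1(v,\bfz,c,g,c_\Gamma,r)$ and $K_2 = K_2(c,c_\Gamma,C_A,N)$ such that
\begin{equation*}
\frac{c}{2}\int_{t_k}^{t_{k+1}} V(s)^2\,ds \;\le\; \frac{K_1}{m} + \frac{K_2}{m}\sum_{i=1}^N (B_i^*(r))^2.
\end{equation*}
Under $\tilde{\mathbb{P}}^*_{(v,\bfz)}$ the $B_i$ are mutually independent standard Brownian motions, and by the reflection principle each $B_i^*(r)$ is sub-Gaussian with parameter $r$, so $\tilde{\mathbb{E}}^*_{(v,\bfz)}\, e^{\theta (B_i^*(r))^2} < \infty$ for any $\theta < 1/(2r)$. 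Choosing $m$ large enough that $K_2/m < 1/(2r)$, equivalently $m > 2rK_2$, the expectation $\tilde{\mathbb{E}}^*_{(v,\bfz)}\exp\!\bigl(\tfrac{c}{2}\int_{t_k}^{t_{k+1}} V(s)^2\,ds\bigr)$ factorizes over $i$ and is finite for every $k$.

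The only real subtlety is ensuring that $m$ can be chosen without reference to $(v,\bfz)$ or to the index $k$. This is automatic here: $K_2$ depends only on $c, r, N, c_\Gamma, C_A$, while the whole $(v,\bfz)$-dependence is absorbed into the deterministic prefactor $e^{K_1/m}$, and the Brownian bound is uniform over $s \le r$ so covers all $k \in \{0,\ldots,rm-1\}$ at once. The one conceptual step beyond routine estimates is the Skorokhod-map Lipschitz bound, which converts the implicitly-defined local time $L_1(r)$ into an explicit linear functional of Brownian suprema, thereby placing the quadratic exponent in the sub-Gaussian integrability regime for $m$ sufficiently large.
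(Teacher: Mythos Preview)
Your proof is correct. Both your argument and the paper's reduce to bounding $|V(s)|$ for $s\le r$ by a deterministic constant plus a linear combination of the Brownian suprema $B_i^*(r)$, squaring, dividing by $m$, and then using that $\exp(\theta(B_i^*(r))^2)$ is integrable for $\theta < 1/(2r)$. The difference lies in how the pathwise bound on $L_1$ is produced. You invoke the Lipschitz property of the Skorokhod map (Proposition~\ref{skorokhod}) with $x'\equiv 0$ to get $L_1(r) \le c_\Gamma|\bfz| + C_A\sum_i B_i^*(r)$. The paper instead works from the explicit local-time representation \eqref{locrep}, derives the linear system $\bfL(t) \le U\bfL(t) + \bfB^*(t)$, and inverts via $W=(I-U)^{-1}$ to obtain the $\bfz$-free bound $L_1(t) \le (W\bfB^*(t))_1$. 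Your route is cleaner---no matrix algebra---at the cost of carrying $|\bfz|$ through as a harmless constant absorbed into $K_1$; the paper's route yields a sharper, $\bfz$-independent bound on $L_1$ whose derivation is reused later (see Lemma~\ref{locin}), so the extra work there is an investment that pays off elsewhere. For this lemma alone, both approaches are equally valid, and your observation that the choice of $m$ depends only on $K_2$ (and hence not on $(v,\bfz)$) is exactly right.
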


\begin{proof}
Fix $c\in (0,\infty)$ and $r\in \NN$.
 Also, fix $(v,\bfz) \in \mathbb{R} \times \mathbb{R}_+^N$. All equalities and inequalities  in the proof are almost sure with respect to the measure $\tilde{\mathbb{P}}^*_{(v,\bfz)}$. 
 %
%
%
Note that, for $t\ge 0$, by \eqref{locrep},
\begin{align*}
    L_1(t) & \leq \frac{1}{2}L_2(t) + \sup_{s \leq t}(-B_1(s)),\\
     L_2(t) & \leq \frac{1}{2}L_3(t) + L_1(t) + \sup_{s \leq t}(B_1(s) - B_2(s)),\\
    L_i(t) & \leq \frac{1}{2}(L_{i+1}(t) + L_{i-1}(t)) + \sup_{s \leq t}(B_{i-1}(s) - B_i(s)), \  \  i=3, \ldots, N.
\end{align*}
Define
\begin{equation}\label{bstar}
\begin{aligned}
    B^*_1(t) &=  \sup_{s \leq t}(-B_1(s)) \\
    B^*_i(t) &= \sup_{s \leq t}(B_{i-1}(s) - B_i(s)), \textnormal{ for }\, i = 2, \ldots, N \\
    \bfB^*(t) &= (B_1^*(t),...,B_N^*(t)).
\end{aligned}
\end{equation}
 Recall the matrices $U=I-R$ and  $W = (I-U)^{-1}$. Then, it is easy to verify that
\begin{equation}\label{Umat}
    U =
    \begin{pmatrix}
    0 & \frac{1}{2} & 0 & 0 & 0 & \cdots & 0 & 0 \\
    1 & 0 & \frac{1}{2} & 0 & 0 & \cdots & 0 & 0 \\
    0 & \frac{1}{2} & 0 & \frac{1}{2} & 0 & \cdots & 0 & 0 \\
    \vdots & \vdots & \vdots & \vdots & \vdots & \vdots & \vdots & \vdots \\
    0 & 0 & \cdots & \cdots & \cdots & \cdots & \frac{1}{2} & 0 
    \end{pmatrix}.
\end{equation}
 and so from the above inequalities we can write, for $t\ge 0$,
$$\bfL(t) \leq U\bfL(t) +\bfB^*(t).$$
In particular, recalling that $W$ can be written as an infinite sum of matrices with nonnegative entries, we have that
$$L_1(t) \leq (W\bfB^*(t))_1.$$
Now fix $m\in \NN$ which will be chosen suitably below. Define $t_k = k/m$, $k=0, 1, \ldots \editc{,} rm-1$.
Then, for any $k$ as above,
\begin{multline*}
    \int_{t_k}^{t_{k+1}}V(s)^2ds \leq m^{-1}\sup_{s \in [t_k, t_{k+1}]}V(s)^2 \\
     = m^{-1}\sup_{s \in [t_k, t_{k+1}]}(v+gs-L_1(s))^2 
     \le 2m^{-1}(|v|+rg)^2 + 2m^{-1} (L_1(r))^2.
\end{multline*}	
It then follows
\begin{align*}
\max_{0\le k \le rm-1} e^{\frac{c}{2}\int_{t_k}^{t_{k+1}}V(s)^2ds} & \leq c_1 e^{cm^{-1} (W\bfB^*(r))^2_1}	
\end{align*}
where
$c_1= e^{cm^{-1}(|v|+rg)^2}$.
  The expectation of the right side under ${\mathbb{P}}^*_{(v,\bfz)}$ (which is independent of $(v,\bfz) \in \RR\times \RR_+^N$)
  is finite for sufficiently large $m$. The result follows.
\end{proof}
For $(v, \bfz) \in\mathbb{R} \times \mathbb{R}_+^N$ and $r \in \NN$, with an abuse of notation, denote the projection of $\PP^*_{(v, \bfz)}$  [resp. $\tilde{\PP}^*_{(v, \bfz)}$] on
$\Om^r \doteq \clc([0,r]: \RR^N\times \RR \times \RR_+^N)$ once more as $\PP^*_{(v, \bfz)}$ [resp. $\tilde{\PP}^*_{(v, \bfz)}$]. Denote by $\clf^r$ the Borel $\sigma$-field on $\Om^r$. The coordinate processes $\bfB, V, \bfZ$ on 
$(\Om^r, \clf^r)$ and the canonical filtration $\{\clf^r_t\}_{0\le t \le r}$ are defined in an analogous manner. Denote by $\bfe_1$ the unit vector $(1,0, 0,\ldots, 0)'$ in $\RR^N$.

\begin{corollary}\label{girsanov}
	Fix $r\in \NN$.
Define for $t \in [0,r]$,  real measurable maps $\cle(t)$ on $(\Om^r, \clf^r)$ as
\begin{equation}\label{expmart}
\mathcal{E}(t) \doteq e^{- \sum_{i=1}^N \int_0^t V(s)(A^{-1} \bfe_1)_i dB_i(s) - \frac{|A^{-1}\bfe_1|^2}{2}\int_0^ t V(s)^2ds }.
\end{equation}
Then for every $(v, \bfz) \in \mathbb{R} \times \mathbb{R}_+^N$, $\tilde{\EE}^*_{(v, \bfz)}[\cle(r)] =1$ and for every $F \in \clb (\clc([0,r]: \RR \times \RR_+^N))$ 
\begin{equation}\label{Girs}
{\cg    \mathbb{P}^*_{(v,\bfz)}((V,\bfZ)\in F) = \tilde{\mathbb{E}}^*_{(v,\bfz)}[\textbf{1}_{\{(V,\bfZ) \in F\}}\mathcal{E}(r)].}
\end{equation}

\end{corollary}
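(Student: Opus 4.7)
The plan is a piecewise Girsanov argument. The substantial step is to upgrade $\cle$ from a local martingale to a genuine martingale on $[0,r]$ under each $\tilde{\PP}^*_{(v,\bfz)}$; once this is established, Girsanov's theorem combined with the strong uniqueness in Theorem \ref{thm:wellposed} will yield \eqref{Girs}.

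The main obstacle is the martingale property of $\cle$. The quadratic variation of the stochastic integral $-\sum_i \int_0^{\cdot} V(s)(A^{-1}\bfe_1)_i\, dB_i(s)$ is $|A^{-1}\bfe_1|^2 \int_0^{\cdot} V(s)^2 ds$, and a direct global Novikov condition on $[0,r]$ is not available since under $\tilde{\PP}^*_{(v,\bfz)}$ the velocity can grow like $gt + L_1(t)$ and a global exponential-moment bound on $\int_0^r V(s)^2 ds$ is delicate. Lemma \ref{Novikov} is designed for exactly this situation: applied with $c = |A^{-1}\bfe_1|^2$, it furnishes an integer $m$ and mesh points $t_k = k/m$, $k = 0, 1, \ldots, rm-1$, for which the local Novikov bound $\tilde{\EE}^*_{(v,\bfz)}[\exp(\tfrac{c}{2}\int_{t_k}^{t_{k+1}} V(s)^2 ds)] < \infty$ holds on each subinterval. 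I would then invoke the standard telescoping argument: apply ordinary Novikov to the restricted exponential $\cle(t_{k+1})/\cle(t_k)$ conditional on $\clf^r_{t_k}$ to conclude that each such ratio has conditional expectation one, and iterate across the partition. The fact that Lemma \ref{Novikov} provides a bound uniform in $(v,\bfz)$ is exactly what allows the conditional step to apply pathwise after conditioning and hence yields $\tilde{\EE}^*_{(v,\bfz)}[\cle(r)] = 1$ and the martingale property of $(\cle(t))_{0\le t\le r}$.

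With the martingale property in hand, define the probability measure $\mathbb{Q}_{(v,\bfz)}$ on $(\Om^r, \clf^r)$ by $d\mathbb{Q}_{(v,\bfz)}/d\tilde{\PP}^*_{(v,\bfz)} = \cle(r)$. Girsanov's theorem then gives that
$$\tilde{\bfB}(t) \doteq \bfB(t) + A^{-1}\bfe_1 \int_0^t V(s)\, ds, \qquad 0 \le t \le r,$$
is a standard $N$-dimensional $\{\clf^r_t\}$-Brownian motion under $\mathbb{Q}_{(v,\bfz)}$. Substituting the identity $A\bfB(t) = A\tilde{\bfB}(t) - \bfe_1 \int_0^t V(s)\, ds$ into the defining system \eqref{drifteqn} for $(V, \bfZ, L)$ under $\tilde{\PP}^*_{(v,\bfz)}$, one checks that under $\mathbb{Q}_{(v,\bfz)}$ the triple $(V, \bfZ, L)$ satisfies the Skorohod system \eqref{eq:solskor} driven by the standard Brownian motion $\tilde{\bfB}$ with initial condition $(v, \bfz)$. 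By the pathwise uniqueness established in Theorem \ref{thm:wellposed}, the law of $(V, \bfZ)$ under $\mathbb{Q}_{(v,\bfz)}$ on $\clc([0,r]: \RR \times \RR_+^N)$ coincides with the marginal of $\PP^*_{(v,\bfz)}$ on the $(V, \bfZ)$ coordinates. The identity \eqref{Girs} is then immediate by unrolling the definition of $\mathbb{Q}_{(v,\bfz)}$.
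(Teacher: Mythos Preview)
Your proposal is correct and follows essentially the same approach as the paper: invoke Lemma \ref{Novikov} with $c = |A^{-1}\bfe_1|^2$ to verify the piecewise Novikov criterion (the paper cites Karatzas--Shreve, Corollary 3.5.14, for the telescoping step), then apply Girsanov and use the uniqueness in Theorem \ref{thm:wellposed} to identify the law of $(V,\bfZ)$ under $\mathbb{Q}_{(v,\bfz)}$ with that under $\PP^*_{(v,\bfz)}$. One small remark: Lemma \ref{Novikov} does not actually give a bound \emph{uniform} in $(v,\bfz)$---only that the same mesh $m$ works for every starting point, with finiteness for each---but since the piecewise Novikov criterion only requires unconditional finiteness on each subinterval, this does not affect your argument.
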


\begin{proof}

Fix $(v,\bfz) \in \mathbb{R} \times \mathbb{R}_+^N$ and $r\in \NN$. For $t \in  [0,r]$, define 
\begin{equation*}
    \tilde{\bfB}(t) \doteq \bfB(t) + \int_0^t V(s)A^{-1}\bfe_1 ds .
\end{equation*}
By Lemma \ref{Novikov} with $c = |A^{-1}\bfe_1|^2$ and (a slight modification of) \cite[Corollary 3.5.14]{KarShre}, it follows that $\{\cle(t)\}_{0\le t \le r}$ is a martingale with respect to the filtration $\{\clf^r_t\}_{0 \le t \le r}$ under the probability measure $\tilde{\mathbb{P}}^*_{(v,\bfz)}$. Hence, from Girsanov's theorem, $\{\tilde{\bfB}(s)\}_{0\le s \le r}$ is a Brownian motion under the probability measure ${\mathbb{Q}}^*_{(v,\bfz)}$ defined by $d{\mathbb{Q}}^*_{(v,\bfz)} \doteq \cle(r)d\tilde{\mathbb{P}}^*_{(v,\bfz)}$. Also, under the measure ${\mathbb{Q}}^*_{(v,\bfz)}$ we have
\begin{align*}
    \bfZ(t) & = \Gamma_2(\bfz + A\bfB(\cdot))(t) 
     = \Gamma_2\left(\bfz  + A\left(\tilde{\bfB}(\cdot) - \int_0^{\cdot} V(s)A^{-1}\bfe_1ds\right)\right)(t) \\
    & = \Gamma_2\left(\bfz  + A\tilde{\bfB}(\cdot) - \int_0^{\cdot} V(s)ds\,\bfe_1\right)(t).
\end{align*}
By the unique solvability given in Theorem \ref{thm:wellposed} and the definition of $\mathbb{P}^*_{(v,\bfz)}$
it now follows that the law of $(V,\bfZ)$ under  ${\mathbb{Q}}^*_{(v,\bfz)}$ is same as 
that under $\mathbb{P}^*_{(v,\bfz)}$. The result follows.
\end{proof}

We next prove several  technical estimates that will be needed in the proof of Theorem \ref{minorization}. 

\begin{lemma}\label{kild}
Let

\begin{equation}\label{Fset}
    F  \doteq  [\frac{\gn}{16},\frac{\gn}{4}] \times [\frac{\gn}{10},2\gn] \times [\frac{3\gn}{4},2\gn]^{N-2}.
\end{equation}
Let $G \subset (\mathbb{R}_+^N)^o$ be an open and bounded domain with $\clc^{2}$ boundary such that 
$$F \subset  F_1 \doteq [\frac{\gn}{16},\gn] \times [\frac{\gn}{10},2\gn] \times [\frac{3\gn}{4},2\gn]^{N-2} \subset G.$$ Let $\sigma_F \doteq \inf_{x \in F,\, y \in \partial G}|A^{-1}(x-y)| $ and choose $ \epsilon > 0$ so that  $G_{1} \doteq \{x \in G: \inf_{y \in \partial G}|A^{-1}(x-y)| > \epsilon\}$
satisfies $G\supset G_{1} \supset F_1 \supset F$.
Define on $(\Om^*, \clf^*)$,
 $\tau_G = \inf\{t \geq 0: \bfZ(t) \notin G\}$. 
 Also, fix a `cemetery point' $\partial^* \in (\RR_+^N)^c$ and define the `killed process' $\{\bfZ^*(t)\}$ by
 \begin{align}\label{eq:zstar}
    \bfZ^*(t) \doteq 
	\begin{cases}
		\bfZ(t) &  \mbox{ if } t < \tau_G \\
     \partial^*& \mbox{ if } t \geq \tau_G,
	 \end{cases}
\end{align}
Then, there is a $c_G \in (0,\infty)$ such that  for any $J \in \mathcal{B}(\mathbb{R}_+^N)$, 
\begin{equation}
    \inf_{s \in [\frac{\sn}{4},\en], (v,\bfz) \in \RR\times F}\tilde{\mathbb{P}}^*_{(v,\bfz)}(\bfZ^*(s) \in J) \geq c_G\lambda(J \cap G_{1}).
\end{equation}

\end{lemma}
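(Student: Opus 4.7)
The starting observation is that while $\bfZ$ stays in $G$, the Skorokhod reflection at $\partial\RR_+^N$ is inactive. Since $G\subset (\RR_+^N)^{\circ}$ by hypothesis, each coordinate satisfies $Z_i(t)>0$ for $t<\tau_G$, so property (iii) of Proposition \ref{skorokhod} forces $L_i(t)=0$ for all $i$ and all $t<\tau_G$. Consequently, under $\tilde\PP^*_{(v,\bfz)}$ one has
$$\bfZ(t) = \bfz + A\bfB(t), \quad 0 \le t < \tau_G,$$
and on the event $\{t<\tau_G\}$ the killed process $\bfZ^*$ coincides with $\bfz + A\bfB(t)$. Since $A$ is invertible, $A\bfB$ is a Brownian motion with strictly positive-definite covariance $\Sigma\doteq AA^T$. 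In particular, the sub-probability law of $\bfZ^*(s)$ on $G$ admits a density, namely the Dirichlet heat kernel $p_s^G(\bfz,\cdot)$ of the uniformly elliptic constant-coefficient operator $\Lmc \doteq \tfrac12\sum_{i,j}\Sigma_{ij}\partial_i\partial_j$ on $G$.

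The next step is to extract a positive uniform lower bound for $p_s^G$. By classical linear parabolic PDE theory on bounded $\clc^2$ domains (Friedman), $p^G$ is jointly continuous and strictly positive on $(0,\infty)\times G\times G$, strict positivity being an application of the parabolic strong maximum principle. Since $F$ and $\overline{G_1}$ are compact subsets of $G$ and $[\sn/4,\en]$ is a compact interval bounded away from $0$, joint continuity and positivity yield
$$c_G \doteq \inf_{s\in [\sn/4,\en],\; \bfz\in F,\; w\in \overline{G_1}} p_s^G(\bfz,w) > 0.$$
Integrating this pointwise lower bound over $J\cap G_1$ then gives
$$\tilde\PP^*_{(v,\bfz)}(\bfZ^*(s)\in J) = \int_{J\cap G} p_s^G(\bfz,w)\,dw \;\ge\; c_G\,\lambda(J\cap G_1),$$
uniformly in the relevant ranges. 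The $v$-independence of the bound is automatic since the dynamics of $\bfZ$ under $\tilde\PP^*$ do not involve $V$.

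The only non-mechanical step I expect is the strict-positivity / joint-continuity input for $p^G$. A fully pathwise alternative in the spirit of the rest of the paper is also available: write down the Gaussian density of $\bfz + A\bfB(s)$ explicitly, and bound from below the conditional probability that the Brownian bridge from $\bfz$ to $w$ (with covariance $\Sigma$) stays inside $G$ over $[0,s]$. Both quantities are uniformly positive on the compact parameter ranges because $\bfz \in F$ and $w \in \overline{G_1}$ both lie at a strictly positive $\Sigma$-distance from $\partial G$ --- this is precisely the role of the $\epsilon$-buffer built into the definition of $G_1$.
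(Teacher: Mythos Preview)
Your proof is correct and follows the same core idea as the paper: under $\tilde\PP^*_{(v,\bfz)}$ the process $\bfZ$ coincides with $\bfz + A\bfB$ until it exits $G$, so the killed law is governed by a Dirichlet heat kernel, and one extracts a uniform positive lower bound over the compact parameter ranges $[\sn/4,\en]\times F\times \overline{G_1}$.

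The difference lies only in how the lower bound on the heat kernel is obtained. The paper first applies the linear change of variables $w\mapsto A^{-1}w$ to reduce to \emph{standard} Brownian motion killed upon exiting $A^{-1}G$, and then invokes the explicit two-sided Dirichlet heat kernel bounds of Zhang (Theorem~1.1 in \cite{ZhangHeatKer}), which give $p_t(x,y)\gtrsim (\rho(x)\rho(y)/t\wedge 1)\,t^{-N/2}e^{-c|x-y|^2/t}$ for small $t$ and $p_t(x,y)\gtrsim \rho(x)\rho(y)e^{-ct}$ for large $t$; the quantities $\sigma_F$ and $\epsilon$ in the statement are then used to bound $\rho(A^{-1}\bfz)$ and $\rho(y)$ from below. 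Your argument instead appeals to the qualitative facts that $p^G$ is jointly continuous and strictly positive on $(0,\infty)\times G\times G$ and then uses compactness. Your route is more elementary and avoids citing sharp heat-kernel estimates; the paper's route makes the dependence of $c_G$ on $\sigma_F$, $\epsilon$, and $G$ explicit. Either is perfectly adequate here, since only the existence of some $c_G>0$ is needed downstream.
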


\begin{proof} Fix $s \in [\frac{\sn}{4}, \en], (v,\bfz) \in \RR\times F$ and $J \in \mathcal{B}(\mathbb{R}_+^N)$ with $\lambda(J\cap G_{1}) > 0$. 
	Since, under $\tilde{\mathbb{P}}^*_{(v,\bfz)}$, $\bfZ(t)= \bfz + A\bfB(t)$ until the first time it has hit the boundary of the positive orthant,
\begin{align*}
	\tilde{\mathbb{P}}^*_{(v,\bfz)}(\bfZ^*(s) \in J) &= \tilde{\mathbb{P}}^*_{(v,\bfz)}(\bfZ^*(s) \in J \cap G)\\
	&= \tilde{\mathbb{P}}^*_{(v,\bfz)}(\bfz + A \bfB(s) \in J\cap G, \bfz + A\bfB(u) \in G \mbox{ for all } u \le s)\\
	&= \tilde{\mathbb{P}}^*_{(v,\bfz)}(A^{-1}\bfz + \bfB(s) \in A^{-1}(J \cap G), \, A^{-1}\bfz + \bfB(u) \in A^{-1}(G),\mbox{ for all } u \le s ).
\end{align*}
Denote the transition probability density at time $t$ of an $N$-dimensional standard Brownian motion in $A^{-1}G$, started from $x$ and killed at the boundary of $A^{-1}G$, by $p_t(x,\cdot)$.
Then from the above identities it follows
\begin{align}\label{eq:denseq}
	\tilde{\mathbb{P}}^*_{(v,\bfz)}(\bfZ^*(s) \in J) &= \int_{A^{-1}(G\cap J)}p_s(A^{-1}\bfz, y)dy.
\end{align}

%
%

%

From \cite[Theorem 1.1]{ZhangHeatKer} we have that there exists $T > 0$ and $c_1, c_2 \in (0,\infty)$  such that for all $x, y \in A^{-1}G$:
\begin{equation}\label{densltT}
    p_t(x,y) \geq \editc{\left(\frac{\rho(x)\rho(y)}{t}\wedge 1\right)}\frac{c_1}{t^{N/2}}e^{-c_2|x-y|^2/t},\,\,\,\,\,t \in [0, T]
\end{equation}
\begin{equation}\label{densgtT}
    p_t(x,y) \geq c_1\rho(x)\rho(y)e^{-c_2 t},\,\,\,\,\,\,t \in (T,\infty),
\end{equation}
where $\rho(x) = \inf_{r \in \partial G}|x-A^{-1}r|$.
  Note that there is a $\eta \in (0,\infty)$ such that
  $$\lambda(A^{-1}(C)) = \eta \lambda(C) \mbox{ for all } C \in \clb(\RR^N).$$
We now estimate the right side of \eqref{eq:denseq}.
First suppose that $s > T$. Then since $\bfz \in F$ and $s \le 1$,
\begin{multline*}
   \int_{A^{-1}(G\cap J)}p_s(A^{-1}\bfz, y)dy 
     \geq \int_{A^{-1}(G\cap J)}c_1\rho(A^{-1}\bfz)\rho(y)e^{-c_2s}dy \\
     \geq  c_1\sigma_Fe^{-c_2}\int_{A^{-1}( G_{1}\cap J)}\rho(y)dy
    \geq c_1\sigma_Fe^{-c_2}\epsilon\eta\lambda( G_{1}\cap J).
\end{multline*}
Letting
$c_{G,1} = c_1\sigma_Fe^{-c_2}\epsilon\eta$,
we have from \eqref{eq:denseq}, when $s> T$,
\begin{align}\label{KPminor}
\tilde{\mathbb{P}}^*_{(v,\bfz)}(\bfZ^*(s) \in J) \geq c_{G,1}\lambda(J\cap G_1).
\end{align}
Now consider the case $s \leq T$. Then, a similar estimate shows,
\begin{align*}
\int_{A^{-1}(J\cap G)}p_s(A^{-1}\bfz,y)dy 
    & \geq \int_{A^{-1}(J\cap G)}\editc{\left(\frac{\rho(A^{-1}\bfz)\rho(y)}{s}\wedge 1\right)}\frac{c_1}{s^{N/2}}e^{-c_2|A^{-1}\bfz-y|^2/s}dy \\
    & \geq c_{G,2}\,\lambda(J\cap G_1) \\
\end{align*}
where
$c_{G,2} = \eta c_1((\epsilon\sigma_F)\wedge 1) e^{-4c_2(\sn)^{-1}
	\sup_{x,y \in A^{-1}G}|x-y|^2}.$
Thus,  once more from \eqref{eq:denseq}, when $s\le T$,
\begin{align}\label{KPminorb}
\tilde{\mathbb{P}}^*_{(v,\bfz)}(\bfZ^*(s) \in J) \geq c_{G,2}\lambda(J\cap G_1).
\end{align}
Setting $c_G = c_{G,1}\wedge c_{G,2}$, we have the result on combining \eqref{KPminor} and \eqref{KPminorb}. 
\end{proof}

For $z_1 \in \mathbb{R}$, let $\mathcal{P}_{z_1}$ denote a probability measure on $\Omega^*$ under which the coordinate process $\{B_1(t)\}$ is a standard Brownian motion starting at $z_1$. We will  use similar notation for the corresponding expectation. 
\begin{lemma}\label{subdens} There exists a  $\mathscr{K} \in (0, \infty)$ such that for every $I \in \mathcal{B}(\mathbb{R})$,
\begin{equation}
 \mathcal{P}_{0} \left(\sup_{0\le u \le \frac{\sn}{4}} B_1(u) \in I,\, \inf_{0\le u \leq \frac{\sn}{4}} B_1(u) > -\frac{6\gn}{10},\,B_1(\frac{\sn}{4}) \in [-\frac{\gn}{8},-\frac{\gn}{16}]\right) \geq \mathscr{K}\lambda(I \cap [0,\frac{\gn}{63}]).
\end{equation}
\end{lemma}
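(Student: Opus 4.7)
The plan is to express the left-hand side as an integral over $I \cap [0, g/63]$ of a Lebesgue density $h(m)$, and then to show that $h$ admits a strictly positive lower bound $\mathscr{K}$ on $[0, g/63]$. Writing $T = \sn/4$, $a = \tfrac{6g}{10}$, and $[b_1, b_2] = [-\tfrac{g}{8}, -\tfrac{g}{16}]$, the event $\{\sup_{[0,T]} B_1 \le m,\, \inf_{[0,T]} B_1 > -a,\, B_1(T) \in [b_1, b_2]\}$ corresponds to the Brownian path starting from $0$ staying in the strip $(-a, m)$ throughout $[0, T]$ and terminating in $[b_1, b_2]$. By the classical method of images, its probability equals $F(m) := \int_{b_1}^{b_2} p_T^{(-a, m)}(0, b)\, db$, where
$$p_T^{(-a, m)}(0, b) = \sum_{n \in \mathbb{Z}} \bigl[\phi_T(b - 2n(m+a)) - \phi_T(b - 2m - 2n(m+a))\bigr]$$
and $\phi_T$ denotes the centered Gaussian density of variance $T$. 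Gaussian tail bounds ensure both the series and its termwise $m$-derivative converge uniformly on compacta, so $h(m) := F'(m) = \int_{b_1}^{b_2} \partial_m p_T^{(-a,m)}(0, b)\, db$ is a continuous function on $[0, g/63]$ and serves as the Radon--Nikodym density of the measure $S \mapsto \mathcal{P}_0(\sup_{[0,T]} B_1 \in S,\, \inf_{[0,T]} B_1 > -a,\, B_1(T) \in [b_1, b_2])$.

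The core task is to verify $h(m) > 0$ on $[0, g/63]$. For $m > 0$ this is a routine pathwise construction: one exhibits a positive-measure set of Wiener paths with $\sup_{[0,T]} B_1 \in (m, m+\varepsilon]$ satisfying the other constraints. The subtle case is $m = 0$, where the $n = 0$ term in $p_T^{(-a, m)}(0, b)$ vanishes identically and no free ``main term'' is available. A direct termwise differentiation yields
$$\partial_m p_T^{(-a, m)}(0, b)\Bigm|_{m = 0} = 2 \sum_{n \in \mathbb{Z}} \phi_T'(b - 2na) = 2\,\theta_T'(b),$$
where $\theta_T(b) := \sum_n \phi_T(b - 2na)$ is the wrapped Gaussian density of period $2a$. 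Being even, $2a$-periodic, and having its unique per-period maximum at $0$ and minimum at $\pm a$, $\theta_T$ is strictly increasing on $(-a, 0)$. Therefore $h(0) = 2[\theta_T(b_2) - \theta_T(b_1)] > 0$, and continuity on the compact interval $[0, g/63]$ yields $\mathscr{K} := \inf_{m \in [0, g/63]} h(m) > 0$.

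The desired conclusion then follows at once: for arbitrary Borel $I$,
$$\mathcal{P}_0\bigl(\sup_{[0,T]} B_1 \in I,\, \inf_{[0,T]} B_1 > -\tfrac{6g}{10},\, B_1(T) \in [-\tfrac{g}{8}, -\tfrac{g}{16}]\bigr) = \int_I h(m)\, dm \ge \mathscr{K}\,\lambda\bigl(I \cap [0, \tfrac{g}{63}]\bigr).$$
I expect the main obstacle to be the positivity of $h$ at $m = 0$: the dominant $n = 0$ image contribution formally cancels, and strict positivity is forced only by the monotone structure of the wrapped Gaussian $\theta_T$ on $(-a, 0)$; the remaining ingredients (Gaussian tail bounds, continuity, compactness) are standard.
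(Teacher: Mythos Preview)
Your approach is correct and takes a genuinely different route from the paper's proof.

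The paper argues probabilistically via a three-stage path decomposition using the strong Markov property. It reduces to intervals $I=[\beta_1,\beta_2]\subset[0,g/63]$, then (i) waits for $B_1$ to hit $\beta_1$ before time $\sn/8$ and before reaching $-6g/10$; (ii) from $\beta_1$, uses the reflection principle to lower-bound the probability of reaching the midpoint $\gamma$ of $[-g/8,-g/16]$ before hitting $\beta_2$ --- this is the step that produces the factor proportional to $\beta_2-\beta_1=\lambda(I)$; (iii) from $\gamma$, bounds the probability of remaining in $[-g/8,-g/16]$ for the rest of the time window. All constants are explicit and the argument uses only elementary tools.

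Your analytic approach via the killed heat kernel and method of images is more structural: it identifies the density $h$ directly and obtains $\mathscr{K}$ as the infimum of a continuous positive function on a compact interval. The delicate point you correctly isolate is $m=0$; the computation $h(0)=2[\theta_T(b_2)-\theta_T(b_1)]$ is right, and the strict monotonicity of the wrapped Gaussian $\theta_T$ on $(-a,0)$ does hold --- one clean justification is the Jacobi triple product, which writes $\theta_T(b)$ (up to a constant) as $\prod_{n\ge1}(1+2q^{2n-1}\cos(\pi b/a)+q^{4n-2})$, a product of positive factors each strictly increasing in $b$ on $(-a,0)$. You should supply this or an equivalent argument rather than merely asserting unimodality.

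One small gap: your stated justification for $h(m)>0$ when $m>0$ --- exhibiting a positive-measure set of paths with $\sup\in(m,m+\varepsilon]$ --- only shows $\int_m^{m+\varepsilon}h>0$, which does not give pointwise positivity of the continuous density at $m$ (consider $h(x)=x^2$ near $0$). This is easily repaired by citing the strict positivity of the classical trivariate density of $(\sup_{[0,T]}B,\inf_{[0,T]}B,B_T)$ on the interior of its support and integrating out the infimum over $(-a,b)$, or by a direct estimate from your image series.
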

\begin{proof}
Let $I \in \mathcal{B}(\mathbb{R})$.  We assume  without loss of generality that, $I \subset [0,\frac{\gn}{63}]$ and $I$ is of the form $I = [\beta_1, \beta_2] \subset \mathbb{R}_+$ for $0 \leq \beta_1 \leq \beta_2$ (the choice of $\mathscr{K}$ will be independent of $\beta_1, \beta_2$). 
Let $\gamma \doteq \frac{\gn}{2}(-\frac{1}{8}-\frac{1}{16})$ be the midpoint of $ [-\frac{\gn}{8},-\frac{\gn}{16}]$. 
For a level $c \in \RR$, let $\tau_{c} \doteq  \inf\{t \geq 0: B_1(t) = c\}$.
Define $\sigma \doteq \tau_{-6\gn/10}$ and $\tau^{\beta}_i \doteq \tau_{\beta_i}$ for $i=1,2$.
Then
\begin{align*}
    &\mathcal{P}_{0}(\sup_{u \leq \frac{\sn}{4}}(B_1(u)) \in I,\, \inf_{u \leq \frac{\sn}{4}} B_1(u) > -\frac{6\gn}{10},\,B_1(\frac{\sn}{4}) \in [-\frac{\gn}{8},-\frac{\gn}{16}]) \\
    & = \mathcal{P}_{0}(\sup_{u \leq \frac{\sn}{4}}(B_1(u)) \in I,\, \sigma > \frac{\sn}{4},\,B_1(\frac{\sn}{4}) \in [-\frac{\gn}{8},-\frac{\gn}{16}]).
\end{align*}
Using the strong Markov property of the Brownian motion, we obtain,
\begin{align*}
    &\mathcal{P}_{0}(\sup_{u \leq \frac{\sn}{4}}(B_1(u)) \in I,\, \sigma > \frac{\sn}{4},\,B_1(\frac{\sn}{4}) \in [-\frac{\gn}{8},-\frac{\gn}{16}]) \\
    & \geq \mathcal{P}_{0}(\tau_1^{\beta} \leq \frac{\sn}{8} \wedge \sigma,\, \sup_{u \leq \frac{\sn}{4}}(B_1(u)) \in I,\, \sigma > \frac{\sn}{4},\,B_1(\frac{\sn}{4}) \in [-\frac{\gn}{8},-\frac{\gn}{16}]) \\
    & = \mathcal{P}_{0}(\textbf{1}_{\{\tau_1^{\beta} \leq \frac{\sn}{8} \wedge \sigma\,\}}\Theta(\tau_1^{\beta})),
\end{align*}
where, for $t \in [0, \frac{\sn}{8}]$,
\begin{align}
\Theta(t) \doteq \mathcal{P}_{\beta_1}(\sup_{u \leq \frac{\sn}{4}-t}(B_1(u)) \leq \beta_2,\, \sigma > \frac{\sn}{4}-t,\,B_1(\frac{\sn}{4}-t) \in [-\frac{\gn}{8},-\frac{\gn}{16}]).
\end{align}
By another application of the strong Markov property, for $t \in [0,\frac{\sn}{8}]$,
\begin{align*}
    \Theta(t) &\geq \mathcal{P}_{\beta_1}(\tau_{\gamma} \leq  \tau_2^{\beta} \wedge \frac{\sn}{16},\, B_1(s) \in [-\frac{\gn}{8},-\frac{\gn}{16}] \mbox{ for all } s \in [\tau_{\gamma}, \frac{\sn}{4}-t]) \\
    & = \mathcal{P}_{\beta_1}(\textbf{1}_{\{\tau_{\gamma} \leq  \tau_2^{\beta} \wedge \frac{\sn}{16}\}}\,\Theta'(\tau_{\gamma})) 
\end{align*}
where  for $u \in [0, \frac{\sn}{16}]$,
\begin{align}
    \Theta'(u) \doteq \mathcal{P}_{\gamma}(B_1(s) \in [-\frac{\gn}{8},-\frac{\gn}{16}] \mbox{ for all } s \in [0, \frac{\sn}{4}-t-u]).
\end{align}
Thus letting
$$\kappa_1 \doteq \mathcal{P}_{\gamma}( B_1(s) \in [-\frac{\gn}{8},-\frac{\gn}{16}] \mbox{ for all } s \in [0, \frac{\sn}{4}])$$
we have that, for $t \in [0,\frac{\sn}{8}]$,
\begin{equation}
\Theta(t) \ge \kappa_1 \mathcal{P}_{\beta_1}(\tau_{\gamma} \leq  \tau_2^{\beta} \wedge \frac{\sn}{16}).
\end{equation}
Also, by an application of the reflection principle,
\begin{align*}
    \mathcal{P}_{\beta_1}(\tau_{\gamma} \leq  \tau_2^{\beta} \wedge \frac{\sn}{16}) & = \mathcal{P}_{\beta_1}(\tau_{\gamma} \leq \frac{\sn}{16}) - \mathcal{P}_{\beta_1}( \tau_2^{\beta} < \tau_{\gamma} \leq \frac{\sn}{16}) \\
    & = \mathcal{P}_{\beta_1}(\tau_{\gamma} \leq \frac{\sn}{16}) - \mathcal{P}_{\beta_1 + 2(\beta_2-\beta_1)}( \tau_{\gamma} \leq \frac{\sn}{16}).
\end{align*}
From the  definition of the stopping times we see,
\begin{align*}
    \mathcal{P}_{\beta_1}(\tau_{\gamma} \leq \frac{\sn}{16}) & = \mathcal{P}_{0}(\sup_{u \leq \frac{\sn}{16}}(B_1(u)) \geq \beta_1 - \gamma ), \\
    \mathcal{P}_{\beta_1 + 2(\beta_2-\beta_1)}( \tau_{\gamma} \leq \frac{\sn}{16}) & = \mathcal{P}_{0}(\sup_{u \leq \frac{\sn}{16}}(B_1(u)) \geq \beta_1 + 2(\beta_2-\beta_1) - \gamma ).
\end{align*}
Using the explicit form for the probability density for the law of the maximum of a Brownian motion, we then obtain,
\begin{align*}
    \mathcal{P}_{\beta_1}(\tau_{\gamma} \leq \frac{\sn}{16}) - \mathcal{P}_{\beta_1 + 2(\beta_2-\beta_1)}(\tau_{\gamma} \leq \frac{\sn}{16}) & = \int_{\beta_1-\gamma}^{\beta_1 + 2(\beta_2-\beta_1)-\gamma}\frac{4\sqrt{2}}{\sqrt{\pi\sn}}e^{-8z^2/\sn} \\
    & \geq \frac{8\sqrt{2}}{\sqrt{\pi\sn}}\inf_{\beta_1-\gamma\leq z\leq \beta_1+2(\beta_2-\beta_1)-\gamma}e^{-8z^2/\sn}(\beta_2-\beta_1).
\end{align*}
Since
$$\beta_1 + 2(\beta_2-\beta_1) - \gamma \leq 2\beta_2 - \gamma \leq \frac{2\gn}{63} + \frac{3\gn}{32} \le \frac{\gn}{4},$$
we have
$$\inf_{\beta_1-\gamma\leq z\leq 2(\beta_2-\beta_1)-\gamma}e^{-8z^2/\sn} \geq e^{-\frac{\gn^2}{2\sn}}.$$
Thus, for $t \in [0, \sn/8]$,
$$\Theta(t) \ge \kappa_1 \mathcal{P}_{\beta_1}(\tau_{\gamma} \leq  \tau_2^{\beta} \wedge \frac{\sn}{16}) 
\ge \kappa_1 \frac{8\sqrt{2}}{\sqrt{\pi\sn}} e^{-\frac{\gn^2}{2\sn}} (\beta_2-\beta_1).$$
Finally, observe that, as $I \subset [0, \frac{\gn}{63}]$,
\begin{align*}
    \mathcal{P}_{0}(\tau_1^{\beta} \leq \frac{\sn}{8} \wedge \sigma\,) \geq \mathcal{P}_{0}(\sup_{u \leq \frac{\sn}{8}}(B_1(s)) > \frac{\gn}{63},\,\,\inf_{u \leq \frac{\sn}{8}}(B_1(s)) > -\frac{6\gn}{10}) \doteq \kappa_2.
\end{align*}
The result now follows on setting
$
\mathscr{K} = \kappa_1\kappa_2\frac{8\sqrt{2}}{\sqrt{\pi\sn}}e^{-\frac{\gn^2}{2\sn}}.$
\end{proof}

For $0\le s \le 1$ and $(z_2, z_3, \ldots , z_N) \in \RR_+^{N-1}$, define
\begin{align*}
    \hat{L}_1(s) &= \sup_{u \leq s}(-B_1(u)), \;\;\;  \hat{Z}_1(s) = B_1(s) + \hat{L}_1(s) \\
    \hat{L}_i(s) &= \sup_{u \leq s}(-z_i+B_{i-1}(u)-B_i(u))^+, \  i = 2, \ldots , N .\\
  \end{align*}

\begin{lemma}\label{H2sub1} Let $I \in \mathcal{B}(\mathbb{R})$ be such that $I \subset [0,\frac{\gn}{63}]$ and $(v, {\bfz}) \in \RR \times \{0\} \times [\gn,\frac{3\gn}{2}]^{N-1}$. Let $H \in \clf^*$. Then the following are equivalent:
	\begin{enumerate}[(a)]
		\item On $H$, $\tilde{\mathbb{P}}^*_{(v,{\bfz})}$ a.s.,
		\begin{inparaenum}[(i)]
	    \item $L_1(\frac{\sn}{4}) \in I$,
	    \item $L_i(\frac{\sn}{4}) \leq \frac{\gn}{6}, \mbox{ for all } i = 2, \ldots ,N$,
	    \item $\sup_{0 \leq u\leq \frac{\sn}{4}}B_1(u)  < \frac{6\gn}{10}$,
	    \item $\sup_{0 \leq u \leq \frac{\sn}{4}}|B_i(u)|  < \frac{\gn}{8}, \mbox{ for all } i = 2,\ldots, N$,
	    \item $Z_1(\frac{\sn}{4})  \in [\frac{\gn}{16},\frac{\gn}{4}]$.	
		\end{inparaenum}
		\item On $H$, $\tilde{\mathbb{P}}^*_{(v,{\bfz})}$ a.s.,
		\begin{inparaenum}[(i')]
		    \item $\hat L_1(\frac{\sn}{4}) \in I$,
		    \item $\hat L_i(\frac{\sn}{4}) \leq \frac{\gn}{6}, \mbox{ for all } i = 2, \ldots,N$,
		    \item $\sup_{0 \leq u\leq \frac{\sn}{4}}B_1(u)  < \frac{6\gn}{10}$,
		    \item $\sup_{0 \leq u \leq \frac{\sn}{4}}|B_i(u)|  < \frac{\gn}{8}, \mbox{ for all } i = 2,\ldots, N$,
		    \item $\hat Z_1(\frac{\sn}{4})  \in [\frac{\gn}{16},\frac{\gn}{4}]$.
			\end{inparaenum}
	\end{enumerate}
	Furthermore, under these equivalent conditions, on $H$, $\tilde{\mathbb{P}}^*_{(v,{\bfz})}$ a.s., $L_1(\sn/4) = \hat L_1(\sn/4)$ and
	$L_i(\sn/4)=0$ for $i= 2, \ldots, N$.
\end{lemma}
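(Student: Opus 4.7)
The plan is to identify $(\bfL, \bfZ)$ on $[0, \sn/4]$ with an explicit candidate via uniqueness of the Skorokhod problem (Proposition \ref{skorokhod}). Under $\tilde\PP^*_{(v,\bfz)}$, the pair $(\bfL, \bfZ)$ is the unique solution of the Skorokhod problem with input $\bfz + A\bfB(\cdot)$ and reflection matrix $R$; so let us define the candidate $\tilde L_1 \doteq \hat L_1$, $\tilde L_i \doteq 0$ for $2 \le i \le N$, and $\tilde\bfZ(t) \doteq \bfz + A\bfB(t) + R\tilde\bfL(t)$. If $(\tilde\bfL, \tilde\bfZ)$ is shown to solve the Skorokhod problem on $[0,\sn/4]$, then uniqueness forces $\bfL \equiv \tilde\bfL$ and $\bfZ \equiv \tilde\bfZ$ on this interval, which in particular yields $L_1(\sn/4) = \hat L_1(\sn/4)$, $Z_1(\sn/4) = \hat Z_1(\sn/4)$, and $L_i(\sn/4) = 0$ for $i \ge 2$. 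These identities simultaneously produce the two-way equivalence and the final moreover-clause.

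For the implication (b) $\Rightarrow$ (a), I will verify the Skorokhod conditions for $(\tilde\bfL, \tilde\bfZ)$ directly on the event in question. Monotonicity of $\hat L_1$ together with (i') gives $\hat L_1(s) \le \gn/63$ throughout $[0,\sn/4]$. Nonnegativity of $\tilde Z_1 = \hat Z_1$ and the one-dimensional complementarity $\int \hat Z_1\, d\hat L_1 = 0$ are classical facts about Skorokhod reflection at $0$. For $i = 2$, $\tilde Z_2 = z_2 + B_2 - B_1 - \hat L_1$ is nonnegative since (iii'), (iv'), and the bound on $\hat L_1$ give $\hat L_1 + B_1 - B_2 < \tfrac{\gn}{63} + \tfrac{6\gn}{10} + \tfrac{\gn}{8} < \gn \le z_2$ uniformly on $[0,\sn/4]$; for $3 \le i \le N$, $\tilde Z_i = z_i + B_i - B_{i-1} \ge \gn - \tfrac{\gn}{4} > 0$ by (iv') and $z_i \ge \gn$. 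Complementarity for $i \ge 2$ is immediate since $\tilde L_i \equiv 0$.

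For the implication (a) $\Rightarrow$ (b), the essential step is to show $L_i \equiv 0$ on $[0, \sn/4]$ for each $2 \le i \le N$; once this is in hand, \eqref{locrep} collapses to $L_1 = \hat L_1$ and $Z_1 = \hat Z_1$, so (i'), (v') follow from (a)(i), (a)(v), while (ii') is automatic because (iii'), (iv') together with $z_i \ge \gn$ force $\hat L_i(\sn/4) = 0$ for $i \ge 2$. I will prove $L_i \equiv 0$ by induction on $i$ via a first-passage contradiction. For $i = 2$, if $L_2(\sn/4) > 0$ and $\tau$ denotes the first time $Z_2$ hits $0$, then $L_2(\tau) = 0$ and the equation $Z_2(\tau) = 0$ rearranges to
\begin{equation*}
z_2 = B_1(\tau) - B_2(\tau) + L_1(\tau) + \tfrac{1}{2} L_3(\tau) < \tfrac{6\gn}{10} + \tfrac{\gn}{8} + \tfrac{\gn}{63} + \tfrac{\gn}{12} < \gn,
\end{equation*}
using (a)(iii), (iv), (i) ($L_1 \le \gn/63$), and (ii) ($L_3 \le \gn/6$); this contradicts $z_2 \ge \gn$. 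Inductively, assuming $L_2, \ldots, L_{i-1} \equiv 0$, the analogous argument applied at the would-be first hit $\tau_i$ of $Z_i$ gives $z_i \le \tfrac{\gn}{4} + \tfrac{\gn}{12} < \gn$ (with $L_{N+1} \equiv 0$ by convention), again a contradiction.

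The main technical obstacle is the mutual coupling of the $L_i$ in \eqref{locrep}: any a priori bound on a single $L_i$ ostensibly requires control of both its neighbors. The first-passage induction circumvents this because at the first moment $Z_i$ attempts to trigger its own reflection, the lower-index local times $L_j$, $j<i$, are already zero by the inductive hypothesis, while $L_{i+1}$ is controlled a priori by (a)(ii). The specific numerical choices $\tfrac{6\gn}{10}, \tfrac{\gn}{8}, \tfrac{\gn}{63}, \tfrac{\gn}{6}$ in the hypotheses, together with the gap lower bound $z_i \ge \gn$, are precisely calibrated to keep the relevant sums strictly below $\gn$ in each step.
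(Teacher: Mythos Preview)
Your proof is correct and shares the same core idea as the paper's: both directions hinge on showing that under the stated bounds the higher local times $L_i$, $i \ge 2$, vanish on $[0,\sn/4]$, after which $L_1 = \hat L_1$ and $Z_1 = \hat Z_1$ follow from \eqref{locrep}. The packaging differs slightly. For (b)$\Rightarrow$(a), you construct the candidate $(\tilde\bfL,\tilde\bfZ)$ and invoke Skorokhod uniqueness directly, which is a clean way to phrase what the paper does via the stopping time $\nu = \min_i \nu_i$ and the contradiction $\nu \le \sn/4$. For (a)$\Rightarrow$(b), the paper bounds the suprema in \eqref{locrep} in one line using (i)--(iv), whereas your first-passage argument reaches the same numerical inequality at the first zero of $Z_i$; the induction is not strictly needed since (a)(ii) already controls $L_{i-1}$, but it does no harm. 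The arithmetic checks in both directions are identical to the paper's.
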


\begin{proof}
Fix $(v,{\bfz}) \in \RR \times \{0\} \times [\gn,\frac{3\gn}{2}]^{N-1}$. 
Noting that $z_i \geq \gn$ for $i = 2, \ldots \editc{,} N$,  we see that,  when conditions $(i) - (v)$ hold, for all $u \leq \frac{\sn}{4}$,
\begin{align*}
    -z_i+B_{i-1}(u)-B_i(u)+\frac{1}{2}(L_{i-1}(u)+L_{i+1}(u)) &\leq -\gn + \frac{\gn}{4} + \frac{\gn}{6}\leq 0, \; i = 3, \ldots, N, \\
        -z_2+B_{1}(u)-B_2(u)+\frac{1}{2}L_{3}(u)+L_{1}(u) &\leq -\gn + \frac{6\gn}{10}+\frac{\gn}{8} + \frac{\gn}{12} + \frac{\gn}{63}  \leq 0.
\end{align*}
 Hence, when conditions $(i) - (v)$ hold on $H$, by \eqref{locrep}, $\tilde{\mathbb{P}}^*_{(v,{\bfz})}$ a.s.,  $L_i(\frac{\sn}{4}) = \hat{L}_i(\frac{\sn}{4}) =0$ for $i = 2, \ldots , N$ which in  turn says that $L_1(\frac{\sn}{4}) = \hat{L}_1(\frac{\sn}{4})$ and 
 $Z_1(\frac{\sn}{4}) = \hat{Z}_1(\frac{\sn}{4})$. Thus in this case  $(i') - (v')$ hold on $H$, $\tilde{\mathbb{P}}^*_{(v,{\bfz})}$ a.s.

On the other hand, suppose that $(i') - (v')$ hold on $H$, $\tilde{\mathbb{P}}^*_{(v,{\bfz})}$ a.s.
Consider the stopping times, 
\begin{align*}
    \nu_2 & = \inf\{t \geq 0: -z_2 + B_{1}(t) - B_2(t) +\frac{1}{2}L_{3}(t)+L_{1}(t) \geq 0\} \\
    \nu_i & = \inf\{t \geq 0: -z_i + B_{i-1}(t) - B_i(t) +\frac{1}{2}(L_{i+1}(t)+L_{i-1}(t)) \geq 0\}, \;\; i = 3, \ldots , N 
   \end{align*}
   and let  $\nu  = \min_{2 \leq i \leq N}\nu_i$.

Then, for $s \leq \nu$, $L_i(s) =\hat L_i(s) = 0,\, \mbox{ for all } i = 2, ..., N$ and so $L_1(s) = \hat{L}_1(s)$ and $Z_1(s) = \hat{Z}_1(s)$. Thus, if $\nu \leq \frac{\sn}{4}$, 
\begin{align*}
    -z_2 + B_1(\nu) - B_2(\nu) +\frac{1}{2} L_3(\nu) +  L_1(\nu) 
    =  -z_2 + B_1(\nu) - B_2(\nu) + \hat{L}_1(\nu) 
    \leq  -\gn +\frac{6\gn}{10} + \frac{\gn}{8} + \frac{\gn}{63} < 0,
\end{align*}
and, for $i = 3, \ldots , N$,
\begin{align*}
    -z_i + B_{i-1}(\nu) - B_i(\nu) + \frac{1}{2}(L_{i+1}(\nu)+L_{i-1}(\nu)) 
    =  -z_i + B_{i-1}(\nu) - B_i(\nu) 
    \leq  -\gn +\frac{\gn}{8} + \frac{\gn}{8} < 0.
\end{align*}
This contradicts the definition of $\nu$ and consequently we must have that $\nu > \frac{\sn}{4}$. Thus 
$(i) - (v)$ hold on $H$, $\tilde{\mathbb{P}}_{(v,{\bfz})}$ a.s., and the result follows.
\end{proof}

Recall the set $F$ introduced in Lemma \ref{kild}.
\begin{lemma}\label{H2sub2} Let $I \in \mathcal{B}(\mathbb{R})$ be such that $I \subset [0,\frac{\gn}{63}]$ and $(v,{\bfz}) \in \RR \times \{0\} \times [\gn,\frac{3\gn}{2}]^{N-1}$. Let $H \in \clf^*$ and suppose the equivalent conditions of Lemma \ref{H2sub1} hold on $H$.
Then, on $H$, $\tilde{\mathbb{P}}^*_{(v,{\bfz})}$ a.s., $\bfZ(\frac{\sn}{4}) \in F$.
\end{lemma}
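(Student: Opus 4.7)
The strategy is a direct computation using the gap equations \eqref{eq:gapprocb} at time $\sn/4$, together with the simplifications afforded by Lemma \ref{H2sub1}. The only non-trivial observation is how to control $Z_2(\sn/4)$ given that no direct upper or lower bound on $B_1(\sn/4)$ is available.

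First I would assemble what Lemma \ref{H2sub1} guarantees on $H$, $\tilde{\mathbb{P}}^*_{(v,\bfz)}$-a.s.: (a) $L_i(\sn/4) = 0$ for all $i = 2,\ldots,N$; (b) $L_1(\sn/4) = \hat L_1(\sn/4) \in I \subset [0,\gn/63]$; (c) $Z_1(\sn/4) = \hat Z_1(\sn/4) = B_1(\sn/4) + L_1(\sn/4) \in [\gn/16,\gn/4]$; (d) $\sup_{u\le \sn/4}|B_i(u)| < \gn/8$ for $i\ge 2$; and (e) $z_i \in [\gn,\tfrac{3\gn}{2}]$ for $i\ge 2$. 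Item (c) already gives the desired range for the first coordinate of $\bfZ(\sn/4)$.

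Next I would handle $Z_2(\sn/4)$. Substituting $L_2(\sn/4) = L_3(\sn/4) = 0$ into \eqref{eq:gapprocb} yields
\begin{equation*}
Z_2(\tfrac{\sn}{4}) \;=\; z_2 + B_2(\tfrac{\sn}{4}) - \bigl(B_1(\tfrac{\sn}{4}) + L_1(\tfrac{\sn}{4})\bigr) \;=\; z_2 + B_2(\tfrac{\sn}{4}) - Z_1(\tfrac{\sn}{4}),
\end{equation*}
where the second equality is precisely observation (c). This is the key step: it replaces the uncontrolled quantity $B_1(\sn/4)$ by the bounded $Z_1(\sn/4)$. Combining with (c), (d), (e) gives the upper bound $\tfrac{3\gn}{2} + \tfrac{\gn}{8} - \tfrac{\gn}{16} \le 2\gn$ and the lower bound $\gn - \tfrac{\gn}{8} - \tfrac{\gn}{4} = \tfrac{5\gn}{8} \ge \tfrac{\gn}{10}$, showing $Z_2(\sn/4) \in [\tfrac{\gn}{10},2\gn]$.

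Finally, for $3 \le i \le N$, the identities $L_{i-1}(\sn/4) = L_i(\sn/4) = L_{i+1}(\sn/4) = 0$ reduce \eqref{eq:gapprocb} to
\begin{equation*}
Z_i(\tfrac{\sn}{4}) \;=\; z_i + B_i(\tfrac{\sn}{4}) - B_{i-1}(\tfrac{\sn}{4}).
\end{equation*}
Using (d) and (e) we get $|B_i(\sn/4) - B_{i-1}(\sn/4)| < \tfrac{\gn}{4}$, hence $Z_i(\sn/4) \in [\tfrac{3\gn}{4},\tfrac{7\gn}{4}] \subset [\tfrac{3\gn}{4},2\gn]$. Collecting the three bounds gives $\bfZ(\sn/4) \in F$. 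No step is really a conceptual obstacle; the delicate part is simply noticing that the equality $Z_1(\sn/4) = B_1(\sn/4) + L_1(\sn/4)$ coming from Lemma \ref{H2sub1} is exactly what is needed to close the bound on $Z_2(\sn/4)$ without a lower bound on $B_1$.
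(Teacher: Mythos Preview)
Your proof is correct and follows essentially the same direct-computation approach as the paper. The only difference is in how you bound $Z_2(\sn/4)$: the paper bounds $B_1(\sn/4)$ and $L_1(\sn/4)$ separately (using $\sup_{u\le\sn/4}B_1(u)<6\gn/10$ for the lower bound and $-B_1(\sn/4)\le \hat L_1(\sn/4)\le \gn/63$ for the upper bound), whereas you group $B_1(\sn/4)+L_1(\sn/4)=Z_1(\sn/4)\in[\gn/16,\gn/4]$, which is slightly cleaner and gives sharper numerical constants, but the underlying argument is the same.
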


\begin{proof}
Under assumptions of the lemma, on $H$, $\tilde{\mathbb{P}}_{(v,{\bfz})}$ a.s.,
\begin{align*}
    Z_2(\sn/4) &= z_2 + B_2(\sn/4) - B_1(\sn/4) -\frac{1}{2}L_3(\sn/4)+L_2(\sn/4)-L_1(\sn/4) \\
    & = z_2 + B_2(\sn/4) - B_1(\sn/4) - L_1(\sn/4) 
     \geq \gn -\frac{\gn}{8}-\frac{6\gn}{10}-\frac{\gn}{63} \geq \frac{\gn}{10}, \\
     Z_i(\sn/4) &= z_i + B_i(\sn/4) - B_{i-1}(\sn/4) -\frac{1}{2}(L_{i-1}(\sn/4)+L_{i+1}(\sn/4))+L_i(\sn/4) \\
     & = z_i + B_i(\sn/4) - B_{i-1}(\sn/4) \geq \frac{3\gn}{4},  \;\;i = 3, \ldots, N.
\end{align*}
From Lemma \ref{H2sub1}, under the assumptions of the current lemma, $L_1(\sn/4) = \hat L_1(\sn/4)$ and so
\begin{align*}
    \frac{\gn}{63} & \geq L_1(\sn/4) = \hat{L}_1(\sn/4) 
     = \sup_{u \leq \sn/4}(-B_1(u)) \geq -B_1(\sn/4).
\end{align*}
Thus we have  the upper bound,
\begin{align*}
    Z_2(\sn/4) &= z_2 + B_2(\sn/4) - B_1(\sn/4) - L_1(\sn/4) \leq \frac{3\gn}{2} + \frac{\gn}{8} + \frac{\gn}{63} \leq 2\gn,
\end{align*}
Also, for $i = 3, ..., N$,
\begin{align*}
     Z_i(\sn/4) &= z_i + B_i(\sn/4) - B_{i-1}(\sn/4) \leq \frac{3\gn}{2} + \frac{\gn}{8} + \frac{\gn}{8} \le 2\gn.
\end{align*}
Hence, $(Z_2(\sn/4), \ldots, Z_N(\sn/4)) \in  [\frac{\gn}{10},2\gn] \times [\frac{3\gn}{4},2\gn]^{N-2}$.
Also, under the conditions of the lemma $Z_1 \in [\frac{\gn}{16},\frac{\gn}{4}]$. Thus 
$\bfZ(\sn/4) \in F$ and the lemma is proved.
\end{proof}

We can now complete the proof of Theorem \ref{minorization}.\\ \ \\

\textbf{Proof of Theorem \ref{minorization}.}
Recall $F, G$ and $G_1$ from Lemma \ref{kild}.
We will prove the theorem with $D \doteq D_1\times G_1$ where $D_1 = [0, g/128]$.
Let $(v,\bfz) \in [0,\frac{g}{128}] \times (0,\infty) \times \RR_+^{N-1}$. All equalities and inequalities of random quantities in the proof are under the measure $\tilde{\mathbb{P}}^*_{(v,\bfz)}$.
Let $r \in [\sn,\en]$ be given. It suffices to establish the estimate in \eqref{MLminor} for  $S \in \mathcal{B}(\mathbb{R} \times \mathbb{R}_+^N)$ of the form $S = I \times J,\,\,\,I \in \mathcal{B}(\mathbb{R}),\,\, J \in \mathcal{B}(\mathbb{R}_+^N)$ with {\cg $I \subseteq D_1$ and $J \subseteq G_1$, for a choice of the constant $K_{(v,\bfz)}$ independent of $I,J$.} 
For such an $S$, letting 
    $\tilde{B}(t) \doteq \sum_{i=1}^N (A^{-1})_{i,1}B_i(t)$,
by Corollary \ref{girsanov},
\begin{align}
\begin{split}\label{minorineq1}
  \mathbb{P}^r((v, \bfz), S)   & = \tilde{\mathbb{E}}^*_{(v,\bfz)} \textbf{1}_{\{V(r) \in I, \bfZ(r) \in J\}}\mathcal{E}(1 )  = \tilde{\mathbb{E}}^*_{(v,\bfz)} \textbf{1}_{\{V(r) \in I, \bfZ(r) \in J\}}\mathcal{E}(r)\\
     & = \,\,\tilde{\mathbb{E}}^*_{(v,\bfz)} \textbf{1}_{\{V(r) \in I, \bfZ(r) \in J\}}e^{ -\int_0^rV(s)d\tilde{B}(s) - \frac{|A^{-1}\bfe_1|^2}{2}\int_0^rV(s)^2ds }.
\end{split}
\end{align}
On the set $\{V(r) \in I\}$,  $L_1(r) = gr - V(r) + v \leq gr + v \leq gr + \frac{g}{128},$ so that by monotonicity, $L_1(s) \leq g(r+\frac{1}{128})$ for all $s \leq r$. This implies that, on this set, for $s \leq r,$ $-2g \leq -g(r + \frac{1}{128}) \leq V(s) \leq gr + v \leq 2g,$ i.e., $|V(s)| \leq 2g$.
Using this estimate in \eqref{minorineq1} we get
\begin{equation}\label{eq:mainest}
 \mathbb{P}^r((v, \bfz), S)  \ge e^{-2|A^{-1}\bfe_1|^2g^2}\tilde{\mathbb{E}}^*_{(v,\bfz)} \textbf{1}_{\{V(r) \in I, \bfZ(r) \in J\}}e^{ -\int_0^rV(s)d\tilde{B}(s) }.
\end{equation}
By It\^{o}'s formula,
\begin{align*}
    -\int_0^rV(s)d\tilde{B}(s) & = \int_0^r\tilde{B}(s)dV(s)  - V(r)\tilde{B}(r) \\
    & = -\int_0^r\tilde{B}(s)dL_1(s) + g\int_0^r\tilde{B}(s)ds - V(r)\tilde{B}(r).
\end{align*}
Define the stopping time
\begin{equation*}
    \tau_1 \doteq \inf\{t \geq 0: Z_1(t) = 0\},
\end{equation*}
and let
\editc{
\begin{align*}
    H \doteq & \left\lbrace \tau_1 \leq \frac{\sn}{4}, (Z_2(\tau_1), \ldots , Z_N(\tau_1)) \in [\gn,\frac{3\gn}{2}]^{N-1}, L_1(\tau_1 + \frac{\sn}{4}) \in v+gr - I,\right.\\
    &\left.\quad  \bfZ(\tau_1+\frac{\sn}{4}) \in F,  \bfZ(s) > 0, \mbox{ for all } s \in [\tau_1+\frac{\sn}{4}, r],
	\,\, \bfZ(r) \in J \right\rbrace.
\end{align*}
}
Note that
\begin{equation}\label{hcontain}
H \subset \{(V(r),\bfZ(r)) \in I \times J\}.
\end{equation}
On $H$ we have, 
\begin{align*}
    - V(r)\tilde{B}(r) & \geq- 2g|\tilde{B}(r)| 
     \geq  - 2g|\tilde{B}(r) - \tilde{B}(\tau_1 + \frac{\sn}{4})| 
     -2g|\tilde{B}(\tau_1+\frac{\sn}{4}) - \tilde{B}(\tau_1)| - 2g|\tilde{B}(\tau_1)|.
\end{align*}
In addition, on $H$,
\begin{align*}
    g\int_0^r\tilde{B}(s)ds & = g\int_0^{\tau_1}\tilde{B}(s)ds + g\int_{\tau_1}^{\tau_1+\frac{\sn}{4}}(\tilde{B}(s) - \tilde{B}(\tau_1))ds + \frac{g\sn}{4}\tilde{B}(\tau_1) \\
    &  +  g\int_{\tau_1+\frac{\sn}{4}}^r(\tilde{B}(s) - \tilde{B}(\tau_1+\frac{\sn}{4}))ds + g\tilde{B}(\tau_1+\frac{\sn}{4})(r-(\tau_1+\frac{\sn}{4})) \\
    & = g\int_0^{\tau_1}\tilde{B}(s)ds + g\int_{\tau_1}^{\tau_1+\frac{\sn}{4}}(\tilde{B}(s) - \tilde{B}(\tau_1))ds + g\tilde{B}(\tau_1)(r-\tau_1) \\
    &  +  g\int_{\tau_1+\frac{\sn}{4}}^r(\tilde{B}(s) - \tilde{B}(\tau_1+\frac{\sn}{4}))ds + g(\tilde{B}(\tau_1+\frac{\sn}{4}) - \tilde{B}(\tau_1))(r-(\tau_1+\frac{\sn}{4})).
\end{align*}
Also, by the definition of $\tau_1$, on  $H$,

\begin{align*}
    -\int_0^r \tilde{B}(s)dL_1(s) & = -\int_{\tau_1}^{\tau_1+\frac{\sn}{4}} \tilde{B}(s)dL_1(s) \\
    & = -\int_{\tau_1}^{\tau_1+\frac{\sn}{4}} (\tilde{B}(s)-\tilde{B}(\tau_1))dL_1(s) - \tilde{B}(\tau_1)(L_1(\tau_1+\frac{\sn}{4})-L_1(\tau_1)) \\
    & {\geq  -\int_{\tau_1}^{\tau_1+\frac{\sn}{4}} (\tilde{B}(s)-\tilde{B}(\tau_1))dL_1(s) - g |\tilde{B}(\tau_1)|.}
\end{align*}

Now let
\begin{align*}
    U_1 & \doteq  g\int_{\tau_1+\frac{\sn}{4}}^r(\tilde{B}(s) - \tilde{B}(\tau_1+\frac{\sn}{4}))ds -2g|\tilde{B}(r)-\tilde{B}(\tau_1 + \frac{\sn}{4})|,\\
    U_2 & \doteq g(\tilde{B}(\tau_1+\frac{\sn}{4}) - \tilde{B}(\tau_1))(r-(\tau_1+\frac{\sn}{4}))  -2g|\tilde{B}(\tau_1+\frac{\sn}{4}) - \tilde{B}(\tau_1)|\\
    &\,\,\, - \int_{\tau_1}^{\tau_1+\frac{\sn}{4}} (\tilde{B}(s)-\tilde{B}(\tau_1))dL_1(s) + g\int_{\tau_1}^{\tau_1+\frac{\sn}{4}}(\tilde{B}(s) - \tilde{B}(\tau_1))ds,\\
    U_3 & \doteq {- 3g|\tilde{B}(\tau_1)|} + g\int_0^{\tau_1}\tilde{B}(s)ds +  g\tilde{B}(\tau_1)(r-\tau_1).
\end{align*}
Then, by \eqref{hcontain}, we have the lower bound
\begin{align}
    \tilde{\mathbb{E}}^*_{(v,\bfz)} \textbf{1}_{\{V(r) \in I, \bfZ(r) \in J\}}e^{ -\int_0^rV(s)d\tilde{B}(s) }  \geq \tilde{\mathbb{E}}^*_{(v,\bfz)} \textbf{1}_He^{ -\int_0^rV(s)d\tilde{B}(s) } 
     { \geq } \tilde{\mathbb{E}}^*_{(v,\bfz)} \textbf{1}_He^{ U_1 + U_2 + U_3}. \label{eq:406}
\end{align}
Recall the killed process $\bfZ^*$ from \eqref{eq:zstar}. Define the sets
\begin{align*}
    H_1(s) & = \{\bfZ^*(s) \in J\}, \; 0\le s  \le 1, \\
    H_2(v) &= \left\lbrace L_1(\frac{\sn}{4}) \in gr+v-I, L_i(\frac{\sn}{4}) \leq \frac{\gn}{6} \mbox{ for } 2\le i \le N,\,\, \sup_{u \leq \frac{\sn}{4}} B_1(u) < \frac{6\gn}{10},\,\,\right. \\
    &\left. \,\,\,\,\,\,\,\,\,\,\,\,\,\sup_{u \leq \frac{\sn}{4}}|B_i(u)| < \frac{\gn}{8}\,\,\,  i = 2, ..., N,\,\, Z_1(\frac{\sn}{4}) \in [\frac{\gn}{16},\frac{\gn}{4}] \right\rbrace, \; \mbox{ where } v\in [0, g/128]. \\
    H_3 & = \left\lbrace \tau_1 \leq \frac{\sn}{4}, (Z_2(\tau_1), \ldots, Z_N(\tau_1)) \in [\gn,\frac{3\gn}{2}]^{N-1}\right\rbrace.
\end{align*}
Also, set
\begin{align*}
    U_1'(t) & \doteq  g\int_0^{t}\tilde{B}(s)ds -2g|\tilde{B}(t)|,\; 0 \le t \le 1,\\
    U_2' & \doteq   -3g|\tilde{B}(\frac{\sn}{4})|
   - \int_{0}^{\frac{\sn}{4}} \tilde{B}(s)dL_1(s) + g\int_{0}^{\frac{\sn}{4}}\tilde{B}(s)ds.
\end{align*}
Applying the Strong Markov Property at $\tau_1+\frac{\sn}{4}$ and then $\tau_1$, we have
\begin{multline}
    \tilde{\mathbb{E}}^*_{(v,\bfz)} \textbf{1}_He^{ U_1 + U_2 + U_3}  \geq \inf_{(\tilde v,\tilde \bfz) \in \RR\times F,\, \frac{\sn}{4} \leq s \leq r }\tilde{\mathbb{E}}^*_{(\tilde v,\tilde \bfz)} \textbf{1}_{H_1(s)}e^{U'_1(s)} \\
    \,\,\,\,\,\,\,\times \inf_{(\hat v,\hat{\bfz}) \in \RR\times [\gn,\frac{3\gn}{2}]^{N-1}}\tilde{\mathbb{E}}^*_{(\hat v,(0,\hat\bfz))} \textbf{1}_{\{L_1(\frac{\sn}{4}) \in gr+v-I, \bfZ(\frac{\sn}{4}) \in F\}}e^{U_2'} \;\;\times \tilde{\mathbb{E}}^*_{(v,\bfz)} \textbf{1}_{H_3}e^{U_3}. \label{eq409}
\end{multline}

Note that since by assumption $I \subseteq [0, g/128]$, $r \in [\sn,\en]$, and $v \in [0, g/128]$, 
\begin{equation}\label{eq:inclu}
	\tilde I \doteq gr+v-I \subseteq [0, g/63].
\end{equation}
Thus, using Lemma \ref{H2sub2},  we see
\begin{multline*}
    \tilde{\mathbb{E}}^*_{(v,\bfz)} \textbf{1}_He^{ U_1 + U_2 + U_3}  \geq 
\inf_{(\tilde v,\tilde \bfz) \in \RR\times F,\, \frac{\sn}{4} \leq s \leq r }\tilde{\mathbb{E}}^*_{(\tilde v,\tilde \bfz)} \textbf{1}_{H_1(s)}e^{U'_1(s)} \\
 \times \inf_{(\hat v,\hat{\bfz}) \in \RR\times [\gn,\frac{3\gn}{2}]^{N-1}}\tilde{\mathbb{E}}^*_{(\hat v,(0,\hat{\bfz}))} \textbf{1}_{H_2(v)}e^{U'_2} \times \tilde{\mathbb{E}}^*_{(v,\bfz)} \textbf{1}_{H_3}e^{U_3}.
\end{multline*}
For the final term, note that, on $H_3$,
{\cg $U_3 \ge -5g\sup_{0\le s \le \en} |\tilde B(s)|$.}
Now, for $M' > 0$, define

\editc{
\begin{align*}
    H_3'(M') &= \left\lbrace\sup_{0 \leq s \leq \en}|\tilde B(s)| < M',\,\,\, Z_1(s) > 0 \,\,\,\mbox{ for all } s \leq \frac{\sn}{8},\,\, \inf_{\frac{\sn}{8} \leq s \leq \frac{\sn}{4}}Z_1(s) = 0, \,\,\right. \\
     & \left. \,\,\,\,\,\,\,\,\,\, (Z_2(s), \ldots \editc{,} Z_N(s)) \in [\gn,\frac{3\gn}{2}]^{N-1} \,\,\,\mbox{ for all } s \in [\frac{\sn}{8}, \frac{\sn}{4}]\right\rbrace.
\end{align*}
}
{\cg
Clearly $H_3'(M') \subset H_3$.
For any $(v, \bfz) \in [0,\frac{g}{128}] \times (0,\infty) \times \RR_+^{N-1}$, one can construct suitable Brownian paths to obtain a measurable choice of $M' = M'(v,\bfz)$ such that
$$
\kappa_{(v,\bfz)} \doteq \tilde{\mathbb{P}}^*_{(v,\bfz)}(H_3'(M'(v,\bfz)))>0.
$$
This definition readily implies the measurability of $(v,\bfz) \mapsto \kappa_{(v,\bfz)}$ through the measurability of the maps $(v,\bfz) \mapsto M'(v,\bfz)$ and $(v,\bfz) \mapsto \tilde{\mathbb{P}}^*_{(v,\bfz)}(A^\circ)$ for any $A^\circ \in \mathcal{F}$.
}
{\cg Recall the set $\bar A$ from the statement of Theorem \ref{minorization}. Using continuity properties of the transition kernel of Brownian motion in its starting point, the choice of $M'(v,\bfz)$ can be made such that
\begin{equation}\label{eq:unifonc}
\sup_{(v,\bfz) \in \bar A} M'(v,\bfz) < \infty, \ \ \ \inf_{(v,\bfz) \in \bar A} \kappa_{(v,\bfz)} >0.
\end{equation}
It now follows that,
\begin{equation} \tilde{\mathbb{E}}^*_{(v,\bfz)} \textbf{1}_{H_3} e^{U_3} \ge 
\tilde{\mathbb{E}}^*_{(v,\bfz)} \textbf{1}_{H_3'(M'(v,\bfz))} e^{U_3} \ge 
e^{-5g M'(v,\bfz)}  \kappa_{(v,\bfz)}. \label{eq:411}\end{equation}}
 Now consider the term involving $H_1(s)$.
Note that, on the set $H_1(s)$, $A\bfB(u) +z \in G$ for all $u \le s$. Since $G$ is bounded, we have that for some $\kappa_G \in (0, \infty)$, under $\tilde{\mathbb{P}}^*_{(v,\bfz)}$, for all $(v,\bfz) \in \RR\times F$
$$\sup_{0\le u \le s} |\tilde B(u)| \le \kappa_G \mbox{ on } H_1(s), \mbox{ for all } s \in [\sn/4, r] \mbox{ and } r \in [\sn,\en].$$
Thus, from Lemma \ref{kild},
\begin{equation}
\inf_{(v,\bfz) \in \RR\times F,\, \frac{\sn}{4} \leq s \leq r }\tilde{\mathbb{E}}^*_{(v,\bfz)} \textbf{1}_{H_1(s)}e^{U'_1(s)}  \ge e^{-3g\kappa_G} \inf_{(v,\bfz) \in \RR\times F,\, \frac{\sn}{4} \leq s \leq r }\tilde{\mathbb{P}}^*_{(v,\bfz)} (\bfZ^*(s)\in J) \ge e^{-3g\kappa_G}  c_G \lambda(J\cap G_1).	
\label{eq:413}
\end{equation}

Consider finally the term involving $H_2(v)$. From Lemma \ref{H2sub1}  (and recalling \eqref{eq:inclu})  it follows that, on $H_2(v)$, for $ v \in [0, g/128]$ and $0\le s \le \sn/4$,
$$-B_1(s) \le \sup_{u\le \sn/4} (-B_1(u)) = L_1(\sn/4) \le g/63.$$
Using this and other properties of the set $H_2(v)$, we see that with $c_A  \doteq  \frac{6\gn}{10} \sum_{i=1}^N |(A^{-1})_{i1}|$, on $H_2(v)$,
\begin{equation}
	\sup_{0\le s \le \sn/4}|\tilde B(s)| \le c_A.
\end{equation}
It then follows that, on $H_2(v)$,
\begin{equation}
	U'_2 \ge -3g c_A - \frac{g \sn}{4} c_A - c_A L_1(\sn/4) \ge -4gc_A.
\end{equation}
Thus, we have
\begin{equation}\label{eq:424}
\inf_{(\hat v,\hat{\bfz}) \in \RR\times [\gn,\frac{3\gn}{2}]^{N-1}}\tilde{\mathbb{E}}^*_{(\hat v,(0,\hat{\bfz}))} \textbf{1}_{H_2(v)}e^{U'_2}  \ge e^{-4gc_A} \inf_{(\hat v,\hat{\bfz}) \in \RR\times [\gn,\frac{3\gn}{2}]^{N-1}}\tilde{\mathbb{P}}^*_{(\hat v,(0,\hat{\bfz}))}(H_2( v)).
\end{equation}
%
Note that the conditions
$\sup_{u\le \sn/4} (-B_2(u)) \le -13g/30 + \hat z_2$ and $\sup_{u \leq \frac{\sn}{4}} B_1(u) <  6\gn/10$ imply that $\hat L_2(\sn/4) \le g/6$.
Thus from  Lemma \ref{H2sub1}, and using \eqref{eq:inclu} again,
\begin{align}
   &\tilde{\mathbb{P}}^*_{(\hat v,(0,\hat{\bfz}))}(H_2(v))\nonumber\\
     &\quad= \tilde{\mathbb{P}}^*_{(\hat v,(0,\hat{\bfz}))}\left( \hat{L}_1(\frac{\sn}{4}) \in \tilde I, \hat{L}_i(\frac{\sn}{4}) \leq \frac{\gn}{6}, \sup_{u \leq \frac{\sn}{4}}|B_i(u)| < \frac{\gn}{8} \mbox{ for } 2\le i \le N,\,\, \sup_{u \leq \frac{\sn}{4}} B_1(u) < \frac{6\gn}{10} , \hat{Z}_1(\frac{\sn}{4}) \in [\frac{\gn}{16},\frac{\gn}{4}] \right) \nonumber\\
     &\quad\geq \tilde{\mathbb{P}}^*_{(\hat v,(0,\hat{\bfz}))}\left(\hat{L}_1(\frac{\sn}{4}) \in \tilde I,\, \sup_{u \leq \frac{\sn}{4}}(-B_2(u)) \leq -\frac{13\gn}{30}+\hat z_2, \hat{L}_3(\frac{\sn}{4}) \leq \frac{\gn}{6}, ..., \hat{L}_N(\frac{\sn}{4}) \leq \frac{\gn}{6}\right.,\,\, \,\, \\
 & \left.  \,\,\,\,\,\,\,\,\,\,\,\,\,\,\,\,\,\,\,\,\,\,\,\,\,\,\,\,\,\,\,\,\,\,\,\,\,\,\sup_{u \leq \frac{\sn}{4}} B_1(u) < \frac{6\gn}{10}, \, \sup_{u \leq \frac{\sn}{4}}|B_i(u)| < \frac{\gn}{8} \mbox{ for } 2\le i \le N,\,\,\hat{Z}_1(\frac{\sn}{4}) \in [\frac{\gn}{16},\frac{\gn}{4}] \right) \nonumber\\
     &\quad= K_{\hat{\bfz}}\,\tilde{\mathbb{P}}^*_{(\hat v,(0,\hat{\bfz}))}\left(\hat{L}_1(\frac{\sn}{4}) \in \tilde I,\, \sup_{u \leq \frac{\sn}{4}} B_1(u) < \frac{6\gn}{10},\,\hat{Z}_1(\frac{\sn}{4}) \in [\frac{\gn}{16},\frac{\gn}{4}]\right), \label{eq:416}
\end{align}
where 
\begin{align}
K_{\hat{\bfz}}=\tilde{\mathbb{P}}^*_{(\hat v,(0,\hat{\bfz}))}\left(\sup_{u \leq \frac{\sn}{4}}(-B_2(u)) \leq -\frac{13\gn}{30}+\hat z_2, \hat{L}_3(\frac{\sn}{4}) \leq \frac{\gn}{6}, ..., \hat{L}_N(\frac{\sn}{4}) \leq \frac{\gn}{6}, \, \sup_{u \leq \frac{\sn}{4}}|B_i(u)| < \frac{\gn}{8} \mbox{ for } 2\le i \le N\,\,\right),
\end{align}
and in the last step we have used the independence of $B_1$ and $(B_2, \ldots , B_N)$.

Note that $\hat{Z}_1(\frac{\sn}{4}) = B_1(\frac{\sn}{4}) + \hat{L}_1(\frac{\sn}{4})$, so if $\hat{L}_1(\frac{\sn}{4}) \in [0, \frac{\gn}{63}]$ and $\,B_1(\frac{\sn}{4}) \in [\frac{\gn}{16}, \frac{\gn}{8}]$, then $\hat{Z}_1(\frac{\sn}{4}) \in [\frac{\gn}{16},\frac{\gn}{4}]$. Consequently,
\begin{align*}
    &\tilde{\mathbb{P}}^*_{(\hat v,(0,\hat{\bfz}))}(\hat{L}_1(\frac{\sn}{4}) \in \tilde I,\, \sup_{u \leq \frac{\sn}{4}} B_1(u) < \frac{6\gn}{10},\,\hat{Z}_1(\frac{\sn}{4}) \in [\frac{\gn}{16},\frac{\gn}{4}]) \\
    & \geq \tilde{\mathbb{P}}^*_{(\hat v,(0,\hat{\bfz}))}(\sup_{u \leq \frac{\sn}{4}}(-B_1(u)) \in \tilde I,\, \sup_{u \leq \frac{\sn}{4}} B_1(u) < \frac{6\gn}{10},\,B_1(\frac{\sn}{4}) \in [\frac{\gn}{16},\frac{\gn}{8}])\\
&=\, \tilde{\mathbb{P}}^*_{(\hat v,(0,\hat{\bfz}))}(\sup_{u \leq \frac{\sn}{4}}(B_1(u)) \in \tilde I,\, \inf_{u \leq \frac{\sn}{4}} B_1(u) > -\frac{6\gn}{10},\,B_1(\frac{\sn}{4}) \in [-\frac{\gn}{8},-\frac{\gn}{16}]),
\end{align*}
where in the last line we have used the fact that
 $\{B(s)\}_{s \leq \frac{\sn}{4}}$ is equal in distribution to $\{-B(s)\}_{s\leq \frac{\sn}{4}}$.
Applying Lemma \ref{subdens} we have 
\begin{multline}
\tilde{\mathbb{P}}^*_{(\hat v,(0,\hat{\bfz}))}(\sup_{u \leq \frac{\sn}{4}}(B_1(u)) \in \tilde I,\, \inf_{u \leq \frac{\sn}{4}} B_1(u) > -\frac{6\gn}{10},\,B_1(\frac{\sn}{4}) \in [-\frac{\gn}{8},-\frac{\gn}{16}]) \ge \mathscr{K}\lambda(\tilde I \cap [0,\frac{\gn}{63}])\\
= \mathscr{K}\lambda(\tilde I ) = \mathscr{K}\lambda( I ) = \mathscr{K}\lambda( I \cap D_1), \label{eq:425}
\end{multline}
where for the last equality we have used that $I \subseteq [0, g/128]=D_1$.
Thus, letting
$$\hat K \doteq \inf_{\hat{\bfz} \in [\gn,\frac{3\gn}{2}]^{N-1}} K_{\hat{\bfz}},$$
we have on combining estimates in  \eqref{eq:mainest}, \eqref{eq:406}, \eqref{eq409}, \eqref{eq:411}, \eqref{eq:413}, \eqref{eq:424}, \eqref{eq:416}, \eqref{eq:425},
\begin{align*}
\mathbb{P}^r((v, \bfz), S)  &\ge e^{-2|A^{-1}\bfe_1|^2g^2} e^{-5gM'(v,\bfz)} \kappa_{(v,\bfz)} e^{-3g\kappa_G}  c_G e^{-4gc_A} \hat K \mathscr{K}\lambda(J\cap G_1)\lambda( I \cap D_1)\\
 &= K_{(v,\bfz)}\lambda((I\times J)\cap D)
 \end{align*}
{\cg where
 $$
K_{(v,\bfz)} = e^{-2|A^{-1}\bfe_1|^2g^2}e^{-5gM'(v,\bfz)} \kappa_{(v,\bfz)} e^{-3g\kappa_G}  c_G e^{-4gc_A} \hat K \mathscr{K}.$$
This proves the first statement in the theorem. The second statement is immediate from the measurability of $(v, \bfz) \mapsto \kappa_{(v,\bfz)}$ indicated earlier in the proof and \eqref{eq:unifonc}.}
\qedsymbol 

\section{Stationary Distribution: Uniqueness}\label{sec:exisuniq}
In this section, we establish uniqueness of the stationary distribution by using the minorization estimate in Theorem \ref{minorization} in conjunction with the following lemma. This lemma also plays a crucial role in establishing exponential ergodicity of the system.
\begin{lemma}\label{uniqlem}
For each $(v,\bfz) \in \mathbb{R} \times \mathbb{R}_+^{N}$, there exists $r_0 \doteq r_0(v,\bfz) \in \mathbb{N}$ such that
\begin{equation*}
    \mathbb{P}^{r_0}((v,\bfz), R) > 0,
\end{equation*}
where
\begin{equation}\label{Rset}
R \doteq (0,\frac{g}{128}) \times (0, \infty) \times \mathbb{R}_+^{N-1}.
\end{equation}
Furthermore, if $v \ge g/128$, we can take $r_0 = 1$.
\end{lemma}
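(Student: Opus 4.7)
The plan is to apply a support-theorem argument under the auxiliary probability $\tilde\PP^*_{(v,\bfz)}$ from Corollary \ref{girsanov}. Since $\mathcal{E}(r_0) > 0$ $\tilde\PP^*$-a.s., the laws of $(V,\bfZ)$ under $\PP^*_{(v,\bfz)}$ and $\tilde\PP^*_{(v,\bfz)}$ on $\clf^{r_0}$ are mutually absolutely continuous, so it suffices to prove $\tilde\PP^*_{(v,\bfz)}((V(r_0),\bfZ(r_0)) \in R) > 0$. Under $\tilde\PP^*$ we have $\bfZ = \Gamma_2(\bfz + A\bfB)$, $L_1 = \Gamma_{11}(\bfz + A\bfB)$ and $V(t) = v + gt - L_1(t)$, so the Lipschitz property in Proposition \ref{skorokhod} makes the map $\bfB \mapsto (V(r_0),\bfZ(r_0))$ sup-norm continuous on $\clc([0,r_0]:\RR^N)$. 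It is therefore enough to exhibit a single deterministic path $\bbd \in \clc([0,r_0]:\RR^N)$ with $\bbd(0)=0$ whose induced trajectory lies strictly inside $R$ at time $r_0$; a sup-norm neighborhood of $\bbd$ then maps into $R$ and the Brownian support theorem gives that neighborhood positive $\tilde\PP^*$-probability.

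For the case $v \ge g/128$ I take $r_0 = 1$ and set $\bbd_i(t) \doteq \bbd_1(t) + (i-1)Kt$ for $2 \le i \le N$, with $K$ to be chosen large, and $\bbd_1$ the piecewise-linear tent $\bbd_1(t) = -mt$ on $[0,1/2]$, $\bbd_1(t) = -m/2 + 2m(t-1/2)$ on $[1/2,1]$, with a parameter $m > 0$. Under the guess $L_i \equiv 0$ for $i \ge 2$ (to be verified), the Skorohod problem for $Z_1$ reduces to the one-dimensional reflection of $z_1 + \bbd_1$, giving $L_1(t) = (mt - z_1)^+$ on $[0,1/2]$ and $L_1(t) \equiv m/2 - z_1$ on $[1/2,1]$ provided $m > 2z_1$. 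Choosing $m$ in the nonempty, positive interval $(2z_1 + 2v + 2g - g/64,\, 2z_1 + 2v + 2g)$ yields $V(1) = v + g - (m/2 - z_1) \in (0, g/128)$ strictly and $Z_1(1) = z_1 + \bbd_1(1) + L_1(1) = m > 0$. To close the self-consistency loop, with $K > m$ one has $Z_2(t) = z_2 + Kt - L_1(t) \ge z_2 + (K-m)t > 0$ on $(0,1]$ (using $L_1(t) \le mt$ on $[0,1/2]$ and $L_1(t) \le m/2$ on $[1/2,1]$), and $Z_i(t) = z_i + Kt > 0$ on $(0,1]$ for $i \ge 3$; by uniqueness in Proposition \ref{skorokhod} this forces $L_i \equiv 0$ for $i \ge 2$, validating the guess.

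For general $(v,\bfz)$ I combine the above with the Markov property. Pick $k \in \NN$ with $v + g(k-1) > g/128$ and, on $[0,k-1]$, take $\bbd_1'(t) = \alpha t$ for some $\alpha > 0$ together with $\bbd_i' \doteq \bbd_1' + (i-1)Kt$ as before. By the same decoupling argument, the Skorohod solution satisfies $\bfL \equiv 0$ on $[0,k-1]$ (since $Z_1(t) = z_1 + \alpha t > 0$ on $(0,k-1]$ and $Z_i(t) = z_i + Kt > 0$ on $(0,k-1]$ for $i \ge 2$), so $V(k-1) = v + g(k-1) > g/128$ exactly. By continuity of the Skorohod map, a sup-norm neighborhood of $\bbd'$ drives $(V(k-1), \bfZ(k-1))$ into a bounded open set $U \subset \{v' \ge g/128\} \times \RR_+^N$, and the support theorem combined with Girsanov as in paragraph 1 gives $\PP^{k-1}((v,\bfz), U') > 0$ for a small compact $U' \subset U$. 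The parameters $(m,K)$ of the $r_0 = 1$ construction depend continuously on $(v',\bfz')$, so $\inf_{(v',\bfz') \in U'} \PP^1((v',\bfz'), R) > 0$, and the Markov property yields
\[
\PP^k((v,\bfz), R) \,\ge\, \PP^{k-1}((v,\bfz), U')\cdot \inf_{(v',\bfz') \in U'}\PP^1((v',\bfz'), R) \,>\, 0,
\]
so $r_0 = k$ works.

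The main technical obstacle is the self-consistency check in paragraph 2: because the Skorohod solution $(\bfZ,\bfL)$ is determined by the input $\bfz + A\bbd$ as a coupled system, the hypothesis $L_i \equiv 0$ for $i \ge 2$ must be confirmed against the $L_1$ computed under that hypothesis, and $K$ must be chosen large enough in terms of $m$ (which in turn depends on $v$ and $z_1$) to guarantee $Z_i > 0$ for $i \ge 2$ throughout $[0,1]$. Once this decoupling is established, the remaining tuning of $V(1)$ into the narrow target interval $(0, g/128)$ is a transparent one-parameter problem solved by the intermediate value theorem applied to the explicit one-dimensional formula for $L_1(1)$.
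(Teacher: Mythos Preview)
Your proof is correct and shares the paper's core strategy: reduce via Corollary~\ref{girsanov} to $\tilde\PP^*_{(v,\bfz)}$, then produce an event on which $(V(r_0),\bfZ(r_0)) \in R$. The paper does this tersely by writing down events of the form $\{L_1(t_0) \in [a,b],\ Z_1(s) > 0 \text{ for } s \in (t_0, r_0]\}$ and simply asserting they have positive $\tilde\PP^*$-probability; your explicit piecewise-linear driving path together with the Lipschitz property of $\Gamma$ and the Brownian support theorem makes this positivity transparent. Your decoupling check (choosing $K>m$ so that the candidate with $L_i\equiv 0$ for $i\ge 2$ is the genuine Skorohod solution) is exactly the right way to close the loop. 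For $v < g/128$ the paper constructs its event directly on $[0,r_0]$ with $r_0$ determined arithmetically from $v$, whereas you bootstrap via the Markov property from the case $v \ge g/128$; both routes work, the paper's one-shot construction being marginally cleaner.

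One minor point in your Markov step: you assert $\inf_{(v',\bfz') \in U'}\PP^1((v',\bfz'),R) > 0$, which is stronger than needed and a little awkward to make uniform under $\PP^*$ (the Girsanov density complicates uniform lower bounds). You can sidestep this: since your first case gives $\PP^1((v',\bfz'),R) > 0$ for \emph{every} $(v',\bfz') \in U'$, and $\PP^{k-1}((v,\bfz),U') > 0$, the integral
\[
\PP^{k}((v,\bfz),R) \ \ge\ \int_{U'}\PP^1((v',\bfz'),R)\,\PP^{k-1}((v,\bfz),d(v',\bfz'))
\]
is already positive, being the integral of a strictly positive measurable function over a set of positive measure. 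Alternatively, concatenate your two deterministic paths into a single path on $[0,k]$ and apply the support theorem once, which is closer in spirit to the paper's direct argument.
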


\begin{proof}
Let $(v,\bfz) \in \mathbb{R} \times \mathbb{R}_+^{N}$ be given. 
In view of Corollary \ref{girsanov} it suffices to show that for some $r_0 \in \NN$
$$ \tilde{\mathbb{P}}^*_{(v,\bfz)}((V(r_0),\bfZ(r_0))\in R) >0.$$

Consider first the case where $v < g/128$. 
Define 
$$
v_1 \doteq
\begin{cases}
\frac{g}{256} - v, & v < \frac{g}{256},\\
\frac{1}{2}(\frac{g}{128}-v), & v \in [\frac{g}{256}, \frac{g}{128}).
\end{cases}
$$
Set $v_2 \doteq v+v_1$. 
Write $v_1 = g(k+t_1)$ with $k \in \NN_0$ and $t_1\in [0,1)$. Let $r_0 \doteq k+1$ and $t_2 \doteq (k+t_1)/2$.
Let $v_3 \doteq gr_0 - v_1$.
Fix $\delta \in (0, v_3)$ such that $[v_2-\delta, v_2+\delta] \subset (0, g/128)$.
Consider the set $A_1 \in \clf^*$ defined as
$$A_1 \doteq  \{L_1(t_2) \in [v_3-\delta, v_3+\delta], Z_1(t)>0 \mbox{ for all } t \in (t_2, r_0]\}.$$
Then on $A_1$, $\bfZ(r_0) \in (0, \infty)\times \RR_+^{N-1}$ and
\begin{align*}
	V(r_0) &= v+ (k+1)g - L_1(r_0) = v+ (k+1)g - L_1(t_2) \in v+ (k+1)g  - [v_3-\delta, v_3+\delta]\\
	&=  [v+ (k+1)g  -v_3-\delta, v+ (k+1)g  -v_3+\delta].
\end{align*}
Also
$$v+ (k+1)g  -v_3 = v+ (k+1)g  -(g(k+1)-v_1) = v+v_1 = v_2$$
Thus on $A_1$, $V(r_0) \in [v_2-\delta, v_2+ \delta] \subset (0, g/128)$ and consequently $A_1 \subset \{(V(r_0), \bfZ(r_0)) \in R\}$.
It is easily verified that $\tilde{\mathbb{P}}^*_{(v,\bfz)}(A_1) >0$ which proves the result for the case 
$v < g/128$.

Now consider the case $v\ge g/128$.
Let $v_1 \doteq v+g- g/256$ and fix $\delta \in (0, g/256)$.
Consider the set $A_2 \in \clf^*$ defined as
$$A_2 \doteq  \{L_1(1/2) \in [v_1-\delta, v_1+\delta], Z_1(t)>0 \mbox{ for all } t \in (1/2, 1]\}.$$
Then, with $r_0=1$, we see, on $A_2$, $\bfZ(r_0) \in (0, \infty)\times \RR_+^{N-1}$ and
\begin{align*}
	V(r_0) &= v+ g - L_1(r_0) = v+ g - L_1(1/2) \in v+ g  - [v_1-\delta, v_1+\delta]\\
	&=  [v+ g  -v_1-\delta, v+ g  -v_1+\delta] = [g/256-\delta, g/256+\delta] = [v_2-\delta, v_2+\delta]\subset (0, g/128).
\end{align*}
Thus  $A_2 \subset \{(V(r_0), \bfZ(r_0)) \in R\}$.
Once again, it is easily verified that $\tilde{\mathbb{P}}^*_{(v,\bfz)}(A_2) >0$ proving the result for the case 
$v \ge g/128$ with $r_0=1$.

\end{proof}
\begin{theorem}\label{atmostonesd}
The  Markov family $\{\PP_{(v,\bfz)}\}_{(v,\bfz) \in \RR \times \RR_+^N}$ has at most one stationary distribution.
\end{theorem}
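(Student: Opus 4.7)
My plan is to combine the accessibility result of Lemma~\ref{uniqlem} with the minorization estimate of Theorem~\ref{minorization} to show that every stationary probability measure is bounded below by a strictly positive multiple of a fixed nonzero measure, and then conclude uniqueness via a Hahn-Jordan argument on the signed difference of two candidate stationary distributions.

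First, I fix any compact set of the form $\bar A = [0, g/128] \times [a_1, b_1] \times \cdots \times [a_N, b_N]$ with $a_1 > 0$, and fix $t^* \in [\sn, \en]$. Theorem~\ref{minorization} furnishes a Borel set $D$ with $\lambda(D) > 0$ and a constant $\bar K_{\bar A} > 0$ so that $\PP^{t^*}((v, \bfz), S) \ge \bar K_{\bar A}\,\lambda(S \cap D)$ for all $(v, \bfz) \in \bar A$. Setting $\nu(\cdot) \doteq \lambda(\cdot \cap D)$, this is the minorizing measure I will work with. The key intermediate step is to show that any stationary probability $\mu$ satisfies $\mu(\bar A) > 0$ for some such $\bar A$; to this end I first show $\mu(R) > 0$ (with $R$ as in Lemma~\ref{uniqlem}), using the stationarity identity $\mu(R) = \int \PP^n((v, \bfz), R)\, d\mu$ and the fact that Lemma~\ref{uniqlem} makes $\bigcup_n \{(v,\bfz) : \PP^n((v,\bfz), R) > 0\}$ exhaust the state space, and I then cover $R$ by countably many compact sets of the form $\bar A$ and pick one carrying positive $\mu$-mass.

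Armed with $\mu(\bar A) > 0$, stationarity together with Theorem~\ref{minorization} yields
$$\mu(S) = \int \PP^{t^*}((v, \bfz), S)\, d\mu \ge \bar K_{\bar A}\, \mu(\bar A)\, \nu(S)$$
for every Borel $S$, so every stationary distribution dominates a strictly positive multiple of $\nu$. To close the argument, I take two stationary distributions $\mu_1, \mu_2$ and decompose $\mu_1 - \mu_2 = \eta^+ - \eta^-$ by Hahn-Jordan. Using $(\mu \PP^t)^{\pm} \le \mu^{\pm} \PP^t$ together with the equality of total masses $\|\eta^+\| = \|\eta^-\|$, I deduce that both $\eta^{\pm}$ are themselves invariant under the full semigroup $\{\PP^t\}$. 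If either $\eta^{\pm}$ were nonzero, its normalization would be a stationary probability and hence would dominate a positive multiple of $\nu$ by the previous paragraph; but this would force $\nu$ to charge each of the disjoint supports of $\eta^+$ and $\eta^-$, contradicting $\nu(D) = \lambda(D) > 0$. Therefore $\eta^{\pm} = 0$ and $\mu_1 = \mu_2$.

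The main obstacle I anticipate is the passage from $\mu(R) > 0$ to $\mu(\bar A) > 0$ for a single compact $\bar A$ on which Theorem~\ref{minorization} furnishes a \emph{uniform} lower bound; this is a routine countable-covering argument but must be organized with care so that $\bar K_{\bar A}$ remains bounded away from zero. A secondary subtlety is justifying that the Hahn-Jordan components of $\mu_1 - \mu_2$ are genuinely invariant under the entire semigroup $\{\PP^t\}$, which hinges on the equality of their total masses.
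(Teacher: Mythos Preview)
Your argument is correct but takes a different route from the paper's. The paper invokes Birkhoff's ergodic theorem to reduce to two mutually singular stationary distributions $\pi,\pi'$ with $\pi(A)=\pi'(A^c)=0$, and then shows $\pi(A)>0$ directly: for \emph{every} starting point $(\tilde v,\tilde\bfz)$ the mixed kernel $\QQ=\sum_i 2^{-i}\PP^{i+\sn}$ charges $A$, using Lemma~\ref{uniqlem} to reach $R$ in some integer time and then the \emph{pointwise} positivity $K_{(v,\bfz)}>0$ on $R$ from Theorem~\ref{minorization}, so $\pi(A)=\int\QQ(\cdot,A)\,d\pi>0$. Your approach instead establishes that every stationary probability dominates a positive multiple of the fixed measure $\nu=\lambda(\cdot\cap D)$ and then eliminates the Hahn--Jordan parts of the signed difference $\mu_1-\mu_2$. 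The paper's version is slightly leaner because it integrates the measurable map $(v,\bfz)\mapsto K_{(v,\bfz)}$ over $R$ and thereby sidesteps your countable-covering step (locating a single compact $\bar A$ with $\mu(\bar A)>0$ and uniform constant $\bar K_{\bar A}$); your version, on the other hand, yields the stronger intermediate statement that all stationary probabilities share a common nontrivial component absolutely continuous with respect to $\nu$, from which the impossibility of mutual singularity is transparent. One small wording slip in your final step: the domination $\eta^{\pm}/\|\eta^{\pm}\|\ge c^{\pm}\nu$ together with $\eta^+(P^c)=\eta^-(P)=0$ forces $\nu(P^c)=\nu(P)=0$, i.e.\ $\nu\equiv 0$, and \emph{that} is what contradicts $\nu(D)>0$; the implication runs opposite to how you phrased it, but the conclusion is unaffected.
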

\begin{proof}
By Birkhoff's ergodic theorem, if there are multiple  stationary distributions, then we can find two that are mutually singular \cite[Chapter 4, Theorem 4.4 and Lemma 4.6]{einsiedlerergodic}. Suppose that $\pi, \pi'$ are mutually singular stationary distributions. Then there is a  $A \in \clb(\mathbb{R} \times \mathbb{R}_+^N)$ such that $\pi(A) = \pi'(A^c) = 0$. Recall 
the set $D$ from Theorem \ref{minorization}.  Since $\lambda(D) > 0$, it follows that either $\lambda(D \cap A) > 0$ or $\lambda(D\cap A^c) > 0$. For specificity, suppose $\lambda(D \cap A) > 0$. We will now argue that $\pi(A)>0$, arriving at a contradiction.
 By Theorem \ref{minorization}, with $R$ as in Lemma \ref{uniqlem}, for every $(v, \bfz) \in R$, there is a $K_{(v,\bfz)}>0$ such that
\begin{equation}\label{uniqineq}
    \mathbb{P}^{\sn}((v,\bfz), A) \geq K_{(v,\bfz)}\lambda(A\cap D).
\end{equation}
Define the transition probability kernel  $\QQ$ on $\RR \times \RR_+^N$ as
\begin{equation*}
    \QQ((\tilde v,\tilde \bfz), S) \doteq \sum_{i=1}^\infty \frac{1}{2^i}\mathbb{P}^{i+\sn}((\tilde v,\tilde \bfz), S), \; (\tilde v,\tilde \bfz) \in \RR \times \RR_+^N, \; S \in \clb(\RR \times \RR_+^N).
\end{equation*}
Since $\pi$ is a stationary distribution, we have
\begin{equation}\label{eq:statstep}
	   \pi(A)  = \int_{\mathbb{R} \times \mathbb{R}_+^N} \QQ((\tilde v,\tilde \bfz), A) d\pi(\tilde v,\tilde \bfz).
\end{equation}
Also, for any $(\tilde v,\tilde \bfz) \in \RR \times \RR_+^N$ and with $r_0 = r_0(\tilde v,\tilde \bfz) \in \NN$ as in Lemma \ref{uniqlem},
\begin{align*}
	\QQ((\tilde v,\tilde \bfz), A) &\ge 2^{-r_0} \mathbb{P}^{r_0+\sn}((\tilde v,\tilde \bfz), A)
	\ge 2^{-r_0} \int_{R} \mathbb{P}^{r_0}((\tilde v,\tilde \bfz), (d v, d\bfz))
	\mathbb{P}^{\sn}(( v,\bfz), A)\\
	&\ge 2^{-r_0} \lambda(A\cap D)\int_{R} \mathbb{P}^{r_0}((\tilde v,\tilde \bfz), (d v, d\bfz))
	K_{(v,\bfz)} >0.
\end{align*}
From \eqref{eq:statstep} it now follows that $\pi(A)>0$ which gives a contradiction and proves the result.
\end{proof}

\section{Product form of stationary density}\label{sec:prodform}

In this section, we prove Theorem \ref{thm:prodform}. The proof relies on `guessing' a product form for the stationary joint density and showing that it satisfies the partial differential equations (along with appropriate boundary conditions) that characterize such stationary densities. This guess is inspired by \cite{banerjee2019gravitation}, where a product form joint density was obtained for the velocity and gap processes of the system comprising one inert particle and one Brownian particle.

\begin{proof}[Proof of Theorem \ref{thm:prodform}]
The generator of the process $(V,\mathbf{Z})$ given by \eqref{eq:gapproc} acts on any $f: \mathbb{R} \times \mathbb{R}_+^N \rightarrow \mathbb{R}$ that is continuously differentiable in $v$ and twice continuously differentiable in $(z_1,\dots, z_N)$, and compactly supported in the interior of $\mathbb{R} \times \mathbb{R}_+^N$, by
$$
\mathcal{L} f(v,\bfz) = \frac{1}{2}\sum_{1 \le i,j\le N} h_{ij}\frac{\partial f}{\partial z_i \partial z_j}(v,\bfz) +g \frac{\partial f}{\partial v}(v,\bfz) - v\frac{\partial f}{\partial z_1}(v,\bfz), \ (v,\bfz) \in \mathbb{R} \times (0,\infty)^N,
$$
where $h_{11} = 1$, $h_{ii} = 2$ for $2 \le i \le N$, $h_{ij} = -1$ for $|i-j| = 1$, and $h_{ij}=0$ otherwise.
Moreover, from the pathwise existence and uniqueness (Theorem \ref{thm:wellposed}) it readily follows that the associated submartingale problem \cite[Definition 2.1]{KangRam} for our process is well-posed. For $c_0,c_1,\ldots,c_N, { \phi} >0$, consider the function
\begin{equation}
\pi(v, \bfz) = c_{\pi}e^{-c_0(v + \phi)^2}\prod_{i=1}^Ne^{-c_iz_i}, \ (v,\bfz) \in \mathbb{R} \times \mathbb{R}_+^N, \label{eq:920}
\end{equation}
where $c_{\pi}$ is the normalization constant ensuring $\int_{\mathbb{R} \times \mathbb{R}_+^N} \pi(v,\bfz) dv d\bfz =1$. Translating the conditions (1)-(3) of \cite[Theorem 3]{KangRam}, $\pi$ is the density of a stationary distribution if $\pi$ satisfies the interior condition
\begin{align}
\mathcal{L}^* \pi(v,\bfz) \doteq \frac{1}{2}\sum_{1 \le i,j \le N} h_{ij}\frac{\partial \pi}{\partial z_i \partial z_j}(v,\bfz) -g \frac{\partial \pi}{\partial v}(v,\bfz) + \frac{\partial (v \pi)}{\partial z_1}(v,\bfz) = 0, \ (v,\bfz) \in \mathbb{R} \times (0,\infty)^N, \label{s1}
\end{align}
and boundary conditions
\begin{align}
 2v\pi(v,\bfz)+\frac{\partial \pi}{\partial z_1}(v,\bfz)-\frac{\partial \pi}{\partial z_2}(v,\bfz)+\frac{\partial \pi}{\partial v}(v,\bfz) &= 0 \ \mbox{ if } z_1=0, \label{s2}\\
 -\frac{\partial \pi}{\partial z_{i-1}}(v,\bfz) +2\frac{\partial \pi}{\partial z_i}(v,\bfz) - \frac{\partial \pi}{\partial z_{i+1}}(v,\bfz)& = 0 \ \mbox{ if } z_i=0, \mbox{ for some } 2 \le i \le N-1, \label{s3}\\
 -\frac{\partial \pi}{\partial z_{N-1}}(v,\bfz) + 2 \frac{\partial \pi}{\partial z_{N}}(v,\bfz)& = 0 \ \mbox{ if } z_N=0. \label{s4}
 \end{align}
 We will solve for the constants $c_0,c_1,\ldots,c_N, c_{\pi}$ to obtain a $\pi$ satisfying the above conditions.
 
The conditions \eqref{s3} and \eqref{s4} applied to \eqref{eq:920} yield that
\begin{align}
c_{i-1} - 2c_i + c_{i+1} &= 0, \,\,\,\,\,i = 2, ..., N-1,\label{s5}\\
   c_{N-1} -2c_N &= 0.
 \end{align}
From these identities, we obtain that
\begin{align*}
    c_{N-1} &= 2c_N \\
    c_{N-2} &= 2c_{N-1} - c_N = 3c_N.
\end{align*}
Fix $j \in \{2,...,N-1\}$. Suppose that we have $c_i = (N-i+1)c_N$ for all $ j \leq i \leq N$. Then, from \eqref{s5},
\begin{align*}
    c_{j-1} = 2c_j - c_{j+1} &= (2(N-j+1) - (N-j))c_N \\
    & = (N-(j-1)+1)c_N.
\end{align*}
Hence, we have by induction that $c_i = (N-i+1)c_N$ for $i = 1,...,N$. Substituting this into \eqref{s2}, we  see that
$$2v-c_1+c_2-2c_0(v+\phi)=0, \,\,\, \mbox{ for all } \ v \in \mathbb{R}.$$
Since this holds for all $v \in \mathbb{R}$, we must have $c_0 = 1$, and so
\begin{align*}
    2\phi = c_2 - c_1 = (N-1)c_N - Nc_N = -c_N 
\end{align*}
and thus $c_N = -2\phi$. Next substituting \eqref{eq:920} in \eqref{s1},
\begin{equation}\label{icond}
\frac{1}{2}\sum_{1 \leq i,j \leq N}h_{ij}c_ic_j + 2gc_0(v+\phi)-c_1v = 0, \,\,\,\mbox{ for all } \ v \in \mathbb{R}.
\end{equation}
Again, since this holds for all $v \in \mathbb{R}$, we must have,
$$2g = c_1 = Nc_N.$$
From the above relations, we obtain
\begin{align}
	\label{statdistconst}
    c_0 = 1 ,\;\;
    c_i = 2\left(\frac{N-i+1}{N}\right)g,\,\,\, \  i = 1,...,N,\;\;
    \phi = -\frac{g}{N}.
\end{align}
To show that this choice of constants yields a valid density for some stationary distribution, it remains only to demonstrate that \eqref{s1} holds for all $(v,\bfz) \in \mathbb{R} \times (0,\infty)^N$, or equivalently, from \eqref{icond} and \eqref{statdistconst},
\begin{align}\label{fi1}
    \frac{1}{2}\sum_{1 \leq i,j \leq N}h_{ij}c_ic_j - 2\frac{g^2}{N} = 0.
\end{align}
To see this, note that
\begin{align*}
    \frac{1}{2}\sum_{1 \leq i,j \leq N}h_{ij}c_ic_j & = \frac{1}{2}\sum_{1 \leq i,j \leq N}h_{ij}\left(\frac{N-i+1}{N}\right)\left(\frac{N-j+1}{N}\right)4g^2 \\
    & = \frac{2g^2}{N^2}\sum_{1 \leq i,j \leq N}h_{ij}(N-i+1)(N-j+1) \\
    & = \frac{2g^2}{N^2}\sum_{i=1}^N(N-i+1)\sum_{j=1}^N h_{ij}(N-j+1).
\end{align*}
From the formulae of $\{h_{ij}\}_{1 \le i,j \le N}$, it follows that
$$
\sum_{j=1}^N h_{ij}(N-j+1) = \delta_{1,i},\,\,\,\mbox{ for all } \ i = 1,...,N,
$$
where $\delta_{1,i}$ is the Kronecker delta function. Hence,
\begin{align*}
  \frac{1}{2}\sum_{1 \leq i,j \leq N}h_{ij}c_ic_j
   &= \frac{2g^2}{N^2}\sum_{i=1}^N(N-i+1)\delta_{1,i}
    = \frac{2g^2}{N},
\end{align*}
which proves \eqref{fi1}.
We have therefore shown that $\pi$ with constants as in \eqref{statdistconst} is indeed the density for a stationary distribution of the process $(V,\mathbf{Z})$. Uniqueness follows from Theorem \ref{thm:exisuniq}.
\end{proof}

\section{Exponential Ergodicity}\label{sec:geomerg}

In this section we will prove Theorem \ref{thm:geomerg}. Since the main source of stability
in our system is the local time interactions between particles, standard PDE techniques for constructing
Lyapunov functions (\cite{khas},\cite{mattingly2002ergodicity},\cite{eberle2019couplings}) for hypoelliptic diffusions are hard to implement.
Furthermore, the singular nature of the dynamics arising from the motion of the inert particle, and the spatial dependence of the drift (which contains a $V$ term), make it challenging to adapt the Lyapunov function constructions  for reflected Brownian motions, which proceed via an analysis of the associated noiseless system \cite{dupuis1994lyapunov},\cite{atar2001positive}.

\editc{\subsection{Outline of Approach}}
Here, we take a different approach to exponential ergodicity by analyzing excursions of the process between appropriately chosen stopping times (see \eqref{eq:sigmatimes}) as the velocity of the inert particle `toggles' between two levels. Control on the exponential moments of these stopping times is established in Sections \ref{highlev} and \ref{sec:lowlev}. In Section \ref{singdrift}, it is shown that the intersection local time between the bottom two particles creates a `singular' drift that results in a reduction of the function $\bar{Z}_2(t) \doteq \sum_{i=1}^{N-1}iZ_{N-i+1}(t)$ of the gaps when observed at successive stopping times. These estimates are combined in Section \ref{comphit} to show that the distribution of return times of the process to an appropriately chosen compact set $C^*$ has exponentially decaying tails. Finally, in Section \ref{ee}, the exponential moments of this return time are used to construct a suitable Lyapunov function. The minorization estimate in Theorem \ref{minorization} is utilized to show that $C^*$ is a `petite' (or small) set in the language of \cite{DowMeyTwe}. These facts together imply exponential ergodicity using the machinery developed in \cite{DowMeyTwe} (see Theorem 6.2 there). \editc{Proofs of some
technical lemmas are deferred to Section \ref{sec:techlem} in order to make it easier to see the overall idea.}

\editc{We note here that the connection between finiteness of exponential moments of certain hitting times, Lyapunov functions and exponential ergodicity is not new \cite{meyn2012markov,DowMeyTwe}. The main work in this section is in establishing that exponential moments of associated hitting times are finite through a detailed pathwise analysis of the process. A general treatment of the above connection in the context of diffusion processes has been undertaken in \cite{cattiaux2013poincare,monmarche20192}, among others. However, typically the diffusion processes are assumed to be hypoelliptic and/or reversible with respect to the stationary measure, neither of which  apply to our setting.}

\subsection{An Inequality for the Local Time}\label{loctimeineq}
In this section we 
establish an estimate on local times which will be used several times. Recall the matrix $W$ from Section \ref{sec:mainres} and the process $\bfB^*$ from
\eqref{bstar}.
\begin{lemma}\label{locin}
For any $(v,\bfz) \in \mathbb{R} \times \mathbb{R}_+^{N}$ and $t \geq 0$, the following inequality holds, $\mathbb{P}^*_{(v,\bfz)}-\textnormal{a.s.}$, for all $i = 1, 2, ..., N$, 
\begin{equation}\label{LTineq}
    L_i(t) \leq W_{i,1}t\sup_{0 \leq s \leq t}(V(s))^+ + \sum_{j=1}^{N}W_{i,j}B^*_j(t).
\end{equation}
Moreover, with $\bar Y(t) \doteq \sum_{i=2}^N (N-i+1) B_i^*(t) $,
\begin{equation}\label{eql2l1}
	L_2(t) \le \frac{2(N-1)}{N}L_1(t) + \frac{2}{N} \bar Y(t), \; t \ge 0.
\end{equation}

\end{lemma}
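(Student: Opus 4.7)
The plan is to obtain both bounds from the one-sided Skorohod reflection representation of each $L_i$. From \eqref{eq:gapprocb}, each coordinate can be written as $Z_i = X_i + L_i$ with $X_i$ a continuous adapted process satisfying $X_i(0) = Z_i^0 \ge 0$ and $L_i$ nondecreasing and supported on $\{Z_i = 0\}$, so that the one-sided Skorohod map yields $L_i(t) = \sup_{s \le t}(-X_i(s))^+$. First I would drop the nonpositive constants $-Z_i^0$ from inside the supremum (which only enlarges the right-hand side) and bound $\sup_{s\le t}(\int_0^s V(u)\,du)^+ \le t \sup_{s\le t}(V(s))^+$ to arrive at
\begin{align*}
L_1(t) &\le t\sup_{s\le t}(V(s))^+ + B_1^*(t) + \frac{1}{2} L_2(t),\\
L_2(t) &\le B_2^*(t) + L_1(t) + \frac{1}{2} L_3(t),\\
L_i(t) &\le B_i^*(t) + \frac{1}{2}L_{i-1}(t) + \frac{1}{2}L_{i+1}(t), \quad 3 \le i \le N,
\end{align*}
with $L_{N+1}\equiv 0$. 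In vector form this reads $\bfL(t) \le U \bfL(t) + t\sup_{s\le t}(V(s))^+\bfe_1 + \bfB^*(t)$, where $U = I-R$ is as in \eqref{Umat}. Since $U$ has nonnegative entries and spectral radius strictly less than one, iterating gives $\bfL(t) \le U^n \bfL(t) + (I + U + \cdots + U^{n-1})(t\sup_{s\le t}(V(s))^+\bfe_1 + \bfB^*(t))$; sending $n \to \infty$, the first term vanishes and the coefficient matrix converges to $W = R^{-1}$, yielding \eqref{LTineq}.

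For \eqref{eql2l1}, I would take a weighted linear combination of the same componentwise inequalities chosen to cancel all $L_j$ with $j \ge 3$. Define $\delta_2 \doteq L_2 - L_1 - \frac{1}{2}L_3$, $\delta_i \doteq L_i - \frac{1}{2}(L_{i-1}+L_{i+1})$ for $3 \le i \le N-1$, and $\delta_N \doteq L_N - \frac{1}{2}L_{N-1}$, so that $\delta_i(t) \le B_i^*(t)$ for $2 \le i \le N$. The aim is to choose nonnegative weights $\alpha_i$ for which $\sum_{i=2}^N \alpha_i \delta_i$ has coefficient $1$ on $L_2$, coefficient $-c$ on $L_1$ for some $c \ge 0$, and coefficient $0$ on each $L_j$, $j \ge 3$. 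Matching coefficients produces the discrete boundary value problem $\alpha_i = \frac{1}{2}(\alpha_{i-1}+\alpha_{i+1})$ for $3 \le i \le N-1$, together with $\alpha_N = \frac{1}{2}\alpha_{N-1}$ and $\alpha_2 - \frac{1}{2}\alpha_3 = 1$. The linear ansatz $\alpha_i = \frac{2(N-i+1)}{N}$ verifies all these constraints, with $c = \alpha_2 = \frac{2(N-1)}{N}$ and $\sum_{i=2}^N \alpha_i B_i^*(t) = \frac{2}{N}\bar Y(t)$; combining these pieces gives \eqref{eql2l1}.

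The one delicate point, which I do not expect to pose a serious obstacle, is justifying the one-sided Skorohod representation for each $L_i$ in the presence of the other local times and of $\int_0^\cdot V(u)\,du$ on the right-hand side. For each fixed $i$, one treats all the remaining adapted terms as a given continuous driving path, and the explicit formula $L_i(t) = \sup_{s\le t}(-X_i(s))^+$ is then the usual one-sided reflection. The remainder is routine algebraic manipulation of the tridiagonal system together with a Neumann-series inversion of $R$.
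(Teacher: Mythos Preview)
Your proposal is correct and matches the paper's proof essentially line for line: the paper writes out the same per-coordinate Skorohod representations \eqref{eq:loctimes}, derives the identical componentwise bounds \eqref{eq:lbds}, passes to the vector inequality $\bfL \le U\bfL + \bfB^* + t\sup_s V(s)^+\bfe_1$, and inverts via the nonnegativity of $W=(I-U)^{-1}$; for \eqref{eql2l1} it takes the same weighted combination with weights $(N-i+1)$ (your $\alpha_i$ differ only by the harmless factor $2/N$) and performs the same telescoping cancellation. Your ``delicate point'' is handled exactly as you anticipate --- the paper simply writes down the one-dimensional Skorohod formula for each $L_i$ with the remaining terms absorbed into the driving path.
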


\begin{proof}
Let $(v,\bfz) \in \mathbb{R} \times \mathbb{R}_+^{N}$ be given. 
All inequalities in the proof are a.s. under $\mathbb{P}^*_{(v,\bfz)}$.
For $1\le i \le N$, the local times $L_i$ are given as
\begin{equation}\label{eq:loctimes}
\begin{aligned}
    L_1(t) &= \sup_{s \leq t}(-z_1+\frac{1}{2}L_2(s) + \int_0^sV(u)du - B_1(s))^+ \\
    L_2(t) &= \sup_{s \leq t}(-z_2 + \frac{1}{2}L_3(s) + L_1(s) + B_1(s) - B_2(s))^+ \\
    L_i(t) &= \sup_{s \leq t}(-z_i + \frac{1}{2}(L_{i-1}(s)+L_{i+1}(s))+B_{i-1}(s)-B_{i}(s))^+, \;\; i = 3, \ldots , N. 
\end{aligned}
\end{equation}
Using these identities we see that
\begin{equation}\label{eq:lbds}
\begin{aligned}
    L_1(t) 
    &\leq \sup_{0 \leq s \leq t}(V(s))^{+}t + \frac{1}{2}L_2(t) + \sup_{s \leq t}(-B_1(s)) \\
    L_2(t) 
    &\leq \frac{1}{2}L_3(t) + L_1(t) + \sup_{s \leq t}(B_1(s) - B_2(s)) \\
    L_i(t) 
    &\leq \frac{1}{2}(L_{i+1}(t) + L_{i-1}(t)) + \sup_{s \leq t}(B_{i-1}(s) - B_i(s)), \;\;   i = 3, \ldots , N. 
\end{aligned}
\end{equation}
Recalling the matrix $U$ from \eqref{Umat} the above inequalities can be written as
$$\bfL(t) \leq \sup_{0 \leq s \leq t}(V(s))^{+}t\bfe_1 + U\bfL(t) + \bfB^*(t),\,\,\,\,\,\, t \geq 0.$$
Using the fact that $W=(I-U)^{-1}$ is a matrix with nonnegative entries, we have,
$$\bfL(t) \leq \sup_{0 \leq s \leq t}(V(s))^{+}tW\bfe_1 + W\bfB^*(t),\,\,\,\,\,\, t \geq 0.$$
This proves the first statement in the lemma. For the second inequality, note that from \eqref{eq:lbds} we have
$$\sum_{i=3}^N (N-i+1) (L_i(t) - \frac{1}{2} L_{i+1}(t) - \frac{1}{2} L_{i-1}(t)) + (N-1) (L_2(t) - L_1(t) - \frac{1}{2} L_3(t)) \le \sum_{i=2}^N (N-i+1) B_i^*(t) = \bar Y(t).$$
Simplifying the left side, we see,
$$\frac{N}{2} L_2(t) - (N-1)L_1(t) \le \bar Y(t)$$
which proves the second statement.
\end{proof}

\subsection{Hitting Time of an Upper Velocity Level}\label{highlev}
For $c \in \RR$, let $\hat\tau_c \doteq \inf\{t \geq 0 : V(t) = c\}$.
 The main result of this section is the following control on exponential moments of this hitting time.

\begin{proposition}\label{sigma1} There exists a $\lala \in (0,\infty)$ such that
$$\sup_{ (v,\bfz) \in [\frac{g}{2N},2g]\times \RR_+^N}\,\mathbb{E}^*_{(v,\bfz)}\,e^{\lala\,\hat\tau_{4g}} < \infty.$$
\end{proposition}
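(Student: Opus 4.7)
The plan is to combine a uniform single-shot positive-probability estimate with iteration via the strong Markov property. First I would establish constants $T^{\star}, p^{\star} > 0$, independent of $(v, \bfz) \in [\tfrac{g}{2N}, 2g] \times \RR_+^N$, satisfying $\mathbb{P}^*_{(v, \bfz)}(\hat\tau_{4g} \leq T^{\star}) \geq p^{\star}$. Since $V(t) = v + gt - L_1(t)$ and $L_1$ grows only on $\{Z_1 = 0\}$, the idea is to exhibit an event $A$ depending only on the driving Brownian paths on which $L_1(T^{\star})$ is small enough that $V(T^{\star}) \geq 4g$. The candidate event $A$ enforces that $B_1(t) \geq \tilde C$ for $t \in [\epsilon, T^{\star}]$ (for some small $\epsilon > 0$ and large $\tilde C$) and that the weighted Brownian maxima $\bar Y(T^{\star})$ and $B_1^*(T^{\star})$ are bounded.

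On such an event, I would use the Skorohod representation \eqref{eq:loctimes} together with the deterministic bound \eqref{eql2l1}---which controls $L_2$ in terms of $L_1$ and the Brownian-only quantity $\bar Y$ and thereby breaks the cascading dependence on $L_3, \ldots, L_N$---to show pathwise that $Z_1(t) \ge z_1 + B_1(t) - vt - gt^2/2 - \bar Y(t)/N > 0$ for $t \in [\epsilon, T^{\star}]$. Consequently $L_1$ is frozen after $\epsilon$, and \eqref{LTineq} applied up to $\epsilon$ (with the crude estimate $\sup_{s \le \epsilon} V(s)^+ \le v + g\epsilon$) gives $L_1(\epsilon) = O(\epsilon)$ plus small Brownian maxima. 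Choosing $T^{\star}$ slightly larger than $(4g - v)/g + 1$ and $\epsilon$ small, we have $v + gT^{\star} - L_1(\epsilon) \ge 4g$, so $V(T^{\star}) \ge 4g$ and $\hat\tau_{4g} \le T^{\star}$ on $A$. Because $A$ involves only the driving Brownian motions, $\mathbb{P}(A)$ is independent of $\bfz$, and is bounded below uniformly in $v$ via \eqref{eq:elemconc}.

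To iterate, on $\{\hat\tau_{4g} > T^{\star}\}$ the velocity $V(T^{\star})$ may lie anywhere in $(-\infty, 4g)$. Applying \eqref{LTineq} up to $T^{\star}$ with $\sup V^+ \le 4g$, together with \eqref{eq:elemconc}, gives a uniform stochastic lower bound $V(T^{\star}) \ge -K$ with high probability, for constants $K, \delta$. A parallel single-shot estimate then applies to starting velocities in $[-K, 4g)$ (with a possibly larger fixed time $T^{\star\star}$), yielding a uniform positive probability $p^{\star\star}$ of hitting $4g$ in time $T^{\star\star}$. Iterating the strong Markov property at multiples of $T^{\star\star}$ gives $\mathbb{P}^*_{(v, \bfz)}(\hat\tau_{4g} > k T^{\star\star}) \le (1-p^{\star\star})^k$, hence the finite exponential moment with any $\lala < -\log(1-p^{\star\star})/T^{\star\star}$.

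The main obstacle is the uniform single-shot estimate in the boundary case $z_1 = 0$, where $L_1(t) > 0$ for every $t > 0$. The key technical device to avoid any $\bfz$-dependent bookkeeping is the inequality \eqref{eql2l1}: by expressing $L_2$ in terms of $L_1$ and the purely Brownian quantity $\bar Y$, it breaks the otherwise recursive coupling among $L_2, \ldots, L_N$ and reduces the pathwise bound on $Z_1$ to Brownian functionals that are controllable via \eqref{eq:elemconc}.
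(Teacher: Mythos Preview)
Your single-shot construction is sound: the event $A$ forces $Z_1>0$ on $[\epsilon,T^\star]$ via the pathwise inequality $Z_1(t)\ge z_1+B_1(t)-2gt-\tfrac{g}{2}t^2-\tfrac1N\bar Y(t)$, and \eqref{LTineq} then controls $L_1(\epsilon)$. This part is essentially the content of the paper's Lemma~\ref{sec1prop2}.

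The gap is in the iteration. The bound $\mathbb{P}^*_{(v,\bfz)}(\hat\tau_{4g}>kT^{\star\star})\le(1-p^{\star\star})^k$ requires that at each time $jT^{\star\star}$, on $\{\hat\tau_{4g}>jT^{\star\star}\}$, the velocity $V(jT^{\star\star})$ remain in $[-K,4g)$ so that your single-shot estimate can be reapplied. But starting from $v=-K$, the only control available through \eqref{LTineq} with $\sup V^+\le 4g$ gives
\[
V(T^{\star\star})\;\ge\; -K+gT^{\star\star}-4gNT^{\star\star}-\sum_{j=1}^N W_{1,j}B_j^*(T^{\star\star}),
\]
whose deterministic part $-K-(4N-1)gT^{\star\star}$ already lies below $-K$. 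Your method therefore cannot certify $V(T^{\star\star})\in[-K,4g)$ from the worst starting point; the interval is not invariant, the deficit $(4N-1)gT^{\star\star}$ compounds at each step, and no fixed $K$ closes the loop. A uniform single-shot bound over all of $(-\infty,4g)$ is also impossible, since $\hat\tau_{4g}\ge(4g-v)/g$ deterministically.

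The paper resolves this by iterating at \emph{hitting times} rather than fixed times: it runs until $V$ hits $4g$ or $0$ (Lemma~\ref{sec1prop3} bounds the exponential moment of this time uniformly over $[0,4g]\times\RR_+^N$), and if $V$ hits $0$ it waits for $V$ to climb back to $\tfrac{g}{2N}$ (Lemma~\ref{sec1prop1}, exploiting that $\sup V^+\le\tfrac{g}{2N}$ keeps $L_1$ growing only at the Brownian rate). At each restart $V$ sits exactly at $\tfrac{g}{2N}$, so the positive probability $p$ from Lemma~\ref{sec1prop2} applies uniformly and the number of excursions until success is geometrically bounded. Your scheme is missing precisely this reset mechanism; supplying it essentially reproduces the paper's decomposition.
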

Proof of the proposition relies on the three lemmas given below. 
\editc{Proofs of these lemmas are given in Section \ref{props1}.
The proposition is proved after the statements of these lemmas.}

\begin{lemma}\label{sec1prop1}   There exists a $\beta \in (0,\infty)$ so that 
$$\sup_{{\bfz} \in \RR_+^{N}}\,\mathbb{E}^*_{(0,{\bfz})}\,e^{\beta\,\hat\tau_{g/(2N)}} < \infty.$$
\end{lemma}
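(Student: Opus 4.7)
The plan is to use Lemma \ref{locin} together with the identity $V(t) = gt - L_1(t)$ (valid since $V^0 = 0$) to argue that staying below the target level $g/(2N)$ for time $t$ forces the Brownian suprema $B_j^*(t)$ to grow at least linearly in $t$, an exponentially unlikely event.

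On the event $\{\hat\tau_{g/(2N)} > t\}$, continuity of $V$ combined with $V(0) = 0 < g/(2N)$ yields $V(s) < g/(2N)$ for all $s \in [0,t]$, and in particular $\sup_{0 \le s \le t} V(s)^+ \le g/(2N)$. A short computation solving $R\mathbf{x} = \bfe_1$ --- using the interior recursion $x_{i+1} = 2x_i - x_{i-1}$ for $3 \le i \le N-1$ together with the two boundary rows $x_2 = 2x_1 - 2$ and $x_N = x_{N-1}/2$ --- shows that the consecutive differences $x_{i+1}-x_i$ are all equal to $-2$ from $i=2$ onward, and back-substitution gives $W_{1,1} = x_1 = N$. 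Substituting into \eqref{LTineq} with $i=1$, and then using $L_1(t) = gt - V(t) > gt - g/(2N)$ on the same event, we obtain
$$gt - \tfrac{g}{2N} < L_1(t) \le N \cdot t \cdot \tfrac{g}{2N} + \sum_{j=1}^N W_{1,j} B_j^*(t) = \tfrac{gt}{2} + \sum_{j=1}^N W_{1,j} B_j^*(t),$$
so that $\sum_{j=1}^N W_{1,j} B_j^*(t) > \tfrac{gt}{2} - \tfrac{g}{2N}$ on $\{\hat\tau_{g/(2N)} > t\}$.

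Applying Markov's inequality together with the moment bound \eqref{eq:elemconc} gives, for every $u > 0$ and suitable $\varrho_1, \varrho_2 \in (0,\infty)$ depending only on $N$ and the $W_{1,j}$,
$$\PP^*_{(0,\bfz)}\bigl(\hat\tau_{g/(2N)} > t\bigr) \le \varrho_1 \exp\Bigl(\tfrac{ug}{2N} - \bigl(\tfrac{ug}{2} - \varrho_2 u^2\bigr) t\Bigr).$$
Choosing $u$ small enough that $\tfrac{ug}{2} - \varrho_2 u^2 > 0$ produces a decay rate $e^{-\kappa t}$ with $\kappa > 0$ independent of $\bfz$; the conclusion $\sup_{\bfz \in \RR_+^N} \mathbb{E}^*_{(0,\bfz)} e^{\beta \hat\tau_{g/(2N)}} < \infty$ for any $\beta \in (0,\kappa)$ then follows from $\mathbb{E} e^{\beta T} = 1 + \beta \int_0^\infty e^{\beta t}\PP(T > t)\, dt$. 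Uniformity in $\bfz$ is automatic because the right-hand side of Lemma \ref{locin} involves only the driving Brownian motion $\bfB$, whose law under $\PP^*_{(0,\bfz)}$ does not depend on the starting gap vector. The only genuine ingredient is the strict inequality $W_{1,1}\cdot \tfrac{g}{2N} < g$; the exact value $W_{1,1} = N$ supplies the comfortable slope $g/2$ in the linear lower bound and is the main (but easily handled) point of the argument.
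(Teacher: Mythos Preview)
Your proof is correct and follows essentially the same approach as the paper's: both use Lemma~\ref{locin} with $W_{1,1}=N$ to show that on $\{\hat\tau_{g/(2N)}>t\}$ the quantity $\sum_j W_{1,j}B_j^*(t)$ must exceed $\tfrac{gt}{2}-\tfrac{g}{2N}$, then apply the exponential moment bound \eqref{eq:elemconc} to obtain exponential tail decay uniformly in $\bfz$. The paper packages the same inequality as a comparison $V(t)\ge Q(t)\doteq \tfrac{gt}{2}-\sum_j W_{1,j}B_j^*(t)$ and compares hitting times, but the content is identical. One small remark: your description of solving $R\mathbf{x}=\bfe_1$ omits row~2 of $R$, which gives $x_3=2x_2-2x_1$ (not the interior recursion); this is the step that pins the common difference at $-2$, after which your back-substitution correctly yields $x_1=N$.
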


\begin{lemma}\label{sec1prop2} We have
\begin{equation}
\inf_{(v, \bfz) \in [\frac{g}{2N},2g]\times \RR_+^N}\mathbb{P}^*_{(v,\bfz)}(\hat \tau_{4g} < \hat\tau_{0}) \doteq p >0.
\end{equation}	
\end{lemma}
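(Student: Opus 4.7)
The plan is to exhibit a measurable set $\Omega_0$ on the path space of the driving Brownian motions $\bfB$, depending only on $g$ and $N$, such that $\PP^*_{(v,\bfz)}(\bfB \in \Omega_0) > 0$ independently of $(v,\bfz)$ and such that on $\{\bfB \in \Omega_0\}$ we have $\hat\tau_{4g} < \hat\tau_0$ for every $(v,\bfz) \in [g/(2N),2g]\times\RR_+^N$. I would construct $\Omega_0$ as a small sup-norm tube around a smooth deterministic path $\beta$, analyze the corresponding deterministic ``skeleton'' of \eqref{eq:solskor} explicitly, and transfer the conclusion via the Lipschitz property of the Skorohod map in Proposition \ref{skorokhod}.

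Fix $T=5$ and $K > 9g/2$, and take $\beta(t) \doteq (Kt, 2Kt, \ldots, NKt)^\prime$. Substituting $\beta$ for $\bfB$ in \eqref{eq:solskor} yields a deterministic fixed-point problem for which I claim the explicit solution is $V^\beta(t) = v+gt$, $\bfL^\beta\equiv 0$, and $\bfZ^\beta(t) = \bfz + A\beta(t) - \bfe_1\int_0^t V^\beta(s)\,ds$. Self-consistency: using $(A\beta)_1(t)=Kt$ and $(A\beta)_i(t) = \beta_i(t) - \beta_{i-1}(t) = Kt$ for $i\ge 2$, the Skorohod input coordinates are $I_1(t) = z_1 + Kt - vt - gt^2/2 = z_1 + t(K - v - gt/2) \ge 0$ and $I_i(t) = z_i + Kt \ge 0$ for $i\ge 2$ on $[0, T]$. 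Since the input is coordinatewise nonnegative, Proposition \ref{skorokhod} gives $\bfL^\beta\equiv 0$ and $\bfZ^\beta = I$, and uniqueness for the coupled fixed-point equation (by the same Picard/Gr\"onwall argument as in the proof of Theorem \ref{thm:wellposed}) identifies this as the unique solution. Hence $V^\beta$ crosses $4g$ at the explicit time $t^\star = (4g-v)/g \in [1,7/2]$ with $V^\beta(t^\star+1) = 5g$, while $V^\beta(t) \ge v \ge g/(2N)$ throughout $[0, t^\star+1] \subset [0, 9/2] \subset [0, T]$.

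For the stochastic perturbation, define $\Omega_0 \doteq \{\|\bfB-\beta\|_T \le \delta\}$, which has positive $\PP^*_{(v,\bfz)}$-probability depending only on $T,\delta,\beta$ (by the support theorem for Brownian motion, or a direct Girsanov computation against the smooth drift $\beta^\prime$). On $\{\bfB \in \Omega_0\}$, the Lipschitz property of $\Gamma$ from Proposition \ref{skorokhod} combined with Gr\"onwall's lemma --- as in the uniqueness argument of Theorem \ref{thm:wellposed} --- yields $\|V-V^\beta\|_T \le C\delta$ for $C \doteq c_\Gamma|A|e^{c_\Gamma T}$, a constant depending only on $g$ and $N$. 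Choosing $\delta$ so that $C\delta < g/(4N)$ gives $V(t) \ge V^\beta(t) - C\delta \ge v - g/(4N) \ge g/(4N) > 0$ for $t \le t^\star+1$ and $V(t^\star+1) \ge 5g - g/(4N) > 4g$; by continuity of $V$, this forces $\hat\tau_{4g} \le t^\star+1 < \hat\tau_0$ on $\{\bfB \in \Omega_0\}$, and $p \doteq \PP^*_{(v,\bfz)}(\bfB \in \Omega_0) > 0$, uniform in $(v,\bfz)$, gives the lemma.

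The main obstacle is the self-consistency verification for the deterministic skeleton: since \eqref{eq:solskor} is a coupled fixed-point equation in $(V,\bfZ,\bfL)$ --- with $V$ appearing inside the Skorohod input, while $V$ itself depends on $L_1 = \Gamma_{11}(\cdot)$ --- one must check carefully that the candidate $(v+gt, I, 0)$ solves the full coupled system and invoke uniqueness. Once this is in hand, the uniformity in $(v,\bfz)$ comes for free: only $v$ enters through the explicit formula $V^\beta(t)=v+gt$, whose minimum on $[0,t^\star+1]$ is bounded below by $g/(2N)$, while nonnegative $\bfz$ only helps keep the Skorohod input coordinatewise positive.
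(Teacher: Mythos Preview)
Your proof is correct and takes a genuinely different route from the paper. You argue via a support-theorem/deterministic-skeleton construction: pick a smooth path $\beta$ that, when substituted for $\bfB$, makes the coupled Skorohod input strictly positive (hence $\bfL^\beta\equiv 0$ and $V^\beta(t)=v+gt$), then use the pathwise Lipschitz/Gr\"onwall stability from Theorem \ref{thm:wellposed} and Proposition \ref{skorokhod} to transfer the conclusion to a sup-norm tube of positive Wiener measure around $\beta$; uniformity in $(v,\bfz)$ is automatic since $\bfz\ge 0$ only helps the input stay nonnegative and $v$ enters only through the explicit formula $V^\beta(t)=v+gt$. The paper instead argues in two stages with explicit local-time inequalities: first it shows the bound when $z_1\ge 1$ by exhibiting a Brownian event $A_1$ on which \eqref{eql2l1} and \eqref{eq:1033} force $L_1$ to stay small so that $V$ climbs to $4g$ without touching $0$; then it uses the strong Markov property together with a second event $A_2$ to show that from an arbitrary starting point the process reaches $\{Z_1\ge 1\}$ (or already hits $4g$) before $V$ falls to $g/(4N)$. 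Your approach is shorter and more self-contained, leaning only on the Lipschitz property of $\Gamma$ already in hand; the paper's approach is more hands-on but reuses the local-time machinery (notably \eqref{eql2l1}) that it has developed for other parts of the ergodicity analysis. One cosmetic slip: with $v\in[g/(2N),2g]$ you actually have $t^\star=(4g-v)/g\in[2,\,4-1/(2N)]\subset[2,7/2]$, not $[1,7/2]$; this does not affect the argument since only the upper bound on $t^\star$ matters.
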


\begin{lemma}\label{sec1prop3} There exists $\lala_1 > 0$ so that 
$$\sup_{(v,\bfz) \in [0,4g] \times \RR_+^N}\,\mathbb{E}^*_{(v,\bfz)}\,e^{\lala_1\,(\hat\tau_{4g}\wedge\hat\tau_{0})} < \infty.$$
\end{lemma}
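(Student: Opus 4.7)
The plan is to reduce the exponential-moment estimate to a uniform Doeblin-type lower bound via the strong Markov property, and to prove that bound by an explicit pathwise construction on the driving Brownian motions.

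First, it suffices to exhibit constants $T_0\in(0,\infty)$ and $p_0\in(0,1]$ such that
$$\inf_{(v,\bfz)\in[0,4g]\times\RR_+^N}\PP^*_{(v,\bfz)}(\hat\tau_{4g}\wedge\hat\tau_0\le T_0)\ge p_0.$$
Once this is shown, iterating the strong Markov property of $\{\PP^*_{(v,\bfz)}\}$ at the times $T_0, 2T_0, \ldots$ yields $\PP^*_{(v,\bfz)}(\hat\tau_{4g}\wedge\hat\tau_0>nT_0)\le (1-p_0)^n$ for every $n\in\NN$, and the conclusion of the lemma then follows for any $\lambda_1\in (0,-T_0^{-1}\log(1-p_0))$.

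For the Doeblin estimate, observe that $V(t)=v+gt-L_1(t)$, so on the no-exit event $\{\hat\tau_{4g}\wedge\hat\tau_0>T_0\}$ we have $V(s)\in[0,4g]$ throughout $[0,T_0]$ and in particular $L_1(T_0)\ge gT_0-4g$. I would construct an event $A=A_\uparrow\cup A_\downarrow\in\clf^*_{T_0}$ with $\PP^*_{(v,\bfz)}(A)\ge p_0$ uniformly in $(v,\bfz)$, under which $V$ must exit $[0,4g]$ by time $T_0$. On $A_\uparrow$, the Brownian paths keep the first gap $Z_1$ sufficiently large that the collision local time $L_1$ accumulates only mildly, and the drift $g$ carries $V$ past $4g$; on $A_\downarrow$, the paths induce enough collisions between particles $0$ and $1$ to make $L_1(T_0)\ge v+gT_0$, forcing $V$ down to $0$. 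Lemma~\ref{locin} is used throughout to bound the higher local times $L_i$, $i\ge 2$, in a self-consistent way.

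The principal obstacle is obtaining a lower bound on $\PP^*_{(v,\bfz)}(A)$ that is uniform over unbounded starting gaps $\bfz\in\RR_+^N$, and especially in the degenerate case $z_1=0$ where collisions begin immediately. To handle this, I would split $[0,T_0]=[0,T_1]\cup[T_1,T_0]$: on $[0,T_1]$, the Lipschitz property of the Skorokhod map (Proposition~\ref{skorokhod}) combined with the bounds of Lemma~\ref{locin} is used to show that with positive probability uniform in $\bfz$, either exit has already occurred or $\bfZ(T_1)$ lies in a fixed bounded subset of $\RR_+^N$; the Brownian construction of $A_\uparrow, A_\downarrow$ is then performed on $[T_1,T_0]$ from this bounded state via the strong Markov property.

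The hardest technical point will be the uniform lower bound on $\PP(A_\downarrow)$: a naive application of \eqref{LTineq} to contradict $L_1(T_0)\ge gT_0-4g$ fails because the coefficient $W_{11}$ scales with $N$, so $4gW_{11}T_0$ dominates $gT_0$ and no contradiction emerges from $V^*\le 4g$ alone. Instead, I would use the sharper relation \eqref{eql2l1} relating $L_2$ to $L_1$, together with an excursion-level analysis of $Z_1$ at $0$, to produce a positive-probability Brownian event on which $L_1$ grows at an average rate exceeding $g$ over a long subinterval of $[T_1,T_0]$, thereby driving $V$ to $0$ by time $T_0$. Combined with the reduction to bounded $\bfZ(T_1)$, this yields the desired Doeblin estimate and hence the lemma.
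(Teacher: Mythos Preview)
Your reduction to a uniform Doeblin bound is exactly what the paper does, and the iteration via the strong Markov property is correct. Where you diverge is in the execution, which is far more elaborate than needed and contains a step that fails as written.

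The paper's proof is very short: split only on whether $z_1\ge 1$ or $z_1<1$, and in each case exhibit an event depending \emph{only on the driving Brownian motions} --- hence with probability independent of $(v,\bfz)$ --- on which exit from $[0,4g]$ must occur by time $5$. If $z_1\ge 1$, take $A_1=\{-B_1(s)+4gs-1\le 0\text{ for }s\le 5,\ \bar Y(5)<g\}$; then using the bound $L_1(t)\le N\sup_{s\le t}(-z_1-B_1(s)+4gs)^+ +\bar Y(t)$ (obtained from \eqref{eql2l1}, not from \eqref{LTineq}), one gets $L_1(5)<g$ and hence $V(5)>4g$. If $z_1<1$, take $A_2=\{B_1(5)<-1-9g\}$; then the trivial lower bound $L_1(5)\ge (-z_1-B_1(5))^+>9g$ (from \eqref{eq:loctimes}, discarding the nonnegative terms $\tfrac12 L_2+\int V$) forces $V(5)<0$. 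No excursion analysis is needed for the downward exit, and no preliminary reduction to bounded gaps is needed for either case.

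Two specific issues with your plan. First, your proposed step ``$\bfZ(T_1)$ lies in a fixed bounded subset of $\RR_+^N$ with uniform positive probability'' cannot hold: if $z_N$ is enormous, $Z_N(T_1)=z_N+B_N(T_1)-B_{N-1}(T_1)+\ldots$ remains enormous with overwhelming probability, so no such uniform bound on the full vector exists. At best you could hope to control $Z_1(T_1)$ alone --- but the paper shows this is unnecessary. Second, your worry that ``$W_{11}$ scales with $N$'' is resolved not by a more delicate excursion argument but by using \eqref{eql2l1} instead of \eqref{LTineq}: this replaces the factor $W_{11}=N$ multiplying $\sup V^+$ by a term $N\sup(-z_1-B_1+4gs)^+$ which \emph{vanishes} on the chosen Brownian event when $z_1\ge 1$. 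The key conceptual point you missed is that events defined purely on $\bfB$ automatically have probability uniform in $(v,\bfz)$, so the dichotomy on $z_1$ suffices and the higher gaps $z_2,\ldots,z_N$ play no role whatsoever.
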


 We now prove  the main result of the section.\\

\noindent \textbf{Proof of Proposition \ref{sigma1}.}
Define $\tau_{-1} = \tau_0 \doteq  0$ and for $i \in \NN_0$, define
\begin{align*}
    \tau_{2i+1}  \doteq \inf\{t \geq \tau_{2i}: V(t) = 4g\,\,\, or\,\,\, 0\} ,\;\;
    \tau_{2i+2}  \doteq \inf\{t \geq \tau_{2i+1}: V(t) = \frac{g}{2N}\,\,\, or\,\,\, 4g\}. 
\end{align*}
Define
$$\mathscr{N} = \inf\{k \geq 0: V(\tau_{2k+1})=4g\}.$$
From Lemma \ref{sec1prop2} it follows that
\begin{equation}
\sup_{(v,\bfz) \in [\frac{g}{2N}, 2g]\times \RR_+^N}\mathbb{P}^*_{(v,\bfz)}(\mathscr{N} = k) \le (1-p)^{k-1}, \ \ k \ge 0.	
\end{equation}
Note that
\begin{align}
    \hat\tau_{4g} &\leq \sum_{i=0}^{\mathscr{N}}(\tau_{2i+1}-\tau_{2i-1})
     \leq \sum_{i=1}^{\mathscr{N}+1}(\tau_{2i}-\tau_{2i-2}). \label{eq:535}
\end{align}
By Lemmas \ref{sec1prop1} and \ref{sec1prop3} there are $c_1, c_2 \in (0,\infty)$ such that
$$\sup_{(v,\bfz) \in [\frac{g}{2N}, 2g]\times \RR_+^N}\mathbb{P}^*_{(v,\bfz)}(\tau_2 \geq t) \leq c_1e^{-c_2t}.$$
It then follows that, for $0 < \alpha < c_2$,
\begin{align*}
  \sup_{(v,\bfz) \in [\frac{g}{2N}, 2g]\times \RR_+^N}\mathbb{E}^*_{(v,\bfz)}\,e^{\alpha\,\tau_2} & \le \int_{-\infty}^{\infty}\alpha e^{\alpha s}\sup_{(v,\bfz) \in [\frac{g}{2N}, 2g]\times \RR_+^N}\mathbb{P}^*_{(v,\bfz)}(\tau_2 \geq s)ds \\
     &\leq 1 + \alpha c_1\int_0^{\infty}e^{(\alpha - c_2)s}ds 
     = 1 + \frac{\alpha c_1}{c_2-\alpha}.
\end{align*}
Choose $\delta \in (0,1)$ such that $(1+\delta)(1-p)\doteq \kappa <1$. 
Now choose $\alpha>0$ sufficiently small such that
$$\sup_{(v,\bfz) \in [\frac{g}{2N}, 2g]\times \RR_+^N}\mathbb{E}^*_{(v,\bfz)}\,e^{2\alpha\,\tau_2} \le (1+\delta).$$
%
%
%
Applying Cauchy-Schwarz and the Strong Markov property we now see that,
for any $(v,\bfz) \in [\frac{g}{2N}, 2g]\times \RR_+^N$,
\begin{align*}
    \mathbb{E}^*_{(v,\bfz)}e^{\alpha\sum_{i=1}^{\mathscr{N}+1}(\tau_{2i}-\tau_{2i-2})} & = \sum_{k=0}^{\infty}\mathbb{E}^*_{(v,\bfz)}e^{\alpha\sum_{i=1}^{k+1}(\tau_{2i}-\tau_{2i-2})}\textbf{1}_{\{\mathscr{N} = k\}} \\
    & \leq \sum_{k=0}^{\infty}(\mathbb{E}^*_{(v,\bfz)}e^{2\alpha\sum_{i=1}^{k+1}(\tau_{2i}-\tau_{2i-2})})^{\frac{1}{2}}(\mathbb{P}^*_{(v,\bfz)}(\mathscr{N} = k))^{\frac{1}{2}} \\
    & \leq \frac{2}{(1-p)^{1/2}}\sum_{k=0}^{\infty}(1+\delta)^{k/2}(1-p)^{\frac{k}{2}} 
     \leq \sum_{k=0}^{\infty}\kappa^{k/2}  < \infty.
\end{align*}
The result now follows on combining the above estimate with \eqref{eq:535}.
\qedsymbol

\subsection{Hitting Time of a Lower Velocity Level}
\label{sec:lowlev}
Let $\sigma_1 \doteq \hat \tau_{4g} = \inf\{t \geq 0: V(t) = 4g\}$ and set
 $\sigma_2 \doteq \inf\{t \geq \sigma_1: V(t) = 2g\}$. 
 The main result of the section is the following.
\begin{proposition}\label{sigma2} There is $\lala_2 > 0$ such that
$$\sup_{\hat{\bfz} \in  \RR_+^{N-1}}\,\,\mathbb{E}^*_{(2g,0,\hat{\bfz})}\,e^{\lala_2\sigma_2} < \infty.$$
\end{proposition}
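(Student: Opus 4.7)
The plan is to decompose $\sigma_2 = \sigma_1 + (\sigma_2-\sigma_1)$ and bound each piece, then combine via Cauchy--Schwarz. Since the initial state has $V(0)=2g\in[g/(2N),\,2g]$, Proposition \ref{sigma1} applies directly and gives $\sup_{\hat{\bfz}}\mathbb{E}^*_{(2g,0,\hat{\bfz})}e^{\gamma\sigma_1}<\infty$ for some $\gamma>0$. What remains is to establish a uniform-in-$\hat{\bfz}$ exponential moment bound on $\sigma_2-\sigma_1$.

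On $[\sigma_1,\sigma_2]$ the velocity satisfies $V(t)\ge 2g$ by continuity and minimality of $\sigma_2$, so the drift of $Z_1$ on this interval is at most $-2g$. Using $V(\sigma_1)=4g$ and $V(\sigma_2)=2g$, the velocity equation gives $L_1(\sigma_2)-L_1(\sigma_1)=g(\sigma_2-\sigma_1)+2g$. Combining this with the $Z_1$-equation on $[\sigma_1,\sigma_2]$, $Z_1(\sigma_2)\ge 0$, and $L_2$ nondecreasing, a direct calculation yields the pathwise inequality
\begin{equation*}
g(\sigma_2-\sigma_1) \;\le\; 2g + Z_1(\sigma_1) + |\tilde B_1(\sigma_2-\sigma_1)|,
\end{equation*}
where $\tilde B_1(s)=B_1(\sigma_1+s)-B_1(\sigma_1)$. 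Absorbing the Brownian term via the random variable $M:=\sup_{s\ge 0}(|\tilde B_1(s)|-(g/2)s)$ (which has finite exponential moments up to rate $g$) gives $\sigma_2-\sigma_1\le 4+\tfrac{2}{g}Z_1(\sigma_1)+\tfrac{2}{g}M$. By the Strong Markov property at $\sigma_1$, $M$ is independent of $\mathcal{F}^*_{\sigma_1}$, so, for any sufficiently small $\alpha>0$,
\begin{equation*}
\mathbb{E}^*_{(2g,0,\hat{\bfz})}\bigl[e^{\alpha(\sigma_2-\sigma_1)}\bigr] \;\le\; C_\alpha\,\mathbb{E}^*_{(2g,0,\hat{\bfz})}\bigl[e^{(2\alpha/g)\,Z_1(\sigma_1)}\bigr].
\end{equation*}

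The principal obstacle is therefore to control $\mathbb{E}^*_{(2g,0,\hat{\bfz})}[e^{\beta Z_1(\sigma_1)}]$ uniformly in $\hat{\bfz}$ for some $\beta>0$. The direct pathwise bound, using $L_1(s)\le L_1(\sigma_1)=g\sigma_1-2g$ and the resulting lower bound $V(s)\ge 4g+gs-g\sigma_1$ on $[0,\sigma_1]$, produces only the quadratic estimate $Z_1(\sigma_1)\lesssim B_1^*(\sigma_1)+\tfrac{g}{2}\sigma_1^2$, which is insufficient against the merely exponential (not Gaussian) moments of $\sigma_1$ provided by Proposition \ref{sigma1}. To close this gap my plan is to refine the bound by exploiting that $Z_1$ can only grow significantly on the sub-intervals of $[0,\sigma_1]$ on which $V<0$: on intervals where $V\ge 0$ the $Z_1$-drift is nonpositive and $Z_1$ stays controlled by Brownian fluctuations, while on each excursion where $V<0$, since $L_1$ only grows at $Z_1=0$ and $Z_1=0$ is left immediately when the drift $-V>0$, the local time $L_1$ is nearly frozen and $V$ climbs back to $0$ at deterministic rate $g$, bounding the excursion length by the instantaneous depth $|V|$. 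Iterating this analysis against the excursion-count control from the proof of Proposition \ref{sigma1} (in particular the uniform exponential moments of $\hat\tau_0\wedge\hat\tau_{4g}$ from Lemma \ref{sec1prop3} together with the bound from Lemma \ref{locin}) should yield the required uniform exponential moment bound on $Z_1(\sigma_1)$.

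Once $\sup_{\hat{\bfz}}\mathbb{E}^*e^{\beta Z_1(\sigma_1)}<\infty$ is established, Cauchy--Schwarz gives
\begin{equation*}
\mathbb{E}^*e^{\lambda_2\sigma_2}\;\le\;\bigl(\mathbb{E}^*e^{2\lambda_2\sigma_1}\bigr)^{1/2}\bigl(C_{2\lambda_2}\mathbb{E}^*e^{(4\lambda_2/g)Z_1(\sigma_1)}\bigr)^{1/2}\;<\;\infty
\end{equation*}
for $\lambda_2$ sufficiently small, uniformly in $\hat{\bfz}$, which completes the proof.
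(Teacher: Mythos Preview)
Your decomposition $\sigma_2=\sigma_1+(\sigma_2-\sigma_1)$ and the pathwise bound $g(\sigma_2-\sigma_1)\le 2g+Z_1(\sigma_1)+|\tilde B_1(\sigma_2-\sigma_1)|$ are correct, but they force you to produce a uniform exponential moment for $Z_1(\sigma_1)$ itself, and this is where the argument breaks down. You correctly observe that the only directly available bound is quadratic, $Z_1(\sigma_1)\lesssim B_1^*(\sigma_1)+\tfrac{g}{2}\sigma_1^2$, and that exponential moments of $\sigma_1$ from Proposition~\ref{sigma1} do not upgrade to exponential moments of $\sigma_1^2$. Your proposed fix---an excursion analysis on $\{V<0\}$ to sharpen the bound on $Z_1(\sigma_1)$---is not actually carried out, and it is far from clear it could be: during each excursion with $V<0$ the drift of $Z_1$ is $-V>0$, so $Z_1$ genuinely grows, and there is no evident mechanism by which the excursion-count control in Lemmas~\ref{sec1prop1}--\ref{sec1prop3} would prevent the accumulated growth of $Z_1$ from being of order $\sigma_1^2$ in the worst case. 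As written, this is a plan, not a proof, and the gap is real.

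The paper avoids the need for exponential moments of $Z_1(\sigma_1)$ altogether by a finer decomposition of $[\sigma_1,\sigma_2]$. It inserts the stopping time $\eta_1=\inf\{t\ge\sigma_1:Z_1(t)=0\}$ and observes that on $[\sigma_1,\eta_1]$ one has $Z_1>0$, hence $L_1$ is frozen and $V(t)=4g+g(t-\sigma_1)$ grows \emph{linearly}. The resulting drift $-\int_{\sigma_1}^t V(s)\,ds$ is therefore \emph{quadratic} in $t-\sigma_1$, which forces $Z_1$ to hit zero in time of order $Z_1(\sigma_1)^{1/2}$ (Lemma~\ref{sec2prop2}). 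On $[\eta_1,\sigma_2]$ one starts from $Z_1=0$ and $V(\eta_1)\le 4g+g(\eta_1-\sigma_1)$, and Lemma~\ref{sec2prop3} then gives $\sigma_2-\eta_1\lesssim V(\eta_1)$, again of order $Z_1(\sigma_1)^{1/2}$. Thus the paper only needs a uniform exponential moment for $Z_1(\sigma_1)^{1/2}$ (Lemma~\ref{sec2prop1}), and for this the quadratic bound $Z_1(\sigma_1)\lesssim B_1^*(\sigma_1)+g\sigma_1^2$ is exactly enough, since $Z_1(\sigma_1)^{1/2}\lesssim (B_1^*(\sigma_1))^{1/2}+\sqrt{g}\,\sigma_1$. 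Your bound $\sigma_2-\sigma_1\lesssim Z_1(\sigma_1)$ throws away precisely this square-root gain by using only the crude lower bound $V\ge 2g$ on the whole interval $[\sigma_1,\sigma_2]$ rather than the linear growth of $V$ on $[\sigma_1,\eta_1]$.
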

The proof relies on the following three lemmas. \editc{Proofs of these lemmas are given in Section \ref{sec:pfsigma2}.
The proposition is proved after the statements of the lemmas.}
\begin{lemma}\label{sec2prop1} There is a $\lala_3 > 0$ such that 
$$\sup_{\hat{\bfz} \in  \RR_+^{N-1}}\mathbb{E}^*_{(2g,0,\hat{\bfz})}e^{\lala_3\, Z_1(\sigma_1)^{1/2}} < \infty.$$
\end{lemma}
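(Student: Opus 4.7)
The plan is to derive a deterministic pathwise bound of the form $Z_1(\sigma_1) \le B_1^*(\sigma_1) + C\sigma_1^2$, and then combine this with the exponential moment bound on $\sigma_1$ from Proposition \ref{sigma1} and the Gaussian concentration estimate \eqref{eq:elemconc}. The square-root exponent in the lemma is precisely calibrated so that $\sqrt{\sigma_1^2}$ is linear in $\sigma_1$, after which everything reduces to standard exponential moment manipulations.

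Fix $\hat{\bfz} \in \RR_+^{N-1}$ and work under $\PP^*_{(2g, 0, \hat{\bfz})}$. From \eqref{eq:gapprocb} with $z_1 = 0$,
\begin{equation*}
Z_1(\sigma_1) = B_1(\sigma_1) - \int_0^{\sigma_1} V(s)\,ds - \tfrac{1}{2}L_2(\sigma_1) + L_1(\sigma_1).
\end{equation*}
The equation $V(t) = 2g + gt - L_1(t)$ evaluated at $\sigma_1$ forces $L_1(\sigma_1) = g(\sigma_1 - 2)$, so in particular $\sigma_1 \ge 2$ a.s. Monotonicity of $L_1$ then yields $L_1(s) \le g(\sigma_1 - 2)$ for $s \le \sigma_1$, and hence
\begin{equation*}
|V(s)| \;\le\; 2g + gs + L_1(s) \;\le\; 2g\sigma_1, \qquad s \in [0,\sigma_1].
\end{equation*}
Dropping the nonpositive term $-\tfrac{1}{2}L_2(\sigma_1)$ and substituting,
\begin{equation*}
Z_1(\sigma_1) \;\le\; B_1^*(\sigma_1) + \int_0^{\sigma_1} |V(s)|\,ds + L_1(\sigma_1) \;\le\; B_1^*(\sigma_1) + C_1\sigma_1^2
\end{equation*}
for some $C_1 \in (0,\infty)$ independent of $\hat{\bfz}$. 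Applying $\sqrt{a+b}\le \sqrt{a}+\sqrt{b}$ gives $Z_1(\sigma_1)^{1/2} \le \sqrt{B_1^*(\sigma_1)} + \sqrt{C_1}\,\sigma_1$.

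It then suffices to bound, uniformly in $\hat{\bfz}$, both $\EE^*_{(2g, 0, \hat{\bfz})}\, e^{2\lala_3 \sqrt{C_1}\,\sigma_1}$ and $\EE^*_{(2g, 0, \hat{\bfz})}\, e^{2\lala_3 \sqrt{B_1^*(\sigma_1)}}$, after which Cauchy-Schwarz finishes the proof. The first expectation is finite uniformly in $\hat{\bfz}$ for all sufficiently small $\lala_3$ by Proposition \ref{sigma1}, since $(2g, 0, \hat{\bfz}) \in [\tfrac{g}{2N}, 2g] \times \RR_+^N$. For the second, use the elementary inequality $\sqrt{x} \le \eta x + 1/(4\eta)$ (for $x,\eta >0$) to reduce the task to showing that $\EE\, e^{\beta B_1^*(\sigma_1)}$ is uniformly finite for some small $\beta > 0$. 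Partitioning on $\{\sigma_1 \in [n, n+1)\}$ and using that $B_1^*$ is nondecreasing,
\begin{equation*}
\EE\,e^{\beta B_1^*(\sigma_1)} \;\le\; \sum_{n\ge 0} \EE\bigl[e^{\beta B_1^*(n+1)}\mathbf{1}_{\{\sigma_1 \ge n\}}\bigr] \;\le\; \sum_{n\ge 0} \bigl(\EE\,e^{2\beta B_1^*(n+1)}\bigr)^{1/2}\bigl(\PP(\sigma_1 \ge n)\bigr)^{1/2}.
\end{equation*}
The first factor is controlled by \eqref{eq:elemconc}, giving growth of order $e^{2\varrho_2\beta^2(n+1)}$, while Proposition \ref{sigma1} combined with Markov's inequality yields $\PP(\sigma_1 \ge n) \le \kappa e^{-\lala n}$ uniformly in $\hat{\bfz}$. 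The resulting geometric series converges provided $2\varrho_2\beta^2 < \lala/2$, which can be arranged by choosing $\beta$ (and hence $\lala_3$) sufficiently small.

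The main (and essentially only) obstacle is the first step: establishing a deterministic polynomial bound on $Z_1(\sigma_1)$ rather than merely a random one. This is accomplished via the exact identity $L_1(\sigma_1) = g(\sigma_1 - 2)$, which tethers both $L_1(s)$ and $|V(s)|$ on $[0,\sigma_1]$ deterministically to $\sigma_1$; without this pathwise control over $V$, one could not bound $\int_0^{\sigma_1} V\,ds$ by a function of $\sigma_1$ alone. Once this bound is in hand, the exponent $1/2$ in the statement is chosen precisely to render the $\sigma_1^2$ contribution linear in $\sigma_1$, matching the regime in which Proposition \ref{sigma1} is effective.
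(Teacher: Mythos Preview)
Your proof is correct and follows essentially the same route as the paper's: obtain the pathwise bound $Z_1(\sigma_1)\le (\text{Brownian sup term}) + C\sigma_1^2$ via the identity $L_1(\sigma_1)=g(\sigma_1-2)$, take square roots, and control the two resulting terms using Proposition~\ref{sigma1} together with the partition on $\{\sigma_1\in[n,n+1)\}$, Cauchy--Schwarz, and \eqref{eq:elemconc}. One small caution: in this paper $B_1^*(t)$ is reserved in \eqref{bstar} for $\sup_{s\le t}(-B_1(s))$, whereas your argument requires the sup of $|B_1|$ (or of $B_1$) as in \eqref{eq:elemconc}; be explicit about this to avoid a notational clash.
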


\begin{lemma}\label{sec2prop2} Define $\tau_0^{Z_1} \doteq \inf\{t \geq 0: Z_1(t) = 0\}$. There is a $\lala_4 > 0$ and 
	$\kappa_1, \kappa_2 \in (0,\infty)$ such that for any $z_1 \in \RR_+$ and $\lala \in (0, \lala_4]$
		%
\begin{equation*}
    \sup_{\hat{\bfz} \in \mathbb{R}_+^{N-1}}\mathbb{E}^*_{(4g,z_1,\hat{\bfz})}e^{\lala \tau_0^{Z_1}} \leq \kappa_1 e^{\kappa_2\lala z_1^{1/2}}.
\end{equation*}
\end{lemma}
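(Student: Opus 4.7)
The plan is to bound $\tau_0^{Z_1}$ pathwise by the first zero of an explicit one-dimensional process driven only by $B_1$ and deterministic terms, and then estimate the exponential moment of that hitting time by exploiting the quadratic downward drift on $Z_1$ that the growing velocity produces.

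I would first observe that on $[0, \tau_0^{Z_1})$ the gap $Z_1$ is strictly positive, so the local time $L_1$ cannot grow there. From $V(t) = 4g + gt - L_1(t)$ this forces $V(s) = 4g + gs$ for $s < \tau_0^{Z_1}$, and hence $\int_0^t V(s)\,ds = 4gt + \tfrac{g}{2}t^2$ on that interval. Substituting this into the equation for $Z_1$ in \eqref{eq:gapprocb} and dropping the nonnegative term $\tfrac{1}{2}L_2(t)$ gives the pathwise upper bound
\begin{equation*}
Z_1(t) \le \Psi(t) \doteq z_1 + B_1(t) - 4gt - \tfrac{g}{2}t^2, \qquad t < \tau_0^{Z_1}.
\end{equation*}
Hence $\tau_0^{Z_1} \le \tau^{\mathrm{ub}} \doteq \inf\{t \ge 0 : \Psi(t) \le 0\}$. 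The process $\Psi$ does not involve $\hat{\bfz}$ or $L_2$ at all, so the required uniformity in $\hat{\bfz}$ is automatic once an exponential-moment bound is established for $\tau^{\mathrm{ub}}$.

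The key step is then to exploit the quadratic drift. Setting $T(z_1) \doteq \sqrt{2 z_1 / g}$, so that $\tfrac{g}{2} T(z_1)^2 = z_1$, we have $4gt + \tfrac{g}{2}t^2 - z_1 \ge 4gt$ for every $t \ge T(z_1)$. Consequently, for such $t$,
\begin{equation*}
\{\tau^{\mathrm{ub}} > t\} \subseteq \{\Psi(t) > 0\} \subseteq \{B_1(t) > 4gt\},
\end{equation*}
and the standard Gaussian tail estimate yields $\PP(\tau^{\mathrm{ub}} > t) \le e^{-8g^2 t}$. Writing $\EE e^{\lala \tau^{\mathrm{ub}}} = 1 + \lala \int_0^\infty e^{\lala t}\PP(\tau^{\mathrm{ub}} > t)\,dt$ and splitting at $T(z_1)$, the integral over $[0, T(z_1)]$ contributes at most $e^{\lala T(z_1)} - 1$, while for $\lala$ sufficiently small (say $\lala \le \lala_4$ with $\lala_4 \in (0, 4g^2)$) the tail over $[T(z_1), \infty)$ is controlled by the Gaussian decay $e^{-(8g^2 - \lala)t}$. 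Combining the two contributions produces $\EE e^{\lala \tau^{\mathrm{ub}}} \le \kappa_1 e^{\kappa_2 \lala \sqrt{z_1}}$ with $\kappa_2 = \sqrt{2/g}$ and $\kappa_1$ an absolute constant, which is exactly the claimed bound.

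I do not anticipate any substantive obstacle in executing this plan. The main conceptual point is that the constant gravitational acceleration, combined with the absence of the retarding local time $L_1$ on $[0, \tau_0^{Z_1})$, yields an $O(t^2)$ downward push on $Z_1$ until its first return to zero; this is what produces the $\sqrt{z_1}$ scaling of the exponent, rather than the linear-in-$z_1$ scaling one would naively predict from a constant drift. The boundary case $z_1 = 0$ is trivially subsumed since $\tau_0^{Z_1} = 0$ and the bound reduces to $\kappa_1 \ge 1$.
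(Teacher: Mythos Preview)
Your proposal is correct and follows essentially the same approach as the paper: both arguments observe that $L_1\equiv 0$ on $[0,\tau_0^{Z_1})$, hence $V(s)=4g+gs$ there, and then drop the $\tfrac12 L_2$ term to dominate $Z_1$ by a one-dimensional process with a quadratic downward drift, reducing the problem to an exponential-moment estimate for the hitting time of that dominating process.

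The only difference is in the tail computation. The paper bounds $Z_1(t)$ by $H(t)\doteq z_1+\sup_{s\le t}B_1(s)-\tfrac{g}{2}t^2$ (discarding the linear $4gt$ term and replacing $B_1(t)$ by its running maximum), then passes to square roots, applies Markov's inequality with an exponential weight $e^{\theta(\cdot)}$, and invokes the concentration estimate \eqref{eq:elemconc} to close the integral. You instead retain $\Psi(t)=z_1+B_1(t)-4gt-\tfrac{g}{2}t^2$, split the layer-cake integral at the deterministic time $T(z_1)=\sqrt{2z_1/g}$, and bound the tail by the elementary Gaussian estimate $\PP(B_1(t)>4gt)\le e^{-8g^2 t}$. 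Your route is slightly more direct and avoids the square-root manoeuvre; the paper's route is a bit more robust in that it would adapt verbatim if the linear drift coefficient were unknown or zero. Both yield the same $e^{\kappa_2\gamma\sqrt{z_1}}$ scaling, and for the same conceptual reason.
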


\begin{lemma}\label{sec2prop3}  There exists a $\lala_5 > 0$ and $\kappa_1', \kappa_2' \in (0, \infty)$ such that for all 
	$\lala \in (0, \lala_5)$ and 
	${ v \in [2g, \infty)}$, 
$$\sup_{\hat{\bfz} \in \RR_+^{N-1}}\mathbb{E}^*_{(v, 0,\hat{\bfz})}e^{\lala \,\hat \tau_{2g}} \leq \kappa_1' e^{\kappa_2' \lala v}.$$
\end{lemma}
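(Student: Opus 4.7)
The plan is to reduce $\hat\tau_{2g}$ to the first passage time of a one-dimensional drifted Brownian motion, whose Laplace transform is a classical (Inverse Gaussian) object with a closed form. Because the resulting bound depends only on $B_1$ and the gravitational constant $g$, it will be uniform in $\hat{\bfz} \in \RR_+^{N-1}$, which is exactly what the lemma asks for.

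First I would exploit that $V$ is continuous and $V(0) = v \ge 2g$, so $V(s) \ge 2g$ throughout $[0, \hat\tau_{2g}]$. Combined with $V(t) = v + gt - L_1(t)$, this gives the upper bound $L_1(t) \le v - 2g + gt$ on $[0, \hat\tau_{2g}]$. From the equation for $Z_1$ in \eqref{eq:gapprocb} with $Z_1^0 = 0$, together with $Z_1 \ge 0$, $L_2 \ge 0$, and $V \ge 2g$ on $[0, \hat\tau_{2g}]$, I obtain the matching lower bound $L_1(t) \ge -B_1(t) + 2gt$ on the same interval. Combining the two yields the pathwise inequality
$$gt - B_1(t) \le v - 2g \quad \text{for all } t \in [0, \hat\tau_{2g}].$$

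Next, set $M_t \doteq gt - B_1(t)$; since $-B_1$ is a standard Brownian motion under $\mathbb{P}^*_{(v, 0, \hat{\bfz})}$, the process $M$ is a Brownian motion with drift $g$ starting at $0$. Let $\tau \doteq \inf\{t \ge 0 : M_t = v - 2g\}$. I claim $\hat\tau_{2g} \le \tau$ almost surely: if not, then by continuity $M_\tau = v - 2g$, and applying the strong Markov property at $\tau$ together with the fact that a Brownian motion exceeds its starting value in every right neighborhood of time $0$, there would exist $s \in (\tau, \hat\tau_{2g})$ with $M_s > v - 2g$, contradicting the pathwise bound. This small pathwise reduction step is the one place that really requires care.

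The remainder is a one-dimensional calculation. The first passage time $\tau$ to level $a = v - 2g \ge 0$ of a Brownian motion with drift $g$ has the classical Laplace transform
$$\mathbb{E}\bigl[e^{\lala \tau}\bigr] = \exp\bigl((v-2g)(g - \sqrt{g^2 - 2\lala})\bigr), \qquad \lala < g^2/2.$$
Rationalizing gives $(g - \sqrt{g^2 - 2\lala})/\lala = 2/(g + \sqrt{g^2 - 2\lala}) \le 2/g$ for $\lala \in (0, g^2/2)$. Taking for instance $\lala_5 = g^2/4$, $\kappa_1' = 1$, $\kappa_2' = 2/g$, this yields
$$\mathbb{E}^*_{(v,0,\hat{\bfz})}\, e^{\lala \hat\tau_{2g}} \;\le\; \mathbb{E}\bigl[e^{\lala \tau}\bigr] \;\le\; e^{(2/g)\lala(v-2g)} \;\le\; e^{\kappa_2' \lala v}$$
for all $\lala \in (0, \lala_5)$ and $v \in [2g, \infty)$, uniformly in $\hat{\bfz} \in \RR_+^{N-1}$. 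The main (and only) technical subtlety is the pathwise comparison $\hat\tau_{2g} \le \tau$; once it is in place, everything decouples from the high-dimensional gap dynamics and reduces to the scalar Inverse Gaussian computation.
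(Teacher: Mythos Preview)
Your proof is correct and takes a slightly different (and cleaner) route from the paper's. Both arguments begin identically: on $[0,\hat\tau_{2g}]$ one has $V\ge 2g$, and the nonnegativity of $Z_1$ together with $Z_1(0)=0$, $L_2\ge 0$ yields $L_1(t)\ge -B_1(t)+2gt$. From here the paper replaces $B_1(t)$ by $\sup_{s\le t}B_1(s)$ and dominates $V(t)$ by $Q(t)=\sup_{s\le t}B_1(s)-gt+v$, then estimates $\mathbb{E}e^{\gamma\hat\tau_{2g}}$ via Markov's inequality and a Gaussian concentration bound \eqref{eq:elemconc} for the running maximum. You instead keep $B_1(t)$ and combine with the upper bound $L_1(t)\le v-2g+gt$ (from $V(t)\ge 2g$) to obtain the pathwise constraint $gt-B_1(t)\le v-2g$ on $[0,\hat\tau_{2g}]$, which bounds $\hat\tau_{2g}$ by the first-passage time of a drifted Brownian motion to level $v-2g$; the exact Inverse Gaussian Laplace transform then gives explicit constants $\gamma_5=g^2/4$, $\kappa_1'=1$, $\kappa_2'=2/g$. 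Your comparison step $\hat\tau_{2g}\le\tau$ is the one delicate point, but your argument is fine: the pathwise bound forces $\hat\tau_{2g}\le\inf\{t:M_t>v-2g\}$, and for a drifted Brownian motion this strict-exceedance time coincides a.s.\ with $\tau$. The payoff of your route is sharper, explicit constants and no need for the concentration estimate; the paper's approach trades this for a slightly softer argument that avoids invoking the Inverse Gaussian formula.
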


We now prove the main result of this section.
$\,$ \\

\noindent \textbf{Proof of Proposition \ref{sigma2}.}
Let $\alpha \in (0,1)$ be such that
\begin{equation}\label{eq:alphcon}
	\alpha < \gamma_5, \;\; \alpha (1+\kappa_2'g) \le \gamma_4, \;\; 2\alpha (1+\kappa_2'g)\kappa_2  \le \gamma_3, \;\; 2\alpha(1+\kappa_2'g)  \le \gamma,
\end{equation}
where $\gamma_5$  and $\kappa_2'$ are as in Lemma \ref{sec2prop3}, $\kappa_2$ and $\gamma_4$ are as in Lemma \ref{sec2prop2},
$\gamma_3$ is as in Lemma \ref{sec2prop1} and $\lala$ is as in Proposition \ref{sigma1}.

Fix $\hat{\bfz} \in \RR_+^{N-1}$. Define stopping time
\begin{align}\label{eq:etastop}
	\eta_1 &\doteq \inf\{t \ge \sigma_1: Z_1(t)=0\}.
\end{align}
Note that $\sigma_2 = \inf\{t \ge \eta_1:  V(t) = 2g\}$.
From the strong Markov property, Lemma \ref{sec2prop3},  and recalling the first condition on $\alpha$ from \eqref{eq:alphcon}, 
\begin{align*}
\mathbb{E}^*_{(2g,0,\hat{\bfz})}\,e^{\alpha\sigma_2} 
 &= 
\mathbb{E}^*_{(2g,0,\hat{\bfz})} \left[ \mathbb{E}^*_{(2g,0,\hat{\bfz})}(e^{\alpha\sigma_2} \mid \clf^*_{\eta_1})\right] \le \kappa_1'\mathbb{E}^*_{(2g,0,\hat{\bfz})}e^{\kappa_2'\alpha V(\eta_1) + \alpha \eta_1}\\
& \le \kappa_1'\mathbb{E}^*_{(2g,0,\hat{\bfz})}e^{\kappa_2'\alpha (4g+ g\eta_1) + \alpha \eta_1}
\le \kappa_1'e^{4\kappa_2'\alpha g}\mathbb{E}^*_{(2g,0,\hat{\bfz})}e^{\alpha(1+\kappa_2'g) \eta_1}.
\end{align*}
Thus, with $d_1 = \kappa_1'e^{4\kappa_2' \alpha g}$ and $d_2= (1+\kappa_2'g)$,
\begin{equation}
\mathbb{E}^*_{(2g,0,\hat{\bfz})}\,e^{\alpha\sigma_2} \le  d_1 \mathbb{E}^*_{(2g,0,\hat{\bfz})}e^{\alpha d_2\eta_1}. \label{eq:541}
\end{equation}
Using the strong Markov property again,
\begin{align*}
\mathbb{E}^*_{(2g,0,\hat{\bfz})}e^{\alpha d_2\eta_1} & =
\mathbb{E}^*_{(2g,0,\hat{\bfz})} \left[ \mathbb{E}^*_{(2g,0,\hat{\bfz})}(e^{\alpha d_2\eta_1} \mid \clf^*_{\sigma_1})\right]\\
&\le \kappa_1\mathbb{E}^*_{(2g,0,\hat{\bfz})} e^{\kappa_2 \alpha  d_2Z_1(\sigma_1)^{1/2} + \alpha d_2\sigma_1}\\
&\le \kappa_1\left(\mathbb{E}^*_{(2g,0,\hat{\bfz})} e^{2\kappa_2 \alpha  d_2Z_1(\sigma_1)^{1/2} }\right)^{1/2}
\left(\mathbb{E}^*_{(2g,0,\hat{\bfz})} e^{2 \alpha d_2\sigma_1}\right)^{1/2},
\end{align*}
where the second inequality is from Lemma \ref{sec2prop2} and on  recalling the second condition on $\alpha$ from \eqref{eq:alphcon}, and the last line is from Cauchy-Schwarz inequality.
Next, applying Lemma \ref{sec2prop1}, and recalling the third condition on $\alpha$ from \eqref{eq:alphcon}, 
$$\sup_{\hat{\bfz} \in \RR_+^{N-1}}\mathbb{E}^*_{(2g,0,\hat{\bfz})} e^{2\kappa_2 \alpha d_2  Z_1(\sigma_1)^{1/2}}
\doteq d_3 <\infty.$$
Finally, applying Proposition \ref{sigma1} and recalling the fourth condition on $\alpha$ from \eqref{eq:alphcon} 
we have
$$\sup_{\hat{\bfz} \in \RR_+^{N-1}}\mathbb{E}^*_{(2g,0,\hat{\bfz})}e^{2 \alpha d_2\sigma_1} \doteq d_4<\infty.$$
Combining the above estimates we have
$$
\sup_{\hat{\bfz} \in \RR_+^{N-1}}\mathbb{E}^*_{(2g,0,\hat{\bfz})}\,e^{\alpha\sigma_2} \le d_1 \kappa_1 d_3^{1/2} d_4^{1/2} <\infty.$$
The result follows.
\qedsymbol

\subsection{A Negative Singular Drift Property}\label{singdrift}

For $\bfz \in \RR_+^N$,  define
$\bar{z}_2 \doteq \sum_{i=2}^N(N-i+1)z_i$. Similarly, for $\RR_+^N$ valued process $\{\bfZ(t)\}$, we define
 for $t\ge 0$,
\begin{equation}\label{barz2}
\bar{Z}_2(t) \doteq \sum_{i=2}^N(N-i+1)Z_i(t) = \sum_{i=1}^{N-1}iZ_{N-i+1}(t).
\end{equation}

The main result of this section is Propositon \ref{neginexp}, where we will show that if $\bar{z}_2$ is large, then the process $\bar{Z}_2(\cdot)$ decreases in expectation in the course of an appropriately large number of excursions of the velocity process between the levels $2g$ and $4g$ (see \eqref{eq:sigmatimes}).  

The following lemma gives a key algebraic representation of $\bar{Z}_2(\cdot)$ in terms of $L_1$, $L_{k+1}$ for $k \in \{1,\ldots,N-1\}$, and additional error terms. If $\bar{z}_2$ is large, then there exists $k \in \{1,\ldots,N-1\}$ such that $Z_{k+
1}(0) = z_{k+1}$ is large. Thus, it takes a long time for this gap to hit zero. Before this time, the lowest (inert) particle `pushes' the bottom $k+1$ particles up and thereby reduces $\bar{Z}_2(\cdot)$, as captured by the $L_1$ term in the lemma. This `singular' drift through local times results in stability and, in turn, exponential ergodicity, of the system.
\begin{lemma}\label{zless}
Let $Y^{(1)}_{1}(t) = 0$ and $Y^{(1)}_{k}(t) \doteq \sum_{i=2}^{k}(k-i+1) B_i^*(t), \, t \ge 0, \, 2 \le k \le N$. Also define $M(t) \doteq \sum_{i=2}^NB_i(t) - (N-1)B_1(t), \, t \ge 0$. Then
for all $(v,\bfz) \in \RR\times \RR_+^N$,  $\PP^*_{(v,\bfz)}$ a.s., 
\begin{equation}\label{zlesseq}
 \bar{Z}_2(t) - \bar{z}_2  { \,\,\le\,\, }  M(t) + \frac{N}{k}Y^{(1)}_k(t) + \frac{N}{2k}L_{k+1}(t) - \frac{(N-k)}{k}L_1(t), \ t \ge 0, \ 1 \le k \le N,
\end{equation}
{ with equality for $k = 1$.}
\end{lemma}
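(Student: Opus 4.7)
The plan is to first establish the claimed identity at $k=1$ as an \emph{exact} equality, and then extend to general $k$ by feeding into it a recursively derived upper bound on $L_2(t)$.

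For the $k=1$ case, I would start from \eqref{eq:gapprocb} and compute $\bar Z_2(t) - \bar z_2 = \sum_{i=2}^N (N-i+1)(Z_i(t) - z_i)$ term by term. Setting $a_i := N-i+1$, the Brownian part $\sum_{i=2}^N a_i (B_i(t) - B_{i-1}(t))$ collapses by summation by parts to $\sum_{j=2}^N B_j(t) - (N-1) B_1(t) = M(t)$. For the local time contribution, the weights $a_i$ form an arithmetic progression and hence satisfy $a_j - \tfrac{1}{2} a_{j-1} - \tfrac{1}{2} a_{j+1} = 0$ for $3 \le j \le N-1$, together with $a_N - \tfrac{1}{2} a_{N-1} = 0$ (using $L_{N+1} \equiv 0$), so the coefficients of $L_3, \ldots, L_N$ all vanish. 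Only the $L_1$ and $L_2$ terms survive, with coefficients $-(N-1)$ and $N/2$, yielding the exact identity
\[
\bar Z_2(t) - \bar z_2 \;=\; M(t) - (N-1) L_1(t) + \tfrac{N}{2} L_2(t),
\]
which is precisely the $k=1$ statement of the lemma since $Y^{(1)}_1 \equiv 0$.

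For $k \ge 2$ I would derive the auxiliary pathwise bound
\[
L_2(t) \;\le\; \tfrac{2}{k} Y^{(1)}_k(t) + \tfrac{1}{k} L_{k+1}(t) + \tfrac{2(k-1)}{k} L_1(t).
\]
The supremum representations of the $L_i$ (as already used in the proof of Lemma \ref{locin}) give $2 L_2 - L_3 \le 2 L_1 + 2 B_2^*$ and $2 L_i - L_{i-1} - L_{i+1} \le 2 B_i^*$ for $3 \le i \le N$. I would then take the linear combination $\sum_{i=2}^k \alpha_i \cdot (\text{inequality at } i)$ with weights $\alpha_i = k - i + 1$. These weights solve the discrete harmonic recursion $\alpha_{i-1} + \alpha_{i+1} = 2 \alpha_i$ on the interior $3 \le i \le k-1$, together with the boundary condition $2 \alpha_k = \alpha_{k-1}$, so the contributions of $L_3, \ldots, L_k$ telescope away, leaving $k L_2(t) - L_{k+1}(t) - 2(k-1) L_1(t) \le 2 \sum_{i=2}^k (k-i+1) B_i^*(t) = 2 Y^{(1)}_k(t)$, which is the auxiliary bound. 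As a sanity check, $k = N$ recovers the second statement of Lemma \ref{locin}, while $k = 1$ collapses to the trivial $L_2 \le L_2$.

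Substituting the auxiliary bound into the $k = 1$ identity and noting that $-(N-1) + \tfrac{N(k-1)}{k} = -\tfrac{N-k}{k}$ delivers the inequality in the form stated in the lemma. The only genuinely non-routine step is guessing the correct weights $\alpha_i$ for the auxiliary bound; once that is in hand, the remainder is arithmetic bookkeeping with the gap dynamics in \eqref{eq:gapprocb}.
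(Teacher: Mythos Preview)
Your proposal is correct and follows essentially the same route as the paper: first establish the exact identity $\bar Z_2(t)-\bar z_2 = M(t) + \tfrac{N}{2}L_2(t) - (N-1)L_1(t)$ for $k=1$, then derive the auxiliary bound $L_2 \le \tfrac{2(k-1)}{k}L_1 + \tfrac{1}{k}L_{k+1} + \tfrac{2}{k}Y^{(1)}_k$ by taking the linear combination of the local-time inequalities with weights $k-i+1$, and substitute. The paper writes out the telescoping sums more explicitly while you invoke the discrete-harmonic property of the weights, but the computations are identical.
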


\begin{proof}
Note that, for $(v,\bfz) \in \RR\times \RR_+^N$, under $\PP^*_{(v,\bfz)}$, for $t\ge 0$,
 \begin{align*}
    \bar{Z}_2(t) & = -\frac{(N-1)}{2}L_1(t) + \sum_{i=2}^N(N-i+1)\left[(B_i(t)-B_{i-1}(t)-\frac{1}{2}(L_{i+1}(t)+L_{i-1}(t))+L_i(t) + z_i\right] \\
    & = -\frac{(N-1)}{2}L_1(t) + \sum_{i=1}^{N-1}i\left[(B_{N-i+1}(t)-B_{N-i}(t)-\frac{1}{2}(L_{N-i+2}(t)+L_{N-i}(t))+L_{N-i+1}(t) + z_{N-i+1}\right] \\
    & = \bar z_2 - \frac{(N-1)}{2}L_1(t) + \sum_{i=1}^{N-1}iB_{N-i+1}(t) - \sum_{i=1}^{N-1}iB_{N-i}(t) +\sum_{i=1}^{N-1}i(L_{N-i+1}(t)-\frac{1}{2}(L_{N-i+2}(t)+L_{N-i}(t))). \\
\end{align*}
Also,
\begin{align*}
    \sum_{i=1}^{N-1}iB_{N-i+1}(t) - \sum_{i=1}^{N-1}iB_{N-i}(t) & = \sum_{i=0}^{N-2}(i+1)B_{N-i}(t) - \sum_{i=1}^{N-1}iB_{N-i}(t) 
     = \sum_{i=2}^NB_i(t) - (N-1)B_1(t).
\end{align*}
Moreover,
\begin{multline*}
    \sum_{i=1}^{N-1}i(L_{N-i+1}(t)-\frac{1}{2}(L_{N-i+2}(t)+L_{N-i}(t)))\\ 
	 = -\frac{1}{2}\sum_{i=1}^{N-1}i(L_{N-i+2}(t)-L_{N-i+1}(t)) 
    +\frac{1}{2}\sum_{i=1}^{N-1}i(L_{N-i+1}(t)-L_{N-i}(t))) \\
     = -\frac{1}{2}\sum_{i=0}^{N-2}(i+1)(L_{N-i+1}(t)-L_{N-i}(t))
   +\frac{1}{2}\sum_{i=1}^{N-1}i(L_{N-i+1}(t)-L_{N-i}(t)) \\
   = \frac{(N-1)}{2}(L_2(t)-L_1(t))+\frac{1}{2}L_2(t).
\end{multline*}
Hence, 
\begin{align*}
    \bar{Z}_2(t) & = \bar{z}_2 - \frac{(N-1)}{2}L_1(t) + \sum_{i=2}^NB_i(t) - (N-1)B_1 (t)+ \frac{(N-1)}{2}(L_2(t)-L_1(t))+\frac{1}{2}L_2(t) \\
    & = \bar{z}_2 + \sum_{i=2}^NB_i(t) - (N-1)B_1(t) + \frac{N}{2}L_2(t) - (N-1)L_1(t).
\end{align*}
Consider the martingale
$M(t) \doteq \sum_{i=2}^NB_i(t) - (N-1)B_1(t)$. Then
\begin{equation}\label{eq:semmart}
	\bar{Z}_2(t)  =  \bar{z}_2 + M(t) + \frac{N}{2}L_2(t) - (N-1)L_1(t).
	\end{equation}
This proves \eqref{zlesseq} for $k=1$. Now, we will use this along with some local time inequalities to prove \eqref{zlesseq} for $k \ge 2$. Note that, from \eqref{eq:lbds},
\begin{align}
    L_2(t) &\leq L_1(t) + B^*_2(t) +\frac{1}{2}L_3(t) \label{eq:eq147} \\
    L_i & \leq B^*_i(t) + \frac{1}{2}(L_{i+1}(t) + L_{i-1}(t)),\,\, i = 3, \ldots , N.\nonumber
\end{align}
From these identities it follows that, for $k \in \{3, \ldots, N\},$
\begin{align*}
    \sum_{i=3}^k (k-i+1)\left[(L_i(t) - \frac{1}{2}(L_{i+1}(t) + L_{i-1}(t)))\right] + &(k-1)(L_2(t)-L_1(t)-\frac{1}{2}L_3(t)) \\
    &\leq \sum_{i = 2}^k (k-i+1)B^*_i(t) \doteq  Y^{(1)}_k(t).
\end{align*}
On the other hand,
\begin{align*}
    \sum_{i=3}^k (k-i+1)(L_i(t) - \frac{1}{2}(L_{i+1}(t) + L_{i-1}(t)))& + (k-1)(L_2(t)-L_1(t)-\frac{1}{2}L_3(t)) \\
    &= \frac{k}{2}L_2(t) - (k-1)L_1(t) -\frac{1}{2}L_{k+1}(t).
\end{align*}
Combining the last two displays and multiplying through by $\frac{2}{k}$,
\begin{align}\label{eq:1055}
    L_2(t) \leq \frac{2(k-1)}{k}L_1(t) + \frac{1}{k}L_{k+1}(t) + \frac{2}{k}Y^{(1)}_k(t), \, k = 3, \ldots , N.
\end{align}
The last display holds trivially for $k=1$  and also for $k=2$,  as can be seen from \eqref{eq:eq147}.

Hence, for all $1\le k \le N$, using \eqref{eq:semmart},
\begin{align}
    \bar{Z}_2(t) - \bar{z}_2 & = M(t) + \frac{N}{2}L_2(t) - (N-1)L_1(t) \nonumber\\
    & \leq M(t) + \frac{N}{2}(\frac{2}{k}Y^{(1)}_k(t) + \frac{2(k-1)}{k}L_1(t) + \frac{1}{k}L_{k+1}(t)) - (N-1)L_1(t) \nonumber\\
    & =  M(t) + \frac{N}{k}Y^{(1)}_k(t) + \frac{N}{2k}L_{k+1}(t) - \frac{(N-k)}{k}L_1(t).\label{eq:507}
\end{align}
This proves the lemma.	
\end{proof}

Define the sequence of stopping times $\{\sigma_m\}_{m\ge 0}$ as $\sigma_0=0$, and for $i \ge 0$,
\begin{equation}
	\label{eq:sigmatimes}
	\sigma_{2i+1} \doteq \inf\{t \geq \sigma_{2i}: V(t) = 4g\},\;\;
 \sigma_{2i+2} \doteq \inf\{t \geq \sigma_{2i+1}: V(t) = 2g\}.
 \end{equation}
For $\hat\bfz \in \RR_+^{N-1}$, abusing notation, write $\sum_{i=2}^N(N-i+1)\hat z_i$ as $\bar z_2$.
\begin{proposition}\label{neginexp}
There exists $\Delta_0 > 0$ so that, for every $\Delta \geq \Delta_0,$ there is a $l \in \NN$ such that
\begin{equation}
    \sup_{\hat\bfz \in \RR_+^{N-1}: \bar z_2 \geq \Delta}\mathbb{E}_{(2g,0,\hat{\bfz})}(\bar{Z}_2(\sigma_{2l}) - \bar z_2) < 0.
\end{equation}
\end{proposition}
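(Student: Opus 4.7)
The plan is to apply Lemma \ref{zless} at $t=\sigma_{2l}$ with a carefully chosen $k\in\{1,\ldots,N-1\}$ depending on $\hat\bfz$, take expectations, and arrange for the negative $L_1$-term to dominate once $l$ and $\Delta$ are sufficiently large. Two elementary deterministic facts drive the argument. First, since $V(0)=V(\sigma_{2l})=2g$, the identity $V(t)=V(0)+gt-L_1(t)$ forces
\begin{equation*}
L_1(\sigma_{2l}) \,=\, g\,\sigma_{2l}.
\end{equation*}
Second, on every half-cycle $[\sigma_{2i},\sigma_{2i+1}]$ from $V=2g$ up to $V=4g$, the bound $V(t)\leq V(\sigma_{2i})+g(t-\sigma_{2i})$ forces the half-cycle length to be at least $2$, hence $\sigma_{2l}\geq 2l$ a.s.\ and $\mathbb{E}^*_{(2g,0,\hat\bfz)}[L_1(\sigma_{2l})]\geq 2gl$. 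Proposition \ref{sigma2} together with the strong Markov property applied at each $\sigma_{2i}$ (noting $V(\sigma_{2i})=2g$ and $Z_1(\sigma_{2i})=0$, since $V$ can decrease only when $Z_1=0$) yields uniform-in-$\hat\bfz$ exponential moments of $\sigma_{2l}$, so $\mathbb{E}^*_{(2g,0,\hat\bfz)}[\sigma_{2l}^p]\leq C_{p,l}$ for every $p\geq 1$.

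Given $\hat\bfz$ with $\bar z_2\geq\Delta$, pigeonhole on $\bar z_2=\sum_{i=2}^N(N-i+1)z_i$ produces $k=k(\hat\bfz)\in\{1,\ldots,N-1\}$ with $z_{k+1}\geq \bar z_2/(N-1)^2$. Taking expectations in \eqref{zlesseq} at $t=\sigma_{2l}$ with this $k$, and killing the martingale term $M$ by optional stopping ($\langle M\rangle_t=N(N-1)t$ and $\mathbb{E}[\sigma_{2l}]<\infty$), one gets
\begin{equation*}
\mathbb{E}[\bar Z_2(\sigma_{2l})-\bar z_2] \,\leq\, \tfrac{N}{k}\mathbb{E}[Y^{(1)}_k(\sigma_{2l})] + \tfrac{N}{2k}\mathbb{E}[L_{k+1}(\sigma_{2l})] - \tfrac{N-k}{k}\mathbb{E}[L_1(\sigma_{2l})].
\end{equation*}
Doob's $L^2$ inequality gives $\mathbb{E}[B_i^*(\sigma_{2l})^2]\leq C\mathbb{E}[\sigma_{2l}]\leq C'l$, whence $\mathbb{E}[Y^{(1)}_k(\sigma_{2l})]\leq C_N\sqrt{l}$ uniformly in $k\in\{1,\ldots,N-1\}$.

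The main technical step is to show $\mathbb{E}[L_{k+1}(\sigma_{2l})]\to 0$ as $z_{k+1}\to\infty$. Since $L_{k+1}$ grows only at $\{Z_{k+1}=0\}$, setting $\tau^0_{k+1}\doteq\inf\{t\geq 0:Z_{k+1}(t)=0\}$ yields $L_{k+1}(\sigma_{2l})=0$ on $\{\tau^0_{k+1}>\sigma_{2l}\}$, so by Cauchy--Schwarz
\begin{equation*}
\mathbb{E}[L_{k+1}(\sigma_{2l})] \,\leq\, \bigl(\mathbb{E}[L_{k+1}(\sigma_{2l})^2]\bigr)^{1/2}\bigl(\mathbb{P}(\tau^0_{k+1}\leq\sigma_{2l})\bigr)^{1/2}.
\end{equation*}
Lemma \ref{locin} together with $\sup_{s\leq\sigma_{2l}}V(s)^+\leq 2g+g\sigma_{2l}$ and the polynomial moments of $\sigma_{2l}$ bounds the first factor by a constant $C'_l$. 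For the second, evaluating the equation for $Z_{k+1}$ at $\tau^0_{k+1}$ (using $L_{k+1}(\tau^0_{k+1})=0$ since $L_{k+1}$ has not yet grown) gives, up to a harmless constant factor on the $L_1$ term when $k=1$,
\begin{equation*}
z_{k+1}\,\leq\, B_k^*(\sigma_{2l})+B^*_{k+1}(\sigma_{2l})+\tfrac{1}{2}\bigl(L_k(\sigma_{2l})+L_{k+2}(\sigma_{2l})\bigr),
\end{equation*}
whose right side has expectation bounded by a constant $C''_l$ (via Lemma \ref{locin} applied again, together with the moment bounds). Markov's inequality then yields $\mathbb{P}(\tau^0_{k+1}\leq\sigma_{2l})\leq C''_l(N-1)^2/\Delta$, so $\mathbb{E}[L_{k+1}(\sigma_{2l})]\leq \tilde C_l/\sqrt{\Delta}$.

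Assembling the estimates, and using $(N-k)/k\geq 1/(N-1)$,
\begin{equation*}
\mathbb{E}[\bar Z_2(\sigma_{2l})-\bar z_2] \,\leq\, N C_N\sqrt{l} + \frac{N\tilde C_l}{2\sqrt{\Delta}} - \frac{2gl}{N-1}.
\end{equation*}
First fix $l=l(N,g)$ large enough that $\frac{2gl}{N-1}-NC_N\sqrt{l}\geq 2$; then, with this $l$ fixed, choose $\Delta_0$ large enough that $N\tilde C_l/(2\sqrt{\Delta_0})\leq 1$. For every $\Delta\geq\Delta_0$ and every $\hat\bfz$ with $\bar z_2\geq\Delta$ the bound is $\leq -1<0$, proving the proposition. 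The principal obstacle throughout is keeping all constants uniform in $\hat\bfz$, which rests on the uniformity in $\hat\bfz$ already established in Proposition \ref{sigma2} and Lemma \ref{locin}.
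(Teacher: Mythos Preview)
Your argument is correct and follows the same overall strategy as the paper: both apply Lemma~\ref{zless} at $\sigma_{2l}$ with a pigeonholed $k$, kill $M$ by optional stopping, control $Y^{(1)}_k$ by $O(\sqrt l)$, lower bound $L_1$ by $\Theta(l)$, and then bound $\mathbb{E}[L_{k+1}(\sigma_{2l})]$ via Cauchy--Schwarz against the event $\{\tau^0_{k+1}\le\sigma_{2l}\}$. Two genuine differences are worth recording. First, your deterministic identity $L_1(\sigma_{2l})=g\sigma_{2l}\ge 2gl$ is cleaner and sharper than the paper's indirect lower bound $\mathbb{E}[L_1(\sigma_{2l})]\ge gl$ in Lemma~\ref{zlesslem1}. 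Second, for $\mathbb{P}(\tau^0_{k+1}\le\sigma_{2l})$ the paper invests in the refined local-time inequalities (their \eqref{eq:549}--\eqref{eq:550}, exploiting $L_{k+1}(t)=0$ for $t\le\theta_k$) to obtain exponential decay in $\Delta$, whereas you simply feed Lemma~\ref{locin} into Markov's inequality and get the polynomial rate $C''_l/\Delta$. Your cruder estimate is entirely sufficient here and lets you fix $l$ once (depending only on $N,g$) and then choose $\Delta_0$, while the paper takes $l=l(\Delta)=\lfloor\Delta^{1/4}\rfloor+1$ growing with $\Delta$; for the stated proposition that growth is unnecessary, though the paper's sharper estimates yield a quantitative drift $-c\Delta^{1/4}$ rather than just a fixed negative constant.
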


\editc{This proposition will be proven using the following two lemmas. Proofs of the lemmas are given in Section \ref{sec:pfneginexp}.}

\begin{lemma}\label{zlesslem1}
There exists an $l_0 \in \mathbb{N}$ and $c_2 > 0$, such that for all $1\le k<N$, $\hat\bfz \in \RR_+^{N-1}$, and  $l \ge l_0$,
\begin{equation}\mathbb{E}^*_{(2g, 0,\hat\bfz)}(\bar{Z}_2(\sigma_{2l}) - \bar{z}_2) \leq -c_2l +\frac{N}{2k}\mathbb{E}^*_{(2g, 0,\hat\bfz)}L_{k+1}(\sigma_{2l}).\label{eq:1140}\end{equation}
\end{lemma}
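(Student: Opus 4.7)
\textbf{Proof plan for Lemma \ref{zlesslem1}.} The plan is to apply Lemma \ref{zless} at the stopping time $t = \sigma_{2l}$, take expectations under $\EE^*_{(2g,0,\hat\bfz)}$, and show that the mean of the martingale term vanishes, the mean of $Y^{(1)}_k(\sigma_{2l})$ is $O(\sqrt{l})$, while $L_1(\sigma_{2l}) \ge 2gl$ pathwise. The singular drift $-(N-k)L_1/k$ will then dominate the $O(\sqrt{l})$ fluctuation, yielding the desired bound.

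First, I would observe a deterministic identity and inequality for $L_1$. Since $V(t) = 2g + gt - L_1(t)$ under $\PP^*_{(2g,0,\hat\bfz)}$ and $V(\sigma_{2l}) = 2g = V(0)$, we get $L_1(\sigma_{2l}) = g\sigma_{2l}$ almost surely. Moreover, because $dV \le g\,dt$ (local time only decreases $V$), the jump $V(\sigma_{2i+1}) - V(\sigma_{2i}) = 2g$ forces $\sigma_{2i+1}-\sigma_{2i} \ge 2$. Summing over $i=0,\ldots,l-1$ gives $\sigma_{2l} \ge 2l$, so
\[
L_1(\sigma_{2l}) \ge 2gl \quad \PP^*_{(2g,0,\hat\bfz)}\text{-a.s.}
\]

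Next, I would control $\EE^*\sigma_{2l}$ uniformly in $\hat\bfz$. At each $\sigma_{2i}$ (for $i\ge 1$), $V(\sigma_{2i})=2g$ and $V$ is strictly decreasing in a left neighborhood (since $\sigma_{2i}$ is a first downcrossing from above $2g$), hence $L_1$ is active there, forcing $Z_1(\sigma_{2i})=0$. Therefore the strong Markov property gives
\[
\EE^*_{(2g,0,\hat\bfz)}(\sigma_{2i}-\sigma_{2i-2}\mid \clf^*_{\sigma_{2i-2}}) = \EE^*_{(2g,0,\hat\bfZ(\sigma_{2i-2}))}(\sigma_2),
\]
which by Proposition \ref{sigma2} is bounded by some $C_\sigma < \infty$ uniformly in the starting gap vector. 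Hence $\EE^*_{(2g,0,\hat\bfz)}\sigma_{2l} \le C_\sigma l$.

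Now I would estimate the error terms. The process $M(t) = \sum_{i=2}^N B_i(t) - (N-1)B_1(t)$ is a continuous martingale with quadratic variation $[M]_t = N(N-1)t$. Since $\EE^* [M]_{\sigma_{2l}} \le N(N-1)C_\sigma l < \infty$, $\{M(t\wedge\sigma_{2l})\}_{t\ge 0}$ is $L^2$-bounded, hence uniformly integrable, and optional stopping yields $\EE^* M(\sigma_{2l}) = 0$. For $Y^{(1)}_k(\sigma_{2l}) = \sum_{i=2}^k (k-i+1)B_i^*(\sigma_{2l})$, Doob's $L^2$ inequality applied to each continuous martingale $B_{i-1}-B_i$ together with Cauchy--Schwarz yields
\[
\EE^* B_i^*(\sigma_{2l}) \le \bigl(\EE^* (B_i^*(\sigma_{2l}))^2\bigr)^{1/2} \le 2\sqrt{2\,\EE^*\sigma_{2l}} \le C_1 \sqrt{l},
\]
so $\EE^* Y^{(1)}_k(\sigma_{2l}) \le C_2 N \sqrt{l}$ for a universal $C_2$.

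Combining these, Lemma \ref{zless} gives for each $1 \le k < N$,
\[
\EE^*_{(2g,0,\hat\bfz)}(\bar Z_2(\sigma_{2l}) - \bar z_2) \le 0 + \frac{N}{k} C_2 N\sqrt{l} + \frac{N}{2k}\EE^* L_{k+1}(\sigma_{2l}) - \frac{N-k}{k}\cdot 2gl.
\]
Taking the worst case $k=N-1$ to ensure uniformity, the net coefficient of $l$ is at most $-\tfrac{2g}{N-1} + \tfrac{N^2 C_2}{\sqrt{l}}$. The main (though routine) task is then to pick $l_0$ large enough (depending only on $N$, $g$, $C_2$) so that $N^2 C_2/\sqrt{l_0} \le g/(N-1)$; then the estimate holds with $c_2 := g/(N-1) > 0$ uniformly in $k \in \{1,\ldots,N-1\}$ and $\hat\bfz \in \RR_+^{N-1}$, completing the proof. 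The only step that requires genuine care is the identification $Z_1(\sigma_{2i})=0$ at even stopping times, which is needed to iterate the strong Markov property with the supremum bound from Proposition \ref{sigma2}.
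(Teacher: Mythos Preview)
Your proof is correct and follows essentially the same route as the paper: apply Lemma \ref{zless} at $\sigma_{2l}$, use optional stopping to kill $\EE^* M(\sigma_{2l})$, bound $\EE^* Y^{(1)}_k(\sigma_{2l})$ by $O(\sqrt{l})$ via Doob's inequality and the linear-in-$l$ bound on $\EE^*\sigma_{2l}$ from Proposition \ref{sigma2}, and then let the negative $L_1$ term dominate. The one notable difference is your treatment of $L_1(\sigma_{2l})$: you observe the deterministic identity $L_1(\sigma_{2l}) = g\sigma_{2l}$ and the pathwise inequality $\sigma_{2l}\ge 2l$ (from $\sigma_{2i+1}-\sigma_{2i}\ge 2$), giving $L_1(\sigma_{2l})\ge 2gl$ almost surely, whereas the paper argues in expectation via $\EE^* L_1(\sigma_{2l}) \ge gl\cdot \PP^*(\sigma_2>1)=gl$; your route is cleaner and yields a sharper constant. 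Your final ``worst case $k=N-1$'' sentence is slightly loose (the $\sqrt{l}$ coefficient is worst at $k=1$, the linear coefficient at $k=N-1$), but bounding each term separately over $k$ gives $\frac{N^2C_2}{k}\sqrt{l}-\frac{2g(N-k)}{k}l \le N^2C_2\sqrt{l}-\frac{2g}{N-1}l$ uniformly, so the conclusion stands.
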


To complete the proof of Proposition \ref{neginexp} we will  estimate, in the next lemma,  the second term in the bound \eqref{eq:1140}. 

Take $\Delta > 0$ and suppose $\hat\bfz \in \cls_{\Delta} \doteq \{\hat\bfz \in \RR_+^{N-1}: \bar{z}_2 \geq \Delta\}$.
Then there is a $ k \in \{1, \ldots, N-1\}$ so that 
\begin{equation}z_{k+1}\geq \frac{\Delta}{N^2}.\label{eq:1132}
\end{equation}
We will work with this $k$ in the following.

\begin{lemma}\label{zlesslem2}
For $\Delta>0$ and $\hat\bfz \in \cls_{\Delta}$, let $k = k(\Delta)$ satisfy \eqref{eq:1132}. There exist positive constants $\Delta_1, D_1, D_2,D_3$ such that for any $\Delta \ge \Delta_1$ and $l \in \mathbb{N}$,
\begin{align}\label{lkbd}
    \mathbb{E}^*_{(2g, 0,\hat\bfz)}L_{k+1}(\sigma_{2l}) 
	\le D_1l^{5/2}\editc{\left(\sqrt{l}e^{-D_2\sqrt{\Delta}/l}+e^{-D_3\Delta^{3/2}}\right)}.
\end{align}
\end{lemma}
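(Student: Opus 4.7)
The plan is to exploit the fact that $L_{k+1}$ increases only at times when $Z_{k+1} = 0$. Defining $\tau^{(k+1)} \doteq \inf\{t \geq 0 : Z_{k+1}(t) = 0\}$, we have $L_{k+1}(\sigma_{2l}) > 0$ only on $\{\tau^{(k+1)} \leq \sigma_{2l}\}$. Examining the equation for $Z_{k+1}$ in \eqref{eq:gapprocb}, with $L_{k+1}(t) = 0$ for $t < \tau^{(k+1)}$ and $Z_{k+1}(\tau^{(k+1)}) = 0$, one extracts the deterministic inequality
\begin{equation*}
z_{k+1} \le B_k^*(\tau^{(k+1)}) + B_{k+1}^*(\tau^{(k+1)}) + \tfrac{1}{2}\bigl(L_k(\tau^{(k+1)}) + L_{k+2}(\tau^{(k+1)})\bigr)
\end{equation*}
(with the coefficient of $L_1$ being $1$ rather than $\tfrac{1}{2}$ when $k=1$, and $L_{N+1} \equiv 0$ when $k=N-1$). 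Since $z_{k+1} \ge \Delta/N^2$ by \eqref{eq:1132}, reaching $\tau^{(k+1)}$ by time $T$ forces the right-hand side, evaluated up to $T$, to attain size $\gtrsim \Delta$.

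To make this quantitative I would combine Lemma \ref{locin} with the elementary bound $\sup_{s \le t} V(s)^+ \le 2g + gt$ (immediate from $V(s) = 2g + gs - L_1(s)$ and $L_1 \ge 0$) to get $L_i(t) \le C(t+t^2) + C\, B^*(t)$ with $B^*(t) \doteq \max_{1 \le j \le N} B_j^*(t)$. Taking $T = c_1 \sqrt{\Delta}$ with $c_1$ small enough that $C(T+T^2) \le \Delta/(2N^2)$, the preceding display forces $B^*(T) \ge c_2\,\Delta$ on $\{\tau^{(k+1)} \le T\}$, and the standard Gaussian maximal tail gives
\begin{equation*}
\mathbb{P}\bigl(\tau^{(k+1)} \le c_1 \sqrt{\Delta}\bigr) \le C \exp(-c_3 \Delta^{3/2}).
\end{equation*}

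Running in parallel, I need uniform-in-$l$ control on the upper tail of $\sigma_{2l}$. Iterating the strong Markov property at each $\sigma_{2i-2}$ (where $V = 2g$) and appealing to Proposition \ref{sigma2} (with a straightforward adaptation to arbitrary $Z_1(\sigma_{2i-2})$ via Lemmas \ref{sec2prop1}--\ref{sec2prop3}) yields $\mathbb{E} e^{\alpha \sigma_{2l}} \le M^l$ for some fixed $\alpha>0$. The crucial refinement is to apply this with exponent $\alpha/l$ and invoke $(1 + x/l)^l \le e^x$, producing the scaled tail bound
\begin{equation*}
\mathbb{P}(\sigma_{2l} > T) \le C\,\exp\!\bigl(-\alpha T/l\bigr),
\end{equation*}
uniform in $l$. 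Polynomial-moment bounds $\mathbb{E}\sigma_{2l}^p \le C_p l^p$ together with Doob/optional stopping applied to $B^2 - t$ give $\mathbb{E} B^*(\sigma_{2l})^{2p} \le C_p l^p$ and therefore $\mathbb{E} L_{k+1}(\sigma_{2l})^2 \le C l^4$.

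Finally I would split
\begin{equation*}
\mathbb{E} L_{k+1}(\sigma_{2l}) \le \mathbb{E} L_{k+1}(\sigma_{2l}) \mathbf{1}\{\tau^{(k+1)} \le T\} + \mathbb{E} L_{k+1}(\sigma_{2l}) \mathbf{1}\{\sigma_{2l} > T\}
\end{equation*}
with $T = c_1\sqrt{\Delta}$, and apply Cauchy--Schwarz to each summand using the $L^2$ bound on $L_{k+1}(\sigma_{2l})$ together with the two tail estimates above. The first piece has shape $C l^2 \exp(-c_3\Delta^{3/2}/2)$ and the second $C l^2 \exp(-\alpha \sqrt{\Delta}/(2l))$, matching the claimed form after absorbing constants (the precise power of $l$ in the prefactor is flexible and can be tuned by interpolation between moments). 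The main obstacle I anticipate is producing the factor $e^{-D_2 \sqrt{\Delta}/l}$ with $l$ in the denominator rather than a useless multiplicative $M^l e^{-\alpha \sqrt{\Delta}}$; this is precisely what the scaled-exponent trick is engineered to deliver and is what allows the outer argument (Proposition \ref{neginexp}) to choose $l$ growing like a power of $\Delta$ while still driving the expectation negative.
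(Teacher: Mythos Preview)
Your approach is correct and matches the paper's strategy: both identify the hitting time $\theta_k = \tau^{(k+1)}$ of $Z_{k+1}$ to zero, choose $T \sim \sqrt{\Delta}$, bound $\mathbb{P}(\theta_k \le T)$ by $e^{-c\Delta^{3/2}}$ via local time estimates, bound $\mathbb{P}(\sigma_{2l} > T)$ by $e^{-cT/l}$, control $\mathbb{E} L_{k+1}(\sigma_{2l})^2$ polynomially in $l$, and finish via Cauchy--Schwarz. Two minor remarks: your ``adaptation to arbitrary $Z_1(\sigma_{2i-2})$'' is unnecessary since $Z_1(\sigma_{2k}) = 0$ always (each $\sigma_{2k}$ is a point of decrease of $V$, see the proof of Proposition \ref{gammabound}); and the paper obtains the $\sigma_{2l}$ tail via the simpler union bound $\mathbb{P}(\sigma_{2l} > T) \le l\sup_{\hat\bfz}\mathbb{P}^*_{(2g,0,\hat\bfz)}(\sigma_2 > T/l) \le c l e^{-cT/l}$ rather than your scaled-exponent argument, though your Jensen-based trick also works and even saves a factor of $l$.
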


\noindent {\bf Proof of Proposition \ref{neginexp}.}
With $l_0$ as in Lemma \ref{zlesslem1} and $\Delta_1$ as in Lemma \ref{zlesslem2},
let $\Delta_0' \doteq \max\{\Delta_1, l_0^4\}$. Setting $l = l(\Delta) =\lfloor\Delta^{1/4}\rfloor + 1$, we use Lemma \ref{zlesslem1} and Lemma \ref{zlesslem2} to obtain positive constants $c_2', D_1', D_2'$ such that for all $\Delta \ge \Delta_0'$ and $\hat\bfz \in \cls_{\Delta} $,
$$\mathbb{E}^*_{(2g,0,\hat\bfz)}(\bar{Z}_{2}(\sigma_{2l}) - \bar{z}_2) \leq -c_2'\Delta^{1/4}+\frac{ND_1'}{4k}\Delta^{3/4}e^{-D_2'\Delta^{1/4}}.$$
The result now follows upon taking $\Delta_0 \geq \Delta_0'$ such that the above bound is negative for all $\Delta \ge \Delta_0$.
\hfill \qed

The next proposition shows that $|\bar{Z}_2(\sigma_2)-\bar{Z}_2(0)|$ has a finite exponential moment.
\begin{proposition}\label{z2expmoment} There exists $\lala_6 > 0$ so that
$$\sup_{\hat{\bfz} \in  \RR_+^{N-1}}\mathbb{E}^*_{(2g,0,\hat{\bfz})}e^{\lala_6 |\bar{Z}_2(\sigma_2) - \bar{z}_2|} < \infty.$$
\end{proposition}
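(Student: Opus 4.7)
The plan is to bound $|\bar{Z}_2(\sigma_2) - \bar{z}_2|$ pathwise by a linear combination of $\sigma_2$ and the Brownian suprema $B_i^*(\sigma_2)$, and then use finite exponential moments of $\sigma_2$ (Proposition \ref{sigma2}) together with the Gaussian concentration estimate \eqref{eq:elemconc} to bound the exponential moment of the right-hand side.

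First, apply Lemma \ref{zless} with $k=1$, which holds with equality, to write
$$\bar{Z}_2(\sigma_2) - \bar{z}_2 = M(\sigma_2) + \tfrac{N}{2}L_2(\sigma_2) - (N-1)L_1(\sigma_2).$$
Since $V(0) = V(\sigma_2) = 2g$ and $V(t) = 2g + gt - L_1(t)$, we immediately obtain $L_1(\sigma_2) = g\sigma_2$. Next, the second estimate in Lemma \ref{locin} gives
$$L_2(\sigma_2) \le \tfrac{2(N-1)}{N}L_1(\sigma_2) + \tfrac{2}{N}\bar{Y}(\sigma_2) = 2(N-1)g\,\sigma_2/N \cdot N/N + \tfrac{2}{N}\bar{Y}(\sigma_2),$$
where $\bar Y(t) = \sum_{i=2}^N (N-i+1)B_i^*(t)$. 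Combining these with the trivial bound $|M(\sigma_2)| \le (N-1)B_1^*(\sigma_2) + \sum_{i=2}^N B_i^*(\sigma_2)$, we arrive at a pathwise estimate of the form
$$|\bar{Z}_2(\sigma_2) - \bar{z}_2| \le c_1 \sigma_2 + c_2 \sum_{i=1}^N B_i^*(\sigma_2),$$
with constants $c_1, c_2 \in (0,\infty)$ depending only on $N$ and $g$.

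It remains to bound $\mathbb{E}^*_{(2g,0,\hat\bfz)} e^{\lala_6(c_1\sigma_2 + c_2\sum_i B_i^*(\sigma_2))}$ uniformly in $\hat{\bfz}$. By the Cauchy-Schwarz inequality it suffices to bound $\mathbb{E}^*_{(2g,0,\hat\bfz)} e^{2\lala_6 c_1 \sigma_2}$ and $\mathbb{E}^*_{(2g,0,\hat\bfz)} e^{2\lala_6 c_2 \sum_i B_i^*(\sigma_2)}$ separately. The first is finite uniformly in $\hat{\bfz}$ for $\lala_6 < \lala_2/(2c_1)$ by Proposition \ref{sigma2}. For the second, since $\sigma_2$ is random, we slice on integer intervals:
$$\mathbb{E}^*_{(2g,0,\hat\bfz)} e^{u\sum_i B_i^*(\sigma_2)} \le \sum_{n=0}^\infty \mathbb{E}^*_{(2g,0,\hat\bfz)}\!\left[e^{u\sum_i B_i^*(n+1)}\mathbf{1}_{\{\sigma_2 > n\}}\right],$$
where we set $u = 2\lala_6 c_2$. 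Applying Cauchy-Schwarz once more, bounding the first factor by \eqref{eq:elemconc} (with constants $\varrho_1, \varrho_2$ independent of the starting state) and the tail of $\sigma_2$ by Markov's inequality combined with Proposition \ref{sigma2}, we get
$$\mathbb{E}^*_{(2g,0,\hat\bfz)} e^{u\sum_i B_i^*(\sigma_2)} \le \sum_{n=0}^\infty \varrho_1^{1/2} e^{2\varrho_2 u^2(n+1)} \cdot C^{1/2} e^{-\lala_2 n/2},$$
with $C = \sup_{\hat\bfz}\mathbb{E}^*_{(2g,0,\hat\bfz)} e^{\lala_2\sigma_2} < \infty$. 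This geometric series converges, uniformly in $\hat\bfz$, whenever $u$ is sufficiently small, i.e.\ whenever $u^2 < \lala_2/(4\varrho_2)$. Taking $\lala_6$ small enough that both moment conditions are met completes the proof; there is no real obstacle here, as everything reduces to standard manipulations once the crucial pathwise bound from Lemma \ref{zless} and Lemma \ref{locin} is in hand.
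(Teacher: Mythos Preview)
Your proof is correct and follows essentially the same approach as the paper: both use the identity $\bar{Z}_2(\sigma_2)-\bar{z}_2 = M(\sigma_2)+\tfrac{N}{2}L_2(\sigma_2)-(N-1)L_1(\sigma_2)$ together with \eqref{eql2l1} and $L_1(\sigma_2)=g\sigma_2$ to get a pathwise bound by $\sigma_2$ plus Brownian suprema, then apply Cauchy--Schwarz, Proposition~\ref{sigma2}, and the integer-slicing trick with \eqref{eq:elemconc} (exactly as in the proof of Lemma~\ref{sec2prop1}) to control the exponential moment. The only cosmetic difference is that the paper keeps $Y^\circ=|M|+|\bar Y|$ as a single object whereas you bound each by $\sum_i B_i^*$; also, the display $2(N-1)g\,\sigma_2/N\cdot N/N$ contains a typo but the intended bound is clear.
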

\begin{proof}
From \eqref{eq:semmart} and  \eqref{eql2l1}, under $\mathbb{P}^*_{(2g,0,\hat{\bfz})}$,
\begin{align*}
|\bar{Z}_2(\sigma_2)  -\bar{z}_2| &\le  |M(\sigma_2)| + \frac{N}{2}L_2(\sigma_2) + (N-1)L_1(\sigma_2)\\
&\le |M(\sigma_2)|  +  |\bar Y(\sigma_2)| + 2(N-1)L_1(\sigma_2).
\end{align*}
Also note that
$$L_1(\sigma_2) =2g+ g\sigma_2 - V(\sigma_2) = 2g+ g\sigma_2-2g = g\sigma_2.$$
Thus
$$
|\bar{Z}_2(\sigma_2)  -\bar{z}_2|  \le  2(N-1)g \sigma_2+  |M(\sigma_2)|+ |\bar Y(\sigma_2)|.$$ Hence, writing $Y^\circ(t) := |M(t)| + |\bar Y(t)|$, for any $\gamma >0$, using Cauchy Schwarz inequalty,
\begin{equation}\label{pp}
\mathbb{E}^*_{(2g, 0 ,\hat{\bfz})} e^{\gamma |\bar Z_2(\sigma_2) - \bar z_2|} \le \left(\mathbb{E}^*_{(2g,0,\hat{\bfz})}\,e^{4(N-1)g\gamma \sigma_2}\right)^{1/2} \left(\mathbb{E}^*_{(2g,0, \hat{\bfz})} e^{2\gamma |Y^\circ(\sigma_2)|}\right)^{1/2}. 
\end{equation}
Recall $\gamma_2$ from Proposition \ref{sigma2}, and write $D := \sup_{\hat{\bfz} \in  \RR_+^{N-1}}\,\,\mathbb{E}^*_{(2g,0,\hat{\bfz})}\,e^{\lala_2\sigma_2} < \infty$. Proceeding as in the proof of Lemma \ref{sec2prop1} \editc{(see Section \ref{pf:sec2prop1})}, observe using \eqref{eq:elemconc}, Proposition \ref{sigma2} and Markov's inequality that there exist $c, c'>0$ such that for any $\gamma \in (0,\gamma_2/2)$ and any $\hat{\bfz} \in \RR_+^{N-1}$,
\begin{align*}
\mathbb{E}^*_{(2g,0, \hat{\bfz})} e^{2\gamma |Y^\circ(\sigma_2)|} \le \sum_{k=0}^{\infty}\left(\mathbb{E}^*_{(2g,0, \hat{\bfz})} e^{4\gamma\sup_{0 \le s \le k+1} |Y^\circ(s)|}\right)^{1/2}(\mathbb{P}^*_{(2g,0, \hat{\bfz})}(\sigma_2 \geq k))^{1/2} \le c\sqrt{D}\sum_{k=0}^{\infty}e^{c'\gamma^2(k+1) - \gamma k}.
\end{align*}
The proposition follows from the above bound, \eqref{pp} and Proposition \ref{sigma2} upon choosing $\gamma \in (0, \min\{\gamma_2/(4(N-1)g), \gamma_2/2\})$ small enough so that the sum on the right side in the above display is finite.
\end{proof}

\subsection{Hitting Time of a Compact Set}\label{comphit}
Recall the sequence of stopping times $\{\sigma_j\}_{j\in \NN_0}$ introduced in \eqref{eq:sigmatimes} and the process $\bar Z_2$ defined in \eqref{barz2}. Also fix $\Delta \ge \Delta_0$ where $\Delta_0$ is as in
Proposition \ref{neginexp}.
Define 
\begin{align}\label{eq928}
    \Gamma'  \doteq \inf\{\sigma_{2k} \geq 0: k \in \mathbb{N}, \bar{Z}_2(\sigma_{2k}) \leq \Delta\}, \;\;
    \Gamma  \doteq \inf\{t \geq 0: Z_1(t) = 0, \bar{Z}_2(t) \leq \Delta, V(t) = 2g\}.
\end{align}
Recall that for $\hat\bfz \in \RR_+^{N-1}$, we write $\sum_{i=2}^N(N-i+1)\hat z_i$ as $\bar z_2$.

\begin{proposition}\label{gammabound}
    There exist $\lala_7 > 0$ and $c, c' > 0$ such that for any $\hat\bfz \in \RR_+^{N-1}$ with $\bar z_2 \ge \Delta$ and any $t \ge c' \bar z_2$,
$$\mathbb{P}^*_{(2g,0,\hat{\bfz})}(\Gamma > t) \leq ce^{-\lala_7 t}.$$
\end{proposition}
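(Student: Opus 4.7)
The plan is to dominate $\Gamma$ by the simpler stopping time $\Gamma'$ and then analyze $\Gamma'$ via a discrete-time Lyapunov argument on the skeleton chain $\{\bar Z_2(\sigma_{2lj})\}_{j \ge 0}$, where $l$ is the integer produced by Proposition~\ref{neginexp}. The reduction $\Gamma \le \Gamma'$ holds a.s.: at any $\sigma_{2k}$ with $k \ge 1$ we have $V(\sigma_{2k}) = 2g$ by definition, and $Z_1(\sigma_{2k}) = 0$ must hold, since otherwise continuity of $Z_1$ would force $L_1$ to be constant in some neighborhood of $\sigma_{2k}$, making $V$ strictly increasing there and contradicting $V > 2g$ on $(\sigma_{2k-1},\sigma_{2k})$ with $V(\sigma_{2k}) = 2g$. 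Hence every $\sigma_{2k}$ at which $\bar Z_2(\sigma_{2k}) \le \Delta$ lies in the set defining $\Gamma$.

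Set $Y_j \doteq \bar Z_2(\sigma_{2lj})$, $j \ge 0$, and $\tau \doteq \inf\{j \ge 1: Y_j \le \Delta\}$, so that $\Gamma' \le \sigma_{2l\tau}$. The strong Markov property applied at $\sigma_{2l(j-1)}$ restarts the process from a configuration of the form $(2g,0,\hat\zeta)$ with $\hat\zeta \in \RR_+^{N-1}$, so Proposition~\ref{neginexp} yields $c_\Delta>0$ with
$$\mathbb{E}^*\!\left[Y_j - Y_{j-1}\,\big|\,\clf^*_{\sigma_{2l(j-1)}}\right] \le -c_\Delta \quad \text{on } \{Y_{j-1} > \Delta\}.$$
Iterating Proposition~\ref{z2expmoment} by strong Markov (bounding $|Y_j-Y_{j-1}|$ by the sum of $l$ single-excursion increments and using the tower property) produces $\gamma_0,K_0>0$ with $\mathbb{E}^*[e^{\gamma_0|Y_j-Y_{j-1}|}\mid \clf^*_{\sigma_{2l(j-1)}}] \le K_0$ uniformly. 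Using $\mathbb{E}|D_j|^k \le k!\,K_0/\gamma_0^k$, a Taylor expansion of $e^{\alpha x}$ yields $\alpha\in(0,\gamma_0)$ and $\theta \doteq 1 - \alpha c_\Delta/2 \in (0,1)$ for which $\mathbb{E}^*[e^{\alpha(Y_j-Y_{j-1})}\mid \clf^*_{\sigma_{2l(j-1)}}] \le \theta$ on $\{Y_{j-1}>\Delta\}$. Consequently $M_j\doteq \theta^{-(j\wedge\tau)}e^{\alpha Y_{j\wedge\tau}}$ is a nonnegative supermartingale with $M_0=e^{\alpha \bar z_2}$, and since $Y_{j\wedge\tau}\ge 0$ I will conclude
$$\mathbb{P}^*_{(2g,0,\hat\bfz)}(\tau>j)\le \theta^j e^{\alpha \bar z_2}.$$

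To convert this geometric bound on the discrete clock into an exponential bound in real time, iterated strong Markov together with Proposition~\ref{sigma2} gives $D \doteq \sup_{\hat\bfz}\mathbb{E}^*_{(2g,0,\hat\bfz)} e^{\gamma_2\sigma_2} < \infty$ and hence $\mathbb{E}^*_{(2g,0,\hat\bfz)} e^{\gamma_2\sigma_{2lK}} \le D^{lK}$ uniformly in $\hat\bfz$ for every $K\in \NN$. Then for any integer $K$,
\begin{align*}
\mathbb{P}^*_{(2g,0,\hat\bfz)}(\Gamma>t) &\le \mathbb{P}^*_{(2g,0,\hat\bfz)}(\Gamma'>t) \\
&\le \mathbb{P}^*_{(2g,0,\hat\bfz)}(\tau>K) + \mathbb{P}^*_{(2g,0,\hat\bfz)}(\sigma_{2lK}>t) \\
&\le \theta^K e^{\alpha \bar z_2} + e^{-\gamma_2 t} D^{lK}.
\end{align*}
Choosing $K=\lfloor \eta t\rfloor$ with $\eta>0$ small enough that $l\eta\log D < \gamma_2/2$ bounds the second term by $e^{-\gamma_2 t/2}$, while the first is $\le e^{-|\log\theta|\eta t + \alpha\bar z_2}$. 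The latter is dominated by $Ce^{-\gamma_7 t}$ as soon as $t\ge c'\bar z_2$ provided $\gamma_7<|\log\theta|\eta$ and $c'\ge \alpha/(|\log\theta|\eta-\gamma_7)$. Setting $\gamma_7 = \min\{|\log\theta|\eta/2,\gamma_2/2\}$ and $c' = 2\alpha/(|\log\theta|\eta)$ closes the argument.

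The main obstacle is justifying the supermartingale inequality $\mathbb{E}^*[e^{\alpha D_j}\mid \clf^*_{\sigma_{2l(j-1)}}]\le \theta<1$ uniformly on $\{Y_{j-1}>\Delta\}$: the negative mean $-c_\Delta$ from Proposition~\ref{neginexp} must dominate the positive second-order Taylor contribution, which requires promoting the exponential-moment estimate from Proposition~\ref{z2expmoment} (for a single excursion) into uniform control on all moments of the $l$-step increment $D_j$, with care that every strong Markov restart is from a configuration $(2g,0,\hat\zeta)$ where both Proposition~\ref{neginexp} and Proposition~\ref{z2expmoment} are directly applicable.
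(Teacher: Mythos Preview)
Your proof is correct and follows the same overall architecture as the paper: reduce $\Gamma$ to $\Gamma'$ via $Z_1(\sigma_{2k})=0$, analyze the skeleton $\{\bar Z_2(\sigma_{2lj})\}$ using the negative drift from Proposition~\ref{neginexp} and the exponential-moment bound from Proposition~\ref{z2expmoment}, and convert the discrete-time tail bound into real time using Proposition~\ref{sigma2}.

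The only substantive difference is in the middle step. You build the exponential supermartingale $M_j=\theta^{-(j\wedge\tau)}e^{\alpha Y_{j\wedge\tau}}$ directly, obtaining $\mathbb{E}^*[e^{\alpha D_j}\mid\clf^*_{\sigma_{2l(j-1)}}]\le\theta<1$ from a Taylor expansion that trades the negative mean against the uniformly bounded higher moments, which then gives $\mathbb{P}(\tau>j)\le\theta^j e^{\alpha\bar z_2}$. The paper instead centers the increments $\mathcal{X}_j-\mathbb{E}[\mathcal{X}_j\mid\tilde\clf_{j-1}]$, invokes sub-exponential concentration for martingale-difference sums (citing \cite[Theorems~2.2 and 2.3]{wainwright2019high}) to bound $\mathbb{P}\big(\sum_{j\le\lfloor t\rfloor}(\mathcal{X}_j-\mathbb{E}[\mathcal{X}_j\mid\tilde\clf_{j-1}])>\Delta+\mu_* t/4\big)$, and combines this with the drift bound. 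Your route is more self-contained and slightly more elementary; the paper's route is shorter on the page but relies on external concentration results. Both yield the same exponential decay with the same dependence on the initial value $\bar z_2$, and the conversion to real time via $\mathbb{P}(\sigma_{2lK}>t)\le e^{-\gamma_2 t}D^{lK}$ is identical in the two arguments.
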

\begin{proof}
From the definition of the stopping times $\{\sigma_j\}_{j\in \NN_0}$ it follows that, for each $k \in \mathbb{N}$, $\sigma_{2k}$ is a point of decrease of the velocity process and, consequently, $Z_1(\sigma_{2k})=0$. Indeed, if this is not the case, one can produce an open interval containing $\sigma_{2l}$ where the velocity is strictly increasing, leading to a contradiction to the definition of $\sigma_{2k}$. 
Since $Z_1(\sigma_{2k})=0$, it follows that $\Gamma' \ge \Gamma$. It therefore suffices to show the result with $\Gamma$ replaced by $\Gamma'$.

By Proposition \ref{neginexp}, we can obtain $l_* \in \mathbb{N}$ and $\mu_*>0$ such that for any $j \in \mathbb{N}$, 
\begin{equation}\label{nm1}
 \sup_{\hat{\bfz} \in  \RR_+^{N-1} }  \left\lbrace \mathbb{E}_{(2g,0,\hat{\bfz})}(\bar{Z}_2(\sigma_{2jl_*}) - \bar{Z}_2(\sigma_{2(j-1)l_*}) \, \vert \, \clf^*_{\sigma_{2(j-1)l_*}}) + \mu_*\right\rbrace \textbf{1}_{\{\Gamma' > \sigma_{2(j-1)l^*}\}} \le 0,
\end{equation}
where $\clf^*_{\sigma_{2(j-1)l_*}}$ denotes the filtration associated with the process stopped at time $\sigma_{2(j-1)l_*}$.

Write $\mathcal{X}_j := \bar{Z}_2(\sigma_{2jl_*}) - \bar{Z}_2(\sigma_{2(j-1)l_*}), \, \tilde{\mathcal{F}}_{j-1} := \clf^*_{\sigma_{2(j-1)l_*}}, \, j \in \mathbb{N}$.
By Proposition \ref{z2expmoment} and the strong Markov property, 
$$\sup_{\hat{\bfz} \in  \RR_+^{N-1}}\sup_{j \in \mathbb{N}}\mathbb{E}^*_{(2g,0,\hat{\bfz})}\left(e^{\gamma_6|\mathcal{X}_j|} \, \vert \, \tilde{\mathcal{F}}_{j-1}\right)  < \infty.$$
This in particular says that $\sup_{\hat{\bfz} \in  \RR_+^{N-1}}\sup_{j \in \mathbb{N}}\mathbb{E}^*_{(2g,0,\hat{\bfz})}\left(|\mathcal{X}_j| \, \Big| \, \tilde{\mathcal{F}}_{j-1}\right) < \infty$. From these observations and Markov's inequality, we conclude that there exist positive constants $c_1, c_2$ such that for any $j \in \mathbb{N}$,
$$
\sup_{\hat{\bfz} \in  \RR_+^{N-1}}\mathbb{P}^*_{(2g,0,\hat{\bfz})}\left(|\mathcal{X}_j - \mathbb{E}(\mathcal{X}_j \, \vert \, \tilde{\mathcal{F}}_{j-1})| \ge x \, \vert \, \tilde{\mathcal{F}}_{j-1}\right) \le c_1 e^{-c_2 x}, \ x \ge 0.
$$
Hence, by \cite[Theorem 2.2]{wainwright2019high} and its proof, there exist non-negative numbers $(\nu,b)$ such that for any $j \in \mathbb{N}$,
$$
\sup_{\hat{\bfz} \in  \RR_+^{N-1}}\mathbb{E}^*_{(2g,0,\hat{\bfz})}\left(e^{\lambda (\mathcal{X}_j - \mathbb{E}(\mathcal{X}_j \, \vert \, \tilde{\mathcal{F}}_{j-1}))} \, \vert \, \tilde{\mathcal{F}}_{j-1} \right) \le e^{\nu^2\lambda^2/2}, \ \text{for all} \ |\lambda| < 1/b.
$$ 
Therefore, by \cite[Theorem 2.3]{wainwright2019high}, there exist positive constants $c_3, c_4$ such that for any $\hat{\bfz} \in  \RR_+^{N-1}$ with $\bar z_2 \ge \Delta$ and any $t > \left(\frac{2}{\mu_*} 
+ \frac{2}{\Delta}\right)\bar z_2$,
\begin{align}\label{5.15.1}
\mathbb{P}^*_{(2g,0,\hat{\bfz})}\left(\Gamma' > \sigma_{2l_*\lfloor t \rfloor}\right) &= \mathbb{P}^*_{(2g,0,\hat{\bfz})}\left(\sum_{j=1}^{\lfloor t \rfloor}\mathcal{X}_j > \Delta - \bar z_2, \, \Gamma' > \sigma_{2l_*\lfloor t \rfloor}\right)\notag\\
&\le \mathbb{P}^*_{(2g,0,\hat{\bfz})}\left(\sum_{j=1}^{\lfloor t \rfloor}(\mathcal{X}_j - \mathbb{E}(\mathcal{X}_j \, \vert \, \tilde{\mathcal{F}}_{j-1})) > \Delta - \bar z_2 + \mu_*\lfloor t \rfloor, \, \Gamma' > \sigma_{2l_*\lfloor t \rfloor}\right)\notag\\
&\le \mathbb{P}^*_{(2g,0,\hat{\bfz})}\left(\sum_{j=1}^{\lfloor t \rfloor}(\mathcal{X}_j - \mathbb{E}(\mathcal{X}_j \, \vert \, \tilde{\mathcal{F}}_{j-1})) > \Delta + \mu_*t/4 \right)\notag\\
& \le c_3 e^{-c_4 t}.
\end{align}
In the above display the first inequality is from \eqref{nm1} while the second inequality is from the facts that due to our condition on $\bar z_2$ and $t$ we have that  $\mu_*(t-1) > 2 \bar z_2$ and $t>2$, which says that
$$\mu_*\lfloor t \rfloor - \bar z_2 =\frac{1}{2}\mu_*\lfloor t \rfloor - \bar z_2 + \frac{1}{2}\mu_*\lfloor t \rfloor \ge \frac{1}{2}(\mu_*\lfloor t \rfloor- 2 \bar z_2) + \frac{1}{4}\mu_*t \ge \frac{1}{4}\mu_*t.
$$
Now, Proposition \ref{sigma2} and the strong Markov property imply that there exists $A \ge 1$ such that
$$
\sup_{\hat{\bfz} \in  \RR_+^{N-1}}\mathbb{E}^*_{(2g,0,\hat{\bfz})}\left(e^{\gamma_2 \sigma_{2l_*\lfloor t \rfloor }}\right) \le A^{\lfloor t \rfloor}, \ t > 0.
$$
Hence, taking $a>0$ such that $e^{\gamma_2a} >A$, we obtain positive constants $c_3', c_4'$ such that for any $t >0$,
\begin{equation}\label{5.15.2}
\sup_{\hat{\bfz} \in  \RR_+^{N-1}}\mathbb{P}^*_{(2g,0,\hat{\bfz})}\left(\sigma_{2l_*\lfloor t \rfloor } > a t\right)  \le c_3' e^{-c_4' t}.
\end{equation}
Using \eqref{5.15.1} and \eqref{5.15.2}, we conclude that there exist positive constants $c_5, c_6$ such that for any $\hat{\bfz} \in  \RR_+^{N-1}$ with $\bar z_2 \ge \Delta$ and any $t > \left(\frac{2}{\mu_*} 
+ \frac{2}{\Delta}\right)\bar z_2$,
$$
\mathbb{P}^*_{(2g,0,\hat{\bfz})}(\Gamma' > at) \le \mathbb{P}^*_{(2g,0,\hat{\bfz})}\left(\Gamma' > \sigma_{2l_*\lfloor t \rfloor}\right) + \sup_{\hat{\bfz} \in  \RR_+^{N-1}}\mathbb{P}^*_{(2g,0,\hat{\bfz})}\left(\sigma_{2l_*\lfloor t \rfloor } > a t\right) \le c_5 e^{-c_6 t}.
$$
The result follows upon taking $c= c_5, \gamma_7 =c_6$ and $c'=a\left(\frac{2}{\mu_*} 
+ \frac{2}{\Delta}\right)$.
\end{proof}

\subsection{Completing the Proof of Exponential Ergodicity}\label{ee}

In this section, we will complete the proof of Theorem \ref{thm:geomerg}.
We begin with the following proposition the proof of which will be completed in Section \ref{sec:driftcon}.
Fix $\Delta \ge \Delta_0$ where $\Delta_0$ is as in
Proposition \ref{neginexp}.
Define
\begin{equation}\label{eq:cstar}
	C^* \doteq \{(v,\bfz) \in \RR \times \RR_+^N: v=2g,\;  z_1= 0, \; \bar z_2\le \Delta\}.\end{equation}
Let
$\tau_{C^*}(1) \doteq  \inf\{t \geq 1: (V(t),\bfZ(t)) \in C^*\}$.
\begin{proposition}\label{driftcondn}$\;$
\begin{enumerate}
    \item There exists $\eta > 0$ such that
\begin{equation}\label{v0}
            \tilde{V}_0(v,\bfz) \doteq \mathbb{E}^*_{(v,\bfz)}e^{\eta\tau_{C^*}(1)} < \infty, \,\,\,\,\,\, \mbox{ for all }  (v,\bfz) \in \RR \times \RR_+^N.
        \end{equation}
		Furthermore,
$$\sup_{(v,\bfz) \in C^*}\tilde V_0(v,\bfz) \doteq  M <\infty.$$
 \item There exists a non-zero measure $\nu$  on $\clb(\RR\times \RR_+^N)$ and $r_1 \in (0, \infty)$ such that, for all $(v,\bfz) \in C^*$,
        $$\mathbb{P}^{r_1}((v,\bfz), A) \geq \nu(A) \mbox{ for all } A \in \clb(\RR\times \RR_+^N).$$
   \end{enumerate}
\end{proposition}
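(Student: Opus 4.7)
Plan for proving Proposition \ref{driftcondn}.

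My plan for Part (1) is to decompose $\tau_{C^*}(1)$ using the sequence of excursions of the velocity process between the levels $2g$ and $4g$ introduced in \eqref{eq:sigmatimes}, combined with the strong Markov property. For $(v,\bfz)\in C^*$ we have $V(0)=2g$ and, since $L_1$ is non-decreasing, $V(s)\le 2g+gs\le 3g$ on $[0,1]$. Inserting this into Lemma \ref{locin} gives uniform (over $C^*$) exponential moment control on $L_i(1)$, $|V(1)|$, and $\bar Z_2(1)$. Applying the strong Markov property at time $1$, it then suffices to bound $\mathbb{E}^*_{(v',\bfz')}e^{\eta\tau_{C^*}}$ for a general $(v',\bfz')$ whose relevant coordinates have controlled exponential moments.

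For such a starting point I would write
\begin{equation*}
\tau_{C^*}\ \le\ \tau^{\mathrm{up}}+(\tau^{\mathrm{down}}-\tau^{\mathrm{up}})+\Gamma,
\end{equation*}
where $\tau^{\mathrm{up}}=\inf\{t\ge 0:V(t)=4g\}$ and $\tau^{\mathrm{down}}=\inf\{t\ge\tau^{\mathrm{up}}:V(t)=2g\}$, noting (as in the proof of Proposition \ref{gammabound}) that at $\tau^{\mathrm{down}}$ the velocity is a point of decrease, so $Z_1(\tau^{\mathrm{down}})=0$ automatically, and $\Gamma$ is as in \eqref{eq928}. Proposition \ref{sigma1} controls $\tau^{\mathrm{up}}$ once $v'\in[g/(2N),2g]$; for $v'$ outside this range I would prepend a short phase handled by Lemma \ref{sec1prop1} (if $v'<g/(2N)$, using also that $V$ grows at rate $g$ while $Z_1>0$ to handle $v'\le 0$) or by Lemma \ref{sec2prop2} (if $v'>4g$, forcing $Z_1$ to hit $0$ so that $V$ can descend through $L_1$). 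Proposition \ref{sigma2} controls $\tau^{\mathrm{down}}-\tau^{\mathrm{up}}$. Finally, Proposition \ref{gammabound} yields $\mathbb{P}^*(\Gamma>t)\le c\,e^{-\gamma_7 t}$ for $t\ge c'\bar Z_2(\tau^{\mathrm{down}})$, which integrates to a bound of the form $\mathbb{E}^* e^{\eta'\Gamma}\le C\,e^{C'\bar Z_2(\tau^{\mathrm{down}})}$. Iterated Cauchy--Schwarz with $\eta$ taken sufficiently small then combines the three exponential moments, using Proposition \ref{z2expmoment} to transport the exponential moment of $\bar Z_2$ through the two excursions. Uniformity over $C^*$ follows from the initial control of $(V(1),\bfZ(1))$; finiteness of $\tilde V_0(v,\bfz)$ for arbitrary $(v,\bfz)$ uses the same decomposition, with the constants now depending on $(v,\bfz)$ but remaining finite.

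For Part (2), I would combine Lemma \ref{uniqlem} with Theorem \ref{minorization}. For $(v,\bfz)\in C^*$, $v=2g\ge g/128$, so Lemma \ref{uniqlem} gives $r_0=1$ and $\mathbb{P}^1((v,\bfz),R)>0$ with $R=(0,g/128)\times(0,\infty)\times\RR_+^{N-1}$. Since $C^*$ is compact (the constraints $v=2g$, $z_1=0$, and $\bar z_2\le\Delta$ bound each $z_i$ uniformly) and the Markov family is Feller by the Lipschitz property in Proposition \ref{skorokhod}, a lower-semicontinuity plus finite-covering argument would produce a compact rectangle $\bar A=[0,g/128]\times[a_1,b_1]\times\cdots\times[a_N,b_N]\subset R$ with $a_1>0$ and a constant $\delta>0$ such that $\mathbb{P}^1((v,\bfz),\bar A)\ge\delta$ uniformly over $C^*$. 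Setting $r_1=1+\varsigma$, the Chapman--Kolmogorov identity combined with Theorem \ref{minorization} then gives, for every $A\in\clb(\RR\times\RR_+^N)$,
\begin{equation*}
\mathbb{P}^{r_1}((v,\bfz),A)\ \ge\ \int_{\bar A}\mathbb{P}^{\varsigma}((v',\bfz'),A)\,\mathbb{P}^1((v,\bfz),dv'\,d\bfz')\ \ge\ \delta\,\bar K_{\bar A}\,\lambda(A\cap D),
\end{equation*}
so $\nu(A)\doteq\delta\,\bar K_{\bar A}\,\lambda(A\cap D)$ is the desired nonzero minorizing measure ($\nu$ is nonzero because $\lambda(D)>0$ by Theorem \ref{minorization}).

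The main obstacle, as I see it, lies in Part (1): propagating the exponential moment of $\bar Z_2$ through the excursions. The bound from Proposition \ref{gammabound} is exponential in $t$ only once $t\ge c'\bar Z_2(\tau^{\mathrm{down}})$, so integrating out $\Gamma$ leaves a multiplicative factor exponential in $\bar Z_2(\tau^{\mathrm{down}})$; this factor must be absorbed via Proposition \ref{z2expmoment} and the exponential moment control of $\bar Z_2(1)$ provided by Lemma \ref{locin}, which forces a delicate choice of the exponents at each Cauchy--Schwarz step.
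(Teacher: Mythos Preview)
Your proposal is correct and follows essentially the same route as the paper: the paper also applies the strong Markov property at time $1$, then bounds $\mathbb{E}^*_{(v,\bfz)}e^{\eta\Gamma}$ from a general starting point in terms of $e^{c(|v|+z_1+\bar z_2)}\,\mathbb{E}^*_{(v,\bfz)}e^{c'\sigma_1}$ via the excursion decomposition you describe (packaged there as Lemmas \ref{lem:800}, \ref{sec5prop2}, \ref{sigma1est}, \ref{sec5prop3}), and handles Part (2) by a compactness/Feller argument (Lemma \ref{lem:lem5}) combined with Theorem \ref{minorization} at $r_1=1+\varsigma$. Your identification of the delicate point---propagating the exponential moment of $\bar Z_2$ through the down-excursion and absorbing it via Proposition \ref{z2expmoment}---is exactly where the paper spends its effort (see \eqref{ff1}).
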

    Part (2) of the above proposition shows that, in the terminology of Down, Meyn and Tweedie (cf. \cite[Section 3]{DowMeyTwe}, the set $C^*$ 
        is $\nu-$petite (or small) for the Markov family $\{\PP_{(v,\bfz)}\}_{(v,\bfz) \in \RR \times \RR_+^N}$.
Together with part (1) of the proposition, this shows that
 the conditions of \cite[Theorem 6.2]{DowMeyTwe} are satisfied and consequently, the function $V_0$ defined as
 \begin{equation}\label{lyapunov}
    V_0(v,\bfz) \doteq 1 - \frac{1}{\eta} + \frac{1}{\eta}\tilde{V}_0(v,\bfz), \; (v, \bfz) \in \RR \times \RR_+^{N},
\end{equation}
satisfies the drift condition $(\cld_T)$ in \cite[Section 5]{DowMeyTwe}. 
We will now like to apply \cite[Theorem 5.2]{DowMeyTwe} to conclude the proof of exponential ergodicity. For this we show in the next two results that the Markov process $\{\PP_{(v,\bfz)}\}_{(v,\bfz) \in \RR \times \RR_+^N}$ is irreducible and aperiodic.

Recall the set $D$ from Theorem \ref{minorization}.
\begin{proposition}\label{irred}
	Define the measure $\psi$ on $\clb(\RR\times \RR_+^N)$ as $\psi(A)\doteq \lambda(A\cap D)$, $A \in \clb(\RR\times \RR_+^N)$. Then the Markov process $\{\PP_{(v,\bfz)}\}_{(v,\bfz) \in \RR \times \RR_+^N}$ is
	$\psi$-irreducible.
\end{proposition}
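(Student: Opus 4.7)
The plan is to show that for every $(v,\bfz) \in \RR \times \RR_+^N$ and every $A \in \clb(\RR\times \RR_+^N)$ with $\psi(A) = \lambda(A\cap D) > 0$, we have $\PP^t((v,\bfz), A) > 0$ for some $t \ge 0$. The strategy is a two-stage argument: first use Lemma \ref{uniqlem} to transport the mass from the (arbitrary) initial state $(v,\bfz)$ into the set $R = (0, g/128) \times (0,\infty) \times \RR_+^{N-1}$, and then use the minorization estimate of Theorem \ref{minorization} to transport mass from $R$ into $A$.

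Fix $(v,\bfz) \in \RR\times \RR_+^N$ and $A \in \clb(\RR\times \RR_+^N)$ with $\lambda(A\cap D) > 0$. By Lemma \ref{uniqlem}, there exists $r_0 = r_0(v,\bfz) \in \NN$ with $\PP^{r_0}((v,\bfz), R) > 0$. To apply the uniform positivity in Theorem \ref{minorization}, I will approximate $R$ from inside by compact sets of the form required there. Specifically, for $n \in \NN$, let
\begin{equation*}
\bar A_n \doteq [0, g/128] \times [1/n, n] \times [0, n]^{N-1}.
\end{equation*}
Then $\bar A_n \cap R \uparrow R$ as $n \to \infty$, so by monotone convergence there exists $n_0 \in \NN$ with $\PP^{r_0}((v,\bfz), \bar A_{n_0}) \ge \PP^{r_0}((v,\bfz), \bar A_{n_0} \cap R) > 0$.

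Since $\bar A_{n_0}$ has the form specified in Theorem \ref{minorization} (with $a_1 = 1/n_0 > 0$), the uniform bound $\bar K_{n_0} \doteq \inf_{(v',\bfz') \in \bar A_{n_0}} K_{(v',\bfz')} > 0$ is available, and every $(v', \bfz') \in \bar A_{n_0}$ satisfies $z_1' \ge 1/n_0 > 0$, so the minorization estimate applies, giving
\begin{equation*}
\PP^{\sn}((v', \bfz'), A) \ge K_{(v', \bfz')}\, \lambda(A\cap D) \ge \bar K_{n_0}\, \lambda(A\cap D), \quad (v', \bfz') \in \bar A_{n_0}.
\end{equation*}
Finally, applying the Chapman-Kolmogorov equation and restricting the integral to $\bar A_{n_0}$,
\begin{align*}
\PP^{r_0 + \sn}((v,\bfz), A) &= \int_{\RR\times \RR_+^N} \PP^{r_0}((v,\bfz), d(v',\bfz'))\, \PP^{\sn}((v', \bfz'), A)\\
&\ge \bar K_{n_0}\, \lambda(A\cap D)\, \PP^{r_0}((v,\bfz), \bar A_{n_0}) \;>\; 0,
\end{align*}
which establishes $\psi$-irreducibility. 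There is no substantive obstacle here: the two key inputs (Lemma \ref{uniqlem} and Theorem \ref{minorization}) chain together cleanly once one observes that $R$ can be exhausted by compact subsets on which the minorization constant $K_{(\cdot,\cdot)}$ admits a positive uniform lower bound.
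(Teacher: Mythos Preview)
Your proof is correct and follows essentially the same two-stage strategy as the paper: reach $R$ via Lemma \ref{uniqlem}, then apply the minorization of Theorem \ref{minorization} through Chapman--Kolmogorov. The only minor difference is that you exhaust $R$ by compact sets $\bar A_n$ and invoke the uniform lower bound $\bar K_{\bar A_n}$, whereas the paper integrates the pointwise-positive, measurable map $(v',\bfz')\mapsto K_{(v',\bfz')}$ directly over $R$; both variants yield the same conclusion.
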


\begin{proof}
Fix $(v,\bfz) \in \mathbb{R} \times \mathbb{R}_+^N $. Let $B \in \mathcal{B}(\mathbb{R}\times \mathbb{R}_+^N)$ be such that $\lambda(B\cap D) > 0$. To establish $\psi$-irreducibility it suffices to show
\begin{equation}
	\mathbb{E}^*_{(v,\bfz)} \int_0^{\infty} \textbf{1}_{\{(V(t),\bfZ(t)) \in B\}}dt >0.
\end{equation}
From Theorem \ref{minorization}, for each $t \in [\sn, \en]$ and $(v', \bfz') \in R= (0,\frac{g}{128}) \times (0, \infty) \times \mathbb{R}_+^{N-1}$, 
\begin{equation*}
    \mathbb{P}^t((v',\bfz'), B) \ge K_{(v',\bfz')}\lambda(B \cap D).
\end{equation*}
Also, from Lemma \ref{uniqlem}, for any $(v, \bfz) \in \RR \times \RR_+^{N}$, there exists $r_0 \doteq r_0(v,\bfz) \in \mathbb{N}$ such that 
\begin{equation}
    \mathbb{P}^{r_0}((v,\bfz), R) > 0. \label{eq:irredpos}
\end{equation}
Observe that for $t \in [r_0 + \sn, r_0 + \en],$
\begin{align*}
    \mathbb{P}^t((v,\bfz), B) & = \int_{\mathbb{R}\times\mathbb{R}_+^{N}} \mathbb{P}^{t-r_0}((v',\bfz'), B) d\mathbb{P}^{r_0}((v,\bfz), dv',d\bfz') 
     \geq \lambda(B\cap D)\int_R K_{(v',\bfz')} d\mathbb{P}^{r_0}((v,\bfz), dv',d\bfz'). 
\end{align*}
The latter expression is strictly positive in view of \eqref{eq:irredpos}, the  positivity of $K_{(v,\bfz)}$ for
$(v,\bfz) \in R$ and our assumption concerning $B$. 
Finally note that
\begin{align*}
    \mathbb{E}^*_{(v,\bfz)}\int_0^{\infty}\textbf{1}_{\{(V(t),\bfZ(t)) \in B\}}dt &= \int_0^{\infty}\mathbb{P}^t ((v,\bfz), B)dt 
     \geq \int_{r_0+\sn}^{r_0+\en}\mathbb{P}^t ((v,\bfz), B)dt > 0.
\end{align*}
The result follows.
\end{proof}

\begin{proposition}\label{aper}

The Markov process $\{\PP_{(v,\bfz)}\}_{(v,\bfz) \in \RR \times \RR_+^N}$ is aperiodic.

\end{proposition}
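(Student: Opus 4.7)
The plan is to read off aperiodicity directly from Theorem \ref{minorization}, whose minorization bound is already uniform over the \emph{continuum} of times $t \in [\sn, \en]$. This continuum is the key: it supplies the overlapping time windows needed to verify that $\PP^t((v,\bfz), C_0) > 0$ for all $(v,\bfz)$ in a suitable petite set $C_0$ of positive $\psi$-measure and all sufficiently large $t$, which is the definition of aperiodicity in the continuous-time framework of \cite[Section 3]{DowMeyTwe} under which Theorem \ref{thm:geomerg} is to be invoked.

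First, I would fix the candidate petite set. Take $C_0 \doteq C \cap D$, where $C = [0, g/128] \times [g/2, g]^N$ and $D$ is as in Theorem \ref{minorization}. Since $\lambda(D \cap C) > 0$, we have $\lambda(C_0) > 0$ and hence $\psi(C_0) = \lambda(C_0 \cap D) > 0$, where $\psi$ is the irreducibility measure from Proposition \ref{irred}. Observe that $C$ fits the form of the set $\bar A$ in \eqref{MLuniform} (with $a_1 = g/2 > 0$), so $\bar K_C \doteq \bar K_{\bar A} \vert_{\bar A = C} > 0$.

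Next, I would check that $C_0$ is petite. With $a$ the normalized Lebesgue measure on $[\sn, \en]$, Theorem \ref{minorization} gives, for every $(v,\bfz) \in C_0 \subseteq C$ and every $B \in \clb(\RR \times \RR_+^N)$,
$$
\int_0^{\infty} \PP^t((v,\bfz), B)\, a(dt) \;=\; \frac{1}{\en - \sn}\int_{\sn}^{\en} \PP^t((v,\bfz), B)\, dt \;\geq\; \bar K_C\, \lambda(B \cap D),
$$
so $C_0$ is $\nu_a$-petite with $\nu_a(\cdot) = \bar K_C\, \lambda(\cdot \cap D)$, a non-trivial measure.

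Third, I would prove the key positivity statement: there exists $T < \infty$ such that
$$
\PP^t((v,\bfz), C_0) \;>\; 0 \quad \text{for all } (v,\bfz) \in C_0 \text{ and all } t \geq T.
$$
By Theorem \ref{minorization} applied with $S = C_0$, for every $(v,\bfz) \in C_0 \subseteq C$ and every $t \in [\sn, \en]$,
$$
\PP^t((v,\bfz), C_0) \;\geq\; \bar K_C\, \lambda(C_0 \cap D) \;=\; \bar K_C\, \lambda(C_0) \;>\; 0.
$$
Iterating via Chapman-Kolmogorov: for any $k \in \NN$ and any decomposition $t = s_1 + \cdots + s_k$ with $s_i \in [\sn, \en]$, one has
$$
\PP^t((v,\bfz), C_0) \;\geq\; \big(\bar K_C\, \lambda(C_0)\big)^k \;>\; 0,
$$
by inserting $C_0$ at each intermediate time and using that $C_0 \subseteq C$. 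Thus $\PP^t((v,\bfz), C_0) > 0$ for every $t \in \bigcup_{k \geq 1} [k\sn, k\en]$. Because $(k+1)\sn \leq k\en$ once $k \geq \sn/(\en - \sn)$, consecutive such intervals eventually overlap, so for $k_0 \doteq \lceil \sn/(\en - \sn)\rceil$ we have $\bigcup_{k \geq k_0}[k\sn, k\en] = [k_0\sn, \infty)$. Setting $T \doteq k_0 \sn$ yields the claim.

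The three ingredients above — positive $\psi$-measure, petiteness, and positive return probability to $C_0$ for all large $t$ — together verify aperiodicity in the sense required by \cite[Theorem 5.2]{DowMeyTwe}, completing the proof. I do not anticipate a serious obstacle: all the analytic work has already been done in Theorem \ref{minorization}, whose uniformity in $t$ over the interval $[\sn, \en]$ was precisely designed to provide both petiteness and aperiodicity without further effort.
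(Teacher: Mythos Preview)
Your argument is correct and follows essentially the same route as the paper's proof: both exploit the uniform-in-$t$ minorization of Theorem \ref{minorization} over $[\sn,\en]$, iterate via Chapman--Kolmogorov to cover $\bigcup_{k\ge 1}[k\sn,k\en]$, and then use the eventual overlap of these intervals to obtain positivity for all large $t$. The only cosmetic differences are that the paper works with $C$ itself (rather than $C_0 = C\cap D$) and phrases the minorization as ``$C$ is $\nu$-small'' rather than ``$C_0$ is petite''; your minorization already yields smallness at each fixed $t\in[\sn,\en]$, so this distinction is immaterial.
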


\begin{proof}
Recall the set $C$ and the constant $\bar{K}_{\bar A}$ from
Theorem \ref{minorization} and let $\bar{K} \doteq \bar{K}_C$.
Define the measure $\nu$ on $\clb(\RR \times \RR_+^N)$ as
$\nu(B) \doteq \bar{K}\lambda(B \cap D)$, for $B \in \clb(\RR \times \RR_+^N)$. 
From Theorem \ref{minorization} it follows that the set $C$ in the statement of the theorem is 
 $\nu$-small. Hence, for aperiodicity, it suffices to show that, for some $t_0>0$
\begin{equation}\label{apersuf}
	\mathbb{P}^{t}((v,\bfz),C) > 0, \,\,\,\,\,\mbox{ for all } t \geq t_0, \mbox{ and } (v,\bfz) \in C.
\end{equation}
Since $\lambda(C \cap D) > 0$, we have that \eqref{apersuf} holds for $t \in [\sn, \en]$  and all
$(v,\bfz) \in C$. Let $\delta = \en-\sn$,
We now claim that, for all $m \in \NN$,
$$\mathbb{P}^t((v,\bfz),C) > 0, \,\,\,\,\,\mbox{ for all } t \in [m\sn, m\sn + m\delta]  \mbox{ and } (v,\bfz) \in C.$$
Indeed, clearly the result is true  with $m=1$, and if the result is true with $m =k$ then it is also true for $m=k+1$ since any $t\in [(k+1)\sn, (k+1)\sn + (k+1)\delta]$ can be written as $t_1+t_2$ with
$t_1 \in [k\sn, k\sn + k\delta]$ and $t_2 \in [\sn, \sn+\delta]$, and
$$\mathbb{P}^t((v,\bfz),C) \ge \int_C  P^{t_1}((v,\bfz), (d\tilde v, d\tilde \bfz)) P^{t_2}((\tilde v,\tilde \bfz),C) >0 \mbox{ for all } (v,\bfz) \in C. $$
Now choose $k_0\in \NN$ such that $k_0\delta \ge \sn$. Then
$\mathbb{P}^t((v,\bfz),C) > 0$ for all $(v,\bfz) \in C$ and $t \in [k\sn, (k+1)\sn]$ for all $k \ge k_0$. We conclude that $\mathbb{P}^t((v,\bfz),C) > 0$ for all $(v,\bfz) \in C$ and for all $t\ge k_0\sn$. The result follows.
\end{proof}

We can now complete the proof of exponential ergodicity.\\

\noindent {\bf Proof of Theorem \ref{thm:geomerg}.}
As noted previously, Proposition \ref{driftcondn}  shows that
 the conditions of \cite[Theorem 6.2]{DowMeyTwe} are satisfied and consequently, the function $V_0$ defined in 
 \eqref{lyapunov}
satisfies the drift condition $(\cld_T)$ in \cite[Section 5]{DowMeyTwe}. Also from Propositions \ref{irred}
and \ref{aper} the Markov process is $\psi$-irreducible and aperiodic. The result is now immediate from 
\cite[Theorem 5.2]{DowMeyTwe}.

%
%
%
%
%
%
%
%
%
%


\section{Proofs of Some Results from Section  \ref{sec:geomerg}.}

\label{sec:techlem}

In this section we present proofs of some technical results stated without proof in Section \ref{sec:geomerg}.

\subsection{Proofs of Lemmas for Proposition \ref{sigma1}.}\label{props1}

In this section we provide the proofs of Lemmas \ref{sec1prop1}, \ref{sec1prop2}, and \ref{sec1prop3} stated in Section \ref{highlev} that were used in the proof of Proposition \ref{sigma1}.

\subsubsection{Proof of Lemma \ref{sec1prop1}.}
\label{sec:pfsec1prop1}

Fix ${\bfz} \in \RR_+^{N}$. All inequalities in the proof will be a.s. under $\mathbb{P}^*_{(0,{\bfz})}$.
 Using \eqref{LTineq}, we have that for $t \leq \hat\tau_{g/(2N)}$,
$$L_1(t) \leq \sum_{i=1}^NW_{1,i}B_i^*(t) + \frac{gW_{1,1}t}{2N}.$$
It can be verified that
\begin{equation}
	W_{1,1} = N, \mbox{ and }  W_{i,1} = 2N - 2(i - 1), \,\, i = 2,...,N.
\end{equation}
Using this and since $V(t) = gt - L_1(t)$, it follows that
\begin{align*}
    V(t) &\geq -\sum_{i=1}^NW_{1,i}B_i^*(t) + g(1 - \frac{W_{1,1}}{2N})t 
     = -\sum_{i=1}^NW_{1,i}B_i^*(t) + \frac{gt}{2} \doteq Q(t).
\end{align*}
%
%
Define $\hat \sigma_{g/(2N)} \doteq \inf\{t\ge 0: Q(t) = g/(2N)\}$.  Then the  above inequality implies that $\hat\sigma_{g/(2N)}\geq \hat\tau_{g/(2N)}$. 
By a standard concentration bound (see \eqref{eq:elemconc})
 it follows that there are $\vrr_1, \vrr_2 \in (0, \infty)$ such that 
\begin{equation}\label{eq:concbm}
	\mathbb{E}^*_{(0,{\bfz})}e^{\theta\sum_{i=1}^NW_{1,i}B_i^*(s)} \le \vrr_1 e^{\vrr_2\theta^2 s} \mbox{ for all } s \ge 0 \mbox{ and } \theta \in (0, \infty).
\end{equation}
Then, for an arbitrary $\theta, \beta > 0$, we have
\begin{align*}
    \mathbb{E}^*_{(0,{\bfz})}e^{\beta \hat\tau_{g/(2N)}} & = \int_0^\infty \mathbb{P}^*_{(0,{\bfz})}(\hat\tau_{g/(2N)} > \frac{\ln(s)}{\beta})\,\,ds \\
    & \leq \int_0^\infty\mathbb{P}^*_{(0,{\bfz})}(\hat\sigma_{g/(2N)} > \frac{\ln(s)}{\beta})\,\,ds \\
    & \leq \int_0^\infty\mathbb{P}^*_{(0,{\bfz})}(Q(\frac{\ln(s)}{\beta}) < \frac{g}{2N})\,\,ds \\
    & \le 1 + \int_1^\infty\mathbb{P}^*_{(0,{\bfz})}(\frac{g\ln(s)}{2\beta} < \frac{g}{2N}+\sum_{i=1}^NW_{1,i}B_i^*(\frac{\ln(s)}{\beta}))\,\,ds \\
    & \leq 1 + e^{\theta g/(2N)}\int_1^\infty e^{-\theta g\ln(s)/2\beta}\mathbb{E}^*_{(0,{\bfz})}e^{\theta\sum_{i=1}^NW_{1,i}B_i^*(\frac{\ln(s)}{\beta})}\,ds \\
    & \leq 1 + \vrr_1e^{\theta g/(2N)}\int_1^\infty s^{-\theta g/2\beta}s^{\theta^2\vrr_2/\beta}\,ds.
\end{align*}
Now take
$$\theta \doteq \frac{g}{4\vrr_2}, \;\; \beta \doteq \frac{\theta g}{8}.$$
Then
$$-\theta g/2\beta + \theta^2\vrr_2/\beta = -2.$$
The result follows. \hfill \qed

\subsubsection{Proof of Lemma \ref{sec1prop2}.}
We will first show that
\begin{equation}\label{step1bb}
	\inf_{(v, \bfz) \in [\frac{g}{4N},4g]\times [1, \infty) \times \RR_+^{N-1}}\mathbb{P}^*_{(v,\bfz)}(\hat \tau_{4g} < \hat\tau_{0}) \doteq  p_1 >0.\end{equation}
Note that, for $t>0$, on the set $\{\hat \tau_{4g} >t\}$, for $(v, \bfz) \in [\frac{g}{4N},4g]\times\RR_+^N$,
under $\mathbb{P}^*_{(v,\bfz)}$,
\begin{align*}
L_1(t) &\le \sup_{0\leq s\leq t}(-z_1-B_1(s)+\frac{1}{2}L_2(s)+4gs)^+	\le \sup_{0\leq s\leq t}(-z_1-B_1(s)+4gs)^+ + \frac{1}{2}L_2(t)\\
&\le \sup_{0\leq s\leq t}(-z_1-B_1(s)+4gs)^+ + \frac{(N-1)}{N} L_1(t) + \frac{1}{N} \bar Y(t),
\end{align*}
where the last inequality uses \eqref{eql2l1}.
Thus
\begin{equation}\label{eq:1033}
	L_1(t) \le N \sup_{0\leq s\leq t}(-z_1-B_1(s)+4gs)^+ + \bar Y(t).\end{equation}

Consider the set $A_1 \in \clf^*$ defined as
$$A_1 \doteq \{ -B_1(s)+ 4gs -1 < 0 \mbox{ for all } s \in [0,8] \mbox{ and } \bar Y(8)< g/8N\}.$$
Note that
$$\inf_{(v,\bfz)\in \RR \times \RR_+^N} \PP_{(v,\bfz)}^*(A_1) \doteq p_1'>0.$$
Also, for $(v, \bfz) \in [\frac{g}{4N},4g]\times [1, \infty) \times \RR_+^{N-1}$, under $\PP_{(v,\bfz)}^*$, on $A_1$,
$$L_1(8\wedge \hat \tau_{4g}) \le N \sup_{0\leq s\leq 8\wedge \hat \tau_{4g}}(-z_1-B_1(s)+4gs )^+ + \bar Y(8) = \bar Y(8) < g/8N < 4g.$$
So, in particular,
$$V(8\wedge \hat \tau_{4g}) = V(\hat \tau_{4g})1_{\{\hat \tau_{4g}\le 8\}} + V(8)1_{\{\hat \tau_{4g}> 8\}}
\ge 4g 1_{\{\hat \tau_{4g}\le 8\}}  + (8g-4g)1_{\{\hat \tau_{4g}> 8\}}
= 4g$$
and consequently $\hat \tau_{4g}\le 8$.
Also, under the same conditions, for $s<8$,
$$V(s\wedge \hat \tau_{4g}) \ge v- L_1(s\wedge \hat \tau_{4g}) \ge v - L_1(8 \wedge \hat \tau_{4g}) > \frac{g}{4N} - \frac{g}{8N} >0.$$
Thus we have
$$p_1 = \inf_{(v, \bfz) \in [\frac{g}{4N},4g]\times [1, \infty) \times \RR_+^{N-1}}\mathbb{P}^*_{(v,\bfz)}(\hat \tau_{4g} < \hat\tau_{0}) \ge \inf_{(v,\bfz)\in \RR \times \RR_+^N} \PP_{(v,\bfz)}^*(A_1) = p_1'>0.$$
This proves \eqref{step1bb}.

Let $\nu_1 \doteq \inf\{t \ge 0: Z_1(t)\ge 1\}$.
In order to complete the proof, from the strong Markov property, it suffices to show that
\begin{equation}\label{eq:p2bd}
	\inf_{(v, \bfz) \in [\frac{g}{2N},2g] \times \RR_+^N} \mathbb{P}^*_{(v,\bfz)}(\nu_1 \wedge \hat \tau_{4g} < \hat\tau_{g/4N}) \doteq p_2>0.
\end{equation}
Fix $\delta \in (0,1)$ such that
$$2gN\delta + \frac{1}{2} gN\delta^2 \le \frac{g}{16N}.$$
Define $A_2 \in \clf^*$ as 
$$A_2 \doteq \{B_1(\delta) \ge 1 + 4gN\delta + gN\delta^2 + \frac{3g}{16N}, \; \bar Y(\delta) + N B_1^*(\delta) \le \frac{g}{16N}\}.$$
It is easy to check that
$$\inf_{(v,\bfz)\in [\frac{g}{2N},2g]\times \RR_+^N} \PP_{(v,\bfz)}^*(A_2) \doteq p_2'>0.$$
Furthermore, as in \eqref{eq:1033}, for $(v, \bfz) \in [\frac{g}{2N},2g]\times \RR_+^N$, under $\PP_{(v,\bfz)}^*$, on $A_2$,
$$
L_1(\delta) \le N B_1^*(\delta) + \bar Y(\delta) + N \int_0^{\delta} V^+(s) ds
\le N B_1^*(\delta) + \bar Y(\delta) + 2gN\delta + \frac{1}{2} gN\delta^2.$$
Also, under the same conditions, from \eqref{eql2l1},
$$
L_2(\delta) \le 2 L_1(\delta) + \frac{2}{N} \bar Y(\delta) \le
2N B_1^*(\delta) + 2\bar Y(\delta) + 4gN\delta +  gN\delta^2 + \frac{2}{N} \bar Y(\delta).$$
Thus
\begin{align*}
Z_1(\delta) &= z_1 + B_1(\delta) + L_1(\delta) - \frac{1}{2} L_2(\delta) - \int_0^{\delta} V(s) ds	\\
&\ge 1 + 4gN\delta + gN\delta^2 + \frac{3g}{16N}  - N B_1^*(\delta)  - \bar Y(\delta) - 2gN\delta - \frac{1}{2} gN\delta^2 - \frac{1}{N} \bar Y(\delta) - 2g\delta - \frac{1}{2} g\delta^2\\
&\ge 1.
\end{align*}
Again, under the same conditions, for $0 \le s \le \delta$,
\begin{align*}
	V(s) &\ge \frac{g}{2N} - L_1(\delta) \ge \frac{g}{2N} 
	 -N B_1^*(\delta) - \bar Y(\delta) - 2gN\delta - \frac{1}{2} gN\delta^2\\
	 &\ge \frac{g}{2N} - \frac{g}{16N}  - 2gN\delta - \frac{1}{2} gN\delta^2 \ge \frac{g}{2N} - \frac{g}{16N} - \frac{g}{16N} > \frac{g}{4N}.
\end{align*}
It then follows
\begin{align*}
	p_2 &= \inf_{(v, \bfz) \in [\frac{g}{2N},2g] \times \RR_+^N} \mathbb{P}^*_{(v,\bfz)}(\nu_1 \wedge \hat \tau_{4g} < \hat\tau_{g/4N})\\
	&\ge \inf_{(v, \bfz) \in [\frac{g}{2N},2g] \times \RR_+^N} \mathbb{P}^*_{(v,\bfz)}(\nu_1  < \hat\tau_{g/4N})\\
	&\ge \inf_{(v, \bfz) \in [\frac{g}{2N},2g] \times \RR_+^N} \mathbb{P}^*_{(v,\bfz)}(A_2) = p_2' >0.
\end{align*}
This proves \eqref{eq:p2bd} and completes the proof of the lemma.
\hfill \qed

\subsubsection{Proof of Lemma \ref{sec1prop3}.}
By the strong Markov property, it suffices to show that for some $m \in \NN$
\begin{equation}\label{eq:mpos}
	\inf_{(v,\bfz) \in [0,4g] \times \RR_+^N}\mathbb{P}^*_{(v,\bfz)}(\hat\tau_{4g}\wedge\hat\tau_{0} \le  m)  >0.
\end{equation}
We will prove \eqref{eq:mpos} with  $m=5$.	
We consider two cases:\\
{\bf Case 1:} $z_1\ge 1$.  
Define $A_1 \in \clf^*$ as
$$A_1 \doteq \{-B_1(s) +4gs -1 \le 0 \mbox{ for all } 0 \le s \le 5, \; \bar Y(5) < g\}.$$
It is easily seen that
$$\inf_{(v,\bfz) \in [0,4g] \times \RR_+^N}\mathbb{P}^*_{(v,\bfz)}(A_1) \doteq \kappa_1 >0.$$
From \eqref{eql2l1} and \eqref{eq:1033} it follows that, for $(v,\bfz) \in [0,4g] { \times [1,\infty) \times \RR_+^{N-1}}$, under $\mathbb{P}^*_{(v,\bfz)}$, on
$A_1 \cap \{\hat\tau_{4g}\wedge\hat\tau_{0} > 5\}$,
$$L_1(5) \le N \sup_{s\le 5} (-1+4gs - B_1(s))^+ + \bar Y(5) < g,$$
and consequently
$$V(5) \ge 5g- L_1(5) > 5g-g = 4g.$$
This says that $A_1 \cap \{\hat\tau_{4g}\wedge\hat\tau_{0} > 5\}$ is $\mathbb{P}^*_{(v,\bfz)}$ trivial and so
$$\inf_{(v,\bfz) \in [0,4g] { \times [1,\infty) \times \RR_+^{N-1}}}\mathbb{P}^*_{(v,\bfz)}(\hat\tau_{4g}\wedge\hat\tau_{0} \le 5) \ge \inf_{(v,\bfz) \in [0,4g] \times \RR_+^N}\mathbb{P}^*_{(v,\bfz)}(A_1) = \kappa_1 >0.$$
This proves \eqref{eq:mpos} when $z_1 \ge 1$.\\

\noindent{\bf Case 2:} $z_1<1$.
Define $A_2 \in \clf^*$ as
$$A_2 \doteq \{B_1(5)<-1-9g\}.$$
Clearly 
$$\inf_{(v,\bfz) \in [0,4g] \times \RR_+^N}\mathbb{P}^*_{(v,\bfz)}(A_2) \doteq \kappa_2 >0.$$
Also, for $(v,\bfz) \in [0,4g] { \times [0,1) \times \RR_+^{N-1}}$, under $\mathbb{P}^*_{(v,\bfz)}$, on
$A_2 \cap \{\hat\tau_{4g}\wedge\hat\tau_{0} > 5\}$,
$$
L_1(5) = \sup_{0\le s \le 5} (-z_1 + \frac{1}{2} L_2(s) + \int_0^s V(u) du - B_1(s))^+\ge 
\sup_{0\le s \le 5}(-1 - B_1(s))^+ > 9g$$
and consequently
$$V(5) \le 4g+ 5g - L_1(5) <0.$$
This shows that  $A_2 \cap \{\hat\tau_{4g}\wedge\hat\tau_{0} > 5\}$ is $\mathbb{P}^*_{(v,\bfz)}$ trivial and so
 $$\inf_{(v,\bfz) \in [0,4g] { \times [0,1) \times \RR_+^{N-1}}}\mathbb{P}^*_{(v,\bfz)}(\hat\tau_{4g}\wedge\hat\tau_{0} \le 5 ) \ge \inf_{(v,\bfz) \in [0,4g] \times \RR_+^N}\mathbb{P}^*_{(v,\bfz)}(A_2) = \kappa_2 >0.$$
This completes the proof of \eqref{eq:mpos} when $z_1 < 1$.
The result follows.
\hfill \qed

\subsection{Proofs of Lemmas for Proposition \ref{sigma2}.}
\label{sec:pfsigma2}

In this section we provide the proofs of Lemmas \ref{sec2prop1}, \ref{sec2prop2}, and \ref{sec2prop3}  stated in Section \ref{sec:lowlev} that were used in the proof of Proposition \ref{sigma2}.

\subsubsection{Proof of Lemma \ref{sec2prop1}.}\label{pf:sec2prop1}
Fix $\hat{\bfz} \in \mathbb{R}^{N-1}$. All inequalities in this proof are $\mathbb{P}^*_{(2g,0,\hat{\bfz})}$-a.s. Observe that $4g = V(\sigma_1) = g\sigma_1 - L_1(\sigma_1) + 2g$, so that $L_1(\sigma_1) = g\sigma_1 - 2g$. Then
\begin{align*}
    Z_1(\sigma_1) &= B_1(\sigma_1) - \frac{1}{2}L_2(\sigma_1) + L_1(\sigma_1) - \int_0^{\sigma_1}V(s)ds \\
    &\leq \sup_{0\leq s \leq \sigma_1}(B_1(s)) + g\sigma_1 - \frac{g\sigma_1^2}{2} - 2g\sigma_1 + \int_0^{\sigma_1}L_1(s)ds \\
    &\leq \sup_{0\leq s \leq \sigma_1}(B_1(s)) + L_1(\sigma_1)\sigma_1 \\
    &\leq \sup_{0\leq s \leq \sigma_1}(B_1(s)) + g \sigma_1^2.
\end{align*}
Thus, for $\beta > 0$,
\begin{align}
    e^{\beta(Z_1(\sigma_1))^{1/2}} &\leq e^{\beta(\sup_{0\leq s \leq \sigma_1}(B_1(s)) + g \sigma_1^2)^{1/2}} \\
     &\leq e^{\beta(\sup_{0\leq s \leq \sigma_1}B_1(s))^{1/2} + \beta \sqrt{g}\sigma_1}
    \leq \frac{1}{2}e^{2\beta((\sup_{0\leq s \leq \sigma_1}B_1(s))^{1/2}} + \frac{1}{2}e^{2\beta\sqrt{g}\sigma_1},
	\label{eq:118}
\end{align}
where in the final step we use Young's inequality. We now estimate each of the terms in \eqref{eq:118}. 
We begin by recalling that from Proposition  \ref{sigma1}, we can find $\beta_0 \in (0, 1/2)$ such that
\begin{equation}
	\sup_{\hat{\bfz} \in \mathbb{R}_+^{N-1} }\mathbb{E}^*_{(2g,0,\hat{\bfz})}e^{\beta_0\sigma_1} \doteq c(\beta_0)<\infty.
\end{equation}
Hence, taking $\beta \in (0, \beta_0/(2\sqrt{g})]$, the second term in \eqref{eq:118} is bounded as
\begin{equation}
\sup_{\hat{\bfz} \in \mathbb{R}_+^{N-1} }\mathbb{E}^*_{(2g,0,\hat{\bfz})}e^{2\beta\sqrt{g}\sigma_1} \le c(\beta_0).
\end{equation}
With $\beta \in (0, \beta_0]$ for the first term in \eqref{eq:118}, we have,
\begin{align}
    \mathbb{E}^*_{(2g,0,\hat{\bfz})}e^{2\beta(\sup_{0\leq s \leq \sigma_1}B_1(s))^{1/2}} 
    &\le e^{2\beta} + \mathbb{E}^*_{(2g,0,\hat{\bfz})}\,e^{2\beta\sup_{0\leq s \leq \sigma_1}B_1(s)}\textbf{1}_{\{\sup_{0\leq s \leq \sigma_1}B_1(s) > 1\}}\label{eq:eq145} \\
    & \leq  e^{2\beta} + \sum_{k=0}^{\infty}\mathbb{E}^*_{(2g,0,\hat{\bfz})}\,e^{2\beta\sup_{0\leq s \leq \sigma_1}B_1(s)}\textbf{1}_{\{k \leq \sigma_1 < k+1\}}\nonumber \\
    & \leq  e^{2\beta} + \sum_{k=0}^{\infty}\mathbb{E}^*_{(2g,0,\hat{\bfz})}\,e^{2\beta\sup_{0\leq s \leq k+1}B_1(s)}\textbf{1}_{\{k \leq \sigma_1 < k+1\}}.\nonumber
\end{align}
Using Cauchy-Schwarz inequality,
\begin{align}
    \mathbb{E}^*_{(2g,0,\hat{\bfz})}e^{2\beta(\sup_{0\leq s \leq \sigma_1}B_1(s))^{1/2}} 
    & \leq e^{2\beta} + \sum_{k=0}^{\infty}(\mathbb{E}^*_{(2g,0,\hat{\bfz})}\,e^{4\beta\sup_{0\leq s \leq k+1}B_1(s)})^{1/2}(\mathbb{P}^*_{(2g,0,\hat{\bfz})}(\sigma_1 \geq k))^{1/2} \nonumber\\
    & \leq e^{2\beta} + \vrr_1\sum_{k=0}^{\infty}e^{8\beta^2\vrr_2(k+1)}(\mathbb{P}^*_{(2g,0,\hat{\bfz})}(\sigma_1 \geq k))^{1/2} \nonumber\\
    & \leq e^{2\beta} + \vrr_1 c(\beta_0)^{1/2} e^{8\beta^2\vrr_2} \sum_{k=0}^{\infty}e^{8\beta^2\vrr_2k -\frac{\beta k}{2}} \doteq c_1(\beta) <\infty, \label{eq:907}
\end{align}
where the finiteness follows on choosing $\beta \in (0, \beta_1]$ for sufficiently small $\beta_1 \in (0, \beta_0]$. The second line above follows from a standard concentration inequality (see \eqref{eq:elemconc}) and the last line from Markov's inequality. 
Thus for any $\beta \in (0,\beta_1]$,
\begin{equation*}
    \sup_{\hat{\bfz}  \in \mathbb{R}_+^{N-1}}\mathbb{E}^*_{(2g,0,\hat{\bfz})}\,e^{2\beta(\sup_{0\leq s \leq \sigma_1}B_1(s))^{1/2}}
	\doteq c_1(\beta) < \infty.
\end{equation*}
The result now follows on setting $\gamma_3 = \min\{\beta_0/(2\sqrt{g}),\beta_1\}$.
\hfill \qed
\subsubsection{Proof of Lemma \ref{sec2prop2}.}

Let $(z_1,\hat{\bfz}) \in (0,\infty) \times \mathbb{R}_+^{N-1}$. All inequalities of random quantities in this proof are $\mathbb{P}^*_{(4g,z_1,\hat{\bfz})}$-almost sure. For $ t \leq \tau_0^{Z_1}$, we have
\begin{align*}
    Z_1(t) & = z_1 + B_1(t) - \frac{1}{2}L_2(t) - \int_0^tV(s)ds \\
    & = z_1 + B_1(t) - \frac{1}{2}L_2(t) - \int_0^t(gs + 4g)ds \\
    & \leq z_1 + \sup_{0 \leq s \leq t}B_1(s) - \frac{gt^2}{2} \doteq H(t).
\end{align*}
Consequently, $Z_1(t)$ must hit zero before $H(t)$, and so $\tau_0^H \doteq \inf\{t\ge0: H(t) =0\} \geq \tau_0^{Z_1}$. Thus,
for arbitrary $\lala>0$,
\begin{align*}
    \mathbb{E}^*_{(4g,z_1,\hat{\bfz})}e^{\lala\tau_0^{Z_1}} &= 1 + \int_1^{\infty}\mathbb{P}^*_{(4g,z_1,\hat{\bfz})}(\tau_0^{Z_1} > \frac{\ln(s)}{\lala}) ds \\
    & \leq 1 + \int_1^{\infty}\mathbb{P}^*_{(4g,z_1,\hat{\bfz})}(\tau_0^{H} > \frac{\ln(s)}{\lala}) ds\\
    & \leq 1 + \int_1^{\infty}\mathbb{P}^*_{(4g,z_1,\hat{\bfz})}(H(\frac{\ln(s)}{\lala}) > 0)  ds.
\end{align*}
Thus, using Markov's inequality, for $\theta >0$,	
\begin{align*}
\mathbb{E}^*_{(4g,z_1,\hat{\bfz})}e^{\lala \tau_0^{Z_1}}     & \le 1 + \int_1^{\infty}\mathbb{P}^*_{(4g,z_1,\hat{\bfz})}(z_1 + \sup_{0 \leq u \leq \frac{\ln(s)}{\lala}}B_1(s) > \frac{g(\frac{\ln(s)}{\lala})^2}{2} )ds \\
    & \leq 1 + \int_1^{\infty}\mathbb{P}^*_{(4g,z_1,\hat{\bfz})}(z_1^{1/2} +(\sup_{0 \leq u \leq \frac{\ln(s)}{\lala}}B_1(s))^{1/2} > \sqrt{\frac{g}{2}}\frac{\ln(s)}{\lala})ds \\
    & \leq 1 + e^{\theta z_1^{1/2}}\int_1^{\infty}s^{-\sqrt{\frac{g}{2}}\frac{\theta}{\lala}} \mathbb{E}^*_{(4g,z_1,\hat{\bfz})}e^{\theta(\sup_{0 \leq u \leq \frac{\ln(s)}{\lala}}B_1(s))^{1/2}}ds \\
    & \leq 1 + \vrr_1e^{\theta z_1^{1/2}}\int_1^{\infty}s^{-\sqrt{\frac{g}{2}}\frac{\theta}{\lala}}s^{\frac{\vrr_2\theta^2}{\lala}}ds,
\end{align*}
where in the last line we have used a standard concentration inequality (see \eqref{eq:elemconc}).
Now take $\lala_4 \doteq g/(16\vrr_2)$ and for fixed $\lala \in (0, \lala_4]$, take $\theta = 4\sqrt{2}\lala/\sqrt{g}$.
Then it follows that 
$$-\sqrt{\frac{g}{2}}\frac{\theta}{\lala} + \frac{\vrr_2\theta^2}{\lala} { \leq} -2.$$
Thus
$$
\sup_{\hat{\bfz} \in \mathbb{R}_+^{N-1}}\mathbb{E}^*_{(4g,z_1,\hat{\bfz})}e^{\lala \tau_0^{Z_1}}
\le  1 + \vrr_1e^{4\sqrt{2} \lala z_1^{1/2}/\sqrt{g}}.$$
%
%
%
%
The result follows. \hfill \qed

\subsubsection{Proof of Lemma \ref{sec2prop3}.}
Let $v \in [2g, \infty),\,\hat{\bfz} \in \mathbb{R}_+^{N-1}$, and $\lala > 0$. All inequalities of random quantities in this proof are $\mathbb{P}^*_{(v,0,\hat{\bfz})}$-almost sure. For $t \leq \hat \tau_{2g}$, $V(t) \geq 2g$, so
\begin{align*}
    0 \leq Z_1(t) & = B_1(t) + L_1(t) - \frac{1}{2}L_2(t) - \int_0^tV(s)ds 
     \leq \sup_{0 \leq s \leq t}B_1(s) + L_1(t) - 2gt,
\end{align*}
from which it follows that 
$
    - L_1(t) \leq \sup_{0 \leq s \leq t}B_1(s) - 2gt.
$
Hence,
$$V(t) = gt - L_1(t) + v \leq \sup_{0 \leq s \leq t}B_1(s) - gt + v \doteq Q(t).$$
From this inequality we  see that $\tau^Q_{2g} \doteq \inf\{t\ge 0: Q(t)=2g\} $ satisfies $\tau^Q_{2g} \geq \hat \tau_{2g}$. Then, for any $\theta > 0$,
\begin{align*}
    \mathbb{E}^*_{(v,0,\hat{\bfz})}e^{\lala\hat \tau_{2g}} &= \int_0^\infty \mathbb{P}^*_{(v,0,\hat{\bfz})}(\hat \tau_{2g} \geq \frac{1}{\lala}\ln(s))ds 
     \leq \int_0^\infty \mathbb{P}^*_{(v,0,\hat{\bfz})}(\tau^Q_{2g} \geq \frac{1}{\lala}\ln(s))ds \\
    & \leq 1 + \int_1^{\infty}\mathbb{P}^*_{(v,0,\hat{\bfz})}(Q(\frac{1}{\lala}\ln(s)) > 2g)ds .
\end{align*}
Thus by Markov's inequality,
\begin{align*}	
   \mathbb{E}^*_{(v,0,\hat{\bfz})}e^{\lala\hat \tau_{2g}}    & \leq 1 + e^{-2g\theta}\int_1^{\infty}\mathbb{E}^*_{(v,0,\hat{\bfz})}e^{\theta Q(\frac{1}{\lala}\ln(s))}ds \\
    & = 1 + e^{\theta(v-2g)}\int_1^{\infty}e^{-\frac{g\theta}{\lala}\ln(s)}\mathbb{E}^*_{(v,0,\hat{\bfz})}e^{\theta\sup_{0 \leq t \leq \frac{1}{\lala}\ln(s)}B_1(t)}ds \\
    & \leq 1 + \vrr_1e^{\theta(v-2g)}\int_1^{\infty}s^{-\frac{g\theta}{\lala}+\frac{\vrr_2\theta^2}{\lala}}ds, \\
\end{align*}
where we have once again used \eqref{eq:elemconc}.
Now let $\lala_5 \doteq g^2/(8\vrr_2)$ and for fixed $\lala \in (0, \lala_5)$, take $\theta = 4 \lala/g$.
Then, for any $\lala \in (0, \lala_5)$,
$$-\frac{g\theta}{\lala}+\frac{\vrr_2\theta^2}{\lala} { \leq} -2.$$
It then follows, for $\lala \in (0, \lala_5)$,
$$
\sup_{\hat{\bfz} \in \RR_+^{N-1}}\mathbb{E}^*_{(v, 0,\hat{\bfz})}e^{\lala \,\hat \tau_{2g}} \le 
 1 + \vrr_1e^{4 \lala(v-2g)/g}.
 $$
 The result follows. \hfill \qed
%
%
%
%
%
%
%
%
%

\subsection{Proofs of Lemmas for Proposition \ref{neginexp}.}
\label{sec:pfneginexp}
In this section we provide the proofs of Lemmas \ref{zlesslem1} and \ref{zlesslem2}  stated in Section \ref{singdrift} that were used in the proof of Proposition \ref{neginexp}.

\subsubsection{Proof of Lemma \ref{zlesslem1}.}
Fix $(2g,0, \hat\bfz)\in \RR\times \RR_+^N$. Since $M(t) = \sum_{i=2}^NB_i(t) - (N-1)B_1(t)$, from Proposition \ref{sigma2} (which implies $\mathbb{E}^*_{(2g, 0,\hat\bfz)}\sigma_{2l} < \infty$ for any $l \in \mathbb{N}$) and optional sampling theorem (cf. \cite[Section 1.3.C]{KarShre}),
 we have from Lemma \ref{zless}, for $l\in \NN$,
\begin{align}
    \mathbb{E}^*_{(2g, 0,\hat\bfz)}(\bar{Z}_2(\sigma_{2l}) - \bar{z}_2) &\leq \mathbb{E}^*_{(2g, 0,\hat\bfz)}\left(M(\sigma_{2l}) + \frac{N}{k}Y^{(1)}_k(\sigma_{2l}) - \frac{(N-k)}{k}L_1(\sigma_{2l}) + \frac{N}{2k}L_{k+1}(\sigma_{2l})\right) \nonumber\\
    & = \frac{N}{k}\mathbb{E}^*_{(2g, 0,\hat\bfz)}\,Y^{(1)}_k(\sigma_{2l}) - \frac{(N-k)}{k}\mathbb{E}^*_{(2g, 0,\hat\bfz)}\,L_1(\sigma_{2l}) + \frac{N}{2k}\mathbb{E}^*_{(2g, 0,\hat\bfz)}\,L_{k+1}(\sigma_{2l}). &\ &&\ \label{eq:340}
\end{align}
Using standard martingale maximal inequalities we have
\begin{align*}
    \mathbb{E}^*_{(2g, 0,\hat\bfz)}B_i^*(\sigma_{2l}) &\leq 
 c_0 \sqrt{\mathbb{E}^*_{(2g, 0,\hat\bfz)}\sigma_{2l}} 
     =  c_0\left(\sum_{i=1}^{l}\mathbb{E}^*_{(2g, 0,\hat\bfz)}(\sigma_{2i}-\sigma_{2(i-1)})\right)^{1/2} \le  c_0' \sqrt{l},
\end{align*}
where $ c_0, c_0'\in (0, \infty)$ are independent of $\hat\bfz$ and $l$, and the last inequality once more uses Proposition \ref{sigma2}.
Thus, for some $ c_1 \in (0,\infty)$, for all $k = 1, \ldots, N$, $l\in \NN$, 
\begin{equation}\label{eq:338}
    \sup_{\hat\bfz \in \RR_+^{N-1}}\mathbb{E}^*_{(2g,0,\bfz)}Y_k^{(1)}(\sigma_{2l}) \leq c_1 l^{1/2}.
\end{equation}
Next note that
\begin{align}
    \mathbb{E}^*_{(2g, 0,\hat\bfz)}\,L_1(\sigma_{2l}) & = \sum_{i=1}^{l}\mathbb{E}^*_{(2g, 0,\hat\bfz)}(L_1(\sigma_{2i}) - L_1(\sigma_{2i-2})) \;\;\ge l\inf_{\tilde\bfz \in \RR_+^{N-1}}  \mathbb{E}^*_{(2g, 0,\tilde\bfz)}(L_1(\sigma_2))\nonumber\\
	&\ge gl\inf_{\tilde\bfz \in \RR_+^{N-1}}  \mathbb{P}^*_{(2g,0,\tilde\bfz)}(L_1(\sigma_2) > g)\nonumber\\
	&= gl\inf_{\tilde\bfz \in \RR_+^{N-1}}  \mathbb{P}^*_{(2g,0,\tilde\bfz)}(g\sigma_2 - V(\sigma_2) + 2g  > g)
	= gl\inf_{\tilde\bfz \in \RR_+^{N-1}}  \mathbb{P}^*_{(2g,0,\tilde\bfz)}(\sigma_2  > 1) = gl,
&\ \label{eq:345}	\end{align}
	where the last equality follows on observing that, under $\mathbb{P}^*_{(2g,0,\tilde\bfz)}$, $\sigma_2> \sigma_1>1$ a.s.
The result follows from \eqref{eq:340}, \eqref{eq:338} and \eqref{eq:345}. \hfill \qed

\subsubsection{Proof of Lemma \ref{zlesslem2}.}

Fix $\Delta>0$ and $(2g,0, \hat\bfz)\in \RR\times \RR_+^N$ such that $\hat\bfz \in \cls_{\Delta}$. All inequalities will be a.s. under $\PP^*_{(2g,0,\hat\bfz)}$. Let $k=k(\Delta)$ satisfy \eqref{eq:1132}.
Define
$$\theta_k = \inf\{t \geq 0: Z_{k+1}(t) = 0\}.$$
Then for $t \leq \theta_k$, 
\begin{align}
    Z_{k+1}(t) &= z_{k+1}+B_{k+1}(t)-B_k(t) - \frac{1}{2}(L_k(t) + L_{k+2}(t)) \nonumber\\
    & \geq \frac{\Delta}{N^2}+B_{k+1}(t)-B_k(t) - \frac{1}{2}(L_k(t) + L_{k+2}(t)).\label{eq:551}
\end{align}
To bound $\mathbb{E}^*_{(2g, 0,\hat\bfz)}L_{k+1}(\sigma_{2l})$, we will obtain an upper bound on the probability that $L_{k+1}(\sigma_{2l}) > 0$, or equivalently, the probability that $Z_{k+1}(\cdot)$ hits zero before time $\sigma_{2l}$, using \eqref{eq:551}. Next, we will estimate $\mathbb{E}^*_{(2g, 0,\hat\bfz)}(L_{k+1}(\sigma_{2l})^2)$. These two will be combined using a Cauchy-Schwarz inequality to obtain an upper bound for $\mathbb{E}^*_{(2g, 0,\hat\bfz)}L_{k+1}(\sigma_{2l})$.

We will first obtain an upper bound for $L_k(t)$ for $k<N$ and $t \le \theta_k$. When $3 \le k \le N-1$, from \eqref{eq:lbds},
for $t \le \theta_k$,
\begin{align}
    L_{k}(t) &\leq B^*_k(t) + \frac{1}{2}L_{k-1}(t) \nonumber\\
    L_i(t) &\leq B^*_i(t) + \frac{1}{2}(L_{i-1}(t)+L_{i+1}(t)),\,\, 3 \leq i \leq k-1 \ \mbox{ if } \ k \ge 4, \nonumber\\
    L_2(t) &\leq L_1(t) + B^*_2(t) + \frac{1}{2}L_3(t).\label{lll}
\end{align}
Thus,
\begin{align*}
    \sum_{i=3}^{k-1}&(i-1)(L_i(t)-\frac{1}{2}(L_{i+1}(t)+L_{i-1}(t))) \\
    &+ (L_2(t)-L_1(t)-\frac{1}{2}L_3(t)) + (k-1)(L_k(t)-\frac{1}{2}L_{k-1}(t)) \leq \sum_{i=2}^k(i-1)B^*_i(t) \doteq Y^{(2)}_k(t),
\end{align*}
where the first sum is taken to be zero if $k=3$.
The left side in the above inequality equals
$
   \frac{k}{2}L_k(t) - L_1(t)
$
and so we have, whenever $N>k\ge3$, $t \leq \theta_k$,
\begin{equation}\label{eq:549}
	L_k(t) \leq \frac{2}{k}L_1(t) + \frac{2}{k}Y^{(2)}_k(t).\end{equation}
Note that the above inequality holds trivially if $k=1$, and by \eqref{lll} if $k=2$, and so in fact the above holds under $\mathbb{P}^*_{(2g, 0,\hat\bfz)}$,
with $\hat \bfz \in \cls_{\Delta}$, for $k$ satisfying \eqref{eq:1132} and $t \le \theta_k$ .

We now obtain a similar upper bound on $L_{k+2}(t)$ when $k<N$ and $t \le \theta_k$. 
From \eqref{eq:lbds},
when $k<N-1$, for $t \leq \theta_k$,
\begin{align*}
    &(N-k-1)(L_{k+2}(t)-\frac{1}{2}L_{k+3}(t)) 
     + \sum_{i=k+2}^{N-1}(N-i)(L_{i+1}(t)-\frac{1}{2}(L_{i}(t) + L_{i+2}(t))) \\
    &\leq \sum_{i=k+1}^{N-1}(N-i)B^*_{i+1}(t) \doteq Y^{(3)}_k(t).
\end{align*}
The left side equals $\frac{N-k}{2}L_{k+2}(t)$ and so we have, when $k<N-1$,
\begin{equation}\label{eq:550}
	L_{k+2}(t) \leq \frac{2}{N-k}Y^{(3)}_k(t),\,\,\mbox{ for all } t \leq \theta_k.\end{equation}
	Note that when $k=N-1$ the inequality is trivially true.
Using \eqref{eq:549} and \eqref{eq:550} in \eqref{eq:551}, we have for $t \leq \theta_k$,
under $\mathbb{P}^*_{(2g, 0,\hat\bfz)}$
\begin{align*}
	Z_{k+1}(t) 
	&\ge \frac{\Delta}{N^2} - \sum_{i=1}^N B_{i}^*(t) - \frac{2}{k}L_1(t) - \frac{2}{k}Y^{(2)}_k(t)
	-\frac{2}{N-k}Y^{(3)}_k(t)\\
	&\ge \frac{\Delta}{N^2} - Y^{(4)}_k(t) - \frac{2}{k}L_1(t) ,
\end{align*}
where
$Y^{(4)}_k(t) = \sum_{i=1}^N B_{i}^*(t) + \frac{2}{k}Y^{(2)}_k(t) + \frac{2}{N-k}Y^{(3)}_k(t)$.
Note that if $L_{k+1}(\sigma_{2l}) > 0$, then  $\inf_{0 \leq s \leq \sigma_{2l}}Z_{k+1}(s) = 0$, which in turn implies that  $\theta_k \in [0, \sigma_{2l}]$ and so from the above display
$$\frac{\Delta}{N^2} \le  Y^{(4)}_k(\theta_k) + \frac{2}{k}L_1(\theta_k) \leq  Y^{(4)}_k(\sigma_{2l}) + \frac{2}{k}L_1(\sigma_{2l}). $$
As a consequence, 
\begin{align}
    \mathbb{P}^*_{(2g, 0,\hat\bfz)}(L_{k+1}(\sigma_{2l}) > 0) & \leq \mathbb{P}^*_{(2g, 0,\hat\bfz)}(\frac{\Delta}{N^2} \leq Y^{(4)}_k(\sigma_{2l}) + \frac{2}{k}L_1(\sigma_{2l})) \\
    & \leq \mathbb{P}^*_{(2g, 0,\hat\bfz)}(\frac{\Delta}{2N^2} \leq Y^{(4)}_k(\sigma_{2l}))
     + \mathbb{P}^*_{(2g, 0,\hat\bfz)}(\frac{\Delta}{2N^2} \leq \frac{2}{k}L_1(\sigma_{2l})).\label{eq:631}
\end{align}
Consider now the first term on the right side. Then, for  $T \ge 1$,
\begin{align}
    \mathbb{P}^*_{(2g, 0,\hat\bfz)} (\frac{\Delta}{2N^2} \leq Y^{(4)}_k(\sigma_{2l})) 
    & \leq \mathbb{P}^*_{(2g, 0,\hat\bfz)}(\sigma_{2l} > T) + \mathbb{P}(\frac{\Delta}{2N^2} \leq Y^{(4)}_k(T)) \nonumber\\
    & \leq l { \sup_{\tilde \bfz \in \RR_+^{N-1}}}\mathbb{P}^*_{(2g,0, \tilde\bfz)}(\sigma_{2} > T/l) + c_1e^{-c_2\Delta^2/T} \nonumber\\
    & \leq c_3le^{-c_4T/l} +c_1e^{-c_2\Delta^2/T}, \label{eq:1020}
\end{align}
where $c_i \in (0,\infty)$ are constants  that do not depend on $\Delta$ or $\hat\bfz \in \cls_{\Delta}$, the second inequality uses the strong Markov property and a standard concentration estimate (see \eqref{eq:elemconc}) and the last inequality is a consequence of Proposition \ref{sigma2}.
 Now consider the second term on the right side of \eqref{eq:631}. For $T\ge 1$,
 \begin{align}
 \mathbb{P}^*_{(2g, 0,\hat\bfz)}(\frac{\Delta}{2N^2} \leq \frac{2}{k}L_1(\sigma_{2l})) &\le
 \mathbb{P}^*_{(2g, 0,\hat\bfz)}(\sigma_{2l} > T) + \mathbb{P}^*_{(2g, 0,\hat\bfz)}(\frac{\Delta}{2N^2} \leq \frac{2}{k}L_1(T)) \nonumber\\
 &\le c_3le^{-c_4T/l} +\mathbb{P}^*_{(2g, 0,\hat\bfz)}(L_1(T)\ge \frac{\Delta k}{4N^2}). \label{eq:1019}
 \end{align} 
 Note that under $\PP^*_{(2g, 0,\hat\bfz)}$,
$$\sup_{0 \leq s \leq T}V(s) < 2g + gT.$$
Thus, using \eqref{eql2l1} and \eqref{eq:loctimes}, for any $T\ge 1$,
\begin{align*}
	L_1(T) &\le \frac{1}{2}L_2(T) + (2g+gT)T + B_1^*(T)\le \frac{(N-1)}{N}L_1(T) + \frac{1}{N} \bar Y(T) + (2g+gT)T + B_1^*(T)
\end{align*}
and thus, with $\tilde Y(T) = \bar Y(T) + NB_1^*(T)$ and $c_5 = 3gN$
\begin{align}\label{eq:1114}
	L_1(T) &\le   \bar Y(T) + (2g+gT)TN + NB_1^*(T) { \leq} \tilde Y(T) + c_5T^2.
\end{align}

Take $T = T(\Delta) \doteq \frac{1}{2N} \sqrt{\frac{\Delta}{2c_5}}$ and observe that $c_5T^2 \le \Delta k/(8N^2)$ for all $k \in \{1,\ldots,N\}$. Choose $\Delta_1>0$ such that $T(\Delta_1) \ge 1$.
Then, it follows by a concentration estimate that, for $\Delta \ge \Delta_1$,
\begin{align}
	\mathbb{P}^*_{(2g, 0,\hat\bfz)}(L_1(T)&\ge \frac{\Delta k}{4N^2}) \le
	\mathbb{P}^*_{(2g, 0,\hat\bfz)}(\tilde Y(T)  \ge \frac{\Delta k}{4N^2} - c_5T^2)\\
	&\le \mathbb{P}^*_{(2g, 0,\hat\bfz)}(\tilde Y(T)  \ge c_5T^2) \le c_6 e^{-c_7T^3}.\label{eq:1017}
\end{align}
Then, using \eqref{eq:1017},  \eqref{eq:1019}
and \eqref{eq:1020} in \eqref{eq:631}, we obtain constants $c_2', c_7', c_8, c_9>0$ such that for all $\Delta \ge \Delta_1$,
\begin{equation}\mathbb{P}^*_{(2g, 0,\hat\bfz)}(L_{k+1}(\sigma_{2l}) > 0) \leq 
2c_3le^{-c_4\nunu \sqrt{\Delta}/l} +c_1e^{-c_2'\Delta^{3/2}} + c_6 e^{-c_7'\Delta^{3/2}}
{ \leq} 2c_3le^{-c_4\nunu \sqrt{\Delta}/l} + c_8e^{-c_9\Delta^{3/2}}.
\label{eq:1135}
\end{equation}

We will now obtain an upper bound for $\mathbb{E}_{{(2g,0,\hat \bfz)}}(L_{k+1}(\sigma_{2l})^2)$.
From \eqref{eq:loctimes} we have that, for $m\ge 2$ and $t\ge 0$,
\begin{align*}
	 &(L_N(t) - \frac{1}{2}L_{N-1}(t)) + \sum_{j=m}^{N-2} (N-j)(L_{j+1}(t) -\frac{1}{2}(L_{j+2}(t)+L_{j}(t))) \\ 
    &\leq \sum_{j=m}^{N-1}(N-j)B_{j+1}^*(t) \doteq Y^{(5)}_{m}(t).
\end{align*}
The left side above equals $\frac{N-m+1}{2} L_{m+1}(t) - \frac{N-m}{2}L_m(t) $ and so
$$\frac{N-m+1}{2} L_{m+1}(t) - \frac{N-m}{2}L_m(t) \le Y^{(5)}_{m}(t).$$
Dividing by $(N-m)(N-m+1)/2$ throughout, we have
$$\frac{1}{N-m} L_{m+1}(t)-  \frac{1}{N-m+1} L_m(t) \le \frac{2}{(N-m)(N-m+1)}Y^{(5)}_{m}(t),\;\; 2\le m \le N-1.$$
Summing over $m$ from $2$ to $k$, the above yields
$$\frac{1}{N-k}L_{k+1}(t)-\frac{1}{N-1}L_2(t) \leq \sum_{m=2}^k\frac{2Y^{(5)}_{m}(t)}{(N-m)(N-m+1)} \doteq Y^{(6)}_k(t).$$
and thus
$$L_{k+1}(t) \leq \frac{N-k}{N-1}L_2(t)+(N-k)Y^{(6)}_k(t).$$
From \eqref{eq:1055} (recall it holds for any value of $k\ge 1$) we have
$$L_{k+1}(t) \leq \frac{N-k}{N-1}\left(\frac{2(k-1)}{k}L_1(t) + \frac{2}{k}Y^{(1)}_k(t) + \frac{1}{k}L_{k+1}(t)\right)+(N-k)Y^{(6)}_k(t).$$
Thus
$$\frac{N(k-1)}{k(N-1)} L_{k+1}(t) \le \frac{2(k-1)(N-k)}{k(N-1)} L_1(t) + \frac{2(N-k)}{k(N-1)}Y^{(1)}_k(t)
+ (N-k)Y^{(6)}_k(t)$$
and consequently, when $k>1$,
\begin{align}
    L_{k+1}(t) &\leq \frac{2(N-k)}{N}L_1(t)+\frac{2(N-k)}{N(k-1)}Y^{(1)}_k(t)+\frac{k(N-k)(N-1)}{N(k-1)}Y^{(6)}_k(t) \\
    & = \frac{2(N-k)}{N}L_1(t)+Y^{(7)}_k(t),\label{eq:1122}
\end{align}
where $Y_k^{(7)}(t) = \frac{2(N-k)}{N(k-1)}Y^{(1)}_k(t)+\frac{k(N-k)(N-1)}{N(k-1)}Y^{(6)}_k(t)$.
Recalling the inequality \eqref{eql2l1} we see that \eqref{eq:1122} also holds with $k=1$
and $Y^{(7)}_1(t) \doteq \frac{2}{N} \bar Y(t)$.

Using this, we obtain that
\begin{align*}
    \mathbb{E}^*_{(2g, 0,\hat\bfz)}(L_{k+1}(\sigma_{2l})^2) & \le  2\mathbb{E}^*_{(2g, 0,\hat\bfz)}(Y^{(7)}_k(\sigma_{2l}))^2 + \frac{8(N-k)^2}{N^2}\mathbb{E}^*_{(2g, 0,\hat\bfz)}L_1^2(\sigma_{2l})\\
	& \le 2\mathbb{E}^*_{(2g, 0,\hat\bfz)}(Y^{(7)}_k(\sigma_{2l}))^2
	+ \frac{16(N-k)^2}{N^2}\mathbb{E}^*_{(2g, 0,\hat\bfz)}(\tilde Y(\sigma_{2l}))^2 + \frac{16(N-k)^2c_5^2}{N^2}\mathbb{E}^*_{(2g, 0,\hat\bfz)}(\sigma_{2l})^4.
\end{align*}
where the last line is from \eqref{eq:1114}.
Thus, using the above bound, the strong Markov property, and Proposition \ref{sigma2}, there is a $b_1 \in (0,\infty)$ such that, for all $l \in \NN$, $\hat \bfz \in \cls_\Delta$, and $k$ satisfying \eqref{eq:1132},
\begin{align*}
    \mathbb{E}^*_{(2g, 0,\hat\bfz)}(L_{k+1}(\sigma_{2l})^2) \le b_1 l^5.
\end{align*}
 
Applying Cauchy-Schwarz inequality and using \eqref{eq:1135}, we obtain positive constants $D_1, D_2, D_3$ such that for all $\Delta \ge \Delta_1$,
\begin{align*}
    \mathbb{E}^*_{(2g, 0,\hat\bfz)}L_{k+1}(\sigma_{2l}) & \leq (\mathbb{E}^*_{(2g, 0,\hat\bfz)}L_{k+1}(\sigma_{2l})^2)^{1/2}(\mathbb{P}^*_{(2g, 0,\hat\bfz)}(L_{k+1}(\sigma_{2l}) > 0))^{1/2} \\
    & \leq b_1^{1/2} l^{5/2} (2c_3le^{-c_4\nunu \sqrt{\Delta}/l} + c_8e^{-c_9\Delta^{3/2}})^{1/2}=
	D_1l^{5/2}(\sqrt{l}e^{-D_2\sqrt{\Delta}/l}+e^{-D_3\Delta^{3/2}}),
\end{align*}
{ as desired.} \hfill \qed

\subsection{Proof of Proposition \ref{driftcondn}}
\label{sec:driftcon}
In this section we give the proof of Proposition \ref{driftcondn}. Proof relies on five preliminary lemmas which extend some estimates derived in Sections \ref{highlev} and \ref{sec:lowlev} to more general starting configurations. The first four are required to verify part (1) of the proposition and the last one is used to check part (2). Proof of the proposition is at the end of the section.
Recall the set $C^*$ from \eqref{eq:cstar} and stopping times $\sigma_1, \sigma_2$ from Section \ref{sec:lowlev}.
Recall $\Gamma$ from \eqref{eq928}.
\begin{lemma}
	\label{lem:800}
There exists a $\rho_0 >0$ and $b_1, b_2 >0$ such that, for all $\rho \in (0, \rho_0)$, there is a   $b_3(\rho) \in (0,\infty)$ such that for any
$(v,\bfz) \in \RR \times \RR_+^N$,
\begin{equation*}
\mathbb{E}^*_{(v,\bfz)}e^{\rho\Gamma} \leq b_3(\rho)e^{b_1\rho(|v| + z_1 + \bar z_2)}\mathbb{E}^*_{(v,\bfz)}e^{b_2\rho\sigma_1}.
\end{equation*}
\end{lemma}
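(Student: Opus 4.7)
The plan is to reduce in three stages to the special starting condition $(2g, 0, \hat\bfz)$ where Proposition \ref{gammabound} applies, using the strong Markov property first at $\sigma_2$ and then at $\sigma_1$. Two pieces of machinery make this work: a cancellation of $L_1$ contributions in a certain decomposition of $\bar Z_2(\sigma_2)$, and a repeated use of the time-splitting sum from the proof of Lemma \ref{sec2prop1}. The main obstacle will be the bookkeeping of exponent constants: each Cauchy--Schwarz and time-splitting step inflates the exponent multiplying $\sigma_1$ (and intermediate objects like $\sigma_2$) by a constant factor, so $\rho_0$ must be chosen small enough that all the resulting geometric series converge simultaneously and so that the intermediate multiplicative constants remain of order $\rho$, allowing $b_1, b_2$ to emerge as absolute constants and $b_3(\rho)$ to absorb the $\rho$-dependent prefactors.

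I would begin by proving the uniform preliminary bound
\[
\mathbb{E}^*_{(2g,0,\hat\bfz)}e^{\rho\Gamma} \leq K(\rho)\, e^{\rho c' \bar z_2}, \qquad \rho \in (0,\gamma_7),\ \hat\bfz \in \RR_+^{N-1},
\]
with $c', \gamma_7$ from Proposition \ref{gammabound}: when $\bar z_2 \leq \Delta$ the starting state lies in $C^*$ and $\Gamma=0$; when $\bar z_2 > \Delta$ I write $\mathbb{E} e^{\rho\Gamma} = 1 + \int_0^{\infty}\rho e^{\rho t}\mathbb{P}(\Gamma > t)\,dt$ and split at $t = c'\bar z_2$, using Proposition \ref{gammabound} on the tail. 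Next I would apply the strong Markov property at $\sigma_2$, noting that $V(\sigma_2)=2g$ and $Z_1(\sigma_2)=0$ (the latter because $V$ must be strictly decreasing at $\sigma_2$ as it touches $2g$ from above, forcing $L_1$ to increase locally, which forces $Z_1=0$). Combined with $\Gamma \leq \sigma_2 + \Gamma\circ\theta_{\sigma_2}$, this yields
\[
\mathbb{E}^*_{(v,\bfz)}e^{\rho\Gamma} \leq K(\rho)\,\mathbb{E}^*_{(v,\bfz)}\bigl[e^{\rho\sigma_2 + \rho c' \bar Z_2(\sigma_2)}\bigr].
\]
The key algebraic point is that combining the identity $\bar Z_2(t) - \bar z_2 = M(t) + \tfrac{N}{2}L_2(t) - (N-1)L_1(t)$ from \eqref{eq:semmart} with the bound \eqref{eql2l1} causes the $L_1$ terms to cancel, giving $\bar Z_2(\sigma_2) \leq \bar z_2 + |M(\sigma_2)| + \bar Y(\sigma_2)$. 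After Cauchy--Schwarz, the piece involving $|M|+\bar Y$ is controlled, via the time-splitting sum over intervals $\{k \leq \sigma_2 < k+1\}$ with \eqref{eq:elemconc} and Markov's inequality (exactly as in the proof of Lemma \ref{sec2prop1}), by $\bigl(\mathbb{E}^*_{(v,\bfz)} e^{\mu\sigma_2}\bigr)^{1/2}$ for a suitable $\mu = \mu(\rho)$.

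The remaining task is to bound $\mathbb{E}^*_{(v,\bfz)} e^{\alpha\sigma_2}$ for small $\alpha$ in terms of $\mathbb{E}^*_{(v,\bfz)} e^{b_2\rho\sigma_1}$ times a prefactor exponential in $|v|+z_1$. I would apply the strong Markov property at $\sigma_1$ and, following the scheme of the proof of Proposition \ref{sigma2}, obtain via Lemmas \ref{sec2prop2} and \ref{sec2prop3}
\[
\mathbb{E}^*_{(4g, z_1', \hat\bfz')} e^{\alpha(\sigma_2-\sigma_1)} \leq \tilde C(\alpha)\, e^{\tilde C'(\alpha)\sqrt{z_1'}}
\]
uniformly in $\hat\bfz'$. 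A pathwise computation using $L_1(\sigma_1) = v + g\sigma_1 - 4g$ and $\int_0^{\sigma_1} L_1(s)\,ds \leq \sigma_1 L_1(\sigma_1)$ then gives $Z_1(\sigma_1) \leq z_1 + B_1^*(\sigma_1) + |v| + \tfrac{g}{2}\sigma_1^2$, whence by subadditivity of $\sqrt{\cdot}$ and $\sqrt{a}\leq 1+a$,
\[
\sqrt{Z_1(\sigma_1)} \leq 3 + z_1 + |v| + B_1^*(\sigma_1) + \sqrt{g/2}\,\sigma_1.
\]
Substituting this in, applying Cauchy--Schwarz and \eqref{eq:elemconc}, and using the time-splitting trick once more to convert $\mathbb{E} e^{\lambda B_1^*(\sigma_1)}$ into an exponential moment of $\sigma_1$, produces a bound of the form $C_1(\rho) e^{C_2(\rho)(z_1 + |v|)}\,\mathbb{E}^*_{(v,\bfz)} e^{C_3(\rho)\sigma_1}$. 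Combining this with the preceding paragraph and using that $\tilde C'(\rho), C_2(\rho), C_3(\rho), \mu(\rho)$ all scale linearly in $\rho$ (so that $b_1 = \max(c', C_2(\rho)/\rho)$ and $b_2 = C_3(\rho)/\rho$ are in fact independent of $\rho$) yields the lemma.
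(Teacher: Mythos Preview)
Your proposal is correct and follows essentially the same route as the paper's proof: reduce via Proposition \ref{gammabound} and the strong Markov property at $\sigma_2$ (using $V(\sigma_2)=2g$, $Z_1(\sigma_2)=0$) to a bound involving $e^{\rho\sigma_2 + \rho c'\bar Z_2(\sigma_2)}$, control $\bar Z_2(\sigma_2)$ through \eqref{eq:semmart}--\eqref{eql2l1} and the time-splitting sum, and then pass from $\sigma_2$ to $\sigma_1$ via the $\eta_1$-scheme of Proposition \ref{sigma2} together with a pathwise bound on $Z_1(\sigma_1)$. The only cosmetic differences are that the paper keeps $(\sup_{s\le\sigma_1}B_1(s))^{1/2}$ in the exponent while you linearize via $\sqrt{a}\le 1+a$ before the time-splitting, and your pathwise bound on $Z_1(\sigma_1)$ is slightly sharper than the paper's; neither affects the argument.
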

\begin{proof}
Define the stopping time
$$\sigma^* \doteq \inf\{t\ge \sigma_2: (V(t), \bfZ(t)) \in C^*\}.$$
From Proposition \ref{gammabound}, there exist positive constants $d_0, c'$ such that for any $\lala \in (0, \lala_7/2)$, where $\lala_7$ is as in that lemma,
\begin{equation}
	\mathbb{E}^*_{(2g,0,\hat\bfz)} e^{\lala \Gamma} \le d_0e^{c'\gamma \bar z_2},  \ \ \hat \bfz \in \RR_+^{N-1}.
\end{equation}
Fix $\rho_0'>0$ such that
$ \rho_0' < \min\{\lala_7, \lala_5, \lala_4\}$ and 
$\rho_0' (1+\kappa_2' g) < \lala_4$,
where $\lala_5$ and $\kappa_2'$  are as in Lemma \ref{sec2prop3} and $\lala_4$ is as in Lemma \ref{sec2prop2}.
Then, for $(v,\bfz) \in \RR \times \RR_+^N$ and $\rho \in (0, \rho_0'/2)$,
\begin{align}
\mathbb{E}^*_{(v,\bfz)}	e^{\rho\Gamma}  \le \mathbb{E}^*_{(v,\bfz)}e^{\rho\sigma^*}
&= \mathbb{E}^*_{(v,\bfz)}\left[ \mathbb{E}^*_{(v,\bfz)}\left[e^{\rho\sigma^*}\mid \clf^*_{\sigma_2}\right]\right] \le d_0 \mathbb{E}^*_{(v,\bfz)} e^{\rho\sigma_2 + c'\rho \bar Z_2(\sigma_2)}\notag\\
& \le d_0\left(\mathbb{E}^*_{(v,\bfz)} e^{2\rho\sigma_2}\right)^{1/2}\left(\mathbb{E}^*_{(v,\bfz)} e^{2c'\rho \bar Z_2(\sigma_2)}\right)^{1/2}.\label{eq:649}
\end{align}
Recall the stopping time $\eta_1 \ge \sigma_1$ defined in \eqref{eq:etastop}. Proceeding as in the proof of Proposition \ref{sigma2} (see \eqref{eq:541}), with $d_1 = 1 + \kappa_1' e^{8\rho \kappa_2' g}$ and $d_2 = (1 + \kappa_2' g)$,
 using Lemma \ref{sec2prop2} and Lemma \ref{sec2prop3},
\begin{align}
\mathbb{E}^*_{(v,\bfz)} e^{2\rho\sigma_2} &\le  \mathbb{E}^*_{(v,\bfz)}\left[\textbf{1}_{\eta_1 <\sigma_2} \mathbb{E}^*_{(v,\bfz)}\left[e^{2\rho\sigma_2}\mid \clf^*_{\eta_1}\right]\right]
+ \mathbb{E}^*_{(v,\bfz)} e^{2\rho\eta_1}\\
&\le  \kappa_1' \mathbb{E}^*_{(v,\bfz)}e^{2\rho \kappa_2' V(\eta_1) + 2\rho \eta_1} + \mathbb{E}^*_{(v,\bfz)} e^{2\rho\eta_1}\\
&\le d_1 \mathbb{E}^*_{(v,\bfz)}e^{2\rho d_2 \eta_1}\\
&= d_1 \mathbb{E}^*_{(v,\bfz)}\left[e^{2\rho d_2 \sigma_1}\mathbb{E}^*_{(v,\bfz)}\left[e^{2\rho d_2 (\eta_1- \sigma_1)} \mid \clf^*_{\sigma_1} \right]\right]\\
&\le \kappa_1 d_1 \mathbb{E}^*_{(v,\bfz)}\left[e^{2\rho d_2 \sigma_1}e^{2\rho d_2\kappa_2 Z_1(\sigma_1)^{1/2}}\right]\\
& \le \kappa_1 d_1\left(\mathbb{E}^*_{(v,\bfz)}e^{4\rho d_2\kappa_2 Z_1(\sigma_1)^{1/2}}\right)^{1/2}
	\left(\mathbb{E}^*_{(v,\bfz)}e^{ 4\rho d_2\sigma_1}\right)^{1/2},
\label{eq:641a}
\end{align}
where we used Lemma \ref{sec2prop3} for the second inequality, $V(\eta_1) \le 4g + g\eta_1$ in the third inequality, and Lemma \ref{sec2prop2} in the penultimate inequality (and the observation that $\rho'_0 d_2 < \gamma_4$).
Now we estimate exponential moments of $Z_1(\sigma_1)^{1/2}$. Note that, under $\mathbb{P}^*_{(v,\bfz)}$,
$L_1(\sigma_1) = g\sigma_1+ v-4g$, from which it follows that
$$Z_1(\sigma_1) \le \sup_{0\le s \le \sigma_1} B_1(s) + z_1 + g \sigma_1^2 + g\sigma_1 + 2|v|\sigma_1 + |v|$$ and so, using $\sqrt{a + b} \le \sqrt{a} + \sqrt{b}$ and $\sqrt{ab} \le (a+b)/2$ for $a,b \ge 0$,
\begin{align*}
Z_1(\sigma_1)^{1/2} \le \left(\sup_{0\le s \le \sigma_1} B_1(s)\right)^{1/2}  + (\sqrt{g} +3/2)\sigma_1 + \frac{1}{2}(z_1 + 3|v| + g + 1).
\end{align*}
Using this bound in \eqref{eq:641a} and Young's inequality, we obtain a finite positive constant $d_3$ not depending on $v, \bfz, \rho$ such that
\begin{multline}
\mathbb{E}^*_{(v,\bfz)} e^{2\rho\sigma_2}\\
 \le d_3 e^{2\rho d_2 \kappa_2(z_1 + 3|v|)}\left[\left(\mathbb{E}^*_{(v,\bfz)} e^{8\rho d_2\kappa_2\left(\sup_{0\le s \le \sigma_1} B_1(s)\right)^{1/2}}\right)^{1/2} + \left(\mathbb{E}^*_{(v,\bfz)}e^{8\rho d_2\kappa_2(\sqrt{g} +3/2)\sigma_1}\right)^{1/2}\right]\left(\mathbb{E}^*_{(v,\bfz)}e^{4\rho d_2\sigma_1}\right)^{1/2}.\\\label{i1}
\end{multline}
The expectation involving $\sup_{0\le s \le \sigma_1} B_1(s)$ above is bounded as in the proof of Lemma \ref{sec2prop1} (see \eqref{eq:907}) to obtain  $\rho_0'', d_4,d_5 \in (0,\infty)$ such that $d_5\rho_0''<d_2$, and
for any $\rho \in (0, \rho_0'')$ and  $(v,\bfz) \in \RR \times \RR_+^N$,
\begin{align}\label{stopsup}
    \mathbb{E}^*_{(v,\bfz)}e^{8\rho d_2\kappa_2(\sup_{0\leq s \leq \sigma_1}B_1(s))^{1/2}} 
    & \leq e^{8\rho d_2\kappa_2} + d_4 e^{d_5\rho^2}\left(\mathbb{E}^*_{(v,\bfz)}e^{4\rho d_2\sigma_1}\right)^{1/2} \sum_{k=0}^{\infty}e^{d_5\rho^2k - 2\rho d_2 k}\notag\\
    &\le e^{8\rho d_2\kappa_2} + d_6(\rho) \left(\mathbb{E}^*_{(v,\bfz)}e^{4\rho d_2\sigma_1}\right)^{1/2},
\end{align}
where $d_6(\rho) \doteq d_4 e^{d_5\rho^2}(1-e^{-\rho d_2})^{-1}$.
%
Using this bound in \eqref{i1}, we conclude that for every $0 < \rho < \min\{\rho_0'/2, \rho_0''\}$, there exists a finite positive constant $d_7(\rho)$ satisfying
\begin{equation}\label{ff0}
\mathbb{E}^*_{(v,\bfz)} e^{2\rho\sigma_2} \le e^{2\rho d_2 \kappa_2(z_1 + 3|v|)} d_7(\rho)\mathbb{E}^*_{(v,\bfz)}e^{d_2'\rho\sigma_1},
\end{equation}
where $d_2' = \max\{4d_2, 8d_2\kappa_2(\sqrt{g} + 3/2)\}$.

Now, we estimate $\mathbb{E}^*_{(v,\bfz)} e^{2c'\rho \bar Z_2(\sigma_2)}$. From \eqref{eq:semmart} and \eqref{eql2l1}, $\bar Z_2(t) \le \bar z_2 + M(t) + \bar Y(t), t \ge 0$. Hence, writing $\tilde{Y}(t) \doteq M(t) + \bar Y(t)$,
$$
\mathbb{E}^*_{(v,\bfz)} e^{2c'\rho \bar Z_2(\sigma_2)} \le e^{2c'\rho\bar z_2}\mathbb{E}^*_{(v,\bfz)} e^{2c'\rho \tilde{Y}(\sigma_2)}. 
$$
Proceeding exactly as in \eqref{stopsup}, we obtain $\rho_0'''>0$ such that for every $\rho \in (0, \rho_0''')$, there exists a  $d_8(\rho) \in (0,\infty)$ such that
$$
\mathbb{E}^*_{(v,\bfz)} e^{2c'\rho \tilde{Y}(\sigma_2)} \le \sum_{k=0}^{\infty}\left(\mathbb{E}^*_{(v,\bfz)} e^{4c'\rho\sup_{0 \le s \le k+1} \tilde{Y}(s)}\right)^{1/2}(\mathbb{P}^*_{(v,\bfz)}(\sigma_2 \geq k))^{1/2} \le d_8(\rho)\left(\mathbb{E}^*_{(v,\bfz)}e^{2\rho \sigma_2}\right)^{1/2},
$$
which, along with \eqref{ff0}, gives
\begin{equation}\label{ff1}
\mathbb{E}^*_{(v,\bfz)} e^{2c'\rho \bar Z_2(\sigma_2)} \le e^{2c'\rho\bar z_2}d_8(\rho)\left(\mathbb{E}^*_{(v,\bfz)}e^{2\rho \sigma_2}\right)^{1/2} \le e^{2c'\rho\bar z_2}e^{\rho d_2 \kappa_2(z_1 + 3|v|)} d_8(\rho) d_7(\rho)^{1/2}\left(\mathbb{E}^*_{(v,\bfz)}e^{d_2'\rho\sigma_1}\right)^{1/2}.
\end{equation}
The result, with $\rho_0 \doteq \min\{\rho_0'/2, \rho_0'', \rho_0'''\}$, now follows upon using \eqref{ff0} and \eqref{ff1} in \eqref{eq:649}. 
\end{proof}
\begin{lemma}\label{sec5prop2}  Let $\vsig \doteq \hat \tau_{4g}\wedge \hat \tau_0.$
	Then there is a 
 $\beta_1 > 0$ and $D_1 > 0$ such that, for all $(v,\bfz) \in \RR\times \RR_+^N$,
$$\mathbb{E}^*_{(v,\bfz)}e^{\beta_1\vsig} \leq D_1 e^{\beta_1(|v|+z_1)}.$$
\end{lemma}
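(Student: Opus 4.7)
We split on the value of $v$: the cases $v\ge 4g$ and $v\in[0,4g]$ are handled trivially or by Lemma~\ref{sec1prop3}, and the genuinely new content is the case $v<0$, where a linear-in-$|v|$ exponent must be produced.

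If $v\ge 4g$, then $\hat\tau_{4g}=0$, so $\vsig=0$ and the bound is automatic with any $D_1\ge 1$. If $v\in[0,4g]$, Lemma~\ref{sec1prop3} supplies a uniform constant $\mathbb{E}^*_{(v,\bfz)}e^{\gamma_1\vsig}\le K<\infty$, which is already $\le K\,e^{\beta_1(|v|+z_1)}$ for any $\beta_1\in(0,\gamma_1]$ and $D_1\ge K$ since the right side is $\ge 1$.

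For $v<0$, continuity of $V$ forces it to cross $0$ before reaching $4g$, so $\vsig=\hat\tau_0$. The crucial pathwise observation will be that $V(s)<0$ for every $s\in[0,\hat\tau_0)$; consequently the drift-dependent term $W_{1,1}t\,\sup_{s\le t}V^+(s)$ in the local-time estimate \eqref{LTineq} of Lemma~\ref{locin} vanishes, leaving the clean bound
\begin{align*}
L_1(t)\;\le\;\tilde U(t)\doteq\sum_{j=1}^N W_{1,j}\,B_j^*(t),\qquad t<\hat\tau_0.
\end{align*}
Combining this with the velocity identity $V(\hat\tau_0)=0$, which reads $g\hat\tau_0=|v|+L_1(\hat\tau_0)$, and noting that on $\{\hat\tau_0>t\}$ the inequality $V(t)<0$ forces $L_1(t)>gt-|v|$, we obtain
\begin{align*}
\mathbb{P}^*_{(v,\bfz)}(\hat\tau_0>t)\;\le\;\mathbb{P}^*_{(v,\bfz)}\bigl(\tilde U(t)\ge gt-|v|\bigr),\qquad t\ge |v|/g.
\end{align*}
A Chernoff argument based on \eqref{eq:elemconc} then gives $\mathbb{P}^*_{(v,\bfz)}(\tilde U(t)\ge gt-|v|)\le \rho_1\,e^{-(gt-|v|)^2/(4\rho_2 t)}$. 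Writing $\mathbb{E}^*_{(v,\bfz)}e^{\beta_1\hat\tau_0}=1+\int_0^\infty \beta_1\,e^{\beta_1 t}\,\mathbb{P}^*_{(v,\bfz)}(\hat\tau_0>t)\,dt$ and splitting the integral at a threshold of order $|v|/g$ (past which the Chernoff decay dominates the factor $e^{\beta_1 t}$), one arrives, for $\beta_1$ sufficiently small, at a bound of the required form $D_1\,e^{\beta_1(|v|+z_1)}$.

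\textbf{The hard part} will be establishing the pathwise inequality $L_1(t)\le \tilde U(t)$ on $[0,\hat\tau_0)$: this depends on recognizing that the sign information $V\le 0$ kills the $\sup V^+$ term in Lemma~\ref{locin}, so that the otherwise coupled local-time recursion reduces to a linear combination of independent Brownian suprema amenable to Gaussian concentration. A secondary, more delicate point is the calibration of $\beta_1$ and $D_1$ (depending on $g$ and the concentration constants $\rho_1,\rho_2$ from \eqref{eq:elemconc}) so that the natural exponent of $|v|$ produced by the tail-integral split matches $\beta_1$ on both sides of the stated inequality; the threshold of splitting and the smallness of $\beta_1$ must be chosen jointly to absorb the $1/g$-type prefactor into $D_1$.
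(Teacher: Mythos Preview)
Your treatment of the range $v>4g$ is wrong, and this is precisely where the $z_1$ term in the exponent is needed. When $V(0)=v>4g$, the hitting time $\hat\tau_{4g}$ is \emph{not} zero: $V$ is continuous, and since $V(t)=v+gt-L_1(t)$ with $L_1(0)=0$, the velocity in fact initially increases (while $Z_1(t)>0$ the local time $L_1$ does not grow). The process must wait until $Z_1$ reaches zero before $L_1$ can push $V$ down to $4g$, so $\vsig=\hat\tau_{4g}>0$ and its size depends on $z_1$. Your proposal therefore leaves this entire regime unhandled.

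The paper's argument for $v>4g$ mirrors your Case~$v<0$ idea but in the opposite direction: for $t\le\vsig$ one has $V(t)\ge 4g$, whence $Z_1(t)\le z_1+\sup_{s\le t}B_1(s)+L_1(t)-4gt$; since $Z_1\ge 0$ this gives $-L_1(t)\le z_1+\sup_{s\le t}B_1(s)-4gt$ and hence $V(t)\le v+z_1+\sup_{s\le t}B_1(s)-3gt$. A Chernoff bound then yields an exponential moment with the factor $e^{\beta_1(z_1+v)}$. Note that here the $z_1$ appears because it controls how long one must wait before the local time can start acting; without this case your final constant $D_1$ cannot absorb the dependence, and the stated inequality would simply be false (e.g.\ take $v=4g+1$ and send $z_1\to\infty$).

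Your handling of $v\in[0,4g]$ (via Lemma~\ref{sec1prop3}) and $v<0$ (via the observation $(V(s))^+=0$ so that Lemma~\ref{locin} reduces $L_1$ to Brownian suprema) matches the paper's approach and is correct.
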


\begin{proof}
	Fix $(v,\bfz) \in \RR\times \RR_+^N$. We consider three cases.\\

\noindent {\bf Case 1: $v \in [0,4g]$.}	
In this case the result is immediate from Lemma \ref{sec1prop3}.

\noindent {\bf Case 2: $v>4g$.}
In this case, for all $t \leq \vsig$, $V(t) > 4g$. Thus, for such $t$, we have
\begin{align*}
    Z_1(t) & = z_1 + B_1(t) + L_1(t) - \frac{1}{2}L_2(t) - \int_0^tV(s)ds 
     \leq z_1 + \sup_{0 \leq s \leq t}B_1(s) + L_1(t) - 4gt.
\end{align*}
Consequently,
$- L_1(t) \leq z_1 + \sup_{0 \leq s \leq t}B_1(s) - 4gt$.
Thus we have, for $t \leq \vsig$,
$$V(t) = gt - L_1(t) + v \leq z_1 + \sup_{0 \leq s \leq t}B_1(s) - 3gt + v \doteq Q_1(t).$$
Letting $\tau^{Q_1}_{4g} \doteq  \inf\{t\ge 0: Q_1(t)= 4g\}$, we have
$\tau^{Q_1}_{4g} \geq \vsig$. Thus, for $\beta_1, \theta>0$,
\begin{align*}
    \mathbb{E}^*_{(v,\bfz)}e^{\beta_1\vsig} & \leq 1 + \int_1^{\infty}\mathbb{P}^*_{(v,\bfz)}(Q_1(\frac{1}{\beta_1}\ln(s)) > 4g)ds \\
    & \leq 1 + e^{-4g\theta\beta_1}\int_1^{\infty}\mathbb{E}^*_{(v,\bfz)}e^{\theta\beta_1 Q_1(\frac{1}{\beta_1}\ln(s))}ds \\
    & = 1 + e^{-4g\theta\beta_1 + \theta\beta_1 (z_1+v)}\int_1^{\infty}e^{-3\theta g\ln(s)}\mathbb{E}^*_{(v,\bfz)}e^{\theta\beta_1\sup_{0 \leq t \leq \frac{1}{\beta_1}\ln(s)}B_1(t)}ds \\
    & \leq 1 + \vrr_1e^{-4g\theta\beta_1 + \theta\beta_1 (z_1+v)}\int_1^{\infty}e^{-3\theta g\ln(s)+\vrr_2\theta^2\beta_1\ln(s)}ds
\end{align*}
where the last line uses the estimate \eqref{eq:elemconc}.
Taking $\theta = g^{-1}$ and $\beta_1 = g^2/\vrr_2$, we now see that
$$\mathbb{E}^*_{(v,\bfz)}e^{\beta_1\vsig}  \leq 1 +\vrr_1e^{-4g\theta\beta_1 + \theta\beta_1 (z_1+v)}$$
which completes the proof for Case 2.\\

\noindent {\bf Case 3: $v<0$.}
In this case,  for $t \leq \vsig$,  we have $V(t) < 0$. Thus for such $t$, from \eqref{eq:loctimes} and \eqref{eql2l1},
\begin{align*}
    L_1(t) & = \sup_{0 \leq s \leq t}(-z_1 - B_1(s) + \frac{1}{2}L_2(s) + \int_0^sV(u)\,du)^+ 
     \leq B_1^*(t) + \frac{1}{2}L_2(t) \le B_1^*(t) + \frac{N-1}{N} L_1(t) + \frac{1}{N} \bar Y(t).
\end{align*}
Consequently,
$$ L_1(t) \le NB_1^*(t) + \bar Y(t)$$
and so
\begin{align*}
    V(t) & = gt - L_1(t) + v 
     \geq gt - NB_1^*(t) - \bar Y(t) + v \doteq Q_2(t).
\end{align*}
Letting, $\tau_0^{Q_2} \doteq \inf\{t\ge 0: Q_2(t) =0\}$, we then have,
 $\tau_0^{Q_2} \geq \vsig$. Thus, for $\theta, \beta_1 >0$,
\begin{align*}
    \mathbb{E}^*_{(v,\bfz)}e^{\beta_1\vsig} & \leq 1 + \int_1^\infty \mathbb{P}^*_{(v,\bfz)}(Q_2(\frac{\ln(s)}{\beta_1}) < 0)\,ds \\
    & = 1 + \int_1^\infty \mathbb{P}^*_{(v,\bfz)}(g\frac{\ln(s)}{\beta_1} + v < NB_1^*(\frac{\ln(s)}{\beta_1}) + \bar Y(\frac{\ln(s)}{\beta_1}))\,ds \\
    & \leq  1 + e^{-\theta\beta_1 v}\int_1^\infty s^{-\theta g}\mathbb{E}^*_{(v,\bfz)}e^{\theta \beta_1(NB_1^*(\frac{\ln(s)}{\beta_1}) + \bar Y(\frac{\ln(s)}{\beta_1}))}\,ds \\
    & \leq 1 + \vrr_1e^{\theta\beta_1 |v|}\int_1^\infty s^{-\theta g}s^{\vrr_2 \beta_1 \theta^2}\,ds,
\end{align*}
where in the last line we have used the estimate \eqref{eq:elemconc}.
Take $\theta = 4g^{-1}$ and $\beta_1 = g^2/(8\vrr_2)$, then
$$ \mathbb{E}^*_{(v,\bfz)}e^{\beta_1\vsig}  \leq 1 + \vrr_1e^{\theta\beta_1 |v|}.$$
This completes the proof for Case 3 and thus the result follows.
\end{proof}

\begin{lemma}\label{sigma1est} There is a $\beta_2 > 0$ and $\kappa_1, \kappa_2>0$ such that, for all $(v,\bfz)\in \RR\times \RR_+^N$,
$$\mathbb{E}^*_{(v,\bfz)}\,e^{\beta_2\sigma_1} \leq \kappa_1 e^{\kappa_2(|v| + z_1)}.$$
\end{lemma}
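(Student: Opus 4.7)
The plan is to split $\sigma_1 = \hat{\tau}_{4g}$ at the stopping time $\vsig = \hat{\tau}_{4g}\wedge\hat{\tau}_0$ already controlled by Lemma \ref{sec5prop2}, which supplies the required exponential dependence $e^{\beta_1(|v|+z_1)}$ on the starting data. On the event $\{V(\vsig)=4g\}$ we have $\sigma_1 = \vsig$, so that contribution is handled directly by Lemma \ref{sec5prop2}. On the complementary event $\{V(\vsig)=0\}$, the strong Markov property at $\vsig$ says that the residual time $\sigma_1-\vsig$ is distributed as $\hat{\tau}_{4g}$ under $\mathbb{P}^*_{(0,\bfZ(\vsig))}$, so the lemma will follow once we have a uniform bound on $\mathbb{E}^*_{(0,\tilde{\bfz})} e^{\alpha\hat{\tau}_{4g}}$ over $\tilde{\bfz}\in\RR_+^N$.

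To obtain that auxiliary uniform bound I would apply the strong Markov property a second time at $\hat{\tau}_{g/(2N)}$: from $V=0$ the process must pass through the level $g/(2N)$ before reaching $4g$, yielding
\begin{equation*}
\mathbb{E}^*_{(0,\tilde{\bfz})} e^{\alpha\hat{\tau}_{4g}} = \mathbb{E}^*_{(0,\tilde{\bfz})}\Bigl[e^{\alpha\hat{\tau}_{g/(2N)}}\,\mathbb{E}^*_{(g/(2N),\bfZ(\hat{\tau}_{g/(2N)}))} e^{\alpha\hat{\tau}_{4g}}\Bigr].
\end{equation*}
Lemma \ref{sec1prop1} controls the exponential moment of $\hat{\tau}_{g/(2N)}$ uniformly in the gap vector when $V=0$, and Proposition \ref{sigma1} controls the exponential moment of $\hat{\tau}_{4g}$ uniformly over $V\in[g/(2N),2g]$. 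Choosing $\alpha \in (0, \min\{\beta,\lala\}]$, with $\beta$ from Lemma \ref{sec1prop1} and $\lala$ from Proposition \ref{sigma1}, will produce a finite constant $K \doteq \sup_{\tilde{\bfz}\in\RR_+^N}\mathbb{E}^*_{(0,\tilde{\bfz})} e^{\alpha\hat{\tau}_{4g}}$.

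Finally, setting $\beta_2 \doteq \min\{\alpha,\beta_1\}$, the strong Markov decomposition at $\vsig$ will give
\begin{align*}
\mathbb{E}^*_{(v,\bfz)} e^{\beta_2\sigma_1}
&= \mathbb{E}^*_{(v,\bfz)}\!\left[e^{\beta_2\vsig}\mathbf{1}_{\{V(\vsig)=4g\}}\right] + \mathbb{E}^*_{(v,\bfz)}\!\left[e^{\beta_2\vsig}\mathbf{1}_{\{V(\vsig)=0\}}\,\mathbb{E}^*_{(0,\bfZ(\vsig))} e^{\beta_2\hat{\tau}_{4g}}\right]\\
&\le (1+K)\,\mathbb{E}^*_{(v,\bfz)} e^{\beta_1\vsig},
\end{align*}
after which Lemma \ref{sec5prop2} bounds the last expectation by $D_1 e^{\beta_1(|v|+z_1)}$, and the claim follows with $\kappa_1 = (1+K)D_1$ and $\kappa_2 = \beta_1$. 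I do not anticipate any serious obstacle, since every exponential-moment input is already in hand; the only care required is to pick $\beta_2$ small enough that Lemma \ref{sec5prop2}, Lemma \ref{sec1prop1}, and Proposition \ref{sigma1} all apply simultaneously.
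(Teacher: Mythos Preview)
Your proposal is correct and follows essentially the same route as the paper: split at $\vsig=\hat\tau_{4g}\wedge\hat\tau_0$, invoke Lemma \ref{sec5prop2} for the $\vsig$-contribution, and on $\{V(\vsig)=0\}$ bound the residual $\hat\tau_{4g}$ via strong Markov through the intermediate level $g/(2N)$ using Lemma \ref{sec1prop1} and Proposition \ref{sigma1}. The paper writes this last step by introducing the auxiliary stopping times $\vsig_1=\inf\{t\ge\vsig:V(t)=g/(2N)\}$ and $\vsig_2=\inf\{t\ge\vsig_1:V(t)=4g\}$, but the argument is identical to yours.
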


\begin{proof}
From Proposition \ref{sigma1}, with $\gamma$ as in that proposition,
\begin{equation}\label{eq:eeq217}
	\sup_{\bfz \in \RR_+^N} \mathbb{E}^*_{(\frac{g}{2N},\bfz)}e^{\gamma \hat \tau_{4g}} \doteq d_1<\infty.
\end{equation}
Also, from Lemma \ref{sec1prop1}, with $\beta$ as in that lemma,
\begin{equation}\label{eq:222}
	\sup_{{\bfz} \in \RR_+^{N}}\,\mathbb{E}^*_{(0,{\bfz})}\,e^{\beta\,\hat\tau_{g/(2N)}} \doteq d_2< \infty.
\end{equation}
With $\beta_1$ as in Lemma \ref{sec5prop2},  let  $\beta_2 \in (0, \min\{\gamma, \beta, \beta_1\})$.
Recall the stopping time $\vsig$ from Lemma \ref{sec5prop2}. Define stopping times
$$\vsig_1 \doteq \inf\{t\ge \vsig: V(t) = g/(2N)\}, \;\; \vsig_2 \doteq \inf\{t\ge \vsig_1: V(t) = 4g\}.$$
Then, for $(v,\bfz)\in \RR\times \RR_+^N$,
\begin{align*}
\mathbb{E}^*_{(v,\bfz)}\,e^{\beta_2\sigma_1} \leq \mathbb{E}^*_{(v,\bfz)}\left[\textbf{1}_{\{\sigma_1=\vsig\}}\,e^{\beta_2\sigma_1}\right]
+ \mathbb{E}^*_{(v,\bfz)}\left[\textbf{1}_{\{\sigma_1 >\vsig\}}\,e^{\beta_2\sigma_1}\right].
\end{align*}
From Lemma \ref{sec5prop2}, with $\beta_1$ and $D_1$ as in the lemma, 
$$\mathbb{E}^*_{(v,\bfz)}\left[\textbf{1}_{\{\sigma_1=\vsig\}}\,e^{\beta_2\sigma_1}\right]
\le \mathbb{E}^*_{(v,\bfz)}\left[\,e^{\beta_2\vsig}\right]
\le D_1 e^{\beta_1(|v|+z_1)}.$$
Next, from \eqref{eq:eeq217},
\begin{align*}
\mathbb{E}^*_{(v,\bfz)}\left[\textbf{1}_{\{\sigma_1 >\vsig\}}\,e^{\beta_2\sigma_1}\right]
&\le \mathbb{E}^*_{(v,\bfz)}\left[\textbf{1}_{\{\sigma_1 >\vsig\}}\,e^{\beta_2\vsig_2}\right]
= 	\mathbb{E}^*_{(v,\bfz)}\left[\textbf{1}_{\{\sigma_1 >\vsig\}}\,\mathbb{E}^*_{(v,\bfz)}\left[e^{\beta_2\vsig_2}\mid \clf^*_{\vsig_1}\right]\right]\\
&\le d_1 \mathbb{E}^*_{(v,\bfz)}\left[\textbf{1}_{\{\sigma_1 >\vsig\}}e^{\beta_2\vsig_1}\right].
\end{align*}
Also, from \eqref{eq:222},
\begin{align*}
\mathbb{E}^*_{(v,\bfz)}\left[\textbf{1}_{\{\sigma_1 >\vsig\}}e^{\beta_2\vsig_1}\right] 
&= \mathbb{E}^*_{(v,\bfz)}\left[\textbf{1}_{\{\sigma_1 >\vsig\}} \mathbb{E}^*_{(v,\bfz)}\left[e^{\beta_2\vsig_1}\mid \clf^*_{\vsig}\right]\right] \\
&\le d_2 \mathbb{E}^*_{(v,\bfz)}\left[\textbf{1}_{\{\sigma_1 >\vsig\}}e^{\beta_2\vsig}\right] 
\le d_2 D_1 e^{\beta_1(|v|+z_1)},
\end{align*}
where the last line is from Lemma \ref{sec5prop2}. Combining the above estimates, for all $(v,\bfz)\in \RR\times \RR_+^N$,
$$\mathbb{E}^*_{(v,\bfz)}\,e^{\beta_2\sigma_1}  \le D_1 e^{\beta_1(|v|+z_1)}
+ d_1 d_2 D_1 e^{\beta_1(|v|+z_1)}.$$
The result follows.
\end{proof}

\begin{lemma}\label{sec5prop3} There is a $\kappa \in (0, \infty)$ such that for every $\alpha > 0$ there is a $s_\alpha > 0$ with
$$\mathbb{E}^*_{(v,\bfz)}e^{\alpha(|V(1)|+Z_1(1) + \bar Z_2(1))} \leq s_\alpha e^{\kappa\alpha(|v|+z_1 + \bar z_2)}, \mbox{ for all } (v,\bfz) \in \RR \times \RR_+^N.$$
\end{lemma}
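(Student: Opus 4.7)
The plan is to obtain a pathwise bound showing that $|V(1)| + Z_1(1) + \bar Z_2(1)$ is dominated by a deterministic linear combination of $|v|, z_1, \bar z_2$ plus a noise term composed of suprema of the driving Brownian motions over $[0,1]$, and then invoke the standard Gaussian concentration estimate \eqref{eq:elemconc} to conclude.

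First I would control $L_1(1)$. Using $V(s) = v + gs - L_1(s) \le |v| + g$ for $s \in [0,1]$ in the variational representation \eqref{eq:loctimes}, we get
$$L_1(1) \le (|v|+g) + \tfrac{1}{2} L_2(1) + B_1^*(1).$$
Substituting the inequality \eqref{eql2l1}, $L_2(1) \le \frac{2(N-1)}{N} L_1(1) + \frac{2}{N} \bar Y(1)$, and rearranging yields
$$L_1(1) \le N(|v|+g) + N B_1^*(1) + \bar Y(1),$$
and consequently $L_2(1) \le 2 L_1(1) + \frac{2}{N} \bar Y(1)$ is similarly bounded.

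Next I would bound each of the three quantities appearing in the exponent. From the defining equations:
\begin{align*}
|V(1)| &\le |v| + g + L_1(1),\\
Z_1(1) &\le z_1 + B_1^*(1) + \int_0^1 (-V(s))^+ ds + L_1(1) \le z_1 + B_1^*(1) + |v| + 2 L_1(1),
\end{align*}
where for $Z_1$ I use $-V(s) \le |v| + L_1(s) \le |v| + L_1(1)$. For the third quantity, invoking the identity \eqref{eq:semmart},
$$\bar Z_2(1) \le \bar z_2 + |M(1)| + \tfrac{N}{2} L_2(1) \le \bar z_2 + |M(1)| + \bar Y(1) + N L_1(1).$$
Summing these three and inserting the bound on $L_1(1)$ produces constants $a_1 = a_1(N), a_2 = a_2(N,g)$ such that, $\mathbb{P}^*_{(v,\bfz)}$-a.s.,
$$|V(1)| + Z_1(1) + \bar Z_2(1) \le a_1(|v| + z_1 + \bar z_2) + a_2 \bigl(1 + B_1^*(1) + |M(1)| + \bar Y(1)\bigr).$$

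Finally, since $B_1^*(1), |M(1)|, \bar Y(1)$ are each bounded by finite linear combinations of $B_i^*(1), 1 \le i \le N$, the estimate \eqref{eq:elemconc} (applied with $t=1$ and $u = a_2 \alpha$ times appropriate coefficients) gives, for every $\alpha > 0$,
$$\mathbb{E}^*_{(v,\bfz)} e^{\alpha a_2 (B_1^*(1) + |M(1)| + \bar Y(1))} \le \varrho_1 e^{\varrho_2 a_2^2 \alpha^2} \doteq s_\alpha' < \infty,$$
independently of $(v,\bfz)$. Taking $\kappa = a_1$ and $s_\alpha = e^{a_2 \alpha} s_\alpha'$ finishes the proof. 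No step is genuinely an obstacle here; the only care needed is in bookkeeping the recursive local-time inequalities so that $L_1(1)$ is controlled without circular dependence on itself.
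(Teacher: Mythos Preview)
Your proposal is correct and follows essentially the same route as the paper: obtain a pathwise bound $|V(1)|+Z_1(1)+\bar Z_2(1)\le a_1(|v|+z_1+\bar z_2)+a_2(1+\text{Brownian suprema over }[0,1])$ and then apply \eqref{eq:elemconc}. The only cosmetic differences are that the paper bounds $L_1(1)$ via the matrix inequality \eqref{LTineq} (giving $\sum_j W_{1,j}B_j^*(1)$) rather than via \eqref{eql2l1} as you do, and the paper exploits the exact cancellation in \eqref{eq:semmart}--\eqref{eql2l1} to get the slightly cleaner $\bar Z_2(1)\le \bar z_2 + M(1) + \bar Y(1)$ with no residual $L_1$ term; neither difference affects the argument.
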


\begin{proof}
Since $V(t) \leq g + |v|$ for $t \leq 1$, we have from \eqref{LTineq}
$$L_1(1) \le (g+|v|) W_{11} + \sum_{i = 1}^N W_{1,i}B^*_i(1) = N(g+|v|)+ \sum_{i = 1}^N W_{1,i}B^*_i(1).$$
Thus, under $\mathbb{P}^*_{(v,\bfz)}$,
\begin{align*}
    Z_1(1) + |V(1)| & \leq z_1 + B_1(1) + L_1(1) - \frac{1}{2}L_2(1) - \int_0^1V(s)ds + g + L_1(1) + |v|
	 \\
	 &\le 2|v| + \frac{g}{2} + z_1 + \sup_{0 \leq s \leq 1}B_1(s)  + 3 L_1(1)\\
    & \leq 2|v| + \frac{g}{2} + z_1  + \sup_{0 \leq s \leq 1}B_1(s) + 
	3(N(g+|v|)+ \sum_{i = 1}^N W_{1,i}B^*_i(1)).
\end{align*}
Moreover, from \eqref{eq:semmart} and \eqref{eql2l1}, 
$\bar Z_2(1) \le \bar z_2 + M(1) + \bar Y(1), \ t \ge 0.$
The result is now immediate from the estimate in \eqref{eq:elemconc}.
\end{proof}

For $K \in \NN$, with $K>256/g$, define
$$R_K \doteq [\frac{1}{K}, \frac{g}{128}- \frac{1}{K}] \times [\frac{1}{K}, K] \times [0, K]^{N-1}.$$
\begin{lemma}
	\label{lem:lem5}
	There is a $K\in \NN$, $K>256/g$, such that 
	$$\inf_{(v,\bfz) \in C^*} \mathbb{P}^1((v,\bfz), R_K) \doteq c_K >0.$$
\end{lemma}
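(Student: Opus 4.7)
The plan is to combine a direct pathwise construction (in the spirit of Lemma \ref{uniqlem}) with a compactness and lower-semicontinuity argument. First, note that $C^*$ is compact in $\RR\times\RR_+^N$: writing $\Lambda_\Delta \doteq \{\hat\bfz \in \RR_+^{N-1}: \bar z_2 \le \Delta\}$, we have $C^* = \{2g\}\times\{0\}\times\Lambda_\Delta$, and the constraint $\sum_{i=2}^N(N-i+1)z_i \le \Delta$ forces $z_i \le \Delta$ for each $i$, hence $\Lambda_\Delta$ is compact. Next I would establish continuous dependence on the initial condition: fixing the driving Brownian motion $\bfB$ on a common probability space and varying the initial condition $(v,\bfz) \in \RR\times\RR_+^N$, the Lipschitz property of the Skorokhod map (Proposition \ref{skorokhod}) together with a Gronwall argument — exactly as in the proof of Theorem \ref{thm:wellposed} — implies that the solution $(V^{(v,\bfz)},\bfZ^{(v,\bfz)})$ depends continuously on $(v,\bfz)$ in the uniform norm on $[0,1]$, almost surely. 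By the portmanteau theorem, this makes $(v,\bfz)\mapsto \PP^1((v,\bfz), O)$ lower semicontinuous for every open $O\subseteq\RR\times\RR_+^N$.

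Third, I would verify pointwise positivity of $\PP^1((v,\bfz),\mathrm{int}(R_K))$ at each $(v,\bfz) \in C^*$, for $K$ sufficiently large depending only on $\Delta, N, g$. Mirroring the $v \ge g/128$ case of Lemma \ref{uniqlem}, set $v_1 \doteq 3g - g/256$ and fix small $\delta \in (0, g/256)$. Define the event
$$
A_K \doteq \{L_1(1/2) \in (v_1-\delta, v_1+\delta)\} \cap \{Z_1(t) \in (1/K, K) \text{ for all } t \in [1/2,1]\} \cap \Big\{\sup_{0\le t \le 1} Z_i(t) < K,\; 2 \le i \le N\Big\}.
$$
Since $Z_1$ remains positive on $[1/2,1]$ under $A_K$, no further local time accumulates there, so $V(1) = 3g - L_1(1) = 3g - L_1(1/2) \in (g/256-\delta, g/256+\delta) \subset (1/K, g/128 - 1/K)$ for $K$ large, and together with $Z_1(1) \in (1/K, K)$ and $Z_i(1) \in [0, K)$ for $i\ge 2$ this shows $A_K \subseteq \{(V(1),\bfZ(1)) \in \mathrm{int}(R_K)\}$. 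A standard Wiener-measure positivity (support-theorem) argument then produces Brownian trajectories realizing $A_K$: one chooses $B_1$ so that it drifts down sufficiently on $[0,1/2]$ to push $L_1(1/2)$ into the prescribed interval and then drifts back up on $[1/2,1]$ to keep $Z_1$ in the corridor, while each $B_i$ for $i\ge 2$ has small oscillations on $[0,1]$. Lemma \ref{locin} quantifies the resulting local times and keeps the gaps $Z_i$ bounded by $K$ starting from $z_i \le \Delta$, so $\PP^*_{(v,\bfz)}(A_K) > 0$ for every $(v,\bfz) \in C^*$ once $K$ is large enough.

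Combining pointwise positivity with lower semicontinuity on the compact set $C^*$ gives $\inf_{(v,\bfz)\in C^*}\PP^1((v,\bfz),\mathrm{int}(R_K)) > 0$, and the lemma follows since $\mathrm{int}(R_K) \subseteq R_K$. The main obstacle is the simultaneous Wiener construction underlying $A_K$: one must jointly engineer $B_1$ (to make $L_1(1/2)$ land near $v_1$ and then hold $Z_1$ in $(1/K,K)$) and the remaining $B_i$ (to keep the higher gaps below $K$), despite the nontrivial coupling of the local times via the reflection matrix $R$; this is exactly where the quantitative bounds of Lemma \ref{locin} are used, and uniformity over $\hat\bfz \in \Lambda_\Delta$ is then inherited from compactness via the semicontinuity argument above.
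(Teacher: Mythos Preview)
Your proposal is correct and rests on the same three ingredients as the paper: compactness of $C^*$, the Feller property (giving lower semicontinuity of $(v,\bfz)\mapsto\PP^1((v,\bfz),O)$ for open $O$), and pointwise positivity, which the paper obtains simply by citing the second part of Lemma \ref{uniqlem} (since $v=2g\ge g/128$ on $C^*$, one may take $r_0=1$) rather than rebuilding the event $A_K$ by hand. The paper packages these via a short contradiction argument---extract a convergent subsequence along which the probabilities vanish, then contradict Feller continuity at the limit point---which is just the contrapositive of your direct lower-semicontinuous-positive-function-on-a-compact-set argument.
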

\begin{proof}
	Suppose that, for every $K \in \NN$, $K>256/g$,
	$$\inf_{(v,\bfz) \in C^*} \mathbb{P}^1((v,\bfz), R_K)  =0.$$
	Then, we can find a sequence $\{(v_K, \bfz_K)\}_{K \in \NN} \subset C^*$ such that
	\begin{equation}\label{eq:gtz}
		\PP^1((v_K, \bfz_K), R_K) \le \frac{1}{K}.
	\end{equation}
	Since $C^*$ is compact, we can find $(v^*, \bfz^*) \in C^*$ such that, along a subsequence (labeled again with $K$), $(v_K, \bfz_K) \to (v^*, \bfz^*)$.
	From the second statement in Lemma \ref{uniqlem},
	$$\PP^1((v^*, \bfz^*), R) >0.$$
	Since $R_K$ increase to $R$ as $K\to \infty$, we can find a $K^* \in \NN$, $K^* > 256/g$, such that
	$$\PP^1((v^*, \bfz^*), R_{K^*})  \doteq a_{K^*}>0.$$
	Choose a real, continuous function $f: \RR \times \RR_+^N $ such that $0\le f \le 1$, $f=1$ on $R_{K^*}$ and $f = 0$ on $R_{2K^*}^c$.
	Then
	\begin{align*}
		\liminf_{K\to \infty}\PP^1((v_K, \bfz_K), R_{2K^*})   &\ge 
		\liminf_{K\to \infty} \int f(v, \bfz)\PP^1((v_K, \bfz_K),  (dv, d \bfz))\\
		&= \int f(v, \bfz)\PP^1((v^*, \bfz^*),  (dv, d\bfz))
		\ge \PP^1((v^*, \bfz^*), R_{K^*}) = a_{K^*}>0,
		\end{align*}
		where the middle equality follows from the Feller property of the transition probability kernel $\PP^1$. The Feller property can be verified by analyzing two copies of the process \eqref{eq:gapproc} starting from different initial conditions but driven by the same Brownian motion. Using the Lipschitz property of the Skorohod map and Gr\"onwall's lemma, the distance between the coupled processes in sup-norm on any given compact time interval can be made small (in a pathwise sense) if the initial conditions are close enough.
		
	On the other hand, from \eqref{eq:gtz}
	$$\limsup_{K\to \infty}\PP^1((v_K, \bfz_K), R_{2K^*}) \le \limsup_{K\to \infty}\PP^1((v_K, \bfz_K), R_{K}) = 0.$$
	This is a contradiction which completes the proof of the lemma.
\end{proof}

We can now complete the proof of Proposition \ref{driftcondn}.\\

\noindent \textbf{Proof of Proposition \ref{driftcondn}}. 
Fix $\eta>0$ such that $\eta < \rho_0$ and $b_2\eta \le \beta_2$, where $\rho_0$ and $b_2$ are as Lemma \ref{lem:800} and $\beta_2$ is as in Lemma \ref{sigma1est}.
Combining Lemmas \ref{lem:800} and \ref{sigma1est}, for all $(v, \bfz)\in \RR\times \RR_+^{N}$,
$$\mathbb{E}^*_{(v,\bfz)}e^{\eta\Gamma} \leq b_3(\eta)e^{b_1 \eta(|v| + z_1 + \bar z_2)}\mathbb{E}^*_{(v,\bfz)}e^{b_2\eta\sigma_1}
\le b_3(\eta)\kappa_1 e^{b_1 \eta(|v| + z_1 + \bar z_2) + \kappa_2(|v| + z_1)}.$$
Thus
$$\mathbb{E}^*_{(v,\bfz)}e^{\eta\tau_{C^*}(1)}  = \mathbb{E}^*_{(v,\bfz)}\left[
\mathbb{E}^*_{(v,\bfz)}\left[e^{\eta\tau_{C^*}(1)} \mid \clf^*_1\right]\right]
\le b_3(\eta)\kappa_1 e^{\eta}\mathbb{E}^*_{(v,\bfz)}e^{(b_1\eta + \kappa_2)(|V(1)|+Z_1(1) + \bar Z_2(1))}.
 $$
Consequently, with $\alpha = b_1\eta + \kappa_2$, for all $(v, \bfz)\in \RR\times \RR_+^{N}$,
$$\mathbb{E}^*_{(v,\bfz)}e^{\eta\tau_{C^*}(1)} \le 
b_3(\eta)\kappa_1 s_{\alpha} e^{\eta} e^{\kappa \alpha (|v|+z_1 + \bar z_2)},$$
where $s_{\alpha}$ and $\kappa$ are as in Lemma \ref{sec5prop3}.
This immediately implies part (1) of the proposition. 

We now consider part (2).  
Let $K$ be as in Lemma \ref{lem:lem5}.
From Theorem \ref{minorization}, with $M_1\doteq \inf_{(v, \bfz) \in R_K} K_{(v,\bfz)}$ (which is positive)
$$\inf_{(v, \bfz) \in R_K} \mathbb{P}^{\sn}(( v,\bfz), B ) \ge
 \lambda(B\cap D)\inf_{(v, \bfz) \in R_K} K_{(v,\bfz)}  = M_1 \lambda(B\cap D).$$
Also, from Lemma \ref{lem:lem5}, with $K$ as in the lemma, for $(v,\bfz) \in C^*$  and $B \in \clb(\RR \times \RR_+^N)$,
\begin{align*}
	\mathbb{P}^{1+\sn}((v,\bfz), B) &\ge
	\int_{R_K} \mathbb{P}^{1}((v,\bfz), (d\tilde v, d\tilde \bfz)) 
	\mathbb{P}^{\sn}((\tilde v,\tilde \bfz), B) \\
	&\ge  M_1 \lambda(B\cap D)  \mathbb{P}^{1}((v,\bfz), R_K) 
	\ge M_1 c_K\lambda(B\cap D).
\end{align*}
The result now follows on taking $\nu(\cdot) =M_1 c_K\lambda(\cdot\cap D)$ and $r_1 = 1 + \sn$.
\qedsymbol\\

\section{Law of large numbers}\label{sec:lln}

In this section, we prove Theorem \ref{lln}. We begin with the following lemma.

\begin{lemma}\label{fl}
For any $(v,\bfz) \in \RR \times \RR_+^N$, $\mathbb{P}^*_{(v,\bfz)}$-almost every $\omega$, there exists a  $m^*(\omega) \in (0, \infty)$ such that
$$
|V(t, \omega)| + \sum_{i=1}^N Z_i(t,\omega) \le m^*(\omega) (\log t)^2, \ \mbox{ for all } t \ge 2.
$$
\end{lemma}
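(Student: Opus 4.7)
The plan is to apply the Borel--Cantelli lemma at integer times and then bridge to all $t \ge 2$ via a pathwise estimate on unit intervals. The key inputs are the exponential tail of return times to $C^*$ (Proposition \ref{driftcondn}(1)) combined with the deterministic pathwise bound of Lemma \ref{locin}.

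I would first set up a regenerative sequence of return times to $C^*$: let $T_0 \doteq \tau_{C^*}(1)$ and, for $k \ge 0$, $T_{k+1} \doteq \inf\{t \ge T_k + 1: (V(t),\bfZ(t)) \in C^*\}$, with $E_{k+1} \doteq T_{k+1} - T_k \ge 1$. Proposition \ref{driftcondn}(1) and the strong Markov property give $\mathbb{E}^*[e^{\eta E_{k+1}} \mid \mathcal{F}^*_{T_k}] \le M$ for every $k \ge 0$, and $\mathbb{E}^*_{(v,\bfz)} e^{\eta T_0} = \tilde V_0(v,\bfz) < \infty$. For $n \ge T_0$, let $K_n \doteq \max\{k: T_k \le n\}$, so that $K_n \le n$ (since $T_k \ge T_0 + k$) and $n - T_{K_n} < E_{K_n+1}$. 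Because $(V(T_{K_n}),\bfZ(T_{K_n})) \in C^*$, $\sum_{i=1}^N Z_i(T_{K_n}) \le (N-1)\Delta$. Applying Lemma \ref{locin} to the process shifted by $T_{K_n}$ (via the strong Markov property), and noting the elementary bound $\sup_{0 \le s \le u} V(T_{K_n}+s)^+ \le 2g + gu$ (which follows from $V(T_{K_n}+s) = 2g + gs - (L_1(T_{K_n}+s) - L_1(T_{K_n}))$), one arrives at pathwise constants $A, B, C > 0$ depending only on $g, N, \Delta$ for which
$$ |V(n)| + \sum_{i=1}^N Z_i(n) \le B + A\, E_{K_n+1}^2 + C \sum_{j=1}^N \sup_{0 \le s \le E_{K_n+1}} \bigl|B_j(T_{K_n}+s) - B_j(T_{K_n})\bigr|. $$

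Next, fix $c > B + 1$ and set $c_1 \doteq \sqrt{(c-B)/(2A)}$. On the event $\{|V(n)| + \sum_i Z_i(n) > c(\log n)^2\}$ at least one of the following holds: (a) $E_{K_n+1} > c_1 \log n$, or (b) $\sup_j \sup_{0 \le s \le c_1 \log n}\bigl|B_j(T_{K_n}+s) - B_j(T_{K_n})\bigr| > c_2 (\log n)^2$ for a suitable $c_2 > 0$. A union bound over $k \in \{0, 1, \ldots, n+1\}$, combined with Markov's inequality and Proposition \ref{driftcondn}(1) for (a), and with \eqref{eq:elemconc} applied conditionally on $\mathcal{F}^*_{T_{k-1}}$ (the shifted BM increment being independent of the past) for (b), yields
$$ \mathbb{P}^*_{(v,\bfz)}\Bigl(|V(n)| + \sum_{i=1}^N Z_i(n) > c(\log n)^2\Bigr) \le C_3(v,\bfz)\, n^{1-\eta c_1} + C_4\, n\, e^{-c_5 (\log n)^3}. $$
Choosing $c$ large enough that $\eta c_1 > 3$ makes the right-hand side $O(n^{-2})$, and the Borel--Cantelli lemma then gives, $\mathbb{P}^*_{(v,\bfz)}$-almost surely, $|V(n)| + \sum_i Z_i(n) \le c(\log n)^2$ for all but finitely many integers $n$.

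To finish, I would pass from integer times to all $t \ge 2$. For $t \in [n, n+1]$, a second application of Lemma \ref{locin} to the process shifted by $n$, together with the inequality $\sup_{0 \le s \le 1} V(n+s)^+ \le V(n)^+ + g$, gives
$$ \sup_{t \in [n, n+1]}\Bigl(|V(t)| + \sum_{i=1}^N Z_i(t)\Bigr) \le C'\Bigl(|V(n)| + \sum_i Z_i(n)\Bigr) + C'' + C''' \sum_{j=1}^N \sup_{0 \le s \le 1}\bigl|B_j(n+s) - B_j(n)\bigr|, $$
with $C', C'', C'''$ depending only on $g, N$. The final term is $O(\sqrt{\log n})$ a.s.\ eventually by a standard Gaussian Borel--Cantelli argument, so combining with the integer-time bound produces the desired $m^*(\omega)(\log t)^2$ estimate (with $m^*$ absorbing the finitely many early exceptional times). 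The main technical obstacle lies in Step 2: carefully assembling the pathwise bound on the excursion with constants depending \emph{only} on the fixed compact set $C^*$ (equivalently on $g, N, \Delta$) and not on the random index $K_n$, so that the union bound over the $O(n)$ excursions up to time $n$ produces a summable tail.
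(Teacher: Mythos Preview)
Your proposal is correct and follows essentially the same strategy as the paper: regenerate at successive returns to $C^*$, bound $\Sigma(t)=|V(t)|+\sum_i Z_i(t)$ pathwise on each excursion via Lemma \ref{locin}, take a union bound over the at most $n$ excursions up to time $n$ using the exponential tail from Proposition \ref{driftcondn}(1), and conclude by Borel--Cantelli. The paper streamlines your two-step argument by first proving the single-excursion tail $\sup_{(v,\bfz)\in C^*}\mathbb{P}^*_{(v,\bfz)}\bigl(\sup_{t\le\tau_{C^*}(1)}\Sigma(t)\ge x\bigr)\le c_1 e^{-c_2\sqrt{x}}$, which (since $\taubar_{n+1}\ge n+1$) immediately controls $\sup_{t\in[n,n+1]}\Sigma(t)$ and makes your separate bridging step from integers to general $t$ unnecessary.
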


\begin{proof}
Recall the set $C^*$ from \eqref{eq:cstar} and the stopping time $\tau_{C^*}(1)$ defined just after. Let $\Sigma(t) \doteq |V(t)| + \sum_{i=1}^N Z_i(t), t \ge 0$. We will first show that there exist positive constants $c_1, c_2$ such that
\begin{equation}\label{lln1}
\sup_{(v,\bfz) \in C^*} \mathbb{P}^*_{(v,\bfz)}\left(\sup_{t \le \tau_{C^*}(1)} \Sigma(t) \ge x\right) \le c_1e^{-c_2\sqrt{x}}, \ x >0.
\end{equation}
Note that, from \eqref{eq:gapproc} and Lemma \ref{locin}, for $(v,\bfz) \in C^*$ and $t \ge 0$,
\begin{align*}
\Sigma(t) &= |{ v}| + \sum_{i=1}^N { z_i} + B_N(t) + g t + L_1(t) + \frac{1}{2}L_N(t)\\ 
&\le 2g + \Delta + B_N(t) + gt + \sum_{i=1}^NW_{i,1}t\sup_{0 \leq s \leq t}(V(s))^+ +  \sum_{i,j=1}^NW_{i,j}B_j^*(t)\\
& \le 2g + \Delta + gt + 2gt\sum_{i=1}^NW_{i,1} +  gt^2\sum_{i=1}^NW_{i,1} + B_N(t) +  \sum_{i,j=1}^NW_{i,j}B_j^*(t).
\end{align*}
Using this bound along with part (1) of Proposition \ref{driftcondn} and \eqref{eq:elemconc}, we obtain positive constants $c_1, c_2, x_0$ such that, for any $(v,\bfz) \in C^*, x \ge x_0$, and choosing $\delta >0$ sufficiently small,
\begin{multline*}
\mathbb{P}^*_{(v,\bfz)}\left(\sup_{t \le \tau_{C^*}(1)} \Sigma(t) \ge x\right) \le \mathbb{P}^*_{(v,\bfz)}(\tau_{C^*}(1) \ge \delta \sqrt{x}) + \mathbb{P}^*_{(v,\bfz)}\left(\sup_{t \le \delta \sqrt{x}} \Sigma(t) \ge x\right)\\
\le \mathbb{P}^*_{(v,\bfz)}(\tau_{C^*}(1) \ge \delta \sqrt{x}) +  \mathbb{P}^*_{(v,\bfz)}\left(\sup_{t \le \delta \sqrt{x}} \left(B_N(t) +  \sum_{i,j=1}^NW_{i,j}B_j^*(t)\right) \ge \frac{x}{2}\right) \le c_1 e^{-c_2\sqrt{x}}.
\end{multline*}
This proves \eqref{lln1}.
Define the following stopping times:
$$
\taubar_{0} = 0, \ \ \taubar_{i+1} \doteq \inf\{t \ge \taubar_i + 1: (V(t), \mathbf{Z}(t)) \in C^*\}, \ i \ge 0.
$$
Using \eqref{lln1}, there exists a positive constant $c_3$ such that for any $(v,\bfz) \in C^*$, $n \ge 2$ and $m >0$,
 \begin{align*}
& \mathbb{P}^*_{(v,\bfz)}\left(\Sigma(t) \ge m (\log t)^2 \text{ for some } t \in [n,n+1]\right)\\
& \le \mathbb{P}^*_{(v,\bfz)}\left(\sup_{t \le n+1}\Sigma(t) \ge c_3 m (\log (n+1))^2\right)
 \le \mathbb{P}^*_{(v,\bfz)}\left(\sup_{t \le \taubar_{n+1}}\Sigma(t) \ge c_3 m (\log (n+1))^2\right)\\
 & \le (n+1)\sup_{(v,\bfz) \in C^*} \mathbb{P}^*_{(v,\bfz)}\left(\sup_{t \le \tau_{C^*}(1)} \Sigma(t) \ge c_3 m (\log (n+1))^2\right)
 \le c_1(n+1) e^{-c_2\sqrt{c_3m}\log(n+1)}, 
 \end{align*}
where we used the strong Markov property to obtain the third inequality. 
Choosing $m$ sufficiently large, we see from the Borel-Cantelli Lemma that, for any $(v,\bfz) \in C^*$,
\begin{equation}
	\mathbb{P}^*_{(v,\bfz)}\left( \limsup_{t\to \infty} \frac{\Sigma(t)}{(\log t)^2} <\infty\right) = 1.
\end{equation}
Finally for an arbitrary $(v,\bfz) \in \RR \times \RR_+^{N}$, and with $\Gamma$ as in \eqref{eq928} and $A = \{\limsup_{t\to \infty} \frac{\Sigma(t)}{(\log t)^2} <\infty\}$, applying  Lemma \ref{lem:800},
\begin{align*}
	\mathbb{P}^*_{(v,\bfz)}(A) = \mathbb{P}^*_{(v,\bfz)}(A, \Gamma <\infty)
	= \mathbb{E}^*_{(v,\bfz)}\left( \mathbb{P}^*_{(v,\bfz)}\left(A \mid \clf^*_{\Gamma}\right) 1_{\{\Gamma<\infty\}}\right) = \mathbb{P}^*_{(v,\bfz)}(\Gamma <\infty) = 1.
\end{align*}
The result follows.
\end{proof}

{\bf Proof of Theorem \ref{lln}.}
All limits in the proof hold $\mathbb{P}^*_{(v,\bfz)}$-almost surely for arbitrary $(v,\bfz) \in \RR \times \RR_+^{N}$. From \eqref{eq:rankloc}, 
\begin{equation}\label{ll1}
\lim_{t \rightarrow \infty} \frac{\sum_{j=1}^NX_{(j)}(t)}{Nt} = \lim_{t \rightarrow \infty} \frac{V(t) + \sum_{j=1}^NX_{(j)}(t)}{Nt} = \frac{g}{N}.
\end{equation}
where we used Lemma \ref{fl} in the first equality. Moreover, again using Lemma \ref{fl}, for any $i \in \{0,1,\dots,N\}$,
\begin{equation}\label{ll2}
\frac{1}{t}\left|X_{(i)}(t) - \frac{1}{N}\sum_{j=1}^NX_{(j)}(t)\right| \le \frac{1}{t}\left|X_{(N)}(t) - X_{(0)}(t)\right| \le \frac{1}{t} \sum_{i=1}^N Z_i(t) \rightarrow 0
\end{equation}
as $t \rightarrow \infty$.  The statement in \eqref{le1} now follows from \eqref{ll1} and \eqref{ll2}. Also, \eqref{le2} follows from Lemma \ref{fl} on noting
$$
0 = \lim_{t \rightarrow \infty}\frac{V(t)}{t} = g - \lim_{t \rightarrow \infty}\frac{L_1(t)}{t}.
$$
To prove \eqref{le3}, note that from \eqref{eq:rankloc}, \eqref{eq:gapproc} and Lemma \ref{fl},
$$
0 = \lim_{t \rightarrow \infty}\frac{Z_1(t)}{t}  = - \lim_{t \rightarrow \infty}\frac{X_{(0)}(t)}{t} - \frac{1}{2}\lim_{t \rightarrow \infty}\frac{L_2(t)}{t} + \lim_{t \rightarrow \infty}\frac{L_1(t)}{t},
$$
which gives $\lim_{t \rightarrow \infty}\frac{L_2(t)}{t} = \frac{2(N-1)g}{N}$. Again using \eqref{eq:gapproc} and Lemma \ref{fl},
$$
0 = \lim_{t \rightarrow \infty}\frac{Z_2(t)}{t}  = - \frac{1}{2}\lim_{t \rightarrow \infty}\frac{L_3(t)}{t} + \lim_{t \rightarrow \infty}\frac{L_2(t)}{t} - \lim_{t \rightarrow \infty}\frac{L_1(t)}{t},
$$
from which, we obtain $\lim_{t \rightarrow \infty}\frac{L_3(t)}{t} = \frac{2(N-2)g}{N}$.
Suppose $N \ge 4$ and, for some $i \in \{3,\dots,N-1\}$, the limit $\lim_{t \rightarrow \infty}\frac{L_j(t)}{t}$ exists and equals $\frac{2(N - j+1)g}{N}$ for all $3 \le j \le i$. Then, using \eqref{eq:gapproc},  $\lim_{t \rightarrow \infty}\frac{L_{i+1}(t)}{t}$ exists and
$$
0 = \lim_{t \rightarrow \infty}\frac{Z_i(t)}{t} = -\frac{1}{2}\lim_{t \rightarrow \infty}\frac{L_{i+1}(t)}{t} + \lim_{t \rightarrow \infty}\frac{L_{i}(t)}{t}-\frac{1}{2}\lim_{t \rightarrow \infty}\frac{L_{i-1}(t)}{t}
$$
which implies $\lim_{t \rightarrow \infty}\frac{L_{i+1}(t)}{t} = \frac{2(N-i)g}{N}$. The statement in \eqref{le3} now follows by induction.
\hfill \qed
\\\\

\noindent \textbf{Acknowledgement:} 
The research of SB was supported in part by the NSF CAREER award DMS-2141621. The research of AB was supported in part by the NSF awards DMS-1814894 and DMS-1853968. We thank two anonymous referees for their careful suggestions which improved the article.
\\\\

\noindent \textbf{Conflict of interest statement: }There are no conflicts of interests.

\bibliographystyle{plain}
\bibliography{inert_ref}

\vspace{\baselineskip}

\noindent{\scriptsize {\textsc{\noindent S. Banerjee, A. Budhiraja, and B. Estevez\newline
Department of Statistics and Operations Research\newline
University of North Carolina\newline
Chapel Hill, NC 27599, USA\newline
email: sayan@email.unc.edu
\newline
email: budhiraj@email.unc.edu
\newline
email: bestevez@live.unc.edu
 \vspace{\baselineskip} } }}

\end{document}